\def\thissectiontitle{}
\def\thissectionnumber{}
\def\thissubsectiontitle{}
\def\thissubsectionnumber{}
\gdef\thissectiontitle{#1}\gdef\thissectionnumber{\thesection}#1}
\gdef\thissubsectiontitle{#1}\gdef\thissubsectionnumber{\thesubsection}#1}
  \pretocmd{\section}{\global\toggletrue{todoSection}}{}{}
  \pretocmd{\subsection}{\global\toggletrue{todoSubsection}}{}{}
  \xpretocmd{\todo}{%
    \iftoggle{todoSubsection}{
     \addtocontents{tdo}{\protect\contentsline{subsection}%
        {\protect\numberline{\thissubsectionnumber}{\thissubsectiontitle}}{}{} }
      \global\togglefalse{todoSubsection}
        }{}
    }{}{}%
  \xpretocmd{\todo}{%
    \iftoggle{todoSection}{
     \addtocontents{tdo}{\protect\contentsline{section}%
        {\protect\numberline{\thissectionnumber}{\thissectiontitle}}{}{} }
      \global\togglefalse{todoSection}
        }{}
    }{}{}%
\theoremstyle{definition}
\newtheorem{defn}{Definition}[section]
\theoremstyle{definition}
\newtheorem*{defn*}{Definition}
\theoremstyle{definition}
\theoremstyle{plain}
\newtheorem{lem}[defn]{Lemma}
\theoremstyle{plain}
\theoremstyle{plain}
\newtheorem{cor}[defn]{Corollary}
\theoremstyle{plain}
\newtheorem{thm}[defn]{Theorem}
\theoremstyle{plain}
\newtheorem*{thm*}{Theorem}
\theoremstyle{plain}
\newtheorem{prop}[defn]{Proposition}
\theoremstyle{plain}
\newtheorem{exmpl}[defn]{Example}
\theoremstyle{plain}
\newtheorem*{exmpl*}{Example}
\theoremstyle{remark}
\newtheorem{rmk}[defn]{Remark}
\theoremstyle{remark}
\newtheorem*{rmk*}{Remark}
\theoremstyle{plain}
\newtheorem{introthm}{Theorem}
\theoremstyle{plain}
\theoremstyle{plain}
\newtheorem{introcor}[introthm]{Corollary}
\theoremstyle{plain}
\theoremstyle{plain}
\crefname{lem}{Lemma}{Lemmas}
\crefname{prop}{Proposition}{Propositions}
\crefname{defn}{Definition}{Definitions}
\crefname{introconj}{Conjecture}{Conjectures}
\crefname{cor}{Corollary}{Corollaries}
\newcommand{\Z}{\mathbb{Z}}
\newcommand{\Fp}{\mathbb{F}_p}
\newcommand{\op}[1]{#1^{\operatorname{op}}}
\newcommand{\id}[1]{\operatorname{id}_{#1}}
\newcommand{\cartsymb}{\arrow[dr, phantom,"\scalebox{1.5}{\color{black}$\lrcorner$}", near start, color=black]}
\renewcommand{\lim}{\operatorname{lim}}
\DeclareFontFamily{U}{min}{}
\DeclareFontShape{U}{min}{m}{n}{<-> udmj30}{}
\newcommand{\Alg}[1]{\operatorname{Alg}_{#1}}
\newcommand{\Mod}[1]{\operatorname{Mod}_{#1}}
\newcommand{\ModDisc}[1]{\operatorname{Mod}_{#1}^{\heartsuit}}
\newcommand{\QCoh}[1]{\operatorname{QCoh}(#1)}
\newcommand{\Frob}[2]{\operatorname{Frob}(#1, #2)}
\newcommand{\FrobSingle}[1]{\operatorname{Frob}(#1)}
\newcommand{\Leq}[2]{\operatorname{LEq}(#1,#2)}
\newcommand{\End}[1]{\operatorname{End}(#1)}
\newcommand{\Sp}{\operatorname{Sp}}
\renewcommand{\S}{\mathbb{S}}
\newcommand{\fgt}[1]{\operatorname{fgt}_{#1}}
\newcommand{\free}[1]{\operatorname{free}_{#1}}
\newcommand{\Cat}[1]{\mathcal{#1}}
\newcommand{\Map}[3]{\operatorname{Map}_{#1}({#2}, {#3})} 
\newcommand{\map}[3]{\operatorname{map}_{#1}({#2}, {#3})} 
\newcommand{\Fun}[2]{\operatorname{Fun}({#1},{#2})}
\newcommand{\CatInf}{\operatorname{Cat}_{\infty}}
\newcommand{\CatInfHuge}{\widehat{\operatorname{Cat}}_{\infty}}
\newcommand{\Groth}{\operatorname{Groth}}
\newcommand{\PrL}{\mathcal{P}r^{L}}
\newcommand{\PrSt}{\mathcal{P}r^{L}_{st}}
\newcommand{\PreSt}{\mathrm{PreSt}^L}
\newcommand{\zar}{\operatorname{zar}}
\newcommand{\frob}{\operatorname{frob}}
\newcommand{\loccit}{\textit{loc.~cit.~}}
\newcommand{\OX}{\mathcal{O}_X}
\newcommand\noloc{%
  \nobreak
  \mspace{6mu plus 1mu}
  {:}
  \nonscript\mkern-\thinmuskip
  \mathpunct{}
  \mspace{2mu}
}
\date{\today}
\author{Klaus Mattis\footnote{\href{http://www.klaus-mattis.com}{www.klaus-mattis.com}}\, and Timo Weiß\footnote{\href{mailto:timo.weiss@uni-mainz.de}{timo.weiss@uni-mainz.de}}}
\title{The derived \texorpdfstring{$\infty$}{infinity}-category of Frobenius modules}
\begin{document}

\maketitle 

\begin{abstract}
    We prove that for $X$ a quasi-compact $\Fp$-scheme with affine diagonal (e.g.\ $X$ quasi-compact 
    and separated) there is a t-exact equivalence 
    $\Cat D(\Frob{\QCoh{X}}{F_*}) \to \Frob{\Cat D(\QCoh{X})}{\Cat D(F_*)}$
    of stable $\infty$-categories.
    Here, $\Frob{-}{-}$ denotes the $\infty$-category of generalized Frobenius modules 
    as introduced in \cite{cartmod}.
    This generalizes our result from \cite{cartmod},
    where we proved the above for regular Noetherian $\Fp$-schemes.
    As a byproduct we prove that the derived $\infty$-category of Frobenius (and Cartier) modules
    satisfies Zariski descent.
\end{abstract}

\hypersetup{pdfborder=0 0 0}
\tableofcontents
\hypersetup{pdfborder=1 1 1}
\newpage


\section{Introduction}

A crucial aspect of algebraic geometry over a field of positive characteristic $p > 0$ is the presence of the Frobenius endomorphism. 
In particular, the study of modules with an action of the Frobenius has lead to deep structural results.

If $X$ is an $\Fp$-scheme, then a (quasi-coherent) $\OX$-module with a left action of the absolute Frobenius $F\colon X \to X$ is called a Frobenius module. 
Equivalently, a Frobenius module is given by a pair $(M, \kappa_M)$ of an $\OX$-module $M$
and an $\OX$-linear morphism $\kappa_M \colon M \to F_*M$.
Examples of Frobenius modules include the structure sheaf $\OX$ and local cohomology modules,
both equipped with their natural left action by the Frobenius.
They have been used to prove important finiteness results, cf.\ \cite{hartshornespeiser, lyubeznik1997f}.
Moreover, Frobenius modules are related via a Riemann--Hilbert-type correspondence to $p$-torsion étale sheaves, 
cf.\ \cite{emerton2004riemann, bhatt2017riemannhilbertcorrespondencepositivecharacteristic,bockle2014cohomological}. 

There is also a dual notion of Cartier modules, which are related to Frobenius modules via Grothendieck--Serre duality by a result of Baudin \cite[Theorem 4.2.7]{baudin2025dualitycartiercrystalsperverse}. 
They are (quasi-coherent) $\OX$-modules with a right action of the Frobenius. Their category was first considered by Anderson in \cite{Anderson} and more thoroughly studied by Blickle and Böckle in \cite{blickle2011cartier,blickle2013cartier}. 
Both Frobenius modules and Cartier modules are important tools in positive characteristic commutative algebra and algebraic geometry,
and have been studied by many authors \cite{lyubeznik2001commutation,schwede2009f,patakfalvi2016frobeniustechniquesbirationalgeometry,99bc2f14-cf95-3c44-a77d-e80ab3dcf24f}.

In our previous article \cite{cartmod}, we introduced an $\infty$-categorical framework
for Cartier and Frobenius modules.
Recall that for any {($\infty$-)category} $\Cat C$ with an endofunctor $G \colon \Cat C \to \Cat C$,
we defined $\Frob{\Cat C}{G} \coloneqq \mathrm{LEq}(\id{\Cat C}, G)$ as the lax equalizer 
of the identity with $G$, cf.\ \cite[Definition 2.4]{cartmod}, and similarly 
$\mathrm{Cart}(\Cat C,G) \coloneqq \mathrm{LEq}(G, \id{\Cat C})$.
If $X$ is an $\Fp$-scheme for some prime $p$, 
$\Cat C = \QCoh{X}$ and $G = F_*$ is the pushforward along the absolute Frobenius,
one recovers the classical categories of Frobenius and Cartier modules on $X$, see the explanation above or e.g.\ \cite[Remark 1.3.2]{bhatt2017riemannhilbertcorrespondencepositivecharacteristic} 
and \cite[Definition 2.1]{blickle2011cartier} for definitions of these categories. 

In the following, we write $\Cat D(\Cat A) \in \PrSt$ 
for the (presentable stable) derived $\infty$-category of a Grothendieck abelian category $\Cat A$.
In \cite{cartmod} we proved the following:
\begin{thm*}[{\cite[Corollaries 6.2 and 6.3]{cartmod}}]
    Let $X$ be an $\Fp$-scheme. Then there is a canonical t-exact equivalence of presentable stable $\infty$-categories 
    \begin{equation*}
        \Cat D(\mathrm{Cart}(\QCoh{X}, F_*)) \xrightarrow{\simeq} \mathrm{Cart}(\Cat D(\QCoh{X}), \Cat D(F_*)).
    \end{equation*}
    If $X$ is moreover regular Noetherian, then there is a canonical t-exact equivalence of presentable stable $\infty$-categories 
    \begin{equation*}
        \Cat D(\mathrm{Frob}(\QCoh{X}, F_*)) \xrightarrow{\simeq} \mathrm{Frob}(\Cat D(\QCoh{X}), \Cat D(F_*)).
    \end{equation*}
\end{thm*}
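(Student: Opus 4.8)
The plan is to first establish an abstract statement comparing the derived category of Cartier modules relative to an \emph{exact} endofunctor with the corresponding derived lax equalizer, and then to specialize it twice: to $G = F_*$ (exact for every $\Fp$-scheme, yielding the first equivalence) and to $G = F^*$ (exact precisely when the Frobenius is flat, yielding the second). Concretely, the abstract claim I would isolate is: for $\Cat A$ a Grothendieck abelian category and $G\colon \Cat A \to \Cat A$ an exact, cocontinuous endofunctor, the functor $T_G \coloneqq \bigoplus_{n\geq 0} G^n$ is an exact, cocontinuous monad, $\mathrm{Cart}(\Cat A, G) = \mathrm{LEq}(G, \id{\Cat A}) \simeq \Mod{T_G}(\Cat A)$ (modules over the free monad on $G$) is again Grothendieck abelian, and there is a canonical t-exact equivalence
\[
  \Phi\colon \Cat D(\mathrm{Cart}(\Cat A, G)) \xrightarrow{\ \simeq\ } \mathrm{Cart}(\Cat D(\Cat A), \Cat D(G)).
\]
To build $\Phi$ I would use that, $G$ being exact, $\Cat D(G)$ is the levelwise extension of $G$, hence t-exact and cocontinuous; applying $\Cat D(-)$ to the forgetful functor $U\colon \mathrm{Cart}(\Cat A, G) \to \Cat A$ and to the tautological natural transformation $G \circ U \Rightarrow U$ and invoking the universal property of the lax equalizer \cite[Definition 2.4]{cartmod} produces $\Phi$, which by construction commutes with the forgetful functors down to $\Cat D(\Cat A)$.

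The core of the argument is to show $\Phi$ is an equivalence by exhibiting both sides as modules over one and the same monad on $\Cat D(\Cat A)$. On the left, $\Cat D(U)$ is exact, cocontinuous and conservative: if $\Cat D(U)(M) \simeq 0$ then $U H^n(M) = H^n(\Cat D(U)(M)) = 0$ for all $n$, so $H^n(M) = 0$ since $U$ is exact and conservative, whence $M \simeq 0$ by nondegeneracy of the t-structure on the derived category of a Grothendieck abelian category. The free Cartier-module functor $\mathrm{free}\colon M \mapsto \bigoplus_n G^n M$ is \emph{exact} because $G$ is, so $\Cat D(\mathrm{free})$ is again levelwise and $\Cat D(\mathrm{free}) \dashv \Cat D(U)$; Barr--Beck--Lurie then presents $\Cat D(\mathrm{Cart}(\Cat A, G))$ as modules over $\Cat D(U)\circ\Cat D(\mathrm{free}) = \Cat D(U\circ\mathrm{free}) = \Cat D(T_G)$. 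On the right, the forgetful functor $\mathrm{Cart}(\Cat D(\Cat A), \Cat D(G)) \to \Cat D(\Cat A)$ is exact, cocontinuous and conservative (a morphism of lax equalizers is an equivalence once its underlying morphism is), with left adjoint $N \mapsto \bigoplus_n \Cat D(G)^n N$, so it presents the right-hand side as modules over the free monad $\bigoplus_n \Cat D(G)^n$ on $\Cat D(G)$. Since finite composites and countable direct sums of exact functors are exact and are computed levelwise by $\Cat D(-)$, one gets $\bigoplus_n \Cat D(G)^n \simeq \Cat D\!\left(\bigoplus_n G^n\right) = \Cat D(T_G)$ as monads; chasing the unit and counit then identifies $\Phi$ with the induced equivalence of module categories.

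For t-exactness: the standard t-structure on $\Cat D(\mathrm{Cart}(\Cat A, G))$ has $\Cat D(U)$ t-exact and heart $\mathrm{Cart}(\Cat A, G)$, while on the right, because $\Cat D(G)$ is t-exact, declaring $(M, \varphi)$ connective (resp.\ coconnective) iff $M$ is defines a t-structure --- the truncations $\tau_{\geq 0} M$, $\tau_{\leq 0} M$ acquire unique compatible structure maps since $\Cat D(G)$ preserves connective and coconnective objects --- with heart again $\mathrm{Cart}(\Cat A, G)$ and the forgetful functor t-exact; as $\Phi$ intertwines the two conservative t-exact forgetful functors to $\Cat D(\Cat A)$, it is t-exact. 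This proves the abstract claim. For the Cartier statement I take $\Cat A = \QCoh X$, $G = F_*$: the absolute Frobenius $F$ is an affine morphism, so $F_*$ is exact and cocontinuous on $\QCoh X$ for \emph{every} $\Fp$-scheme $X$, and $\Cat D(F_*)$ is the derived pushforward, giving the first equivalence. For the Frobenius statement, assume $X$ regular Noetherian; then $F$ is flat by Kunz's theorem, so $F^*$ is exact (and cocontinuous, being a left adjoint). The adjunction $F^* \dashv F_*$ gives canonical identifications $\mathrm{Frob}(\QCoh X, F_*) = \mathrm{LEq}(\id{\QCoh X}, F_*) \simeq \mathrm{LEq}(F^*, \id{\QCoh X}) = \mathrm{Cart}(\QCoh X, F^*)$ of abelian categories and, using $\Cat D(F^*) \dashv \Cat D(F_*)$, $\mathrm{Frob}(\Cat D(\QCoh X), \Cat D(F_*)) \simeq \mathrm{Cart}(\Cat D(\QCoh X), \Cat D(F^*))$, compatibly with $\Cat D(-)$; applying the abstract claim with $G = F^*$ then yields the second equivalence.

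The step I expect to be the main obstacle is the monad bookkeeping in the second paragraph: verifying that deriving the free/forgetful adjunction for $\mathrm{Cart}(\Cat A, G)$ is again an adjunction \emph{and} is compatible with the monad multiplication, so that the abstractly defined $\Phi$ genuinely is the Barr--Beck--Lurie comparison equivalence rather than merely an abstract functor between two categories that happen to be equivalent. This works only because \emph{every} functor appearing ($U$, $\mathrm{free}$, $G$, $T_G$, and their composites) is exact, so that $\Cat D(-)$ of each is the levelwise extension and commutes with composition and countable coproducts. It is precisely the failure of exactness of $F^*$ over a non-regular base that obstructs this bookkeeping and forces the regularity hypothesis in the Frobenius half of the theorem --- and removing that hypothesis is exactly what the present paper sets out to do.
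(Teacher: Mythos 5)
Your proposal is essentially the approach behind the cited result: this theorem is quoted from \cite{cartmod} rather than reproved here, and the route you take --- exactness of $F_*$ for arbitrary $\Fp$-schemes, resp.\ exactness of $F^*$ via Kunz under the regular Noetherian hypothesis (trading $\Leq{\id{}}{F_*}$ for $\Leq{F^*}{\id{}}$ by adjunction), followed by exhibiting both $\Cat D(\mathrm{Cart})$ and $\mathrm{Cart}(\Cat D)$ as monadic over $\Cat D(\QCoh{X})$ for the free monad $\bigoplus_{n\ge 0} G^n$ via Barr--Beck--Lurie --- matches the strategy of the cited proof, whose ingredients (the comparison functor of \cite[Theorem 5.1]{cartmod}, the left adjoints $\bigoplus_n$ of iterates from \cite[Corollary 4.4]{cartmod}, and the induced t-structure of \cite[Proposition 3.3]{cartmod}) this paper reuses, and whose reliance on exactness of $F^*$ is exactly why the regularity hypothesis appears. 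The bookkeeping step you flag (identifying $\bigoplus_n \Cat D(G)^n$ with $\Cat D\bigl(\bigoplus_n G^n\bigr)$ as monads and $\Phi$ with the induced equivalence of module categories) is indeed where the exactness hypotheses do their work, and it is precisely the point that fails for non-exact $F^*$, as the present paper explains.
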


Note that for this theorem to hold true, it is crucial to work with the derived $\infty$-category.
In fact, the proclaimed equivalences in the theorem do not hold if we replace the derived $\infty$-category by the ordinary derived category $h\Cat D(-)$:
In this case, the target categories would not even admit a triangulated structure, as the category $\Fun{\Delta^1}{h\Cat D(\QCoh{X})}$ does not do so. 

We needed the assumption that $X$ is regular Noetherian so that 
the Frobenius $F$ is flat by Kunz' theorem \cite[Theorem 2.1]{kunz1969characterizations},
and hence $F^*$ is an exact functor. If $X$ is arbitrary,
then this is no longer the case, and the proof strategy of \cite{cartmod}
does no longer work.
The goal of this paper is a version of the above result,
where we essentially remove the regularity and Noetherian hypothesis.
For this, we need the following definition:
\begin{defn*}
    Let $X$ be a scheme. We say that $X$ is \emph{geometric} 
    if $X$ is quasi-compact and has affine diagonal (the latter is also called 
    \emph{semi-separated} in the literature).
\end{defn*}
\begin{exmpl*}
    Since closed immersions are affine, and affine morphisms are quasi-compact,
    we see that quasi-compact separated schemes are geometric,
    and geometric schemes are qcqs.
    In particular, any affine scheme is geometric.
\end{exmpl*}

\begin{introthm}[{\cref{lem:ab4:main-thm}}] \label{lem:intro:main-thm}
    Let $X$ be a geometric $\Fp$-scheme.
    Then the canonical map 
    \begin{equation*}
        \Cat D(\Frob{\QCoh{X}}{F_*}) \to \Frob{\Cat D(\QCoh{X})}{\Cat D(F_*)}
    \end{equation*}
    is a t-exact equivalence of presentable stable $\infty$-categories.
\end{introthm}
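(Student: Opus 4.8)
The plan is to realise both sides, via the Barr--Beck--Lurie monadicity theorem, as modules over one and the same monad on $\Cat D(\QCoh{X})$, and then to recognise the resulting equivalence as the comparison map of the theorem. The key observation is that the abelian category $\Frob{\QCoh{X}}{F_*}$ is the category of (left) modules, internal to $\QCoh{X}$, over the quasi-coherent sheaf of non-commutative $\mathcal{O}_X$-algebras $\Cat{A}$ which is free of countable rank as a \emph{left} $\mathcal{O}_X$-module, on generators $1, F, F^2, \dots$, with multiplication determined by $F a = a^p F$ for $a\in\mathcal{O}_X$; equivalently, $\Frob{\QCoh{X}}{F_*}$ is the category of modules over the monad $T = \bigoplus_{n\ge 0}(F^*)^n$, the ``free Frobenius module'' monad. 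Crucially, $\Cat{A}$ is flat (indeed free) as a \emph{left} $\mathcal{O}_X$-module but, by Kunz's theorem, is flat as a \emph{right} $\mathcal{O}_X$-module only when $X$ is regular; this non-flatness is exactly the obstruction that blocked the approach of \cite{cartmod}, and the proof below is organised so as to never tensor with $\Cat{A}$ on the right in a non-derived way.

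For the right-hand side: since $F\colon X\to X$ is integral, $F_*$ is exact, so $\Cat D(F_*)$ is computed degreewise, is colimit-preserving, and has colimit-preserving left adjoint $\Cat D(F^*) = \mathbb{L}F^*$. Hence $\Frob{\Cat D(\QCoh{X})}{\Cat D(F_*)} = \mathrm{LEq}(\operatorname{id},\Cat D(F_*))$ is monadic over $\Cat D(\QCoh{X})$ with monad $\bigoplus_{n\ge 0}(\mathbb{L}F^*)^n$. As pullback of quasi-coherent sheaves preserves flatness, $F^*$ carries flat $\mathcal{O}_X$-modules to flat $\mathcal{O}_X$-modules, so $(\mathbb{L}F^*)^n\simeq\mathbb{L}\bigl((F^n)^*\bigr)$ and this monad is the left-derived functor $\mathbb{L}T\simeq\Cat{A}\otimes^{\mathbb{L}}_{\mathcal{O}_X}(-)$ of $T$.

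For the left-hand side: the forgetful functor $u\colon\Cat D(\Frob{\QCoh{X}}{F_*})\to\Cat D(\QCoh{X})$ is the derived functor of the exact functor $\Frob{\QCoh{X}}{F_*}\to\QCoh{X}$; it is conservative, and it preserves all colimits because its heart-level right adjoint, coinduction along $\Cat{A}$, is exact --- this is where freeness of $\Cat{A}$ as a \emph{left} $\mathcal{O}_X$-module enters. Since $u$ also admits a left adjoint, Barr--Beck--Lurie exhibits $\Cat D(\Frob{\QCoh{X}}{F_*})$ as modules over $u\circ(\text{left adjoint of }u)$, and this monad is again $\mathbb{L}T$: the left adjoint of $u$ is the left-derived functor of the (non-exact) abelian free functor $\Cat{A}\otimes_{\mathcal{O}_X}(-)$, and because that functor sends flat $\mathcal{O}_X$-modules to flat $\mathcal{O}_X$-modules, post-composing with the exact functor $u$ computes precisely $\mathbb{L}T$. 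Thus both sides are equivalent to $\mathrm{Mod}_{\mathbb{L}T}(\Cat D(\QCoh{X}))$. One then checks that this equivalence is the comparison map of the theorem: it is colimit-preserving, $t$-exact, and the identity of $\Frob{\QCoh{X}}{F_*}$ on hearts, so it coincides with the comparison map by the universal property of the derived $\infty$-category of $\Frob{\QCoh{X}}{F_*}$. (When $X=\operatorname{Spec}R$ this is just the equivalence $\Cat D(\mathrm{Mod}^{\heartsuit}_{R[F;\phi]})\simeq\Mod{R[F;\phi]}$ for the twisted polynomial ring $R[F;\phi]$, all the derived functors above being visibly discrete.)

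The main obstacle --- and the place where the hypothesis that $X$ is geometric is really used --- is the derived-categorical bookkeeping underlying the last two steps: one needs $\Cat D(\QCoh{X})$ to be the well-behaved $\infty$-category $\Cat D_{\qc}(X)$, with its symmetric monoidal structure and (left-complete) $t$-structure, and one needs both $\mathrm{Mod}_{\mathbb{L}T}(\Cat D(\QCoh{X}))$ and $\Cat D(\Frob{\QCoh{X}}{F_*})$ to carry left-complete $t$-structures with heart $\Frob{\QCoh{X}}{F_*}$, so that the $t$-exact, identity-on-hearts comparison between them is an equivalence and not merely an equivalence on bounded-below parts. Quasi-compactness together with affine diagonal is the standard hypothesis making this work --- concretely, a geometric scheme admits a finite affine open cover whose intersections are all affine, which is what makes the relevant limit/\v{C}ech arguments converge. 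Finally, the Zariski-descent statement for $\Cat D(\Frob{\QCoh{-}}{F_*})$ advertised in the abstract drops out: $\Frob{\Cat D(\QCoh{-})}{\Cat D(F_*)}$ is a finite limit assembled from the Zariski sheaf $\Cat D(\QCoh{-})$ and the restriction-compatible endofunctor $\Cat D(F_*)$, hence is itself a Zariski sheaf, and the theorem identifies $\Cat D(\Frob{\QCoh{-}}{F_*})$ with it.
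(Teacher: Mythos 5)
Your route (exhibiting both sides as modules over one monad on $\Cat D(\QCoh{X})$ via Barr--Beck--Lurie) is genuinely different from the paper's (reduction to $\Cat D^+$, Zariski descent to affines, and Schwede--Shipley there), and in principle it is an attractive strategy; but as written it has two real gaps, both sitting exactly at the points the paper's introduction flags as delicate. First, on the left-hand side: your justification that the derived forgetful functor $u$ preserves colimits --- ``its heart-level right adjoint, coinduction along $\mathcal{A}$, is exact'' --- is false for a general geometric $X$: coinduction is built from countable products, and $\QCoh{X}$ does not satisfy $\mathrm{AB}4^*$ (only the weaker $\mathrm{AB}4^*n(\omega)$ of \cref{lem:ab4:qcoh-is-ab4n}); the conclusion is still true, but for a different reason (derived functors of colimit-preserving exact functors of Grothendieck abelian categories are colimit-preserving). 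More seriously, the existence of a left adjoint to $u$ on the \emph{unbounded} derived category --- i.e.\ the left-derived functor of the non-exact free functor $\mathcal{A}\otimes_{\mathcal{O}_X}(-)$ --- and the identification $u\circ\mathbb{L}L \simeq \bigoplus_n(\mathbb{L}F^*)^n$ \emph{as a monad} are asserted via ``flat resolutions'', but $\QCoh{X}$ has no nonzero projectives for non-affine $X$, so this requires K-flat/unbounded resolution technology and a genuine coherence argument for the multiplication; none of this is supplied, and it is precisely the content that the paper's affine Schwede--Shipley argument (comparison of endomorphism rings of compact projective generators, \cref{lem:affine:main-thm}) replaces.

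Second, the concluding identification of your abstract equivalence with the canonical comparison map is unsupported. The ``universal property of the derived $\infty$-category'' does not say that a colimit-preserving functor out of $\Cat D(\Frob{\QCoh{X}}{F_*})$ is determined by its restriction to the heart, and ``t-exact and the identity on hearts'' does not yield an equivalence even on $\Cat D^+$: one must show the map induces isomorphisms on all higher Ext/mapping spectra between heart objects, which is the actual mathematical content. Your fallback --- requiring both sides to carry left-complete t-structures so that a heart-level identification propagates --- assumes exactly what the paper cannot prove directly: left-completeness of $\Cat D(\Frob{\QCoh{X}}{F_*})$ is established in \cref{lem:ab4:frob-qcoh-is-ab4n} only \emph{after} the bounded-above equivalence \cref{lem:zariski:main-thm} is available (and left-completeness of $\Cat D(\QCoh{X})$ itself rests on Positselski's nontrivial $\mathrm{AB}4^*n(\omega)$ theorem). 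So at the final step your argument either needs an input you have not provided or silently presupposes the theorem's hardest ingredient. A repaired version of your strategy would avoid left-completeness altogether by proving that the canonical functor $\Phi$ commutes with the forgetful functors and that the mate $L_{\Cat D}\to\Phi\circ\mathbb{L}L$ is an equivalence, then invoking a relative monadicity criterion; but that mate computation is again the unproved identification above.
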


\subsection*{Outline of the proof}
We now explain how we prove \cref{lem:intro:main-thm}.
Both sides of the proclaimed equivalence are left-complete (this follows from \cref{lem:ab4:frob-qcoh-is-ab4n,lem:ab4:qcoh-is-ab4n,lem:ab4:conservative-limit-preserving-reflects-left-complete}),
hence (using \cref{lem:ab4:plus-equivalence}) it suffices to show the following theorem:
\begin{introthm}[{\cref{lem:zariski:main-thm}}] \label{lem:intro:main-thm-plus}
    Let $X$ be a geometric $\Fp$-scheme.
    Then the canonical map 
    \begin{equation*}
        \Cat D^+(\Frob{\QCoh{X}}{F_*}) \to \Frob{\Cat D^+(\QCoh{X})}{\Cat D^+(F_*)}
    \end{equation*}
    is a t-exact equivalence of stable $\infty$-categories.
    Here, $\Cat D^+$ denotes the full subcategory of bounded above objects.
\end{introthm}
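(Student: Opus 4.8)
The plan is to establish the equivalence by Zariski descent, reducing to the affine case; this is exactly where the hypothesis that $X$ is geometric is used. Both sides of the comparison are functorial in $X$, hence restrict to presheaves of stable $\infty$-categories on the small Zariski site of $X$, namely $U \mapsto \Cat D^+(\Frob{\QCoh U}{F_*})$ and $U \mapsto \Frob{\Cat D^+(\QCoh U)}{\Cat D^+(F_*)}$ (here one uses that the absolute Frobenius commutes with open immersions, so $F_*$ and $\Cat D^+(F_*)$ are compatible with restriction), and the canonical map is a natural transformation between them. Since $X$ is geometric, it admits a finite affine open cover $\{U_i\}$ all of whose finite intersections are again affine; equivalently, the \v{C}ech nerve of $\coprod_i U_i \to X$ is levelwise affine. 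Granting (i) that both presheaves satisfy \v{C}ech descent along such a cover and (ii) that the comparison map is an equivalence over every affine scheme, the map over $X$ is identified with the induced map between the totalizations of the two \v{C}ech diagrams, hence is a limit of equivalences, hence an equivalence.

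For (ii), write $X = \operatorname{Spec} R$. An object of $\Frob{\QCoh{\operatorname{Spec} R}}{F_*}$ is an $R$-module $M$ together with an $R$-linear map $F^*M \to M$, i.e.\ an additive endomorphism $\phi$ of $M$ with $\phi(rm) = r^p\phi(m)$; thus $\Frob{\QCoh{\operatorname{Spec} R}}{F_*}$ is the abelian category of left modules over the (discrete, associative) skew polynomial ring $R\{F\}$ with $Fr = r^pF$, and therefore $\Cat D^+(\Frob{\QCoh{\operatorname{Spec} R}}{F_*}) \simeq \Mod{R\{F\}}^{+}$, the bounded-above part of the $\infty$-category of $R\{F\}$-module spectra with its standard $t$-structure. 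On the other side, $\Frob{\Cat D(\QCoh{\operatorname{Spec} R})}{\Cat D(F_*)} = \mathrm{LEq}(\id{\Mod R}, \Cat D(F_*))$, whose forgetful functor to $\Mod R$ is conservative and preserves limits and colimits (because $\Cat D(F_*)$, being restriction of scalars along $r \mapsto r^p$, does), hence is monadic; the associated monad is the tensor algebra on the left adjoint $\mathbb{L}F^*$ of $\Cat D(F_*)$, which is extension of scalars along $R \hookrightarrow R\{F\}$, so this $\infty$-category is $\Mod{R\{F\}}$, t-exactly. Restricting this t-exact equivalence to bounded-above parts — which are precisely the subcategories appearing in the comparison, since $F_*$ and $\Cat D(F_*)$ are t-exact — yields the affine case, once one checks that the comparison map is compatible with these identifications. (The non-flatness of $F$ costs nothing here: $R\{F\}$ is an ordinary ring regardless of whether it is flat over $R$.)

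It remains to prove (i), which I expect to be the main obstacle. The target is the easier half: $U \mapsto \Cat D^+(\QCoh U)$ satisfies Zariski descent on geometric schemes — an affine cover with affine \v{C}ech nerve has finite cohomological dimension for quasi-coherent cohomology, so the bounded-above derived category descends along it — and $\Frob{-}{-} = \mathrm{LEq}(\id{}, -)$ is a finite limit of $\infty$-categories built functorially from $\QCoh$, hence preserves this descent. The source $U \mapsto \Cat D^+(\Frob{\QCoh U}{F_*})$ is where the real work lies, and the strategy of \cite{cartmod} — transporting descent for $\QCoh$ along the forgetful functor $\Frob{\QCoh X}{F_*} \to \QCoh X$ — breaks down precisely because its left adjoint, the free Frobenius module functor, is assembled from $F^*$, which fails to be exact when $F$ is not flat, so the forgetful functor need not preserve injectives. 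Instead I would argue directly with a \v{C}ech resolution inside the abelian category $\Frob{\QCoh X}{F_*}$: for an open immersion $j \colon U \hookrightarrow X$ with $U$ affine, $j$ is affine (this is exactly semi-separatedness: the intersection of two affine opens is affine), so $j_* \colon \QCoh U \to \QCoh X$ is exact, and since $F$ commutes with $j$ it intertwines the two Frobenius pushforwards and thus induces an exact functor on Frobenius modules; the associated \v{C}ech complex then resolves every object of $\Frob{\QCoh X}{F_*}$ (checked after applying the exact, conservative forgetful functor to $\QCoh X$, where it is the usual \v{C}ech resolution attached to a cover), and feeding this into the standard descent machinery — with the finite cohomological dimension of the cover ensuring that the relevant totalizations converge inside $\Cat D^+$ — identifies $\Cat D^+(\Frob{\QCoh X}{F_*})$ with the limit of the \v{C}ech diagram. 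Ensuring that this last argument interacts correctly with the $t$-structures, so that one never leaves the bounded-above world, is the technical heart of the proof.
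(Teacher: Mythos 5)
Your global architecture is the same as the paper's: view both sides as presheaves on the small quasi-compact Zariski site of $X$, note that the comparison is a morphism of presheaves, prove descent for both (for the target, descent of $\Cat D^+(\QCoh{-})$ together with the compatibility of $F_*$ with restriction and the fact that $\Frob{-}{-}$ is a limit construction; for the source, a \v{C}ech/descent argument whose essential input is base change for Frobenius modules along flat, here open, immersions), and use that on a geometric scheme finite intersections of affines are affine to reduce to $X=\operatorname{Spec}(R)$. Your sketch of descent for the source ("feed the \v{C}ech resolution into the standard descent machinery") is vague but is in substance what the paper does via a descent criterion for $\Cat D^+$ of a sheaf of Grothendieck abelian categories plus flat base change for Frobenius modules. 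Where you genuinely diverge is the affine case: you propose Barr--Beck monadicity of $U\colon \Frob{\Cat D(\ModDisc{R})}{\Cat D(F_*)} \to \Cat D(\ModDisc{R})$ and an identification of the resulting monad with extension of scalars along $R\to R[F]$, whereas the paper runs the Schwede--Shipley theorem, exhibiting compact (projective) generators $L_\heartsuit R$ and $L_{\Cat D}HR$ on the two sides and comparing their endomorphism ring spectra.

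Two points in your affine argument are genuine gaps rather than routine checks. First, identifying the monad $\bigsqcup_{n\ge 0}(\mathbf{L}F^*)^n$ with the free $R[F]$-module monad requires an identification of monads (equivalently, of the relevant $\mathbb{E}_1$-algebra structures), not merely of underlying functors; note also that $R$ is not central in $R[F]$, so $R[F]$ is only an algebra in $R$-bimodules and the relative module formalism has to be invoked with care. Moreover, the discreteness that makes this plausible despite the non-flatness of $F$ — namely $(\mathbf{L}F^*)^nHR\simeq HR$ because $HR$ is the tensor unit and $\mathbf{L}F^*$ is monoidal — is precisely the key computation in the paper and must be made explicit; for a general $M$ the values of the monad are not discrete, so "t-exactly" is not automatic either. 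Second, and more seriously, the theorem asserts that the \emph{canonical} map is an equivalence. Producing abstract equivalences of both sides with $\Mod{R[F]}$ does not prove this, and your parenthetical "once one checks that the comparison map is compatible with these identifications" is exactly where most of the paper's affine section lives: one shows the canonical functor is t-exact, that it induces an equivalence (in fact the identity) on hearts, and that it sends the generator on one side to the generator on the other, whence an equivalence of endomorphism spectra. Without this compatibility the descent step only shows the two sheaves have equivalent sections over each affine, not that the given natural transformation is an equivalence, so this step must be proved rather than deferred.
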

In \cref{lem:zariski:left-sep-derived-category-sheaf} and (the proof of) \cref{lem:zariski:main-thm} we show that both sides of the equivalence are Zariski sheaves in $X$,
and that there is a canonical morphism between them, cf.\ \cref{lem:zariski:canonical-map}.
At this point, we heavily use that we are working with the derived $\infty$-category instead of the ordinary derived category.
Hence, it suffices to prove the statement for $X = \mathrm{Spec}(R)$ an affine scheme.
On affines, it boils down to the Schwede--Shipley recognition theorem for stable module categories \cite[Theorem 7.1.2.1]{higheralgebra},
which is an $\infty$-categorical version of Gabriel's theorem \cite[Corollaire V.1.1]{gabriel}:
If one has a stable $\infty$-category $\Cat C$ with a compact projective generator $M$,
then $\Cat C$ is equivalent to the stable $\infty$-category of $A$-module spectra,
where $A = \End{M}$ is the endomorphism ring spectrum of $M$.
Hence, in order to show \cref{lem:intro:main-thm-plus} on affines,
it therefore suffices to show that both categories admit a compact projective generator
(for this see \cref{lem:affine:DFrob-cp-gen,lem:affine:FrobD-cp-gen}),
and that the endomorphism ring spectra of the generators are equivalent (\cref{lem:affine:eta-equivalence}).
In fact, as expected, the endomorphism ring spectra on both sides 
are equivalent to the discrete (non-commutative) ring $\op{R[F]}$ of e.g.\ \cite[Definition 3.2.1]{bockle2014cohomological}.

\begin{rmk*}
    Note that we cannot directly show that both sides of \cref{lem:intro:main-thm} 
    are Zariski sheaves, as we a priori do not know that they are left-complete.
    Indeed, for the proof of the left-completeness of $\Cat D(\Frob{\QCoh{X}}{F_*})$,
    we need to know that $\Frob{\QCoh{X}}{F_*}$ satisfies a weaker version of axiom
    $\mathrm{AB}4^*$. In order to show this, we first establish 
    \cref{lem:intro:main-thm-plus}, as this axiom only depends on the bounded above objects 
    of the derived $\infty$-category, and hence can then be deduced from the fact that already 
    $\QCoh{X}$ satisfies it.
\end{rmk*}

\subsection*{Zariski descent}
As already mentioned above, one step in the proof of the main theorem is a reduction to affine schemes
via Zariski descent. In particular, we get the following result:
\begin{introthm} [{\cref{lem:zariski:left-sep-derived-category-sheaf}}]
    Let $X$ be a geometric $\Fp$-scheme and write $X_{\zar}$ for the small (quasi-compact) Zariski site of $X$, cf.\ \cref{defn:zariski:site}.
    Then $\Cat D^+(\Frob{\QCoh{-}}{F_*})$ defines a Zariski sheaf on $X_{\zar}$ with values in $\CatInf$.
\end{introthm}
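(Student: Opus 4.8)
The plan is to exhibit the presheaf $U \mapsto \Cat D^+(\Frob{\QCoh{U}}{F_*})$ on $X_{\zar}$ as the bounded‑above derived category of a sheaf of module categories, and thereby inherit Zariski descent from the classical descent for quasi‑coherent sheaves. Concretely, $\Frob{\QCoh{U}}{F_*}$ is canonically the category of quasi‑coherent modules over the twisted polynomial sheaf of rings $\mathcal{O}_U[F]$ (with $F\cdot a = a^{p}F$), which is flat — indeed free — as a left $\mathcal{O}_U$‑module. Since $X_{\zar}$ is a quasi‑compact site whose objects, the quasi‑compact opens of the geometric scheme $X$, are again geometric, it suffices by the standard reduction of descent over a quasi‑compact site to the empty cover and to two‑element covers to check two things: that the value on $\emptyset$ is terminal, and that for all quasi‑compact opens $V,W\subseteq X$ the square with corners $\Cat D^+(\Frob{\QCoh{V\cup W}}{F_*})$, $\Cat D^+(\Frob{\QCoh{V}}{F_*})$, $\Cat D^+(\Frob{\QCoh{W}}{F_*})$, $\Cat D^+(\Frob{\QCoh{V\cap W}}{F_*})$ and the restriction functors as edges is a pullback in $\CatInf$; here $V\cap W$ is again quasi‑compact because $X$ is semi‑separated.

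On hearts this is straightforward. The functor $\QCoh{-}$ is a Zariski sheaf valued in Grothendieck abelian categories (descent for quasi‑coherent sheaves), the constructions $\Fun{\Delta^1}{-}$ and finite products preserve limits, and $\Frob{-}{-}$ is by definition a lax equalizer of the identity with an endofunctor, hence a finite limit of $\infty$‑categories. Moreover the Frobenius pushforward is compatible with Zariski restriction: the absolute Frobenius is a natural endomorphism of the identity functor on $\SchFp$ and satisfies $F_U^{-1}(V)=V$ for $V\subseteq U$ open, so the corresponding square is cartesian and $j^*F_{U*}\simeq F_{V*}j^*$ for $j\colon V\hookrightarrow U$; hence $F_{-*}$ is a natural transformation of the presheaf $\QCoh{-}$. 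Combining these, $U\mapsto\Frob{\QCoh{U}}{F_*}$ is a Zariski sheaf of Grothendieck abelian categories whose transition functors are the exact, cocontinuous restrictions $j^*$. In particular the heart‑level square is a pullback, the restrictions induce t‑exact functors on the $\Cat D^+$'s, and — writing $\Cat L$ for the pullback $\Cat D^+(\Frob{\QCoh{V}}{F_*})\times_{\Cat D^+(\Frob{\QCoh{V\cap W}}{F_*})}\Cat D^+(\Frob{\QCoh{W}}{F_*})$, endowed with the levelwise t‑structure — the canonical comparison functor $\Phi\colon\Cat D^+(\Frob{\QCoh{V\cup W}}{F_*})\to\Cat L$ is t‑exact and an equivalence on hearts.

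I would then run a dévissage to reduce the pullback claim to a statement about $\mathrm{RHom}$. Both source and target of $\Phi$ carry bounded‑above, left‑separated t‑structures, and $\Phi$ together with the restriction functors preserves the relevant truncation colimits (the $j^*$ being t‑exact and cocontinuous); so, filtering an arbitrary bounded object by shifts of objects of the heart and an arbitrary object by its connective‑cover tower, full faithfulness of $\Phi$ reduces to the assertion that $\mathrm{map}(M,N)\to\mathrm{map}(\Phi M,\Phi N)$ (mapping spaces in $\Cat D^+(\Frob{\QCoh{V\cup W}}{F_*})$ and in $\Cat L$) is an equivalence for $M,N$ in $\Frob{\QCoh{V\cup W}}{F_*}$, after which essential surjectivity is automatic (the heart of $\Cat L$, hence every bounded object, hence every object of $\Cat L$, lies in the essential image of the now fully faithful $\Phi$). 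Unwinding mapping spaces in the fiber product $\Cat L$, this says exactly that $\mathrm{RHom}$ in the category of Frobenius modules over $V\cup W$ is the fiber product of the corresponding $\mathrm{RHom}$'s over $V$ and over $W$ along that over $V\cap W$.

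This Mayer–Vietoris formula for $\mathrm{RHom}$ is the main obstacle, and the place where the positive‑characteristic subtleties are confined. The point is to compute $\mathrm{RHom}$ in $\Frob{\QCoh{U}}{F_*}$ not via free (i.e.\ $F^*$‑twisted) resolutions — which fail to be exact when the absolute Frobenius is not flat, and which is precisely where \cite{cartmod} needed $X$ regular — but sheaf‑theoretically: using the identification $\Frob{\QCoh{U}}{F_*}=\operatorname{QCoh}(\mathcal{O}_U[F])$ and that $U$ is quasi‑compact quasi‑separated (true since it is geometric), one has $\Cat D^+(\Frob{\QCoh{U}}{F_*})\simeq\Cat D^+_{\qc}$ of the ringed space $(U,\mathcal{O}_U[F])$, so $\mathrm{RHom}_{\Frob{\QCoh{U}}{F_*}}(M,N)\simeq\mathrm{R}\Gamma\bigl(U,R\mathcal{H}om_{\mathcal{O}_U[F]}(M,N)\bigr)$; the sheaf‑$\mathrm{Ext}$ $R\mathcal{H}om_{\mathcal{O}_U[F]}$ commutes with restriction to opens, being a local construction, and $\mathrm{R}\Gamma$ satisfies Mayer–Vietoris for a two‑element open cover, which yields the formula. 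The delicate ingredient is thus the identification of derived $\mathrm{Hom}$ in $\operatorname{QCoh}(\mathcal{O}_U[F])$ with derived $\mathrm{Hom}$ of arbitrary $\mathcal{O}_U[F]$‑modules — equivalently the $\Cat D^+(\QCoh{-})\simeq\Cat D^+_{\qc}$ comparison and the good behaviour of restriction, now over the twisted ring sheaf $\mathcal{O}_U[F]$ rather than $\mathcal{O}_U$ — which is exactly what quasi‑compactness and quasi‑separatedness, together with the flatness of $\mathcal{O}_U[F]$ over $\mathcal{O}_U$, provide, in place of the flatness of $F$. Once the square is a pullback, passing to limits over Čech nerves gives the asserted Zariski sheaf property; alternatively one can conclude directly, bypassing the dévissage, by noting that $\Cat D^+(\Frob{\QCoh{-}}{F_*})\simeq\Cat D^+_{\qc}(\mathcal{O}_{(-)}[F]\text{-modules})$ is a Zariski sheaf as a local‑condition subsheaf of the sheaf $U\mapsto\Cat D^+(\mathcal{O}_U[F]\text{-modules})$.
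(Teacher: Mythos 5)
Your overall architecture (sheaf property at the level of abelian categories, reduction via the quasi-compact Zariski cd-structure to Mayer--Vietoris squares, t-exact comparison functor, d\'evissage to a Mayer--Vietoris formula for $\mathrm{RHom}$ between heart objects) is reasonable, but the step you yourself flag as delicate is a genuine gap, not a citable input. You need the identification $\Cat D^+(\Frob{\QCoh{U}}{F_*}) \simeq \Cat D^+_{\qc}$ of the ringed space $(U,\mathcal{O}_U[F])$ together with $\mathrm{RHom}\simeq R\Gamma\,R\mathcal{H}om_{\mathcal{O}_U[F]}$ and its compatibility with restriction; for the non-central twisted sheaf of rings $\mathcal{O}_U[F]$ this is not an off-the-shelf theorem, and the flatness you invoke does not supply it. Indeed $\mathcal{O}_U[F]$ is free as a \emph{left} $\mathcal{O}_U$-module, but the functors that actually enter such comparison theorems (extension of scalars $\mathcal{O}_U[F]\otimes_{\mathcal{O}_U}-$, i.e.\ the left adjoint of the forgetful functor, whose underlying module is $\bigoplus_n F^{n*}(-)$) use the right module structure $\bigoplus_n F^n_*\mathcal{O}_U$, whose flatness is precisely flatness of the Frobenius --- the hypothesis the theorem is designed to avoid. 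Moreover, the known proofs of $\Cat D^+(\QCoh{-})\simeq \Cat D^+_{\qc}$-type statements on quasi-compact semi-separated schemes proceed by exactly the kind of \v{C}ech/Mayer--Vietoris induction over an affine cover that you are trying to establish, so as written your argument is either incomplete or circular at its central step.

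For comparison, the paper proves the abelian-level sheaf statement as you do (via functoriality of $\Frob{-}{-}$ and limit-preservation), and then invokes a general criterion for $\Cat D^+$ to commute with the limit over the \v{C}ech nerve of a diagram of Grothendieck abelian categories; its hypothesis is a right-adjointability (base change) statement for $Rj_*$ on Frobenius modules, which is verified by reducing along the conservative t-exact forgetful functor to ordinary flat base change for quasi-coherent sheaves, using that the forgetful functor commutes with derived pushforward (a \v{C}ech computation, which is where geometricity enters). Your Mayer--Vietoris formula for $\mathrm{RHom}$ is essentially equivalent to that input: it amounts to the fiber sequence $N \to Rj_{V*}(N|_V)\oplus Rj_{W*}(N|_W) \to Rj_{V\cap W*}(N|_{V\cap W})$ in $\Cat D^+(\Frob{\QCoh{V\cup W}}{F_*})$, and the workable route to it is through the forgetful functor and quasi-coherent base change, not through $\Cat D_{\qc}$ of the twisted ringed space.
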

As explained above, all involved $\infty$-categories are already left-complete.
Hence, one also gets via an analogous proof (using that then 
already $\Cat D(-)$ preserves certain limits as 
in the proof of \cref{lem:zariski:descent-on-derived},
see \cite[Proposition A.4.23]{heyer20246}) the following:
\begin{introthm}
    Let $X$ be a geometric $\Fp$-scheme.
    Then $\Cat D(\Frob{\QCoh{-}}{F_*})$ defines a Zariski sheaf on $X_{\zar}$ 
    with values in $\PrSt$.
\end{introthm}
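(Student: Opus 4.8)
The plan is to run the argument of \cref{lem:zariski:left-sep-derived-category-sheaf} again, but now for the full derived $\infty$-category, using that---thanks to \cref{lem:intro:main-thm} together with the $\mathrm{AB}4^*$-type results \cref{lem:ab4:frob-qcoh-is-ab4n} and \cref{lem:ab4:qcoh-is-ab4n}---all the derived $\infty$-categories that occur are left complete.

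First I would check that $U\mapsto\Cat D(\Frob{\QCoh{U}}{F_*})$ is a presheaf on $X_{\zar}$ with values in $\PrSt$: for an open immersion $j\colon V\hookrightarrow U$ of quasi-compact opens of $X$, the restriction $j^{*}\colon\QCoh{U}\to\QCoh{V}$ is exact and preserves colimits (it has the right adjoint $j_{*}$), hence induces a t-exact colimit-preserving functor on derived $\infty$-categories; since the Frobenius is compatible with open immersions, $j^{*}$ commutes with $F_{*}$, and as $\Frob{-}{F_*}=\mathrm{LEq}(\mathrm{id},F_{*})$ is a limit construction the same holds after passing to Frobenius modules. Because $\PrL\hookrightarrow\CatInf$ preserves limits, it suffices to verify the sheaf condition in $\CatInf$; and since $X$ is geometric, quasi-compact opens are closed under finite intersection and the affine opens form a basis, so it is enough to show that for every quasi-compact open $U$ with a finite affine cover and (necessarily affine) \v{C}ech nerve $U_{\bullet}$ the canonical functor
\begin{equation*}
    \Cat D(\Frob{\QCoh{U}}{F_*})\longrightarrow\lim_{[n]\in\Delta}\Cat D(\Frob{\QCoh{U_{n}}}{F_*})
\end{equation*}
is an equivalence.

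Next, since $\QCoh{-}$ satisfies Zariski descent as a presheaf of Grothendieck abelian categories and $\mathrm{LEq}(\mathrm{id},F_{*})$ commutes with limits, the abelian category $\Frob{\QCoh{U}}{F_*}$ is the limit $\lim_{[n]\in\Delta}\Frob{\QCoh{U_{n}}}{F_*}$ along the exact restriction functors. By \cref{lem:intro:main-thm} (and \cref{lem:ab4:frob-qcoh-is-ab4n}, \cref{lem:ab4:qcoh-is-ab4n}) the derived $\infty$-category of each $\Frob{\QCoh{U_{n}}}{F_*}$---and of $\Frob{\QCoh{U}}{F_*}$, as $U$ is again geometric---is left complete. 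The displayed functor is then an equivalence by the principle that, for a cosimplicial limit of Grothendieck abelian categories along exact functors whose derived $\infty$-categories are left complete, forming $\Cat D$ commutes with the limit; this is \cite[Proposition A.4.23]{heyer20246}, applied exactly as in the proof of \cref{lem:zariski:descent-on-derived}. Concretely, one bootstraps from the bounded-above case: the equivalence $\Cat D^{+}(\Frob{\QCoh{U}}{F_*})\xrightarrow{\simeq}\lim_{[n]\in\Delta}\Cat D^{+}(\Frob{\QCoh{U_{n}}}{F_*})$ of \cref{lem:zariski:left-sep-derived-category-sheaf} is compatible with the Postnikov truncation functors (restriction along open immersions being t-exact), so it restricts to an equivalence at each finite stage of the Postnikov tower; passing to the limit over these stages, interchanging it with $\lim_{\Delta}$, and invoking left completeness on each $U_{n}$ and on $U$ gives the claim. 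Finally, since $\PrSt\subseteq\PrL$ is closed under limits, the resulting equivalence is one of presentable stable $\infty$-categories, so we obtain a $\PrSt$-valued Zariski sheaf.

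The step I expect to be the main obstacle is precisely this interchange of the Postnikov limit with the cosimplicial limit $\lim_{\Delta}$, together with the identification, on each affine piece, of the limit over the Postnikov tower with the unbounded $\Cat D$---that is, genuine left completeness with sufficiently uniform control. This is not available a priori (cf.\ the remark following \cref{lem:intro:main-thm-plus}) and genuinely relies on \cref{lem:intro:main-thm} for the categories of Frobenius modules; once it is in hand, the argument is a formal upgrade of the bounded-above statement.
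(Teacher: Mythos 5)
Your proposal is correct and matches the paper's own (briefly sketched) argument: the paper likewise deduces the unbounded statement from the bounded-above sheaf property of \cref{lem:zariski:left-sep-derived-category-sheaf} together with the left-completeness results \cref{lem:ab4:qcoh-is-ab4n,lem:ab4:frob-qcoh-is-ab4n}, via the descent criterion \cite[Proposition A.4.23]{heyer20246} as in \cref{lem:zariski:descent-on-derived}. Your Postnikov-tower bootstrap (truncate, commute the two limits, use left-completeness on $U$ and on each term of the \v{C}ech nerve) is exactly the mechanism behind that step, and note that the needed left-completeness of $\Cat D(\Frob{\QCoh{U}}{F_*})$ already follows from \cref{lem:ab4:frob-qcoh-is-ab4n} (which rests on \cref{lem:zariski:main-thm}), so citing \cref{lem:intro:main-thm} is harmless but not required and introduces no circularity.
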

Essentially dualizing the proof, we also get 
\begin{introcor}
    Let $X$ be a geometric $\Fp$-scheme. 
    Then $\Cat D(\mathrm{Cart}(\QCoh{-},F_*))$ defines a Zariski sheaf on $X_{\zar}$ with values in $\PrSt$.
\end{introcor}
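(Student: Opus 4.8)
The plan is to transport the statement, via the derived equivalence for Cartier modules proved in \cite{cartmod}, to a descent statement for $\Cat D(\QCoh{-})$. Recall that for \emph{every} $\Fp$-scheme $Y$ there is a canonical t-exact equivalence $\Cat D(\mathrm{Cart}(\QCoh{Y}, F_*)) \xrightarrow{\simeq} \mathrm{Cart}(\Cat D(\QCoh{Y}), \Cat D(F_*))$, with no semi-separatedness hypothesis. First I would check that this is natural with respect to restriction along a quasi-compact open immersion $j \colon U \hookrightarrow Y$: the absolute Frobenius of $U$ is the restriction of that of $Y$, so $j^* F_* \simeq F_* j^*$, and the comparison functor is induced from the canonical functor $\QCoh{-} \to \Cat D(\QCoh{-})$ together with this identification, hence commutes with $j^*$. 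Being objectwise an equivalence, it thus upgrades to an equivalence of functors on $X_{\zar}$. It therefore suffices to prove that $U \mapsto \mathrm{Cart}(\Cat D(\QCoh{U}), \Cat D(F_*))$ is a Zariski sheaf on $X_{\zar}$ with values in $\PrSt$.

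Here I would first note that every object of the site $X_{\zar}$ is again geometric: a quasi-compact open $U \subseteq X$ is quasi-compact by assumption, and its diagonal $\Delta_U$ is the base change of the affine morphism $\Delta_X$ along the open immersion $U \times_{\Fp} U \hookrightarrow X \times_{\Fp} X$, hence affine. Consequently $\Cat D(\QCoh{U})$ is a presentable stable $\infty$-category, and $U \mapsto \Cat D(\QCoh{U})$ is a Zariski sheaf on $X_{\zar}$ with values in $\PrSt$ --- this is the usual Zariski descent for the derived $\infty$-category of quasi-coherent sheaves on a semi-separated scheme, cf.\ \cite[Proposition A.4.23]{heyer20246} and (the proof of) \cref{lem:zariski:descent-on-derived}. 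Moreover the Frobenius pushforwards assemble into a natural transformation $\Cat D(F_*) \colon \Cat D(\QCoh{-}) \Rightarrow \Cat D(\QCoh{-})$ of this sheaf into itself: naturality is the identification $j^*F_* \simeq F_*j^*$ above, and $\Cat D(F_*)$ preserves colimits because Zariski-locally $F_*$ is restriction of scalars along the $p$-power map $R \to R$; in particular $\Cat D(F_*)$ lands in $\PrL$.

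The remaining step is formal. By construction $\mathrm{Cart}(\Cat C, G) = \mathrm{LEq}(G, \id{\Cat C})$ is the pullback of $\infty$-categories
\begin{equation*}
    \mathrm{Cart}(\Cat C, G) \;\simeq\; \Cat C \times_{\Cat C \times \Cat C} \Fun{\Delta^1}{\Cat C}
\end{equation*}
formed along $(G, \id{\Cat C})$ and the pair of evaluations $(\mathrm{ev}_0, \mathrm{ev}_1)$; this pullback again lies in $\PrSt$ when $\Cat C \in \PrSt$ and $G$ preserves colimits, and it depends functorially on the pair $(\Cat C, G)$ along colimit-preserving functors that commute with $G$. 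Since $\Fun{\Delta^1}{-}$ preserves limits, $U \mapsto \Fun{\Delta^1}{\Cat D(\QCoh{U})}$ is again a $\PrSt$-valued Zariski sheaf on $X_{\zar}$, and so is $U \mapsto \Cat D(\QCoh{U}) \times \Cat D(\QCoh{U})$. As sheaves are closed under limits --- the sheaf condition for a $\PrSt$-valued presheaf can be checked after the limit-preserving inclusion $\PrSt \hookrightarrow \CatInf$, and $\CatInf$-valued sheaves are closed under limits --- the pointwise pullback $U \mapsto \mathrm{Cart}(\Cat D(\QCoh{U}), \Cat D(F_*))$ is a $\PrSt$-valued Zariski sheaf. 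Combined with the first paragraph, this proves the corollary.

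The main obstacle will be the naturality bookkeeping of the first two paragraphs --- making precise that the comparison functor of \cite{cartmod}, the pushforward $F_*$, and the free/forgetful adjunctions are all compatible with restriction along the maps of $X_{\zar}$ --- together with the invocation of Zariski descent for $\Cat D(\QCoh{-})$, which is the one place where the geometric hypothesis enters; the lax-equalizer manipulations are routine. Alternatively, one can argue entirely dually to the Frobenius case: $\mathrm{Cart}(\QCoh{-}, F_*)$ is a Zariski sheaf of Grothendieck abelian categories (again since it is a lax equalizer, hence a limit, of $\QCoh{-}$ along the exact, natural functor $F_*$), it inherits the weak $\mathrm{AB}4^*$ axiom from $\QCoh{-}$ because that axiom only constrains bounded-above objects, and then $\Cat D(-)$ commutes with the defining limit as in the proof of \cref{lem:zariski:descent-on-derived}; but routing through the unconditional Cartier equivalence of \cite{cartmod} avoids the $\mathrm{AB}4^*$ discussion and is shorter.
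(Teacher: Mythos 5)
Your argument is correct, but it takes a genuinely different route from the paper. The paper obtains this corollary by dualizing its Frobenius descent argument: one shows that $\mathrm{Cart}(\QCoh{-},F_*)$ is a Zariski sheaf of Grothendieck abelian categories, proves a flat base change statement for Cartier modules (the dual of \cref{lem:zariski:flat-base-change-frob-modules}), and then applies descent for derived $\infty$-categories as in \cref{lem:zariski:descent-on-derived}, in the unbounded form of \cite[Proposition A.4.23]{heyer20246}, using left-completeness of all the categories involved; this is essentially the alternative you sketch in your last paragraph. Your main route instead exploits that the Cartier comparison equivalence $\Cat D(\mathrm{Cart}(\QCoh{U},F_*)) \simeq \mathrm{Cart}(\Cat D(\QCoh{U}),\Cat D(F_*))$ of \cite{cartmod} holds unconditionally for every $\Fp$-scheme, transports the sheaf question across it, and then observes that the lax-equalizer presheaf $U \mapsto \mathrm{Cart}(\Cat D(\QCoh{U}),\Cat D(F_*))$ is a limit of the $\PrSt$-valued sheaves $\Cat D(\QCoh{-})$, $\Cat D(\QCoh{-})^{\times 2}$ and $\Fun{\Delta^1}{\Cat D(\QCoh{-})}$, hence a sheaf, with the geometric hypothesis entering only through descent of $\Cat D(\QCoh{-})$ itself (\cref{lem:zariski:left-sep-derived-category-qcoh-sheaf} together with \cref{lem:ab4:qcoh-is-ab4n}). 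What your route buys is that it avoids re-proving base change and the $\mathrm{AB}4^*$-type/left-completeness discussion for the abelian category of Cartier modules, reducing everything to formal closure of sheaves under limits (note this shortcut is unavailable for the Frobenius statement, since there the comparison equivalence is exactly what the paper is trying to prove); what the paper's route buys is uniformity with the Frobenius case and independence from the comparison theorem. The one piece of real work your write-up defers, and correctly flags, is making the comparison equivalence and $\Cat D(F_*)$ genuinely natural on $X_{\zar}$ as morphisms of $\PrSt$-valued presheaves; this is not automatic from objectwise commutation and requires the same kind of coherent construction the paper carries out via the functoriality of $\FrobSingle{-}$ and $\End{-}$ in \cref{lem:zariski:qcoh-frob,lem:zariski:canonical-map}, now in its dual Cartier form and for unbounded derived categories.
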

\begin{rmk*}
    One can use similar techniques to show stronger descent statements for the derived 
    $\infty$-categories of Frobenius and Cartier modules.
\end{rmk*}

\subsection*{Use of the \texorpdfstring{$\infty$}{infinity}-category of spectra}
As explained above, our proof reduces to the affine case $X = \mathrm{Spec}(R)$,
and then identifies both sides with the category of $R[F]$-module spectra,
i.e.\ working over the absolute base $\S \in \Sp$.
This is mostly for convenience of reference, as the $\infty$-categorical Schwede--Shipley 
theorem is formulated in this setting.
Since all our categories are (limits of) derived categories of rings and schemes,
they are all $\Z$-linear. Hence, one could avoid the $\infty$-category of spectra 
by proving a $\Z$-linear version of Schwede--Shipley,
which then would identify both sides of the equivalence with the category 
$\Mod{R[F]}(\Cat D(\Z))$ 
(which in turn is of course equivalent to $\Mod{R[F]}(\Sp)$,
since the $\mathbb{E}_1$-morphism $\S \to R[F]$ canonically factors through $H\Z$,
and $\Cat D(\Z) = \Mod{H\Z}(\Sp)$).

\subsection*{Notations and Conventions}
This article is written in the language of $\infty$-categories, as developed by Lurie in \cite{highertopoi,higheralgebra,sag}.
We fix four universes, small, large, huge and very huge.
By default, i.e.\ if not specified otherwise, any $\infty$-category will be large,
and any scheme will be small. We will employ the following notations:
\begin{table}[H] 
\centering
\begin{tabular}{l|l}
    $p$ &A prime number \\
    $\CatInf$ &The huge $\infty$-category of large $\infty$-categories \\
    $\CatInfHuge$ &The very huge $\infty$-category of huge $\infty$-categories \\
    $\PrL$ &The huge $\infty$-category of large presentable $\infty$-categories, \\& with small colimit-preserving functors \\
    $\PrSt$&The huge $\infty$-category of large presentable stable $\infty$-categories, \\& with small colimit-preserving functors \\
    $\Groth$ & The huge $\infty$-category of large Grothendieck abelian categories, \\& with small colimit-preserving exact functors \\
    $\Sp$ &The stable $\infty$-category of small spectra \\
    $\S \in \Sp$& The sphere spectrum 
\end{tabular}
\end{table}
In particular, note that $\Sp \in \PrSt$,
and that $\CatInf, \PrL, \PrSt, \Groth \in \CatInfHuge$.

We always use homological notation for a t-structure on a stable $\infty$-category.
If $\Cat D$ is a stable $\infty$-category with a t-structure $(\Cat D_{\ge 0}, \Cat D_{\le 0})$,
we write $H \colon \Cat D^\heartsuit \hookrightarrow \Cat D$ 
for the inclusion of the heart.

If $\Cat D$ is a stable $\infty$-category, it is in particular 
enriched in the $\infty$-category of spectra $\Sp$, cf.\ \cite[Example 7.4.14]{gepnerhaugseng}.
We write $\map{\Cat D}{-}{-} \colon \op{\Cat D} \times \Cat D \to \Sp$
for the mapping spectrum. If $L \colon \Cat D \rightleftarrows \Cat E \noloc R$
is an adjunction between stable $\infty$-categories,
we will use without mention that it upgrades to a spectrally enriched adjunction.
In particular, there is a natural equivalence $\map{\Cat E}{L-}{-} \cong \map{\Cat D}{-}{R-}$.

\subsection*{Acknowledgments}
We thank our respective advisors Tom Bachmann and Manuel Blickle for many helpful discussions. We 
thank Anton Engelmann for reading a draft of this article. We thank the anonymous referee for pointing out a mistake 
in an earlier version of this article.

The authors acknowledge support by the Deutsche Forschungsgemeinschaft 
(DFG, German Research Foundation) through the Collaborative Research 
Centre TRR 326 \textit{Geometry and Arithmetic of Uniformized 
Structures}, project number 444845124.

\section{Functoriality of Frobenius modules}

In this section, we show that the construction of (generalized) Frobenius modules from \cite[Definition 2.4]{cartmod} is functorial
in the $\infty$-category and its endofunctor. 
Moreover, we prove that the resulting functor is limit-preserving, cf.\ \cref{lem:functoriality:frob-functor-limits,lem:functoriality:frob-functor-on-groth-prst-limits}.

\begin{defn}
    We define a functor $\End{-} \colon \CatInfHuge \to \CatInfHuge$
    as the pullback in the following diagram in $\Fun{\CatInfHuge}{\CatInfHuge}$:
    \begin{center}
        \begin{tikzcd}
            \End{-} \cartsymb \ar[d, "\kappa"'] \ar[r, "U"] &\id{\CatInfHuge} \ar[d, "{(\id{}, \id{})}"] \\
            \Fun{\Delta^1}{-} \ar[r, "{(s,t)}"'] &\id{\CatInfHuge} \times \id{\CatInfHuge} \rlap{.}
        \end{tikzcd}
    \end{center}
    If $\Cat C$ is a (huge) $\infty$-category,
    we call $\End{\Cat C}$ the \emph{$\infty$-category of endomorphisms in $\Cat C$}.
\end{defn}

\begin{rmk}
    Let $\Cat C$ be a huge $\infty$-category.
    Then $\End{\Cat C} \cong \Leq{\id{\Cat C}}{\id{\Cat C}}$ is
    the lax equalizer of the identity with the identity on $\Cat C$, see e.g.\ \cite[Definition 2.1]{cartmod} for a definition.
    In other words, it fits in a pullback square in $\CatInfHuge$:
    \begin{center}
        \begin{tikzcd}
            \End{\Cat C} \cartsymb \ar[r, "{U_{\Cat C}}"] \ar[d, "{\kappa_{\Cat C}}"'] &\Cat C \ar[d, "{(\id{\Cat C}, \id{\Cat C})}"] \\
            \Fun{\Delta^1}{\Cat C} \ar[r, "{(s, t)}"'] &\Cat C \times \Cat C\rlap{.}
        \end{tikzcd}
    \end{center}
    This holds since limits of functors are computed pointwise.
    A similar statement is true for morphisms, see also the diagram in the next lemma.
\end{rmk}

\begin{lem} \label{lem:functoriality:end-limit-preserving}
    Let $F \colon \Cat C \to \Cat D$ be a functor of huge $\infty$-categories (i.e.\ a morphism in $\CatInfHuge$).
    Moreover, let $p\colon K \to \End{\Cat C}$ be a diagram in $\End{\Cat C}$ admitting a limit and assume that $F$ preserves the limit of the diagram $U_{\Cat C} \circ p$.
    Then $\End{F} \colon \End{\Cat C} \to \End{\Cat D}$ preserves the limit of the diagram $p$. 
    In particular, if $F$ preserves limits, then so does $\End{F}$.
\end{lem}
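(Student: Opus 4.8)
The plan is to reduce the statement to the pointwise behaviour of limits along the pullback square defining $\End{-}$. Recall that $\End{\Cat C}$ sits in the pullback square of the remark preceding the statement, with legs $U_{\Cat C}\colon\End{\Cat C}\to\Cat C$, $\kappa_{\Cat C}\colon\End{\Cat C}\to\Fun{\Delta^1}{\Cat C}$, $(s,t)\colon\Fun{\Delta^1}{\Cat C}\to\Cat C\times\Cat C$ and $(\id{\Cat C},\id{\Cat C})\colon\Cat C\to\Cat C\times\Cat C$; by construction $\End{F}$ is the induced map between the pullbacks for $\Cat C$ and for $\Cat D$, so there are canonical identifications $U_{\Cat D}\circ\End{F}\cong F\circ U_{\Cat C}$ and $\kappa_{\Cat D}\circ\End{F}\cong\Fun{\Delta^1}{F}\circ\kappa_{\Cat C}$, and commutativity of the square gives $s\circ\kappa_{\Cat C}\circ p = t\circ\kappa_{\Cat C}\circ p = U_{\Cat C}\circ p$ for any diagram $p\colon K\to\End{\Cat C}$.

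First I would record the relevant general fact about limits in a pullback $\mathcal E=\mathcal X\times_{\mathcal Z}\mathcal Y$ of $\infty$-categories (this is standard; it reduces via mapping-space pullbacks to the corresponding statement for spaces, cf.\ \cite{highertopoi}): if $q\colon K\to\mathcal E$ is a diagram such that $\mathcal X$, $\mathcal Y$, $\mathcal Z$ admit the limits of the three composite diagrams and both functors $\mathcal X\to\mathcal Z$, $\mathcal Y\to\mathcal Z$ preserve those limits, then $q$ admits a limit, both projections preserve it, and a cone over $q$ is a limit cone if and only if its images in $\mathcal X$ and $\mathcal Y$ are. In the square for $\End{\Cat C}$ both bottom legs preserve all limits: $(\id{\Cat C},\id{\Cat C})$ trivially, and $(s,t)$ because limits in $\Fun{\Delta^1}{\Cat C}$ are computed pointwise and $s,t$ are the two evaluation functors. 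In particular $\Fun{\Delta^1}{\Cat C}$ admits the limit of $\kappa_{\Cat C}\circ p$ as soon as $\Cat C$ admits the limit of $U_{\Cat C}\circ p$.

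Then the argument runs as follows. Let $\bar p\colon K^{\triangleleft}\to\End{\Cat C}$ be a limit cone for $p$. Since $F$ preserves the limit of $U_{\Cat C}\circ p$, that limit $L$ exists; hence so do the limits of $\kappa_{\Cat C}\circ p$ (by the last remark) and of $(\id{\Cat C},\id{\Cat C})\circ U_{\Cat C}\circ p$ (namely $(L,L)$), so the general fact applies to the square for $\End{\Cat C}$ and shows that $U_{\Cat C}\circ\bar p$ and $\kappa_{\Cat C}\circ\bar p$ are limit cones. Now $U_{\Cat D}\circ\End{F}\circ\bar p\cong F\circ U_{\Cat C}\circ\bar p$ is a limit cone by hypothesis on $F$, and $\kappa_{\Cat D}\circ\End{F}\circ\bar p\cong\Fun{\Delta^1}{F}\circ\kappa_{\Cat C}\circ\bar p$ is a limit cone in $\Fun{\Delta^1}{\Cat D}$, because $\Fun{\Delta^1}{F}$ commutes with the evaluations and $F$ preserves the two pointwise limits (both equal to the limit of $U_{\Cat C}\circ p$). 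Applying the detection direction of the general fact to the square for $\End{\Cat D}$ — whose relevant limits exist, being the $F$-images of those over $\Cat C$ — we conclude that $\End{F}\circ\bar p$ is a limit cone, i.e.\ $\End{F}$ preserves the limit of $p$. The final assertion is then immediate, since a limit-preserving $F$ preserves the limit of $U_{\Cat C}\circ p$ for every $p$ that admits a limit.

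I expect the only delicate point to be formulating the general pullback fact with the right hypotheses, in particular checking that the detection direction holds; here this is unproblematic precisely because both bottom legs preserve limits, so the two a priori distinct limits over $\mathcal Z$ are canonically identified and the pullback really computes the limit. Everything else — the identities relating $\End{F}$ to $U$, $\kappa$ and $\Fun{\Delta^1}{F}$, and the pointwise computation of limits in $\Fun{\Delta^1}{-}$ — is routine.
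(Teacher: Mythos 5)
Your argument is correct, and at bottom it follows the same strategy as the paper's proof: reduce everything to the defining pullback square for $\End{-}$ and detect the limit after projecting out. The difference is one of packaging. The paper simply cites \cite[Proposition 2.6 (b) and (c)]{cartmod} for the fact that $U_{\Cat C}$ and $U_{\Cat D}$ are conservative and limit-preserving, and then concludes from the identification $U_{\Cat D}\circ\End{F}\cong F\circ U_{\Cat C}$ given by the top quadrilateral; you instead re-derive the needed creation and detection of limits directly from the pullback description, using that both legs $(s,t)$ and $(\id{},\id{})$ into $\Cat C\times\Cat C$ preserve the relevant (pointwise) limits and that the pair of projections $(U,\kappa)$ out of the pullback is jointly conservative. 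This buys a self-contained proof, and it also makes explicit a point the paper's terse wording glosses over: to reflect the limit cone along $U_{\Cat D}$ one needs the limit of $\End{F}\circ p$ to exist in $\End{\Cat D}$ (i.e.\ creation rather than mere preservation plus conservativity), and your construction step supplies exactly this. The only mild caveat is your reading of the hypothesis ``$F$ preserves the limit of $U_{\Cat C}\circ p$'' as guaranteeing that this limit exists in $\Cat C$; that is the intended reading, and the paper's own proof uses the same existence via limit-preservation of $U_{\Cat C}$, so it is not a gap.
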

\begin{proof}
    By construction (as limits of functors are computed pointwise),
    we know that $\End{F}$ is the dashed diagonal arrow in the following diagram:
    \begin{center}
        \begin{tikzcd}
            \End{\Cat C} \ar[dr, dashed, "\End{F}"]\ar[rrr, "U_{\Cat C}"] \ar[ddd, "\kappa_{\Cat C}"'] &&& \Cat C \ar[ddd, "{(\id{}, \id{})}"] \ar[dl, "F"] \\
            &\End{\Cat D} \cartsymb \ar[d, "\kappa_{\Cat D}"'] \ar[r, "U_{\Cat D}"] &\Cat D \ar[d, "{(\id{}, \id{})}"] & \\
            &\Fun{\Delta^1}{\Cat D} \ar[r, "{(s,t)}"'] &\Cat D \times \Cat D & \\
            \Fun{\Delta^1}{\Cat C} \ar[ur, "F_*"] \ar[rrr, "{(s,t)}"'] &&& \Cat C \times \Cat C \ar[ul, "F \times F"] \rlap{.}
        \end{tikzcd}
    \end{center}
    Thus, since $U_{\Cat C}$ and $U_{\Cat D}$ are limit-preserving and conservative functors by \cite[Proposition 2.6 (b) and (c)]{cartmod},
    and since $F$ preserves the limit of the diagram $U_{\Cat C} \circ p$, the lemma follows from the commutativity of the top quadrilateral.
\end{proof}

We now construct the $\infty$-category of (generalized) Frobenius modules 
as a functor $\End{\CatInf} \to \CatInf$.
\begin{prop} \label{lem:functoriality:frob-functor}
    There is a functor $\Frob{-}{-} \colon \End{\CatInf} \to \CatInf$
    such that for every endofunctor $F \colon \Cat C \to \Cat C$,
    we have that $\Frob{\Cat C}{F}$ is the category of generalized Frobenius modules 
    from \cite[Definition 2.4]{cartmod}.

    Moreover, on morphisms $f \colon (\Cat C, F) \to (\Cat D, G)$ the functor $\FrobSingle{f}$ is given by the dashed morphism induced by the following diagram:
    \begin{center}
        \begin{tikzcd}
            \Frob{\Cat C}{F} \ar[dr, dashed] \ar[rr, "{U_{\Cat C}}"] \ar[dd, "{\kappa_{\Cat C}}"'] &&\Cat C \ar[d, "f"] \\
            & \Frob{\Cat D}{G} \cartsymb \ar[r, "{U_{\Cat D}}"]\ar[d, "{\kappa_{\Cat D}}"'] &\Cat D \ar[d, "{(\id{\Cat D},G)}"] \\
            \Fun{\Delta^1}{\Cat C} \ar[r, "f_*"']& \Fun{\Delta^1}{\Cat D} \ar[r, "{(s,t)}"'] &\Cat D \times \Cat D \rlap{.}
        \end{tikzcd}
    \end{center}
    Here, the commutativity of the outer diagram is witnessed by
    \begin{equation*}
        \id{\Cat D} fU_{\Cat C} \cong fs\kappa_{\Cat C} \cong s f_* \kappa_{\Cat C}
    \end{equation*}
    and
    \begin{equation*}
        GfU_{\Cat C} \cong fFU_{\Cat C} \cong ft\kappa_{\Cat C} \cong t f_* \kappa_{\Cat C}.
    \end{equation*}
\end{prop}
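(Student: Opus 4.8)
The plan is to realize $\Frob{-}{-}$ as a pullback of functors out of $\End{\CatInf}$, in close analogy with the definition of $\End{-}$ --- the only new ingredient being that one of the two copies of the identity is replaced by a ``tautological endofunctor''. First, observe that the structure map $\kappa_{\CatInf}\colon\End{\CatInf}\to\Fun{\Delta^1}{\CatInf}$ is, under the equivalence $\Fun{\End{\CatInf}}{\Fun{\Delta^1}{\CatInf}}\simeq\Fun{\Delta^1}{\Fun{\End{\CatInf}}{\CatInf}}$, the same datum as a natural transformation $\tau\colon U_{\CatInf}\Rightarrow U_{\CatInf}$ of functors $\End{\CatInf}\to\CatInf$: its source and target are $s\circ\kappa_{\CatInf}$ and $t\circ\kappa_{\CatInf}$, which both equal $U_{\CatInf}$ by the pullback square defining $\End{-}$. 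By construction $\tau$ evaluates at $(\Cat C,F)$ to the endofunctor $F$, and its naturality square at a morphism $f\colon(\Cat C,F)\to(\Cat D,G)$ of $\End{\CatInf}$ is exactly the homotopy $Gf\cong fF$ that is part of the datum of $f$.

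Next I would form the cospan of functors $\End{\CatInf}\to\CatInf$ given by $\Fun{\Delta^1}{-}\circ U_{\CatInf}\xrightarrow{(s,t)}(U_{\CatInf}\times U_{\CatInf})\xleftarrow{(\id{},\tau)}U_{\CatInf}$, where $U_{\CatInf}\times U_{\CatInf}$ is the (pointwise) product in $\Fun{\End{\CatInf}}{\CatInf}$, the map $(s,t)$ is the whiskering with $U_{\CatInf}$ of the natural transformation $\Fun{\Delta^1}{-}\Rightarrow(-)\times(-)$ given by restriction along the two endpoint inclusions $\Delta^0\hookrightarrow\Delta^1$, and the map $(\id{},\tau)$ is the pair of $\id{U_{\CatInf}}$ and $\tau$. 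I then define $\Frob{-}{-}$ to be the pullback of this cospan, which exists in $\Fun{\End{\CatInf}}{\CatInf}$ because $\CatInf$ has finite limits and these are computed pointwise in functor $\infty$-categories. Evaluating at an object $(\Cat C,F)$ yields the pullback of $\Fun{\Delta^1}{\Cat C}\xrightarrow{(s,t)}\Cat C\times\Cat C\xleftarrow{(\id{\Cat C},F)}\Cat C$, which is by \cite[Definition 2.4]{cartmod} precisely the lax equalizer $\Leq{\id{\Cat C}}{F}=\Frob{\Cat C}{F}$, and the two projections recover $U_{\Cat C}$ and $\kappa_{\Cat C}$. This proves the first assertion.

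For the action on a morphism $f\colon(\Cat C,F)\to(\Cat D,G)$, I would unwind the induced map of pointwise pullbacks via the universal property of $\Frob{\Cat D}{G}$. The functor $\FrobSingle{f}$ is then the map into that pullback classified by $f\circ U_{\Cat C}\colon\Frob{\Cat C}{F}\to\Cat D$ and $f_*\circ\kappa_{\Cat C}\colon\Frob{\Cat C}{F}\to\Fun{\Delta^1}{\Cat D}$ together with a homotopy between their images in $\Cat D\times\Cat D$; on the $s$-component this homotopy is $\id{\Cat D}fU_{\Cat C}\cong fs\kappa_{\Cat C}\cong sf_*\kappa_{\Cat C}$ (using $s\circ\kappa_{\Cat C}\cong U_{\Cat C}$ and naturality $s\circ f_*\cong f\circ s$), and on the $t$-component it is $GfU_{\Cat C}\cong fFU_{\Cat C}\cong ft\kappa_{\Cat C}\cong tf_*\kappa_{\Cat C}$ (using $t\circ\kappa_{\Cat C}\cong F\circ U_{\Cat C}$, the homotopy $Gf\cong fF$ from the naturality square of $\tau$ at $f$, and naturality $t\circ f_*\cong f\circ t$). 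This is exactly the dashed arrow and the two witnessing equivalences in the statement.

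The whole argument is formal, so there is no serious obstacle; the main point requiring care is the identification of $\kappa_{\CatInf}$ with the tautological transformation $\tau$ --- so that the cospan leg $(\id{},\tau)$ indeed restricts pointwise to $(\id{\Cat C},F)$ --- together with the (routine but not entirely trivial) verification that the coherences assembling the pullback and its action on morphisms are the ones claimed. The size bookkeeping is harmless: although $\Fun{\End{\CatInf}}{\CatInf}$ is very huge, only pointwise finite limits of functors are used.
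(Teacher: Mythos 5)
Your proposal is correct and follows essentially the same route as the paper: the paper likewise reinterprets $\kappa_{\CatInf}$ by adjunction as a natural transformation $U_{\CatInf} \to U_{\CatInf}$ (your $\tau$) and defines $\Frob{-}{-}$ as the pullback of the cospan $\Fun{\Delta^1}{U_{\CatInf}} \xrightarrow{(s,t)} U_{\CatInf} \times U_{\CatInf} \xleftarrow{(\id{}, \kappa_{\CatInf})} U_{\CatInf}$ in $\Fun{\End{\CatInf}}{\CatInf}$, with the pointwise evaluation and the universal property of the pullback giving the identification on objects and the stated description on morphisms.
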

\begin{proof}
    Write $U_{\CatInf} \colon \End{\CatInf} \to \CatInf$ and $\kappa_{\CatInf} \colon \End{\CatInf} \to \Fun{\Delta^1}{\CatInf}$ for the canonical forgetful functors
    (i.e.\ the projections out of the limit).
    By adjunction (and the identification of $s \circ \kappa_{\CatInf} \cong U_{\CatInf} \cong t \circ \kappa_{\CatInf}$ via the 
    defining diagram of $\End{\CatInf}$), this can be seen as a natural transformation $\kappa_{\CatInf} \colon U_{\CatInf} \to U_{\CatInf}$.
    We let $\Frob{-}{-}$ be the pullback in the diagram of functors in $\Fun{\End{\CatInf}}{\CatInf}$:
    \begin{center}
        \begin{tikzcd}
            \Frob{-}{-} \cartsymb \ar[r] \ar[d] &U_{\CatInf} \ar[d, "{(\id{}, \kappa_{\CatInf})}"] \\
            \Fun{\Delta^1}{U_{\CatInf}} \ar[r, "{(s,t)}"']& U_{\CatInf} \times U_{\CatInf} \rlap{.}
        \end{tikzcd}
    \end{center}
    If $(\Cat C, F) \in \End{\CatInf}$, then we see that (as limits of functors are computed pointwise) there is a pullback square
    \begin{center}
        \begin{tikzcd}
            \Frob{\Cat C}{F} \cartsymb \ar[r, "U_{\Cat C}"] \ar[d, "\kappa_{\Cat C}"'] &\Cat C \ar[d, "{(\id{\Cat C}, F)}"] \\
            \Fun{\Delta^1}{\Cat C} \ar[r, "{(s,t)}"'] &\Cat C \times \Cat C \rlap{,}
        \end{tikzcd}
    \end{center}
    i.e.\ we see that $\Frob{\Cat C}{F}$ agrees with the $\infty$-category 
    from \cite[Definition 2.4]{cartmod}.
    A similar argument gives us the description on morphisms.
\end{proof}

\begin{prop} \label{lem:functoriality:frob-functor-limits}
    The functor $\Frob{-}{-} \colon \End{\CatInf} \to \CatInf$ from \cref{lem:functoriality:frob-functor}
    preserves limits.
\end{prop}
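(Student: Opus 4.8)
The plan is to read off limit-preservation of $\Frob{-}{-}$ directly from its construction in \cref{lem:functoriality:frob-functor} as a pullback in the functor $\infty$-category $\Fun{\End{\CatInf}}{\CatInf}$. The key general fact I will use is that a pointwise limit of limit-preserving functors is again limit-preserving: if $\Cat I$ is a small $\infty$-category and $p \colon \Cat I \to \Fun{\Cat A}{\Cat B}$ is a diagram all of whose vertices $p(i) \colon \Cat A \to \Cat B$ preserve $\Cat J$-indexed limits, then $\lim_{\Cat I} p$ — which is computed pointwise — preserves $\Cat J$-indexed limits as well. Indeed, since limits commute with limits, for a diagram $q$ in $\Cat A$ admitting a limit one has $(\lim_{\Cat I} p)(\lim q) \simeq \lim_i p(i)(\lim q) \simeq \lim_i \lim_j p(i)(q(j)) \simeq \lim_j \lim_i p(i)(q(j)) \simeq \lim_j (\lim_{\Cat I} p)(q(j))$.

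By \cref{lem:functoriality:frob-functor}, $\Frob{-}{-}$ is the pullback in $\Fun{\End{\CatInf}}{\CatInf}$ of the cospan with vertices $U_{\CatInf}$, $U_{\CatInf} \times U_{\CatInf}$ (the functor $(\Cat C, F) \mapsto \Cat C \times \Cat C$) and $\Fun{\Delta^1}{U_{\CatInf}} = \Fun{\Delta^1}{-} \circ U_{\CatInf}$ (the functor $(\Cat C, F) \mapsto \Fun{\Delta^1}{\Cat C}$), all of them functors $\End{\CatInf} \to \CatInf$. By the previous paragraph it therefore suffices to check that each of these three functors preserves limits. For $U_{\CatInf}$ this holds because $\End{\CatInf} \cong \Leq{\id{\CatInf}}{\id{\CatInf}}$ and \cite[Proposition 2.6 (b)]{cartmod} applies (equivalently, $\End{\CatInf}$ is the pullback of $\CatInf \to \CatInf \times \CatInf \leftarrow \Fun{\Delta^1}{\CatInf}$ along limit-preserving functors, so its projections preserve limits). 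The functor $\Fun{\Delta^1}{-} \colon \CatInf \to \CatInf$ preserves limits, being right adjoint to $(-) \times \Delta^1$, so the composite $\Fun{\Delta^1}{U_{\CatInf}}$ does too. Finally $U_{\CatInf} \times U_{\CatInf}$ is the composite of $U_{\CatInf}$ with the diagonal $\CatInf \to \CatInf \times \CatInf$ and the product functor $\CatInf \times \CatInf \to \CatInf$; the diagonal preserves limits, the product functor preserves limits (it is right adjoint to the diagonal, a product being a limit), and compositions of limit-preserving functors preserve limits.

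I do not expect a genuine obstacle, as the whole argument is a formal manipulation of limits. The only points that need a little care are recording that the pullback defining $\Frob{-}{-}$ is computed pointwise in $\Fun{\End{\CatInf}}{\CatInf}$, and the correct appeal to the commutation of $\infty$-categorical limits with limits; the remainder is bookkeeping about which forgetful functors and which ambient $\infty$-categories occur.
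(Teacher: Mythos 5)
Your proposal is correct and follows essentially the same route as the paper: read off limit-preservation from the pointwise pullback defining $\Frob{-}{-}$, using that limits commute with limits, and check that the vertices of the cospan ($U_{\CatInf}$, via \cite[Proposition 2.6]{cartmod}, and $\Fun{\Delta^1}{-}$, as a right adjoint to $- \times \Delta^1$) preserve limits. The only differences are cosmetic: you spell out the interchange-of-limits argument and the (formal) case of $U_{\CatInf} \times U_{\CatInf}$, which the paper leaves implicit.
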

\begin{proof}
    Since $\Frob{-}{-}$ is itself a limit of functors,
    it suffices to show that $U_{\CatInf}$ and $\Fun{\Delta^1}{-}$ preserve limits.
    The first statement is \cite[Proposition 2.6 (c)]{cartmod},
    and the second holds e.g.\ because $\Fun{\Delta^1}{-}$ is right adjoint to the functor $- \times \Delta^1 \colon \CatInf \to \CatInf$.
\end{proof}

\begin{lem} \label{lem:functoriality:frob-functor-fully-faithful}
    Let $f \colon (\Cat C, F) \to (\Cat D, G)$ be a morphism in $\End{\CatInf}$.
    If the underlying functor $f \colon \Cat C \to \Cat D$ is fully faithful,
    then also the induced functor $\Frob{\Cat C}{F} \to \Frob{\Cat D}{G}$ is fully faithful.
\end{lem}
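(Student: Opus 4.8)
The plan is to realize $\Frob{\Cat C}{F} \to \Frob{\Cat D}{G}$ as a pullback of functors that are individually fully faithful, and then invoke the fact that fully faithful functors are stable under pullback along arbitrary functors in $\CatInf$. Recall from \cref{lem:functoriality:frob-functor} that $\Frob{\Cat C}{F}$ sits in the pullback square with corners $\Cat C$, $\Fun{\Delta^1}{\Cat C}$, and $\Cat C \times \Cat C$ (along the maps $(\id{\Cat C}, F)$ and $(s,t)$), and similarly for $\Frob{\Cat D}{G}$. The induced functor $\FrobSingle{f}$ is assembled from $f \colon \Cat C \to \Cat D$, $f_* \colon \Fun{\Delta^1}{\Cat C} \to \Fun{\Delta^1}{\Cat D}$, and $f \times f \colon \Cat C \times \Cat C \to \Cat D \times \Cat D$.

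The key steps, in order, are as follows. First, if $f$ is fully faithful, then $f \times f$ is fully faithful (mapping spaces in a product are products of mapping spaces), and $f_* = \Fun{\Delta^1}{f}$ is fully faithful (for functor categories, $\Map{\Fun{\Delta^1}{\Cat C}}{\alpha}{\beta}$ is an end over $\Delta^1$ of $\Map{\Cat C}{\alpha(-)}{\beta(-)}$, and $f$ induces an equivalence on each such mapping space, hence on the end). Second, I would express $\Frob{\Cat C}{F}$ as a single iterated pullback and compare it termwise with $\Frob{\Cat D}{G}$: more precisely, $\FrobSingle{f}$ is the map on limits induced by a natural transformation of the diagrams $\Delta^1 \times \Delta^1 \to \CatInf$ (or the relevant cospan diagram) defining the two lax equalizers, and each component of this natural transformation ($f$, $f_*$, $f \times f$) is fully faithful by the first step. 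Third, I would invoke the general principle that a limit of a diagram of fully faithful functors is fully faithful — equivalently, pullbacks of fully faithful functors are fully faithful in $\CatInf$ — to conclude. Concretely one can avoid the general "limit of fully faithful functors" statement by writing $\Frob{\Cat C}{F} \cong \Cat C \times_{\Cat C \times \Cat C} \Fun{\Delta^1}{\Cat C}$ and noting the cube comparing this to $\Frob{\Cat D}{G}$ has all three "non-corner" faces fully faithful, so the induced map on total fibers is fully faithful; alternatively, on the level of mapping spaces one checks directly that for $x, y \in \Frob{\Cat C}{F}$, the square of mapping spaces relating $\Map{\Frob{\Cat C}{F}}{x}{y}$ to $\Map{\Cat C}{U_{\Cat C}x}{U_{\Cat C}y}$, $\Map{\Fun{\Delta^1}{\Cat C}}{\kappa_{\Cat C}x}{\kappa_{\Cat C}y}$, and $\Map{\Cat C \times \Cat C}{\cdots}{\cdots}$ is a pullback (since mapping spaces commute with limits of $\infty$-categories), and $f$ induces an equivalence on three of the four corners, hence on the fourth.

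The main obstacle — really the only non-formal point — is making precise that mapping spaces in a pullback (or general limit) of $\infty$-categories are computed as the corresponding pullback (limit) of mapping spaces, so that "fully faithful" can be checked componentwise. This is standard (it follows from the fact that the Yoneda-type functor $\Cat C \mapsto \Map{\Cat C}{x}{y}$ is corepresented and limits in $\CatInf$ are detected after applying $\Map{\CatInf}{\Delta^1}{-}$-style probes, cf.\ the computation of mapping spaces in $\infty$-categorical limits in \cite{highertopoi}), but I would cite it carefully rather than reprove it. Everything else — full faithfulness of $f \times f$ and of $f_*$, and the identification of $\FrobSingle{f}$ as the induced map on the relevant limits — is immediate from \cref{lem:functoriality:frob-functor} and the pointwise computation of limits of functors.
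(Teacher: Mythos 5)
Your proposal is correct and matches the paper's argument: the paper likewise observes that the induced functor is the limit of the fully faithful functors $f$, $f \times f$, and $f_*$, and concludes that it is therefore fully faithful. Your write-up simply spells out in more detail the standard facts (full faithfulness of $f_*$ and $f\times f$, and the computation of mapping spaces in a pullback of $\infty$-categories) that the paper leaves implicit.
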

\begin{proof}
    The induced functor is the limit of the fully faithful functors 
    ${f \colon \Cat C \to \Cat D}$, $f \times f \colon \Cat C^2 \to \Cat D^2$
    and $f_* \colon \Fun{\Delta^1}{\Cat C} \to \Fun{\Delta^1}{\Cat D}$,
    and thus itself fully faithful.
\end{proof}

\begin{prop} \label{lem:functoriality:frob-functor-on-groth-prst}
    Let $\Cat C$ be either $\Groth \in \CatInfHuge$ or $\PrSt \in \CatInfHuge$.
    Write $\iota \colon \Cat C \to \CatInf$ for the obvious inclusion.
    Then there exists the dashed functor making the following diagram commute:
    \begin{center}
        \begin{tikzcd}
            \End{\CatInf} \ar[rr, "{\Frob{-}{-}}"] &&\CatInf \\
            \End{\Cat C} \ar[u, "\End{\iota}"] \ar[rr, dashed] &&\Cat C \ar[u, "\iota"'] \rlap{.}
        \end{tikzcd}
    \end{center}
    We write again $\Frob{-}{-}$ for the dashed functor.
    In other words, the functor $\Frob{-}{-} \circ \End{\iota}$ factors over $\Cat C$. 
\end{prop}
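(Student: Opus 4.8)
The plan is to exploit that $\iota\colon\Cat C\to\CatInf$ is the inclusion of a (non-full) subcategory: its objects are exactly the presentable stable (resp.\ Grothendieck abelian) $\infty$-categories, and its morphisms are exactly the functors between them that preserve small colimits (resp.\ preserve small colimits and finite limits). This class of morphisms contains all equivalences and is closed under composition, so $\iota$ is a subcategory inclusion in the sense of \cite{highertopoi}; in particular $\iota$ is conservative and induces inclusions of path components on all mapping spaces. Consequently a functor $G\colon\End{\Cat C}\to\CatInf$ admits a factorization through $\iota$---then automatically essentially unique---precisely when $G$ carries each object of $\End{\Cat C}$ into $\Cat C$ and each morphism of $\End{\Cat C}$ to a morphism in $\Cat C$. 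So I would reduce the statement to checking these two conditions for $G=\Frob{-}{-}\circ\End{\iota}$.

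For the condition on objects, fix $(\Cat A,F)\in\End{\Cat C}$, so that $\Cat A\in\Cat C$ and $F\colon\Cat A\to\Cat A$ is a morphism in $\Cat C$; I must see $\Frob{\Cat A}{F}\in\Cat C$. This is already known from \cite{cartmod}; alternatively, by \cref{lem:functoriality:frob-functor} the category $\Frob{\Cat A}{F}$ is the pullback in $\CatInf$ of $\Fun{\Delta^1}{\Cat A}\xrightarrow{(s,t)}\Cat A\times\Cat A\xleftarrow{(\id{\Cat A},F)}\Cat A$, whose three corners lie in $\Cat C$ (arrow categories and finite products of objects of $\Cat C$ again lie in $\Cat C$) and whose structure maps are morphisms in $\Cat C$ (the evaluations $s,t$ preserve small colimits and finite limits, and $(\id{\Cat A},F)$ does too since $F$ is a morphism in $\Cat C$), so that it lies in $\Cat C$ because $\Cat C$ is closed under finite limits in $\CatInf$.

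For the condition on morphisms, fix a morphism $f\colon(\Cat A,F)\to(\Cat B,G)$ of $\End{\Cat C}$, so $f\colon\Cat A\to\Cat B$ is a morphism in $\Cat C$; I must see that $\FrobSingle{f}\colon\Frob{\Cat A}{F}\to\Frob{\Cat B}{G}$ is again a morphism in $\Cat C$. Here is where the actual work lies. The forgetful functors $U_{\Cat A},U_{\Cat B}$ are conservative and preserve small limits by \cite[Proposition 2.6]{cartmod}, and they also preserve small colimits because $F$ and $G$ do---so that colimits in $\Frob{\Cat A}{F}$ and $\Frob{\Cat B}{G}$ are computed on underlying objects. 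A conservative, colimit-preserving functor out of a cocomplete $\infty$-category reflects small colimits, and dually $U_{\Cat A},U_{\Cat B}$ reflect the finite limits they preserve. Since \cref{lem:functoriality:frob-functor} provides an equivalence $U_{\Cat B}\circ\FrobSingle{f}\simeq f\circ U_{\Cat A}$, whose target preserves small colimits (resp.\ small colimits and finite limits) because both $f$ and $U_{\Cat A}$ do, the reflection property of $U_{\Cat B}$ forces $\FrobSingle{f}$ to preserve them too, i.e.\ $\FrobSingle{f}$ is a morphism in $\Cat C$. This finishes the verification, and with it the factorization.

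I expect the morphism-level step to be the only non-formal point: one must ensure the forgetful functor genuinely \emph{reflects} small colimits (which needs conservativity together with colimit-preservation and cocompleteness of the source, not conservativity alone) and that it \emph{preserves} small colimits at all, which is exactly where one uses that $F$ is a morphism in $\Cat C$ rather than merely an endofunctor of the underlying $\infty$-category. The remaining ingredients---the subcategory/factorization formalism and closure of $\Cat C$ under arrow categories and finite limits---are routine and can be cited from \cite{highertopoi} and \cite{cartmod}.
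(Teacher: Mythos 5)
Your proposal is correct and follows essentially the same route as the paper: reduce the factorization through the (non-full) subcategory $\Cat C\subset\CatInf$ to checking that $\Frob{\Cat A}{F}$ lies in $\Cat C$ (cited from \cite{cartmod}) and that $\FrobSingle{f}$ is a morphism of $\Cat C$, the latter via the commutative square $U_{\Cat B}\circ\FrobSingle{f}\simeq f\circ U_{\Cat A}$ together with conservativity and (co)limit-preservation of the forgetful functors. You merely spell out the reflection-of-(co)limits step that the paper leaves implicit in ``the commutativity of the above square immediately implies the result.''
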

\begin{proof}
    We first prove the result if $\Cat C = \Groth$.
    It suffices to show that for every $(\Cat A, F) \in \End{\Groth}$ 
    we have that $\Frob{\Cat A}{F}$ is again Grothendieck abelian,
    and that if $\phi \colon (\Cat A, F) \to (\Cat B, G) \in \End{\Groth}$ is a colimit-preserving exact functor
    commuting with $F$ and $G$, then also $\FrobSingle{\phi}$ is colimit-preserving and exact.

    The first statement is just \cite[Corollary 2.8 (i)]{cartmod}.
    For the second statement, note that by definition, there is a commutative diagram 
    \begin{center}
        \begin{tikzcd}
            \Frob{\Cat A}{F} \ar[r, "U_{\Cat A}"] \ar[d, "\FrobSingle{\phi}"'] &\Cat A \ar[d, "\phi"] \\
            \Frob{\Cat B}{G} \ar[r, "U_{\Cat B}"'] &\Cat B \rlap{.}
        \end{tikzcd}
    \end{center}
    Now note that $U_{\Cat A}$ and $U_{\Cat B}$ are conservative, colimit-preserving and exact functors by \cite[Corollary 2.8 (b) and (i)]{cartmod},
    and that $\phi$ is colimit-preserving and exact by assumption.
    Thus, the commutativity of the above square immediately implies the result.

    If $\Cat C = \PrSt$, one now uses \cite[Corollary 2.8 (e) and (f)]{cartmod}
    to see that on objects $\Frob{-}{-}$
    lands again in $\PrSt$. The argument for morphisms is analogous to the case $\Cat C = \Groth$,
    again using \cite[Corollary 2.8 (b) and (f)]{cartmod}.
\end{proof}

\begin{lem} \label{lem:functoriality:frob-functor-on-groth-prst-limits}
    Let $\Cat C$ be either $\Groth \in \CatInfHuge$ or $\PrSt \in \CatInfHuge$.
    Then the functor $\Frob{-}{-} \colon \End{\Cat C} \to \Cat C$ 
    from \cref{lem:functoriality:frob-functor-on-groth-prst}
    preserves limits.
\end{lem}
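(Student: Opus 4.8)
The plan is to reduce the claim to the already-established \cref{lem:functoriality:frob-functor-limits} by exploiting that the inclusions $\iota\colon \Cat C \to \CatInf$ (for $\Cat C$ either $\Groth$ or $\PrSt$) detect limits, in the precise sense that $\iota$ creates those limits which exist in $\Cat C$. Concretely, $\PrSt \to \CatInf$ preserves limits and is conservative (limits in $\PrSt$ are computed as in $\CatInf$, with left adjoints assembled from the transition functors; see e.g.\ \cite[Theorem 5.5.3.18]{highertopoi} together with stability), and likewise $\Groth \to \CatInf$ has the analogous property by \cite[Corollary 2.8]{cartmod} and the results cited there. Hence it suffices to check that $\Frob{-}{-}\colon \End{\Cat C} \to \Cat C$ preserves limits after postcomposing with $\iota$, and that $\End{\iota}$ likewise interacts well with limits.

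First I would record that $\End{\iota}\colon \End{\Cat C} \to \End{\CatInf}$ preserves limits: by \cref{lem:functoriality:end-limit-preserving} this follows because $\iota\colon \Cat C \to \CatInf$ preserves limits. Next, given a diagram $p\colon K \to \End{\Cat C}$ with limit $\lim p$, the diagram $\iota \circ \Frob{-}{-} \circ p$ agrees with $\Frob{-}{-} \circ \End{\iota} \circ p$ by the commutativity established in \cref{lem:functoriality:frob-functor-on-groth-prst}. Applying \cref{lem:functoriality:frob-functor-limits} to the diagram $\End{\iota} \circ p$ in $\End{\CatInf}$, the functor $\Frob{-}{-}$ sends $\lim(\End{\iota}\circ p) = \End{\iota}(\lim p)$ (using the previous paragraph) to $\lim(\Frob{-}{-}\circ\End{\iota}\circ p)$. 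Therefore $\iota(\Frob{\lim p}{})$ is the limit of $\iota\circ\Frob{-}{-}\circ p$ in $\CatInf$.

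It remains to upgrade this to a limit statement in $\Cat C$ itself. Here I would invoke that $\iota$ is conservative and that the diagram $\Frob{-}{-}\circ p$ lands in $\Cat C$, where the limit exists (because $\Cat C$ is complete: $\Groth$ and $\PrSt$ admit all small limits, and in the huge setting the relevant limits are small-indexed by convention). Since $\iota$ preserves this limit and the comparison map $\Frob{\lim p}{} \to \lim(\Frob{-}{-}\circ p)$ becomes an equivalence after applying the conservative functor $\iota$, it is already an equivalence in $\Cat C$. This finishes the argument.

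The main obstacle is the bookkeeping around universes and the exact sense in which $\iota\colon \Cat C \to \CatInf$ creates limits — one must be careful that $\PrSt$ and $\Groth$ are complete as huge $\infty$-categories and that their inclusions into $\CatInf$ both preserve and detect the relevant (small-indexed) limits, so that conservativity plus preservation yields creation. All of this is contained in \cite[Corollary 2.8]{cartmod} and standard facts about $\PrSt$ (e.g.\ \cite{highertopoi, higheralgebra}); once it is in place, the proof is a formal diagram chase combining \cref{lem:functoriality:end-limit-preserving}, \cref{lem:functoriality:frob-functor-limits}, and \cref{lem:functoriality:frob-functor-on-groth-prst}.
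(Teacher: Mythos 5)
Your argument is correct and takes essentially the same route as the paper's own proof: both reduce to \cref{lem:functoriality:frob-functor-limits} via the commutative square from \cref{lem:functoriality:frob-functor-on-groth-prst}, combined with \cref{lem:functoriality:end-limit-preserving} and the fact that $\iota \colon \Cat C \to \CatInf$ is conservative and limit-preserving. The only minor point is your sourcing of the limit-preservation of $\iota$ (the paper cites \cite[Proposition C.5.4.21]{sag} for $\Groth$, and \cite[Proposition 4.8.2.18]{higheralgebra} together with \cite[Proposition 5.5.3.13]{highertopoi} for $\PrSt$), which does not affect the structure of the argument.
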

\begin{proof}
    By definition, there is a commutative diagram 
    \begin{center}
        \begin{tikzcd}
            \End{\CatInf} \ar[rr, "{\Frob{-}{-}}"] &&\CatInf \\
            \End{\Cat C} \ar[u, "\End{\iota}"] \ar[rr, "\Frob{-}{-}"'] &&\Cat C \ar[u, "\iota"'] \rlap{.}
        \end{tikzcd}
    \end{center}
    The top functor preserves limits by \cref{lem:functoriality:frob-functor-limits}.
    If $\iota$ preserves limits, then so does $\End{\iota}$ by \cref{lem:functoriality:end-limit-preserving}.
    Thus, using the commutativity of the diagram, it is enough to show that $\iota$ is conservative and preserves limits.
    Conservativity is clear (as equivalences are always colimit-preserving and exact).
    For limits, in the case of $\Cat C = \Groth$ this is \cite[Proposition C.5.4.21]{sag},
    and if $\Cat C = \PrSt$ this follows from \cite[Proposition 4.8.2.18]{higheralgebra} and \cite[Proposition 5.5.3.13]{highertopoi}.
\end{proof}
 
\section{Frobenius modules as modules over a ring}
In this section, we show that \cref{lem:intro:main-thm} holds for affine schemes, cf.\ \cref{lem:affine:main-thm}.
To do so, we identify both relevant $\infty$-categories as $\infty$-categories of module spectra over equivalent $\mathbb{E}_1$-rings.
For this, we use the Schwede--Shipley theorem, which we recall in \cref{lem:affine:schwede-shipley}.

\begin{defn}
    Let $S$ be an associative discrete ring.
    We write $\ModDisc{S}$ for the abelian (ordinary) category of (right)
    $S$-modules.
\end{defn}

\begin{defn}
    Let $S$ be an $\mathbb{E}_1$-ring spectrum (e.g.\ a discrete ring).
    Then we write $\Mod{S}$ for the presentable stable $\infty$-category of (right) $S$-module spectra,
    cf.\ \cite[Definition 7.1.1.2]{higheralgebra}.
    See \cite[Corollary 7.1.1.5]{higheralgebra} and \cite[Corollary 4.2.3.7]{higheralgebra} for a proof that $\Mod{S}$ is stable and presentable.
\end{defn}

\begin{lem} \label{lem:affine:free-forget}
    Let $S$ be an $\mathbb{E}_1$-ring spectrum.
    There is an adjunction
    \begin{equation*}
        \free{S} \colon \Sp \rightleftarrows \Mod{S} \noloc \fgt{S}
    \end{equation*}
    such that $\fgt{S}\free{S} \cong S \otimes_{\S} -$,
    where $\S$ denotes the sphere spectrum,
    and $\free{S} \S \cong S$.
    Moreover, $\fgt{S}$ is conservative and preserves colimits.

    If $S$ is an $\mathbb{E}_\infty$-ring spectrum,
    then the right-hand side can be equipped with a symmetric monoidal 
    structure such that $\free{S}$ is symmetric monoidal.
\end{lem}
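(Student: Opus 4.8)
The statement is the standard free–forgetful adjunction for modules over an $\mathbb{E}_1$-ring, so the plan is mostly to locate the relevant facts in \cite{higheralgebra} rather than to reprove anything. First I would recall that $\Mod{S}$ is defined in \cite[Definition 7.1.1.2]{higheralgebra} as $\Mod{S}(\Sp)$, the $\infty$-category of right $S$-module objects in $\Sp$ with respect to its (symmetric monoidal) smash-product structure, whose unit is $\S$. The forgetful functor $\fgt{S} \colon \Mod{S} \to \Sp$ is the evident one. By \cite[Corollary 4.2.3.2]{higheralgebra} (or \cite[Corollary 4.2.4.8]{higheralgebra}) this forgetful functor admits a left adjoint $\free{S}$, the free module functor, and by \cite[Proposition 4.2.4.2]{higheralgebra} it is conservative and preserves all colimits (it preserves limits and sifted colimits by \cite[Corollary 4.2.3.5]{higheralgebra}, and it preserves finite coproducts since everything in sight is stable, hence all colimits). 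This already gives the second paragraph of the claim.

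Next I would identify the monad. Since $\Sp$ is generated under colimits by the unit $\S$ and both $\fgt{S}$ and $\free{S}$ preserve colimits, it suffices to evaluate $\fgt{S}\free{S}$ on $\S$: by the projection formula / the description of the free module in \cite[Remark 4.2.4.3]{higheralgebra}, $\free{S}\S \cong S$ as a right $S$-module (with $S$ acting on itself by multiplication), and $\fgt{S}$ of this is the underlying spectrum $S$. More precisely, the monad $\fgt{S}\free{S}$ on $\Sp$ is, by \cite[Theorem 4.2.4.4]{higheralgebra} together with the description of relative tensor products, naturally equivalent to $S \otimes_{\S} (-)$; equivalently one cites the fact that $\free{S}(M) \cong S \otimes_\S M$ with its canonical right $S$-module structure, and applies $\fgt{S}$. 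Unwinding at $\S$ recovers $\free{S}\S \cong S$.

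For the final sentence, if $S$ is an $\mathbb{E}_\infty$-ring then the smash-product monoidal structure on $\Sp$ lifts to a symmetric monoidal structure on $\Mod{S}$ with tensor product $\otimes_S$, by \cite[Theorem 4.5.2.1]{higheralgebra}, and the forgetful functor $\fgt{S}$ is lax symmetric monoidal while its left adjoint $\free{S} = S \otimes_\S (-)$ is symmetric monoidal by \cite[Theorem 4.5.3.1]{higheralgebra} (base change along the $\mathbb{E}_\infty$-map $\S \to S$). That is exactly the asserted structure.

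There is essentially no obstacle here: every assertion is a direct citation from \cite{higheralgebra}, and the only mild bookkeeping is checking that $\fgt{S}$ preserves \emph{all} colimits (not just sifted ones and finite coproducts separately), which is immediate since $\Mod{S}$ and $\Sp$ are stable and a functor between stable $\infty$-categories preserving sifted colimits and finite coproducts preserves all colimits.
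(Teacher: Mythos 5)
Your proposal is correct and takes essentially the same approach as the paper: both simply assemble the statement from Higher Algebra (the free--forgetful adjunction and the identification $\fgt{S}\free{S}\cong S\otimes_{\S}-$ from HA 4.2.4.8, conservativity and colimit-preservation of $\fgt{S}$, and HA 4.5.2.1 for the symmetric monoidal structure when $S$ is $\mathbb{E}_\infty$). The only deviations are cosmetic---you obtain $\free{S}\S\cong S$ directly from the free-module formula and the symmetric monoidality of $\free{S}$ via base change (HA 4.5.3.1), while the paper routes both through HA Corollary 5.1.2.6 and proves colimit-preservation by exhibiting a right adjoint---though you should double-check a couple of your HA item numbers (conservativity of the forgetful functor is HA Corollary 4.2.3.2, not Proposition 4.2.4.2).
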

\begin{proof}
    That there is an adjunction with the required description 
    of the composition was shown in \cite[Corollary 4.2.4.8]{higheralgebra}.
    We will defer the proof that $\free{S} \S \cong S$ to the end.
    That $\fgt{S}$ is conservative can be seen as follows: By stability,
    we only have to see that if $\fgt{S}(M) \cong 0$, then $M \cong 0$.
    This is true as there is only the trivial $S$-module structure on $0$.
    In order to see that $\fgt{S}$ also preserves colimits,
    it suffices to show that it has a right adjoint,
    which follows from \cite[Remark 4.2.3.8]{higheralgebra},
    since the tensor product on $\Sp$ commutes with colimits in both variables.

    If $S$ is an $\mathbb{E}_\infty$-ring spectrum,
    then we equip $\Mod{S}$ with the symmetric monoidal structure
    from \cite[Theorem 4.5.2.1]{higheralgebra} (here we use that 
    the tensor product in $\Sp$ commutes with colimits in each variable).
    It was shown essentially in \cite[Corollary 5.1.2.6]{higheralgebra},
    that the unit map $\S \to S$ induces a symmetric monoidal functor 
    $\Sp \to \Mod{S}$. As this functor is given by base change, it is clear that this functor 
    is equivalent to $\free{S}$, which gives us a symmetric monoidal structure on the latter.

    Similarly, if $S$ was only assumed to be $\mathbb{E}_1$,
    then \cite[Corollary 5.1.2.6]{higheralgebra} shows that $\Sp \to \Mod{S}$
    is $\mathbb{E}_0$-monoidal, i.e.\ preserves the unit object.
    This is exactly the claim that $\free{S} \S \cong S$.
\end{proof}

If $S$ is a connective $\mathbb{E}_1$-ring spectrum,
then we equip $\Mod{S}$ with the accessible t-structure from \cite[Proposition 7.1.1.13]{higheralgebra}.
The following is immediate from the definition:
\begin{lem} \label{lem:affine:forget-t-exact}
    Let $S$ be a connective $\mathbb{E}_1$-ring spectrum.
    Then the forgetful functor $\fgt{S} \colon \Mod{S} \to \Sp$
    is t-exact.
\end{lem}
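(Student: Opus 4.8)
The plan is to unwind the definition of the accessible t-structure on $\Mod{S}$ and to observe that both of its defining conditions are detected on underlying spectra. First I would recall from \cite[Proposition 7.1.1.13]{higheralgebra} that, since $S$ is connective, this t-structure is characterized by $M \in \Mod{S}_{\ge 0}$ iff $\pi_n M \cong 0$ for all $n < 0$, and $M \in \Mod{S}_{\le 0}$ iff $\pi_n M \cong 0$ for all $n > 0$, where the homotopy groups of an $S$-module are by construction those of its underlying spectrum $\fgt{S}(M)$. Since the t-structure on $\Sp$ is cut out by the very same vanishing conditions, it then follows at once that $\fgt{S}$ carries $\Mod{S}_{\ge 0}$ into $\Sp_{\ge 0}$ and $\Mod{S}_{\le 0}$ into $\Sp_{\le 0}$, which is precisely the assertion that $\fgt{S}$ is t-exact.

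Should one prefer to avoid referring to homotopy groups, an alternative route is available: $\fgt{S}(\Mod{S}_{\ge 0}) \subseteq \Sp_{\ge 0}$ is already part of the characterization of $\Mod{S}_{\ge 0}$ in \loccit, giving right t-exactness, and left t-exactness can be deduced from the adjunction $\free{S} \dashv \fgt{S}$ of \cref{lem:affine:free-forget}: as $\fgt{S}\free{S} \cong S \otimes_{\S} -$ with $S$ connective, and connectivity of an $S$-module can be tested after applying the conservative exact functor $\fgt{S}$, the functor $\free{S}$ is right t-exact, and hence its right adjoint $\fgt{S}$ is left t-exact. Either way there is no real obstacle; the only thing to watch is consistency of the homological indexing conventions between this paper and \cite{higheralgebra}.
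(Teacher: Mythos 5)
Your first argument is exactly the paper's reasoning: the lemma is stated as immediate from the definition, since for connective $S$ the t-structure of \cite[Proposition 7.1.1.13]{higheralgebra} is defined by vanishing of homotopy groups of the underlying spectrum, so $\fgt{S}$ preserves both halves by construction. The alternative route via the adjunction in your second paragraph is also fine but not needed; the proposal is correct and matches the paper.
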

 
\begin{lem} \label{lem:affine:maps-from-unit}
    Let $S$ be an $\mathbb{E}_1$-ring spectrum.
    Then there is an equivalence of functors
    \begin{equation*}
        \map{\Mod{S}}{S}{-} \cong \fgt{S}.
    \end{equation*}
\end{lem}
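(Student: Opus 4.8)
The plan is to obtain this as an instance of the spectrally enriched adjunction $\free{S} \dashv \fgt{S}$ from \cref{lem:affine:free-forget}. As recalled in the conventions, every adjunction between stable $\infty$-categories upgrades to a $\Sp$-enriched adjunction, so $\free{S} \dashv \fgt{S}$ yields a natural equivalence
\[
    \map{\Mod{S}}{\free{S}(-)}{-}\;\cong\;\map{\Sp}{-}{\fgt{S}(-)}
\]
of functors $\op{\Sp}\times\Mod{S}\to\Sp$.

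First I would plug in the sphere spectrum $\S$ in the first variable. By \cref{lem:affine:free-forget} we have $\free{S}(\S)\cong S$ as (right) $S$-module spectra, so the left-hand side becomes $\map{\Mod{S}}{S}{-}$. On the right-hand side, $\map{\Sp}{\S}{-}$ is equivalent to $\id{\Sp}$, since $\S$ is the monoidal unit of $\Sp$ and $\Sp$ is enriched over itself via its internal hom, whose value out of the unit is the identity. Composing the two identifications gives the asserted equivalence $\map{\Mod{S}}{S}{-}\cong\fgt{S}$, natural in the $\Mod{S}$-variable.

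The only point requiring care is bookkeeping rather than substance: one should use that the equivalence $\free{S}(\S)\cong S$ invoked above is precisely the one produced in the proof of \cref{lem:affine:free-forget}, namely the $\mathbb{E}_0$-monoidality of $\free{S}$ (i.e.\ that $\free{S}$ sends the unit object $\S$ of $\Sp$ to $S$), so that the chain of equivalences is compatible with the enriched structure maps and hence natural. As a consistency check, on homotopy groups this specializes to the familiar fact that $S$ corepresents $\pi_{*}$ on $\Mod{S}$, i.e.\ $\pi_{n}\map{\Mod{S}}{S}{M}\cong\pi_{n}M$. I do not anticipate any real obstacle here.
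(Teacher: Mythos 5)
Your argument is correct and is essentially the paper's own proof: both use the spectrally enriched adjunction $\free{S} \dashv \fgt{S}$, the identification $\free{S}\S \cong S$ from \cref{lem:affine:free-forget}, and the fact that $\S$ is the unit with $\map{\Sp}{\S}{-} \cong \id{\Sp}$. No substantive difference.
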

\begin{proof}
    By \cref{lem:affine:free-forget} we have $\free{S} \S \cong S$.
    Thus, by adjunction we get
    \begin{equation*}
        \map{\Mod{S}}{S}{-} \cong \map{\Mod{S}}{\free{S} \S}{-} \cong \map{\Sp}{\S}{\fgt{S} -} \cong \fgt{S},
    \end{equation*}
    where we used in the last equivalence that $\S$ is the unit and $\map{\Sp}{-}{-}$ the internal mapping object
    of the category of spectra.
\end{proof}

Recall the following lemma:
\begin{lem} [{\cite[Theorem 7.1.2.13]{higheralgebra}}] \label{lem:affine:derived-cat-of-modules}
    Let $R$ be a commutative discrete ring.
    Then there is a symmetric monoidal equivalence
    \begin{equation*}
        \Cat{D}(\ModDisc{R}) \cong \Mod{R}.
    \end{equation*}
\end{lem}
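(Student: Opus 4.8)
The plan is to apply the Schwede--Shipley recognition theorem (\cite[Theorem 7.1.2.1]{higheralgebra}; see \cref{lem:affine:schwede-shipley}) to the presentable stable $\infty$-category $\Cat{D}(\ModDisc{R})$, taking $R$ itself --- viewed as the complex concentrated in degree $0$ --- as the candidate compact generator. Recall that $\Cat{D}(\ModDisc{R})$ carries a symmetric monoidal structure given by the derived tensor product $-\otimes_R^{\mathbb{L}}-$, with unit $R$, whose tensor bifunctor preserves colimits separately in each variable (it is computed by resolving one factor by a complex of projectives). Thus $R$ is not merely a compact generator; being the monoidal unit, it is precisely the structure that will allow the eventual equivalence to be promoted to a symmetric monoidal one.

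First I would check that $R$ is a compact generator and spell out its relation to the t-structure. Since $R$ is projective in $\ModDisc{R}$, standard homological algebra gives that the mapping-spectrum functor $\map{\Cat{D}(\ModDisc{R})}{R}{-}\colon \Cat{D}(\ModDisc{R}) \to \Sp$ has $\pi_n$ equal to the $n$-th homology group $\mathrm{H}_n(C)$ of the complex $C$; in other words this is the ``underlying complex'' functor. It is conservative, because $R$ generates the abelian category $\ModDisc{R}$, so a complex with vanishing homology in every degree is zero; it preserves all colimits, because filtered colimits and coproducts are computed degreewise and homology commutes with them (concretely, $R$ is finitely generated projective, so $\operatorname{Hom}$ out of it commutes with filtered colimits and with coproducts); and it is t-exact, so in particular it sends connective objects to connective spectra, which is the statement that $R$ is projective. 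In particular $R$ is a compact generator of $\Cat{D}(\ModDisc{R})$.

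Then I would invoke Schwede--Shipley to obtain an equivalence $\Cat{D}(\ModDisc{R}) \simeq \Mod{E}$ of stable $\infty$-categories, where $E \coloneqq \map{\Cat{D}(\ModDisc{R})}{R}{R}$ is the endomorphism $\mathbb{E}_1$-ring spectrum of $R$; since $R$ is moreover the unit of the symmetric monoidal structure, $E$ upgrades to an $\mathbb{E}_\infty$-ring and the equivalence to a symmetric monoidal equivalence. Finally I would identify $E$ with $HR$: by the computation above $\pi_n E$ is an $\operatorname{Ext}$-group of $R$ with itself, which vanishes for $n \neq 0$ and is $R$ (with its ring structure) for $n = 0$, because $R$ is free of rank one over itself. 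A connective $\mathbb{E}_\infty$-ring whose homotopy is concentrated in degree $0$ is canonically the Eilenberg--MacLane ring spectrum of its $\pi_0$, so $E \simeq HR$, and under the conventions of this paper $\Mod{HR} = \Mod{R}$. This completes the plan.

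I expect the symmetric monoidal refinement to be the main obstacle. The underlying equivalence of $\infty$-categories is an immediate application of Schwede--Shipley once the compact generator is in hand; promoting it to a symmetric monoidal equivalence requires the symmetric monoidal form of the recognition theorem --- that a presentably symmetric monoidal stable $\infty$-category whose unit is a compact generator is symmetric-monoidally equivalent to the $\infty$-category of right modules over the $\mathbb{E}_\infty$-endomorphism ring of its unit --- together with the rigidity statement that $E$ is equivalent to $HR$ as an $\mathbb{E}_\infty$-ring, not merely as a spectrum or as an $\mathbb{E}_1$-ring. Everything else, namely the identification of $\map{\Cat{D}(\ModDisc{R})}{R}{-}$ with the underlying-complex functor, compactness, generation, and the $\operatorname{Ext}$ computation, is routine.
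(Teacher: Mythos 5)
The paper gives no argument for this lemma at all: it is imported verbatim from \cite[Theorem 7.1.2.13]{higheralgebra}, so the only comparison to make is between your sketch and the cited proof. Your outline is correct and is essentially the standard argument underlying Lurie's theorem, and it is the same Schwede--Shipley strategy the paper itself deploys later for \cref{lem:affine:main-thm}: $R$ is a compact generator of $\Cat D(\ModDisc{R})$ whose mapping-spectrum functor is the underlying-complex functor, \cref{lem:affine:schwede-shipley} then yields $\Cat D(\ModDisc{R}) \simeq \Mod{\End{R}}$, and $\End{R}$ is discrete with $\pi_0 \cong R$, hence is the Eilenberg--MacLane ring spectrum of $R$. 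The two inputs you flag as the real content are indeed the crux, but both are standard and citable: the symmetric monoidal refinement of the recognition theorem (a presentably symmetric monoidal stable $\infty$-category whose unit is a compact generator is symmetric-monoidally equivalent to modules over the $\mathbb{E}_\infty$-endomorphism ring of the unit, obtained from \cite[Section 4.8.5]{higheralgebra} via the lax symmetric monoidal right adjoint of the unit functor $\Sp \to \Cat D(\ModDisc{R})$), and the identification of discrete $\mathbb{E}_\infty$-rings with ordinary commutative rings. One point to keep explicit, which your setup already handles correctly, is that the symmetric monoidal structure on $\Cat D(\ModDisc{R})$ must be constructed independently of the statement being proved (e.g.\ from the symmetric monoidal model structure on chain complexes, as in \cite[Section 7.1.2]{higheralgebra}); otherwise the argument would be circular. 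What your route buys over the paper's bare citation is a self-contained proof in the same style as the rest of the section, at the cost of invoking these two additional results.
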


\begin{lem}
    There is a functor
    \begin{equation*}
        \Theta \colon \Alg{\mathbb{E}_1}(\Sp) \to {(\PrSt)}_{\Mod{\S}/}
    \end{equation*}
    that sends an $\mathbb{E}_1$-algebra $R$ to the functor $\Mod{\S} \to \Mod{R}$ given by base change.
\end{lem}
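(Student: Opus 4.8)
The plan is to construct $\Theta$ in two steps. First I would build an auxiliary functor $\Theta_0 \colon \Alg{\mathbb{E}_1}(\Sp) \to \PrSt$ with $\Theta_0(R) = \Mod{R}$, sending an algebra morphism $R \to R'$ to the base-change functor $M \mapsto M \otimes_R R'$, $\Mod{R} \to \Mod{R'}$. Then I would lift $\Theta_0$ along the projection $(\PrSt)_{\Mod{\S}/} \to \PrSt$, using that the sphere $\S$ is the initial $\mathbb{E}_1$-ring, so that each $R$ carries an essentially unique map $\S \to R$ and hence a canonical functor $\Mod{\S} \to \Mod{R}$.

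For the first step I would use the relative module construction of \cite[Section 4.8]{higheralgebra}: let $\mathrm{Mod}(\Sp) \to \Alg{\mathbb{E}_1}(\Sp)$ be the forgetful functor from the $\infty$-category of pairs $(R,M)$ with $R$ an $\mathbb{E}_1$-ring and $M \in \Mod{R}$. Since $\Sp$ is presentable and $\otimes$ preserves colimits separately in each variable, this functor is a presentable fibration in the sense of \cite[Section 5.5.3]{highertopoi}: it is coCartesian (a coCartesian edge over $R \to R'$ exhibits the target as the relative tensor product $M \otimes_R R'$, which exists by \cite[Section 4.4]{higheralgebra}), it is Cartesian (a Cartesian edge over $R \to R'$ being restriction of scalars), and its fibers $\Mod{R}$ are presentable by \cite[Corollary 4.2.3.7]{higheralgebra}. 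Straightening in the coCartesian direction therefore produces a functor $\Alg{\mathbb{E}_1}(\Sp) \to \PrL$: the coCartesian pushforward along $R \to R'$ is $- \otimes_R R'$, which is left adjoint to restriction of scalars and hence colimit-preserving. Finally, each $\Mod{R}$ is stable by \cite[Corollary 7.1.1.5]{higheralgebra}, and $\PrSt$ is by definition the full subcategory of $\PrL$ spanned by the stable objects (every colimit-preserving functor of stable $\infty$-categories being automatically exact); so the functor factors through $\PrSt$, which gives $\Theta_0$.

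For the second step I would use that the monoidal unit $\S$ is an initial object of $\Alg{\mathbb{E}_1}(\Sp)$, so that the projection $\Alg{\mathbb{E}_1}(\Sp)_{\S/} \to \Alg{\mathbb{E}_1}(\Sp)$ is a trivial fibration; picking a section and composing with the inclusion $\Alg{\mathbb{E}_1}(\Sp)_{\S/} \hookrightarrow \Fun{\Delta^1}{\Alg{\mathbb{E}_1}(\Sp)}$ yields a functor $R \mapsto (\S \to R)$ whose composite with evaluation at $0 \in \Delta^1$ is constant at $\S$. Applying $\Theta_0$ levelwise gives a functor $\Alg{\mathbb{E}_1}(\Sp) \to \Fun{\Delta^1}{\PrSt}$, $R \mapsto (\Theta_0(\S) \to \Theta_0(R))$, whose composite with evaluation at $0$ is constant at $\Theta_0(\S) = \Mod{\S}$. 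It therefore factors through $(\PrSt)_{\Mod{\S}/}$, and this factorization is the desired $\Theta$: on objects it sends $R$ to the base-change functor $\Mod{\S} \to \Mod{R}$ associated to $\S \to R$, as required.

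I expect the only non-formal input to be the first step, namely recognizing $\mathrm{Mod}(\Sp) \to \Alg{\mathbb{E}_1}(\Sp)$ as a presentable fibration so that its coCartesian straightening lands in $\PrL$ (and then in $\PrSt$ by stability of the fibers); everything else — the initiality of $\S$ and the passage to the slice — is bookkeeping with under- and arrow categories. Alternatively, one could shortcut the first step by citing a packaged form of the functor $R \mapsto \Mod{R}$, valued in $\PrL$, directly from \cite[Section 4.8]{higheralgebra}.
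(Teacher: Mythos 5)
Your construction is correct, but it takes a genuinely different route from the paper: the paper proves this lemma by a one-line citation, namely the discussion immediately preceding \cite[Corollary 4.8.5.13]{higheralgebra} applied with $\Cat C = \Sp$ together with $\Sp \cong \Mod{\S}$, whereas you rebuild the functor by hand — straightening the presentable fibration $\mathrm{Mod}(\Sp) \to \Alg{\mathbb{E}_1}(\Sp)$ in the coCartesian direction to get $\Theta_0 \colon \Alg{\mathbb{E}_1}(\Sp) \to \PrSt$, and then passing to the undercategory via the initiality of $\S$ and the identification of the slice $(\PrSt)_{\Mod{\S}/}$ with the fiber of $\mathrm{ev}_0$ on $\Fun{\Delta^1}{\PrSt}$. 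All the ingredients you invoke (relative tensor products in $\Sp$, presentability and stability of the fibers, colimit-preservation of base change, the trivial-fibration argument over an initial object) are available and the factorizations you claim do go through, so the argument is sound; it is essentially a rederivation of what Lurie packages. What the paper's citation buys, and what your route slightly obscures, is that the very next statement (\cref{lem:affine:schwede-shipley}) quotes fully faithfulness, the existence of the right adjoint, and its description via endomorphism objects for \emph{Lurie's} $\Theta$ specifically (as a restriction of $\widehat{\Theta}$ from \cite[Theorem 4.8.5.11]{higheralgebra}); with a hand-built $\Theta$ you would additionally need to identify your functor with Lurie's (true, and essentially the shortcut you mention at the end yourself) or reprove those properties. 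So for the purposes of this paper the citation is the more economical choice, while your version is a useful self-contained unpacking of the same construction.
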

\begin{proof}
    See the discussion right before \cite[Corollary 4.8.5.13]{higheralgebra} with $\Cat C = \Sp$,
    together with the canonical equivalence $\Sp \cong \Mod{\S}$.
\end{proof}

\begin{rmk}
    An object $F \colon \Mod{\S} \to \Cat C$ in ${(\PrSt)}_{\Mod{\S}/}$
    is the same data as a presentable stable $\infty$-category $\Cat C$, together with an object $M = F(\S) \in \Cat C$.
    This is true because $\Mod{\S} \cong \Sp$, and a colimit-preserving exact functor $\Sp \to \Cat C$
    is just the data of an object (the image of $\S$).

    If $R$ is an $\mathbb{E}_1$-algebra, then $\Theta(R) \cong (\Mod{R}, R)$,
    as $\S \otimes_{\S} R \cong R$.
\end{rmk}

This functor has the following property:
\begin{prop}[Schwede--Shipley theorem] \label{lem:affine:schwede-shipley}
    The functor $\Theta$ is fully faithful and has a right adjoint.
    An object $(\Cat C, M) \in {(\PrSt)}_{\Mod{\S}/}$ is in the essential image 
    if and only if $M$ is a stable compact generator of $\Cat C$ in the sense of \cref{defn:cpgen:stable-generator}.
    Moreover, the right adjoint of $\Theta$ is given by the functor
    that sends $(\Cat C, M)$ to $\End{M}$, 
    where $\End{M}$ denotes the endomorphism spectrum of $M$ equipped with its $\mathbb{E}_1$-ring structure.
\end{prop}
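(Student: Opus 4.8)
The plan is to produce the right adjoint explicitly and then read everything off from its unit and counit. First I would define the candidate right adjoint $\Phi \colon (\PrSt)_{\Mod{\S}/} \to \Alg{\mathbb{E}_1}(\Sp)$ by $\Phi(\Cat C, M) = \End{M}$: a pair $(\Cat C, M)$ in $(\PrSt)_{\Mod{\S}/}$ is the same datum as a presentable stable $\infty$-category $\Cat C$ together with the object $M \in \Cat C$ that is the image of $\S$ under the structure functor $\Sp \simeq \Mod{\S} \to \Cat C$, and the spectral enrichment of $\Cat C$ equips $\End{M} \coloneqq \map{\Cat C}{M}{M}$ with the structure of an $\mathbb{E}_1$-ring (its endomorphism algebra, in the sense of \cite[Section 4.7.1]{higheralgebra}); a morphism in $(\PrSt)_{\Mod{\S}/}$ preserves this structure because colimit-preserving functors of stable $\infty$-categories are canonically spectrally enriched, so $\Phi$ is well-defined.

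Next I would identify the hom-spaces. A morphism $\Theta(R) \to (\Cat C, M)$ is a colimit-preserving exact functor $\Mod{R} \to \Cat C$ sending the distinguished module $R$ to $M$, compatibly with the functors out of $\Sp$; by the universal property of module $\infty$-categories (the $\mathbb{E}_1$-version of Morita theory, as in the discussion around \cite[Corollary 4.8.5.13]{higheralgebra}) such functors are classified by $R$-module structures on objects of $\Cat C$, and fixing the underlying object to be $M$ this is, by \cite[Section 4.7.1]{higheralgebra}, exactly the datum of a map of $\mathbb{E}_1$-rings $R \to \End{M}$. This gives a natural equivalence $\Map{(\PrSt)_{\Mod{\S}/}}{\Theta(R)}{(\Cat C, M)} \simeq \Map{\Alg{\mathbb{E}_1}(\Sp)}{R}{\End{M}}$, i.e.\ the adjunction $\Theta \dashv \Phi$ with $\Phi = \End{-}$. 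Unwinding, the counit at $(\Cat C, M)$ is the left adjoint $\lambda$ of the functor $\rho_M \coloneqq \map{\Cat C}{M}{-} \colon \Cat C \to \Mod{\End{M}}$ — which exists by the adjoint functor theorem, since $\rho_M$ is accessible and preserves limits — and one has $\lambda(\End{M}) \cong M$ by \cref{lem:affine:maps-from-unit} together with the Yoneda lemma. The unit at $R$ is a map $R \to \Phi\Theta(R) = \map{\Mod{R}}{R}{R}$; its underlying map of spectra is an equivalence by \cref{lem:affine:maps-from-unit} (which gives $\map{\Mod{R}}{R}{R} \simeq \fgt{R}(R) \simeq R$), and it is an equivalence of $\mathbb{E}_1$-rings because the endomorphism algebra of the free rank-one $R$-module recovers $R$. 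Hence the unit is an equivalence, so $\Theta$ is fully faithful.

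Finally, since $\Theta$ is a fully faithful left adjoint, a pair $(\Cat C, M)$ lies in its essential image if and only if the counit $\lambda \colon \Theta\Phi(\Cat C, M) \to (\Cat C, M)$ is an equivalence, equivalently (as $\lambda$ is colimit-preserving with $\lambda(\End{M}) \cong M$) if and only if its right adjoint $\rho_M = \map{\Cat C}{M}{-}$ is an equivalence. Now $\rho_M$ is exact; using that $\fgt{\End{M}}$ is conservative and colimit-preserving (\cref{lem:affine:free-forget}), one sees that $\rho_M$ is conservative exactly when $M$ generates $\Cat C$ and that $\rho_M$ preserves colimits exactly when $M$ is compact, so $\rho_M$ is an equivalence precisely when $M$ is a stable compact generator in the sense of \cref{defn:cpgen:stable-generator} — the implication that these two conditions already force $\rho_M$ to be an equivalence being Lurie's form of the Schwede--Shipley recognition theorem \cite[Theorem 7.1.2.1]{higheralgebra}, which I would invoke directly. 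This gives the essential image and simultaneously identifies the right adjoint with $\End{-}$. I expect the main obstacle to be bookkeeping rather than anything conceptual: one has to pin down the left/right module conventions consistently (the endomorphism ring acts on $\map{\Cat C}{M}{-}$ on the side opposite to the handedness of $\Mod{R}$, so depending on conventions one compares via $\op{R}$) and to verify that the assignment $(\Cat C, M) \mapsto \End{M}$, the comparison functor $\rho_M$, and the adjunction are genuinely natural — which is where the spectrally enriched functoriality recalled in the conventions is used.
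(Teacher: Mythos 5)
Your proposal is correct and takes essentially the same route as the paper: the paper's proof simply cites \cite[Theorem 4.8.5.11 and Remark 4.8.5.12]{higheralgebra} for fully faithfulness, the existence of the right adjoint and its identification with $\End{-}$, and (the proof of) \cite[Theorem 7.1.2.1]{higheralgebra} for the essential image, which are exactly the statements you unwind via the universal property of module $\infty$-categories and then invoke. The handedness bookkeeping you flag is a real but harmless convention issue (it is why $\op{R[F]}$ appears in the introduction) and does not affect the argument.
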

\begin{proof}
    That $\Theta$ is fully faithful and admits a right adjoint follows from \cite[Theorem 4.8.5.11]{higheralgebra},
    as $\Theta$ is the restriction of $\widehat{\Theta}_*$,
    whereas the description of the essential image is (the proof of) \cite[Theorem 7.1.2.1]{higheralgebra}.
    The description of the right adjoint is \cite[Remark 4.8.5.12]{higheralgebra}.
\end{proof}

For the rest of this section, let $R$ be a (discrete) $\Fp$-algebra.
Write $F \colon R \to R$ for the Frobenius endomorphism,
i.e.\ the ring morphism given by $x \mapsto x^p$.

Note that the functor $F_* \colon \ModDisc{R} \to \ModDisc{R}$ is a colimit-preserving 
exact functor of Grothendieck abelian categories, as it has both a left adjoint \cite[\href{https://stacks.math.columbia.edu/tag/05DQ}{Tag  05DQ}]{stacks-project} and a right adjoint \cite[\href{https://stacks.math.columbia.edu/tag/08YP}{Tag 08YP}]{stacks-project}.
In particular, the functor $\Cat D(F_*) \colon \Cat{D}(\ModDisc{R}) \to \Cat{D}(\ModDisc{R})$ 
exists, cf.\ \cite[Notation A.3]{cartmod}.

\begin{prop} \label{lem:affine:adjunction-on-module-cats}
    The Frobenius pushforward $\Cat D(F_*) \colon \Cat{D}(\ModDisc{R}) \to \Cat{D}(\ModDisc{R})$
    has a symmetric monoidal left adjoint $\mathbf{L}F^* \colon \Cat{D}(\ModDisc{R}) \to \Cat{D}(\ModDisc{R})$.
\end{prop}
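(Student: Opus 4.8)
The plan is to construct $\mathbf{L}F^*$ as the (left) derived functor of the ordinary base change $F^* = R \otimes_{F,R} - \colon \ModDisc{R} \to \ModDisc{R}$, which is the left adjoint to $F_*$ referenced in \cite[\href{https://stacks.math.columbia.edu/tag/05DQ}{Tag 05DQ}]{stacks-project}. First I would note that via the symmetric monoidal equivalence $\Cat D(\ModDisc{R}) \cong \Mod{R}$ of \cref{lem:affine:derived-cat-of-modules}, it suffices to produce a symmetric monoidal left adjoint to $\Cat D(F_*)$ on $\Mod{R}$. The natural candidate is the extension-of-scalars functor along the $\mathbb{E}_\infty$-ring map $F \colon R \to R$ (viewing the discrete $\Fp$-algebra $R$ as an $\mathbb{E}_\infty$-ring): $\Mod{R} \to \Mod{R}$, $M \mapsto M \otimes_{R, F} R$. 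By \cite[Corollary 4.8.5.13]{higheralgebra} (or \cite[Proposition 7.1.2.6]{higheralgebra}) this functor has a right adjoint given by restriction of scalars along $F$, and by general facts about base change of module categories over $\mathbb{E}_\infty$-rings \cite[Theorem 4.5.3.1]{higheralgebra} it is symmetric monoidal. So the remaining point is to identify this restriction-of-scalars right adjoint with $\Cat D(F_*)$.

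The key step is therefore the identification: restriction of scalars along $F \colon R \to R$ on $\Mod{R}$ corresponds, under $\Cat D(\ModDisc{R}) \cong \Mod{R}$, to the derived pushforward $\Cat D(F_*)$. For this I would argue that both functors are colimit-preserving (restriction of scalars over $\mathbb{E}_1$-rings preserves colimits by \cite[Corollary 4.2.3.5]{higheralgebra}; $\Cat D(F_*)$ preserves colimits since $F_*$ does) and both are exact, hence t-exact up to checking on the heart — and on the heart both restrict to the ordinary functor $F_* \colon \ModDisc{R} \to \ModDisc{R}$. Since a colimit-preserving functor out of $\Mod{R}$ (equivalently $\Cat D(\ModDisc{R})$, which is generated under colimits by $R$) is determined by its restriction to the heart together with its t-exactness, this pins down the equivalence. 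Concretely, one can invoke that $\Cat D(F_*)$ is the unique colimit-preserving extension of $F_*$ (as in \cite[Notation A.3]{cartmod}), and restriction of scalars along $F$ is manifestly such an extension of $F_*$.

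The main obstacle I anticipate is bookkeeping rather than conceptual: making precise the claim that $R$, which is only \emph{discrete} and commutative, is being used simultaneously as a $\mathbb{E}_\infty$-ring (so that $\Mod{R}$ and base change along $F$ make sense in the sense of \cite{higheralgebra}) and that the $\mathbb{E}_\infty$-map "$F$" there is genuinely the Frobenius; and then matching the two possible $R$-module structures one can put on the source and target so that "$\otimes_{R,F} R$" is unambiguously the left adjoint to restriction along $F$, not to corestriction. I would handle this by fixing once and for all that the left adjoint is $- \otimes_{R} R_F$, where $R_F$ denotes $R$ regarded as an $R$-bimodule via the identity on the left and $F$ on the right, and checking the adjunction and symmetric monoidality on this model. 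No deep input beyond \cite{higheralgebra} is needed; the proposition is essentially a translation of the classical adjunction $F^* \dashv F_*$ into the derived $\infty$-categorical language, upgraded with the symmetric monoidality that comes for free from base change of $\mathbb{E}_\infty$-module categories.
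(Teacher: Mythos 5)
Your proposal is correct and follows essentially the same route as the paper: pass through the symmetric monoidal equivalence $\Cat{D}(\ModDisc{R}) \cong \Mod{R}$ of \cref{lem:affine:derived-cat-of-modules} and obtain the adjunction and symmetric monoidality from Lurie's base-change machinery for module categories (the paper cites \cite[Proposition 4.6.2.17]{higheralgebra} and \cite[Remark 4.5.3.2]{higheralgebra}). The only difference is that you spell out the identification of $\Cat D(F_*)$ with restriction of scalars along $F$ via uniqueness of colimit-preserving t-exact extensions of $F_*$, a step the paper leaves implicit, and that identification argument is sound.
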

\begin{proof}
    We have a canonical symmetric monoidal equivalence $\Cat{D}(\ModDisc{R}) \cong \Mod{R}$ by \cref{lem:affine:derived-cat-of-modules}.
    Hence, the existence of the left adjoint is \cite[Proposition 4.6.2.17]{higheralgebra}.
    That the left adjoint is symmetric monoidal follows essentially from \cite[Remark 4.5.3.2]{higheralgebra}.
\end{proof}

In the following we consider the category $\Frob{\Cat D(\ModDisc{R})}{\Cat D(F_*)}$ of (generalized) Frobenius modules. Recall that it is defined by the pullback square
\begin{center}
    \begin{tikzcd}
        \Frob{\Cat D(\ModDisc{R})}{\Cat D(F_*)} \cartsymb \ar[d, "\kappa_{\Cat D}"'] \ar[r, "U_{\Cat D}"] & \Cat D(\ModDisc{R}) \ar[d, "{(\id{},\Cat D(F_*))}"] \\
        \Fun{\Delta^1}{\Cat D(\ModDisc{R})} \ar[r, "{(s, t)}"'] & \Cat D(\ModDisc{R})^2\rlap{.}
    \end{tikzcd}
\end{center}

\begin{prop} \label{lem:affine:frob-left-adjoint}
    The functor $U_{\Cat D} \colon \Frob{\Cat D(\ModDisc{R})}{\Cat D(F_*)} \to \Cat D(\ModDisc{R})$ has a left adjoint $L_{\Cat D}$.
    The underlying module of the left adjoint is given by $U_{\Cat D}L_{\Cat D} \cong \bigsqcup_{n \ge 0} (\mathbf{L}F^*)^n$.
\end{prop}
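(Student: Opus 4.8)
The plan is to produce the left adjoint $L_{\Cat D}$ by hand as a limit of left adjoints, exploiting that $\Frob{\Cat D(\ModDisc R)}{\Cat D(F_*)}$ is a pullback and that the functors along which we pull back all admit left adjoints. Recall the pullback square defining $\Frob{\Cat D(\ModDisc R)}{\Cat D(F_*)}$ with legs $U_{\Cat D}$ and $\kappa_{\Cat D}$. The functor $(s,t) \colon \Fun{\Delta^1}{\Cat D(\ModDisc R)} \to \Cat D(\ModDisc R)^2$ has a left adjoint, sending a pair $(A,B)$ to the arrow $A \sqcup B \to B$ (coproduct of $A\to 0$ and $\id B$, or rather: the left adjoint to $(s,t)$ picks out the "free arrow" $A \to A\sqcup B$ built from the unit of the $A$-component and... ) — more precisely, since $\Fun{\Delta^1}{-}$ is cotensor by $\Delta^1$ and $(s,t)$ is restriction along $\partial\Delta^1 \hookrightarrow \Delta^1$, it has a left adjoint given by left Kan extension, which on $(A,B)$ returns the arrow $A \to A \sqcup B$. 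The other non-identity leg of the cospan, $(\id{},\Cat D(F_*)) \colon \Cat D(\ModDisc R) \to \Cat D(\ModDisc R)^2$, has left adjoint $(C,D) \mapsto C \sqcup \mathbf{L}F^*(D)$ by \cref{lem:affine:adjunction-on-module-cats}.

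Next I would invoke the general principle that a pullback of presentable $\infty$-categories along right adjoints, i.e. a limit in $\PrSt$ computed as in \cite[Proposition 4.8.2.18]{higheralgebra}, yields a category whose projection functors admit left adjoints, computed by the standard bar-type formula. Concretely: $U_{\Cat D}$ sits in the pullback, and we want its left adjoint. Using that $\Frob{\Cat D(\ModDisc R)}{\Cat D(F_*)} = \Cat D(\ModDisc R) \times_{\Cat D(\ModDisc R)^2} \Fun{\Delta^1}{\Cat D(\ModDisc R)}$, an object is a triple (a module $N$, an arrow $\varphi$, and an identification of $(s\varphi,t\varphi)$ with $(N, \Cat D(F_*)N)$); equivalently it is a module $N$ together with a map $N \to \Cat D(F_*)N$, which is exactly a Frobenius module. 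The left adjoint to $U_{\Cat D}$ sends a module $M$ to the free Frobenius module on $M$, and I would identify it by computing mapping spaces: for a Frobenius module $(N,\kappa_N)$,
\begin{equation*}
\Map{\Frob{\Cat D(\ModDisc R)}{\Cat D(F_*)}}{L_{\Cat D}M}{(N,\kappa_N)} \simeq \Map{\Cat D(\ModDisc R)}{M}{N},
\end{equation*}
and the claim $U_{\Cat D}L_{\Cat D}M \cong \bigsqcup_{n\ge 0}(\mathbf{L}F^*)^n M$ follows once one checks that $\bigsqcup_{n\ge 0}(\mathbf{L}F^*)^nM$ carries the evident Frobenius structure (the shift map $\kappa$ sending the $n$-th summand into the $(n{+}1)$-st summand of $\Cat D(F_*)\bigl(\bigsqcup_{n\ge 0}(\mathbf{L}F^*)^nM\bigr) \cong \bigsqcup_{n\ge 0}\Cat D(F_*)(\mathbf{L}F^*)^nM$, using the unit $(\mathbf{L}F^*)^nM \to \Cat D(F_*)\mathbf{L}F^*(\mathbf{L}F^*)^nM$ of the adjunction). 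Indeed this construction is the free-monad-module construction: maps out of it into $(N,\kappa_N)$ are computed summand-wise, the $n$-th summand contributing $\Map{}{(\mathbf{L}F^*)^nM}{N} \simeq \Map{}{M}{\Cat D(F_*)^nN}$, and the compatibility with $\kappa_N$ cuts this down to a single copy of $\Map{}{M}{N}$ in the limit over the tower — i.e. the mapping space out of $\bigsqcup_{n\ge0}(\mathbf{L}F^*)^nM$ constrained by the Frobenius structure is $\Map{}{M}{N}$, exactly as required.

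The main obstacle, and the step deserving genuine care, is the bookkeeping of the equivalence $U_{\Cat D}L_{\Cat D} \cong \bigsqcup_{n\ge0}(\mathbf{L}F^*)^n$ at the level of the $\infty$-category rather than its homotopy category — i.e. producing the Frobenius-module structure on $\bigsqcup_{n\ge0}(\mathbf{L}F^*)^nM$ coherently and checking that the resulting object corepresents $\Map{}{M}{U_{\Cat D}(-)}$. Rather than building this structure map-by-map, I expect the cleanest route is to present $\Frob{\Cat D(\ModDisc R)}{\Cat D(F_*)}$ via \cite[Proposition 4.8.2.18]{higheralgebra} as a limit in $\PrSt$ and appeal to the fact that adjoints assemble along such limits: the left adjoint exists because each structure functor in the diagram has a left adjoint and the relevant Beck–Chevalley/mate conditions are satisfied (all maps in sight are colimit-preserving, and $\mathbf{L}F^*$ commutes with the relevant functors), and then the coproduct formula drops out of the explicit description of the left adjoint of a pullback projection as in the standard computation (see also the analogous statement for lax equalizers). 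If a more hands-on argument is wanted, one can alternatively first establish that $\Frob{\Cat D(\ModDisc R)}{\Cat D(F_*)}$ is presentable (it is, by \cref{lem:functoriality:frob-functor-on-groth-prst}), note $U_{\Cat D}$ preserves all colimits (it does, being a projection out of a limit of colimit-preserving functors, cf.\ the proof of \cref{lem:functoriality:frob-functor-on-groth-prst-limits}) together with limits, and conclude the existence of $L_{\Cat D}$ from the adjoint functor theorem; the coproduct formula is then pinned down by the corepresentability computation above.
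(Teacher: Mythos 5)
Your proposal is correct and amounts to the paper's own argument: the paper checks exactly the same two hypotheses (the left adjoint $\mathbf{L}F^*$ of $\Cat D(F_*)$ from \cref{lem:affine:adjunction-on-module-cats}, and the existence of countable coproducts since $\Cat D(\ModDisc{R})$ is presentable) and then simply cites the general free-Frobenius-module statement \cite[Corollary 4.4]{cartmod}, which encapsulates precisely the coproduct-with-shift construction and the corepresentability/equalizer computation you sketch by hand. So the only real difference is that you inline (a sketch of) the proof of that cited lemma instead of invoking it.
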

\begin{proof}
    Since by \cref{lem:affine:adjunction-on-module-cats} $\Cat D(F_*) \colon \Cat D(\ModDisc{R}) \to \Cat D(\ModDisc{R})$ admits a left adjoint 
    $\mathbf{L}F^*$,
    and since $\Cat D(\ModDisc{R})$ admits all countable coproducts (as it is presentable),
    it follows from \cite[Corollary 4.4]{cartmod} that $U_{\Cat D}$ has a left adjoint with the given description.
\end{proof}

Recall that the category $\Frob{\ModDisc{R}}{F_*}$ is defined by the pullback square
\begin{center}
    \begin{tikzcd}
        \Frob{\ModDisc{R}}{F_*} \cartsymb \ar[d, "\kappa_\heartsuit"'] \ar[r, "U_\heartsuit"] & \ModDisc{R} \ar[d, "{(\id{}, F_*)}"] \\
        \Fun{\Delta^1}{\ModDisc{R}} \ar[r, "{(s, t)}"'] & (\ModDisc{R})^2
    \end{tikzcd}
\end{center}
in $\CatInf$.
Since the functor $F_*$ admits a left adjoint (cf.\ \cite[\href{https://stacks.math.columbia.edu/tag/05DQ}{Tag 05DQ}]{stacks-project}) and the category $\ModDisc{R}$ admits all countable coproducts, the forgetful functor $U_\heartsuit$ also admits a left adjoint $L_\heartsuit\colon \ModDisc{R} \to \Frob{\ModDisc{R}}{F_*}$ by \cite[Corollary 4.4]{cartmod}.

By \Cref{lem:functoriality:frob-functor-on-groth-prst} the category $\Frob{\ModDisc{R}}{F_*}$ is Grothendieck abelian, so we can consider its derived $\infty$-category.
Note that the functors $U_\heartsuit$ and $\kappa_\heartsuit$ both are exact and colimit-preserving: For $U_\heartsuit$ this follows from \cite[Corollary 2.8 (i)]{cartmod} as $\ModDisc{R}$ is a Grothendieck abelian category and $F_*$ is exact and colimit-preserving, and for $\kappa_\heartsuit$ this can be checked after applying the source and target functors where it amounts to the fact that $U_\heartsuit$ and $F_*$ are exact and colimit-preserving.
Hence, they induce derived functors $\Cat D(U_\heartsuit)\colon \Cat D(\Frob{\ModDisc{R}}{F_*}) \to \Cat D(\ModDisc{R})$ and $\Cat D(\kappa_\heartsuit)\colon \Cat D(\Frob{\ModDisc{R}}{F_*}) \to \Cat D(\Fun{\Delta^1}{\ModDisc{R}})$ by \cite[Proposition A.2]{cartmod}.

\begin{defn}
    We define $\Phi \colon \Cat{D}(\Frob{\ModDisc{R}}{F_*}) \to \Frob{\Cat D(\ModDisc{R})}{\Cat D(F_*)}$
    to be the functor fitting into the following diagram as the dashed arrow, via the universal property of the pullback:
    \begin{center}
        \begin{tikzcd}
            \Cat{D}(\Frob{\ModDisc{R}}{F_*}) \ar[dd, "{\Cat D(\kappa_\heartsuit)}"'] \ar[dr, dashed, "\Phi"] \ar[drr, bend left, "{\Cat D(U_\heartsuit)}"] \\
            &\Frob{\Cat D(\ModDisc{R})}{\Cat D(F_*)} \cartsymb \ar[d, "\kappa_{\Cat D}"'] \ar[r, "U_{\Cat D}"] &\Cat D(\ModDisc{R}) \ar[d, "{(\id{}, \Cat D(F_*))}"] \\
            \Cat D(\Fun{\Delta^1}{\ModDisc{R}}) \ar[r, "\xi"']&\Fun{\Delta^1}{\Cat D(\ModDisc{R})} \ar[r, "{(s,t)}"'] &\Cat D(\ModDisc{R})^2 \rlap{.}
        \end{tikzcd}
    \end{center}
    Here, $\xi$ is the functor from the discussion before \cite[Theorem 5.1]{cartmod}.
    The commutativity of the outer solid diagram can be deduced in the exact same way 
    as the commutativity of the diagram in \cite[Theorem 5.1]{cartmod}, 
    see the paragraph directly behind the statement of the theorem.
\end{defn}
In the rest of this section, we show that $\Phi$ is an equivalence.

For this, we first introduce t-structures on the source and target of $\Phi$ such that $\Phi$ is t-exact with respect to these t-structures.

Recall that the derived $\infty$-category $\Cat D(\Cat A)$ of any Grothendieck abelian category $\Cat A$ carries a t-structure which is described in \cite[Definition 1.3.5.16]{higheralgebra}. 
In particular, the $\infty$-category $\Cat D(\Frob{\ModDisc{R}}{F_*})$ carries an induced t-structure as $\Frob{\ModDisc{R}}{F_*}$ is a Grothendieck abelian category by \Cref{lem:functoriality:frob-functor-on-groth-prst}.

Moreover, we have the following.

\begin{lem}
    Define the two full subcategories $\Frob{\Cat D(\ModDisc{R})}{\Cat D(F_*)}_{\geq 0}$ and 
    $\Frob{\Cat D(\ModDisc{R})}{\Cat D(F_*)}_{\leq 0}$ of 
    $\Frob{\Cat D(\ModDisc{R})}{\Cat D(F_*)}$ as follows: 
    For an object $M \in \Frob{\Cat D(\ModDisc{R})}{\Cat D(F_*)}$ 
    we have
    \begin{align*}
        &M \in \Frob{\Cat D(\ModDisc{R})}{\Cat D(F_*)}_{\geq 0} \text{ if and only if } U_{\Cat D}M \in \Cat D(\ModDisc{R})_{\geq 0}, \text{ and} \\
        &M \in \Frob{\Cat D(\ModDisc{R})}{\Cat D(F_*)}_{\leq 0} \text{ if and only if } U_{\Cat D}M \in \Cat D(\ModDisc{R})_{\leq 0}.
    \end{align*}
    These subcategories define a t-structure on $\Frob{\Cat D(\ModDisc{R})}{\Cat D(F_*)}$ and the forgetful functor $U_{\Cat D}$ is t-exact.
\end{lem}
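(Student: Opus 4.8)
The plan is to verify the three defining axioms of a t-structure on $\Frob{\Cat D(\ModDisc{R})}{\Cat D(F_*)}$ by transporting them along the forgetful functor $U_{\Cat D}$, using the fact that $U_{\Cat D}$ is conservative and admits both adjoints, and that it fits into the defining pullback square with the source and target functors $(s,t)$. The key structural input is that $\Cat D(\ModDisc{R})_{\geq 0}$ and $\Cat D(\ModDisc{R})_{\leq 0}$ already form a t-structure, that $\Cat D(F_*)$ is t-exact (it is induced by the exact functor $F_*$), and that the functor $(s,t)\colon \Fun{\Delta^1}{\Cat D(\ModDisc{R})} \to \Cat D(\ModDisc{R})^2$ reflects a t-structure from the target in the evident way, so that $\Fun{\Delta^1}{\Cat D(\ModDisc{R})}$ carries the pointwise t-structure.

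First I would check the orthogonality axiom: for $M \in \Frob{-}{-}_{\geq 1}$ and $N \in \Frob{-}{-}_{\leq 0}$, the mapping spectrum $\map{\Frob{-}{-}}{M}{N}$ is connective (in fact its $\pi_0$ and below vanish). Here I would use that $\Frob{\Cat D(\ModDisc{R})}{\Cat D(F_*)}$ is a limit (pullback) of stable $\infty$-categories, so its mapping spectra are computed as the corresponding pullback (= fiber sequence) of mapping spectra in $\Cat D(\ModDisc{R})$, $\Fun{\Delta^1}{\Cat D(\ModDisc{R})}$, and $\Cat D(\ModDisc{R})^2$; since each of these vanishes in the appropriate degrees under the pointwise t-structures (using t-exactness of $\Cat D(F_*)$ so that the constraints are consistent), the pullback does too. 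Second, I would check closure under the shifts $[1]$ and $[-1]$, which is immediate since $U_{\Cat D}$ commutes with shifts and the conditions are stated purely in terms of $U_{\Cat D}$. Third, and most importantly, I would construct the truncation triangles: given $M \in \Frob{-}{-}$, I want a fiber sequence $\tau_{\geq 1}M \to M \to \tau_{\leq 0}M$ with the two outer terms in the respective subcategories. The natural approach is to produce the truncation objects using the t-structure on $\Frob{\ModDisc{R}}{F_*}$ together with the functor $\Phi$, or alternatively — and more robustly — to use that the pullback square defining $\Frob{-}{-}$ is a limit of stable $\infty$-categories each equipped with a t-structure, and invoke a general principle (e.g.\ as in \cite[Proposition 1.4.4.11]{higheralgebra} or the discussion of recollements) that a lax equalizer / pullback of t-structures along t-exact functors inherits a t-structure whose truncations are computed levelwise.

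The main obstacle I expect is the existence of the truncation functors: one has to check that the levelwise truncations in $\Cat D(\ModDisc{R})$ and in $\Fun{\Delta^1}{\Cat D(\ModDisc{R})}$ actually glue to an object of the pullback $\Frob{-}{-}$. Concretely, if $M$ corresponds to the data $(m, \kappa_m\colon m \to \Cat D(F_*)m)$, then $\tau_{\leq 0}m$ comes with a map $\tau_{\leq 0}m \to \tau_{\leq 0}\Cat D(F_*)m \cong \Cat D(F_*)\tau_{\leq 0}m$, where the last equivalence is t-exactness of $\Cat D(F_*)$; this gives $\tau_{\leq 0}M$ a Frobenius-module structure, and similarly for $\tau_{\geq 1}$, and the fiber sequence in $\Fun{\Delta^1}{\Cat D(\ModDisc{R})}$ assembles these into a fiber sequence in $\Frob{-}{-}$ because $U_{\Cat D}$ and $\kappa_{\Cat D}$ are exact and jointly conservative. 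Once the truncation triangles are in place, t-exactness of $U_{\Cat D}$ is automatic from the very definition of the subcategories, and the fact that $U_{\Cat D}$ sends these truncations to the standard ones follows by uniqueness of truncation triangles. I would also remark that this t-structure is the one that makes $\Phi$ t-exact, which is the reason it is being introduced, though strictly that comparison is the content of the next step in the paper rather than this lemma.
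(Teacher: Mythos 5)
Your proof is essentially correct, but it takes a more self-contained route than the paper does. The paper's entire proof is a one-line citation: it observes that $\Cat D(F_*)$ is t-exact (by the definition of the derived functor) and then invokes \cite[Proposition 3.3]{cartmod}, which already establishes that $\Frob{\Cat C}{G}$ inherits a t-structure defined exactly as in the statement whenever $\Cat C$ is stable with a t-structure and $G$ is t-exact, with $U$ t-exact. What you have written is, in effect, a proof of that cited proposition: closure under shifts is immediate from the definition via $U_{\Cat D}$; orthogonality follows from the fiber sequence computing mapping spectra in the lax equalizer, $\map{}{M}{N} \to \map{}{m}{n} \to \map{}{m}{\Cat D(F_*)n}$, together with levelwise orthogonality and t-exactness of $\Cat D(F_*)$; and the truncation triangles are built levelwise, with the Frobenius structure on $\tau_{\le 0}m$ supplied by $\tau_{\le 0}\Cat D(F_*)m \simeq \Cat D(F_*)\tau_{\le 0}m$. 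This is the right argument, and it correctly isolates t-exactness of $\Cat D(F_*)$ as the key hypothesis, which is also the only point the paper verifies before citing. The trade-off is only one of economy: the paper outsources the gluing of truncations to its earlier result, while your version carries that verification along explicitly.

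Two small caveats. First, your parenthetical in the orthogonality step is stated backwards: for $M$ in degrees $\ge 1$ and $N$ in degrees $\le 0$ the mapping spectrum has $\pi_i = 0$ for $i \ge 0$ (it is coconnective), not ``connective with $\pi_0$ and below vanishing''; fortunately the axiom only needs $\pi_0 = 0$, which your fiber-sequence argument does produce, so this is a slip of terminology rather than a gap. Second, the suggested alternative via \cite[Proposition 1.4.4.11]{higheralgebra} is not a drop-in general principle for pullbacks of t-structures (it produces accessible t-structures generated by objects and would still require identifying the coconnective part), so the direct levelwise construction you give should be regarded as the actual proof.
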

\begin{proof}
    Since the functor $\Cat D(F_*)\colon \Cat D(\ModDisc{R}) \to \Cat D(\ModDisc{R})$ is t-exact by the definition of the derived functor, cf.\ \cite[Notation A.3]{cartmod},
    this follows directly from \cite[Proposition 3.3]{cartmod}.
\end{proof}

\begin{lem} \label{lem:affine:Phi-t-exact}
    The functor $\Phi$ is t-exact.
\end{lem}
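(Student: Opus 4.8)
The plan is to use that the t-structure on the target $\Frob{\Cat D(\ModDisc{R})}{\Cat D(F_*)}$, as set up in the previous lemma, is detected by the forgetful functor $U_{\Cat D}$: an object $M$ is connective (resp.\ coconnective) exactly when $U_{\Cat D}M$ is connective (resp.\ coconnective) in $\Cat D(\ModDisc{R})$. Since t-exactness of $\Phi$ means precisely that $\Phi$ preserves connective and coconnective objects, it therefore suffices to show that the composite $U_{\Cat D}\circ\Phi$ carries $\Cat D(\Frob{\ModDisc{R}}{F_*})_{\geq 0}$ into $\Cat D(\ModDisc{R})_{\geq 0}$ and $\Cat D(\Frob{\ModDisc{R}}{F_*})_{\leq 0}$ into $\Cat D(\ModDisc{R})_{\leq 0}$; i.e.\ that $U_{\Cat D}\circ\Phi$ is t-exact.

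Next I would read off from the defining diagram of $\Phi$ that $U_{\Cat D}\circ\Phi \cong \Cat D(U_\heartsuit)$ — this is exactly the ``bend left'' triangle witnessing the definition of $\Phi$. Now $U_\heartsuit \colon \Frob{\ModDisc{R}}{F_*} \to \ModDisc{R}$ was already observed to be exact and colimit-preserving (using \cite[Corollary 2.8 (i)]{cartmod} together with exactness of $F_*$), and the derived functor of an exact, colimit-preserving functor of Grothendieck abelian categories is t-exact: it preserves colimits and sends the heart into the heart, hence is right t-exact, and since it is moreover exact it commutes with the formation of homotopy objects, hence preserves coconnectivity as well. (Alternatively, this t-exactness can be cited alongside the construction of derived functors in \cite[Proposition A.2]{cartmod}.) Combining, for $M \in \Cat D(\Frob{\ModDisc{R}}{F_*})_{\geq 0}$ we get $U_{\Cat D}\Phi(M) \cong \Cat D(U_\heartsuit)(M) \in \Cat D(\ModDisc{R})_{\geq 0}$, so $\Phi(M)$ is connective; dually for coconnective objects, and we are done.

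There is little genuine difficulty here; the one point to pin down carefully is that the equivalence $U_{\Cat D}\circ\Phi \cong \Cat D(U_\heartsuit)$ is precisely the coherence built into the construction of $\Phi$, and that we have at our disposal the t-exactness — and not merely the right t-exactness — of $\Cat D(U_\heartsuit)$; the latter is exactly where exactness (rather than just colimit-preservation) of $U_\heartsuit$ enters.
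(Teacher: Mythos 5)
Your proposal is correct and follows essentially the same route as the paper: reduce to t-exactness of $U_{\Cat D}\circ\Phi \cong \Cat D(U_\heartsuit)$ via the definition of the induced t-structure, and then invoke t-exactness of the derived functor of the exact, colimit-preserving $U_\heartsuit$ (which the paper cites directly from the construction of derived functors). Your extra justification of why $\Cat D(U_\heartsuit)$ is t-exact is a harmless elaboration of the same point.
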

\begin{proof}
    By the definition of the t-structure on $\Frob{\Cat D(\ModDisc{R})}{\Cat D(F_*)}$
    it suffices to show that $U_{\Cat D} \circ \Phi \cong \Cat D(U_{\heartsuit})$ is t-exact.
    This follows from the definition of the derived functor, cf.\ \cite[Notation A.3]{cartmod}.
\end{proof}

Therefore, $\Phi$ induces a functor $\Frob{\ModDisc{R}}{F_*} \to \Frob{\Cat D(\ModDisc{R})}{\Cat D(F_*)}^\heartsuit$ 
between the hearts. We proceed by showing that this induced functor is an equivalence.

\begin{lem} \label{lem:affine:heart-identification}
    Consider the functor 
    \begin{equation*}
        \Lambda \coloneqq \Frob{\pi_0}{h} \colon \Frob{\Cat D(\ModDisc{R})}{\Cat D(F_*)} \to \Frob{\ModDisc{R}}{F_*}
    \end{equation*}
    induced by the functoriality of $\Frob{-}{-}$ from \cref{lem:functoriality:frob-functor} and the commutative square $h$:
    \begin{center}
        \begin{tikzcd}
            \Cat D(\ModDisc{R}) \ar[d, "\pi_0"'] \ar[r, "\Cat D(F_*)"] &\Cat D(\ModDisc{R}) \ar[d, "\pi_0"] \\
            \ModDisc{R} \ar[r, "F_*"'] &\ModDisc{R} \rlap{,}
        \end{tikzcd}
    \end{center}
    which exists by \cite[Lemma A.4]{cartmod}.
    Then the composition of the inclusion of the heart 
    \begin{equation*}
        H \colon \Frob{\Cat D(\ModDisc{R})}{\Cat D(F_*)}^\heartsuit \hookrightarrow \Frob{\Cat D(\ModDisc{R})}{\Cat D(F_*)}
    \end{equation*}
    with $\Lambda$ 
    is an equivalence $\Frob{\Cat D(\ModDisc{R})}{\Cat D(F_*)}^\heartsuit \cong \Frob{\ModDisc{R}}{F_*}$.
\end{lem}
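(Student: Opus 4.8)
The plan is to show that $\Lambda \circ H$ is an equivalence by producing a fully faithful functor $\iota \colon \Frob{\ModDisc{R}}{F_*} \to \Frob{\Cat D(\ModDisc{R})}{\Cat D(F_*)}$ whose essential image is exactly the heart, and then identifying $\Lambda \circ H$ with its inverse. First I would upgrade the heart inclusion $H_{\ModDisc{R}} \colon \ModDisc{R} \hookrightarrow \Cat D(\ModDisc{R})$ to a morphism $j \colon (\ModDisc{R}, F_*) \to (\Cat D(\ModDisc{R}), \Cat D(F_*))$ in $\End{\CatInf}$: the required $2$-cell $\Cat D(F_*) \circ H_{\ModDisc{R}} \simeq H_{\ModDisc{R}} \circ F_*$ exists because $\Cat D(F_*)$ is t-exact (so preserves the heart) and its restriction to the heart is $F_*$ — this is the content of the square $h$, precomposed with $H_{\ModDisc{R}}$ and using $\pi_0 \circ H_{\ModDisc{R}} \simeq \id{\ModDisc{R}}$. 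Set $\iota \coloneqq \FrobSingle{j}$ via \cref{lem:functoriality:frob-functor}; since $H_{\ModDisc{R}}$ is fully faithful, so is $\iota$ by \cref{lem:functoriality:frob-functor-fully-faithful}.

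Next I would compute the essential image of $\iota$. By the description of $\FrobSingle{j}$ on morphisms in \cref{lem:functoriality:frob-functor} and the pullback presentations of both categories of Frobenius modules, $\iota$ is the functor on pullbacks induced by the map of cospans given by $\Fun{\Delta^1}{H_{\ModDisc{R}}}$, $H_{\ModDisc{R}} \times H_{\ModDisc{R}}$ and $H_{\ModDisc{R}}$; all three are fully faithful, with essential images the diagrams valued in the heart of $\Cat D(\ModDisc{R})$. A routine mapping-space argument then shows that the essential image of $\iota$ consists exactly of those $M \in \Frob{\Cat D(\ModDisc{R})}{\Cat D(F_*)}$ with $U_{\Cat D}M \in \Cat D(\ModDisc{R})^\heartsuit$: the analogous condition on the arrow $\kappa_{\Cat D}M$ is automatic here, as its source is $U_{\Cat D}M$ and its target is $\Cat D(F_*)U_{\Cat D}M$, which lies in the heart as soon as $U_{\Cat D}M$ does, by t-exactness of $\Cat D(F_*)$. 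By the definition of the t-structure on $\Frob{\Cat D(\ModDisc{R})}{\Cat D(F_*)}$ (for which $U_{\Cat D}$ is t-exact), this essential image is precisely $\Frob{\Cat D(\ModDisc{R})}{\Cat D(F_*)}^\heartsuit$. Hence $\iota$ corestricts to an equivalence $\iota' \colon \Frob{\ModDisc{R}}{F_*} \xrightarrow{\simeq} \Frob{\Cat D(\ModDisc{R})}{\Cat D(F_*)}^\heartsuit$ with $H \circ \iota' \simeq \iota$.

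Finally I would identify $\Lambda \circ H$ with $(\iota')^{-1}$. Using that $\Frob{-}{-} \colon \End{\CatInf} \to \CatInf$ is a functor (\cref{lem:functoriality:frob-functor}), we get $\Lambda \circ H \circ \iota' \simeq \Lambda \circ \iota = \FrobSingle{g} \circ \FrobSingle{j} \simeq \FrobSingle{g \circ j}$, where $g \colon (\Cat D(\ModDisc{R}), \Cat D(F_*)) \to (\ModDisc{R}, F_*)$ is the morphism in $\End{\CatInf}$ with underlying functor $\pi_0$ that defines $\Lambda = \Frob{\pi_0}{h}$; and $g \circ j \simeq \id{(\ModDisc{R}, F_*)}$ because its underlying functor is $\pi_0 \circ H_{\ModDisc{R}} \simeq \id{\ModDisc{R}}$ and its coherence $2$-cell matches the identity. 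Therefore $\Lambda \circ H \circ \iota' \simeq \id{\Frob{\ModDisc{R}}{F_*}}$, so $\Lambda \circ H \simeq (\iota')^{-1}$ is an equivalence. I expect the main obstacle to be bookkeeping rather than mathematical: one must (i) check carefully that pulling back the three fully faithful functors along the defining cospans really yields the full subcategory on the heart — in particular that no extra condition on $\kappa_{\Cat D}M$ is needed, which is exactly where t-exactness of $\Cat D(F_*)$ enters — and (ii) verify the coherence in $\End{\CatInf}$ that $g \circ j$ is the identity morphism of $(\ModDisc{R}, F_*)$, not merely on underlying functors. Neither step is deep.
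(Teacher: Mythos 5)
Your argument is correct, but it is not the route the paper takes: the paper disposes of this lemma by pure citation, observing that the composite $\Lambda \circ H$ is literally the functor called $\Phi$ in the proof of \cite[Proposition 3.4]{cartmod} and importing the equivalence proved there. You instead give a self-contained proof inside the formalism of this paper: you upgrade the heart inclusion $H_{\ModDisc{R}}$ to a morphism $j$ in $\End{\CatInf}$ using t-exactness of $\Cat D(F_*)$ and the square $h$, apply $\FrobSingle{-}$ (\cref{lem:functoriality:frob-functor}) to get a fully faithful functor $\iota$ (\cref{lem:functoriality:frob-functor-fully-faithful}), identify its essential image with the heart of the induced t-structure (here your observation that no condition on $\kappa_{\Cat D}M$ is needed, because $\Cat D(F_*)$ preserves the heart, is exactly the right point), and then use functoriality to see $\Lambda \circ \iota \simeq \FrobSingle{g \circ j} \simeq \id$. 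The two remaining verifications you flag are genuinely the only delicate spots: the essential-image computation needs the explicit description of objects and equivalences in the lax equalizer (conservativity of $U_{\Cat D}$ plus the pair description of objects), and the identification $g \circ j \simeq \id_{(\ModDisc{R}, F_*)}$ must be checked at the level of the coherence $2$-cell, not just the underlying functor, since a nontrivial automorphism of $F_*$ there would twist $\kappa$; with your choice of $\beta$ this reduces to a triangle-identity computation. What the paper's citation buys is brevity; what your argument buys is independence from the internals of \cite{cartmod} and a proof that visibly works for any Grothendieck abelian category equipped with an exact colimit-preserving endofunctor, using only the functoriality and t-exactness machinery already established in this paper.
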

\begin{proof}
    This is done in the proof of \cite[Proposition 3.4]{cartmod}. Note that the functor which is called $\Phi$ in \loccit is exactly the composition
    \begin{equation*}
        \Frob{\Cat D(\ModDisc{R})}{\Cat D(F_*)}^\heartsuit \xhookrightarrow{H} \Frob{\Cat D(\ModDisc{R})}{\Cat D(F_*)} \xrightarrow{\Lambda} \Frob{\ModDisc{R}}{F_*}. \qedhere
    \end{equation*}
\end{proof}

\begin{lem} \label{lem:affine:kappa-heart}
    Under the identification of the heart of $\Frob{\Cat D(\ModDisc{R})}{\Cat D(F_*)}$ from \cref{lem:affine:heart-identification},
    the following diagram commutes:
    \begin{center}
        \begin{tikzcd}
            \Frob{\Cat D(\ModDisc{R})}{\Cat D(F_*)} \ar[d, "\pi_0"'] \ar[r, "\kappa_{\Cat D}"] & \Fun{\Delta^1}{\Cat D(\ModDisc{R})} \ar[d, "(\pi_0)_*"] \\
            \Frob{\ModDisc{R}}{F_*} \ar[r, "\kappa_{\heartsuit}"'] &\Fun{\Delta^1}{\ModDisc{R}} \rlap{.}
        \end{tikzcd}
    \end{center}
\end{lem}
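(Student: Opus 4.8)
The plan is to deduce this from the functoriality of $\Frob{-}{-}$ on morphisms recorded in \cref{lem:functoriality:frob-functor}, applied to the morphism $h$. Recall that $\Lambda = \FrobSingle{h}$, where $h$ is the morphism in $\End{\CatInf}$ with underlying functor $\pi_0 \colon \Cat D(\ModDisc{R}) \to \ModDisc{R}$. Specializing the defining diagram of $\FrobSingle{f}$ in \cref{lem:functoriality:frob-functor} to $(\Cat C, F) = (\Cat D(\ModDisc{R}), \Cat D(F_*))$, $(\Cat D, G) = (\ModDisc{R}, F_*)$ and $f = h$, the commutativity of its lower-left part gives a canonical equivalence
\[
    \kappa_\heartsuit \circ \Lambda \;\simeq\; (\pi_0)_* \circ \kappa_{\Cat D}
\]
of functors $\Frob{\Cat D(\ModDisc{R})}{\Cat D(F_*)} \to \Fun{\Delta^1}{\ModDisc{R}}$. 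This is exactly the asserted commutativity, provided the left vertical functor of the square — i.e.\ the composite of the truncation $\tau_{\leq 0}\tau_{\geq 0}$ with the equivalence $\Frob{\Cat D(\ModDisc{R})}{\Cat D(F_*)}^\heartsuit \simeq \Frob{\ModDisc{R}}{F_*}$ of \cref{lem:affine:heart-identification} — is identified with $\Lambda$.

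So the remaining task is to check that these two functors $\Frob{\Cat D(\ModDisc{R})}{\Cat D(F_*)} \to \Frob{\ModDisc{R}}{F_*}$ agree. I would do this via the universal property of the pullback defining $\Frob{\ModDisc{R}}{F_*}$: a functor into it is detected by its composites with $U_\heartsuit$ and with $\kappa_\heartsuit$. Both comparisons reduce, using the defining pullback squares and the displayed equivalence, to the analogous (standard) fact that on $\Cat D(\ModDisc{R})$, resp.\ on $\Fun{\Delta^1}{\Cat D(\ModDisc{R})}$, the functor $\pi_0$, resp.\ $(\pi_0)_*$, is truncation to the heart followed by the standard heart-identification — using here that $U_{\Cat D}$ is t-exact and that $\kappa_{\Cat D}$ is t-exact (the latter because $s \circ \kappa_{\Cat D} \cong U_{\Cat D}$ and $t \circ \kappa_{\Cat D} \cong \Cat D(F_*) \circ U_{\Cat D}$ are t-exact while the t-structure on $\Fun{\Delta^1}{-}$ is computed levelwise), so that truncation commutes past $U_{\Cat D}$ and $\kappa_{\Cat D}$. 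Since $\Lambda \circ H$ is precisely the heart-identification of \cref{lem:affine:heart-identification}, the composites with $U_\heartsuit$ and $\kappa_\heartsuit$ then match up compatibly, so the two functors coincide.

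I expect the main obstacle — a minor one — to be precisely this last bookkeeping: coherently matching the heart-identification of \cref{lem:affine:heart-identification} (which is $\Lambda \circ H$) with the standard heart-identifications of $\Cat D(\ModDisc{R})$ and $\Fun{\Delta^1}{\Cat D(\ModDisc{R})}$ along the t-exact functors $U_{\Cat D}$ and $\kappa_{\Cat D}$. Everything else is a formal consequence of \cref{lem:functoriality:frob-functor} together with the pullback description of $\Frob{\ModDisc{R}}{F_*}$.
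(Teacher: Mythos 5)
Your proposal is correct and in substance coincides with the paper's proof: both arguments rest on exactly the two ingredients you use, namely the functoriality description of $\Lambda$ from \cref{lem:functoriality:frob-functor} (giving $\kappa_\heartsuit \circ \Lambda \simeq (\pi_0)_* \circ \kappa_{\Cat D}$) and the t-exactness of $\kappa_{\Cat D}$ deduced from $s\kappa_{\Cat D} \simeq U_{\Cat D}$ and $t\kappa_{\Cat D} \simeq \Cat D(F_*)\circ U_{\Cat D}$ together with the levelwise t-structure on $\Fun{\Delta^1}{\Cat D(\ModDisc{R})}$, so that truncation commutes past $\kappa_{\Cat D}$. The paper merely organizes this as a vertical pasting of three commuting squares (truncation, inclusion of hearts, then $\Lambda$) instead of your intermediate identification of the left vertical functor with $\Lambda$ via the pullback components, and it glosses over the same coherence bookkeeping you flag.
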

\begin{proof}
    By the description of the equivalence of \cref{lem:affine:heart-identification} and of the t-structure on $\Fun{\Delta^1}{\Cat D(\ModDisc{R})}$, 
    we have to show that the following diagram commutes:
    \begin{center}
        \begin{tikzcd}
            \Frob{\Cat D(\ModDisc{R})}{\Cat D(F_*)} \ar[d, "\pi_0"'] \ar[r, "\kappa_{\Cat D}"] & \Fun{\Delta^1}{\Cat D(\ModDisc{R})} \ar[d, "\pi_0"] \\
            \Frob{\Cat D(\ModDisc{R})}{\Cat D(F_*)}^\heartsuit \ar[d, hook, "H"'] \ar[r, "\kappa_{\Cat D}^{\heartsuit}"] & \Fun{\Delta^1}{\Cat D(\ModDisc{R})}^\heartsuit \ar[d, hook, "H"] \\
            \Frob{\Cat D(\ModDisc{R})}{\Cat D(F_*)} \ar[d, "\Lambda"'] \ar[r, "\kappa_{\Cat D}"'] & \Fun{\Delta^1}{\Cat D(\ModDisc{R})} \ar[d, "(\pi_0)_*"] \\
            \Frob{\ModDisc{R}}{F_*} \ar[r, "\kappa_{\heartsuit}"'] &\Fun{\Delta^1}{\ModDisc{R}} \rlap{.}
        \end{tikzcd}
    \end{center}
    Note that the functor $\kappa_{\Cat D}$ is t-exact because both functors $s\kappa_{\Cat D} \cong U_{\Cat D}$ and $t\kappa_{\Cat D} \cong \Cat D(F_*)$ are.
    This implies that the restriction $\kappa_{\Cat D}^\heartsuit$ appearing in the diagram is well-defined and that the two upper squares commute.
    The lower square commutes by definition of $\Lambda$.
\end{proof}

\begin{lem} \label{lem:affine:Phi-heart-equivalence}
    Under the identification of the heart of $\Frob{\Cat D(\ModDisc{R})}{\Cat D(F_*)}$ from \cref{lem:affine:heart-identification},
    the functor 
    \begin{equation*}
        \Phi^\heartsuit \coloneqq \pi_0 \Phi H \colon \Frob{\ModDisc{R}}{F_*} \to \Frob{\ModDisc{R}}{F_*}
    \end{equation*}
    is equivalent to the identity.
\end{lem}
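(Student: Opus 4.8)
The plan is to use that $\Frob{\ModDisc{R}}{F_*}$ is, by definition, the pullback of $(\id{\ModDisc{R}}, F_*)\colon \ModDisc{R} \to (\ModDisc{R})^2$ along $(s,t)\colon \Fun{\Delta^1}{\ModDisc{R}} \to (\ModDisc{R})^2$. Hence a functor into $\Frob{\ModDisc{R}}{F_*}$ is determined, up to equivalence, by its composites with $U_\heartsuit$ and with $\kappa_\heartsuit$, together with the canonical identification of $(s,t)\circ\kappa_\heartsuit\circ(-)$ with $(\id{\ModDisc{R}},F_*)\circ U_\heartsuit\circ(-)$. So it suffices to exhibit natural equivalences $U_\heartsuit\Phi^\heartsuit \cong U_\heartsuit$ and $\kappa_\heartsuit\Phi^\heartsuit \cong \kappa_\heartsuit$ which are compatible with this extra datum.

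For the forgetful leg, recall from \cref{lem:affine:heart-identification} that under the identification of the heart the functor $U_{\Cat D}$ restricts on hearts to $U_\heartsuit$; concretely, with $\pi_0\colon \Frob{\Cat D(\ModDisc{R})}{\Cat D(F_*)} \to \Frob{\ModDisc{R}}{F_*}$ as in \cref{lem:affine:kappa-heart}, one has $U_\heartsuit\pi_0 \cong \pi_0 U_{\Cat D}$ (using that $U_{\Cat D}$ is t-exact and that $U_\heartsuit\Lambda \cong \pi_0 U_{\Cat D}$ by the functoriality of $\Frob{-}{-}$ applied to the square $h$). Together with $U_{\Cat D}\Phi \cong \Cat D(U_\heartsuit)$ from the construction of $\Phi$ this gives
\[
U_\heartsuit\Phi^\heartsuit \;=\; U_\heartsuit \pi_0 \Phi H \;\cong\; \pi_0 U_{\Cat D}\Phi H \;\cong\; \pi_0\,\Cat D(U_\heartsuit)\,H .
\]
Since $U_\heartsuit$ is exact, $\Cat D(U_\heartsuit)$ is t-exact with heart-restriction $U_\heartsuit$, so $\Cat D(U_\heartsuit)\,H$ is equivalent to the heart inclusion of $\Cat D(\ModDisc{R})$ precomposed with $U_\heartsuit$; as the heart functor $\pi_0$ is a left inverse of that inclusion, the right-hand side is $\cong U_\heartsuit$.

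For the $\kappa$-leg, \cref{lem:affine:kappa-heart} gives $\kappa_\heartsuit\pi_0 \cong (\pi_0)_*\kappa_{\Cat D}$, and the construction of $\Phi$ gives $\kappa_{\Cat D}\Phi \cong \xi\,\Cat D(\kappa_\heartsuit)$, whence
\[
\kappa_\heartsuit\Phi^\heartsuit \;=\; \kappa_\heartsuit \pi_0 \Phi H \;\cong\; (\pi_0)_*\kappa_{\Cat D}\Phi H \;\cong\; (\pi_0)_*\,\xi\,\Cat D(\kappa_\heartsuit)\,H .
\]
Here $\Cat D(\kappa_\heartsuit)$ is t-exact (as $\kappa_\heartsuit$ is exact, noted before the definition of $\Phi$) with heart-restriction $\kappa_\heartsuit$, and $\xi$ is t-exact with heart-restriction the canonical identification $\Fun{\Delta^1}{\ModDisc{R}} \cong \Fun{\Delta^1}{\Cat D(\ModDisc{R})}^\heartsuit$. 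Hence $\xi\,\Cat D(\kappa_\heartsuit)\,H$ is the relevant heart inclusion precomposed with $\kappa_\heartsuit$, and applying $(\pi_0)_*$, which is levelwise $\pi_0$ and thus a left inverse of that inclusion, returns $\kappa_\heartsuit$.

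It remains to see that the two equivalences just produced are compatible with the cartesian structure of $\Frob{\ModDisc{R}}{F_*}$, so that the $2$-categorical universal property of the pullback upgrades them to an equivalence $\Phi^\heartsuit \cong \id{\Frob{\ModDisc{R}}{F_*}}$. I expect this coherence check — and not any single computation — to be the only real obstacle. It can be handled formally: each of the equivalences used above ($U_{\Cat D}\Phi \cong \Cat D(U_\heartsuit)$, $\kappa_{\Cat D}\Phi \cong \xi\,\Cat D(\kappa_\heartsuit)$, and the identifications of \cref{lem:affine:heart-identification,lem:affine:kappa-heart}) is by construction part of a commuting square over $(\ModDisc{R})^2$ or $\Cat D(\ModDisc{R})^2$, and pasting these squares produces exactly the required coherence datum. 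More transparently, one may evaluate on an object $(M, \kappa_M\colon M \to F_*M)$: $H$ sends it to itself viewed as a complex concentrated in degree $0$, $\Phi$ preserves this concentration because each of $\Cat D(U_\heartsuit)$, $\Cat D(\kappa_\heartsuit)$, $\xi$ acts by the corresponding abelian functor on objects of the heart, and $\pi_0$ takes homology levelwise, so $\Phi^\heartsuit(M,\kappa_M) \cong (M,\kappa_M)$ naturally in $(M,\kappa_M)$.
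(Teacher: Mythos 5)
Your proof is correct and follows the paper's strategy in its essentials: for the $\kappa$-leg you produce exactly the paper's chain of identifications (via \cref{lem:affine:kappa-heart}, the defining triangle $\kappa_{\Cat D}\Phi \cong \xi\,\Cat D(\kappa_\heartsuit)$, and the fact that the derived functors restrict on hearts to the original abelian functors), and you conclude by the universal property of the defining pullback. The one organizational difference is the $U$-leg: the paper does not compute $U_\heartsuit\Phi^\heartsuit \cong U_\heartsuit$ independently, but \emph{defines} it as $s$ applied to the $\kappa$-leg equivalence $\omega$, using $U_\heartsuit \cong s\kappa_\heartsuit$; this makes the compatibility of the two equivalences in the defining pullback diagram automatic "by construction", whereas your version, with both legs computed separately, leaves a genuine coherence check over $(\ModDisc{R})^2$ that you must then discharge. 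You flag this and your proposed resolutions are fine — pasting the defining squares works, and the pointwise check is legitimate here precisely because $\Frob{\ModDisc{R}}{F_*}$, $\ModDisc{R}$, $\Fun{\Delta^1}{\ModDisc{R}}$ and $(\ModDisc{R})^2$ are ordinary categories, so commutativity of the relevant square of natural transformations is a condition checkable on objects — but the paper's trick lets one avoid this extra step entirely, at the cost of not exhibiting the $U$-leg equivalence by an independent computation.
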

\begin{proof}
    We first show that there is an equivalence $\omega \colon \kappa_\heartsuit \Phi^\heartsuit \cong \kappa_\heartsuit$.
    It is given by the following chain of equivalences:
    \begin{align*}
        \kappa_\heartsuit \Phi^\heartsuit 
        &= \kappa_\heartsuit \pi_0 \Phi H 
        \cong (\pi_0)_* \kappa_{\Cat D} \Phi H 
        = (\pi_0)_* \xi \Cat D(\kappa_{\heartsuit}) H \\
        &= (\pi_0)_* \xi H \kappa_{\heartsuit} 
        = (\pi_0)_* H_* \kappa_{\heartsuit} 
        \cong \kappa_{\heartsuit}.
    \end{align*}
    Here, the equality signs are just definitions, whereas the equivalences 
    are given by \cref{lem:affine:kappa-heart} and since $(\pi_0)_* H_* \cong (\pi_0 H)_* \cong \id{}$.  

    Thus, we also get an equivalence $\eta \colon U_\heartsuit \Phi^\heartsuit \cong U_\heartsuit$ as follows:
    \begin{equation*}
        U_\heartsuit \Phi^\heartsuit \cong s \kappa_\heartsuit \Phi^\heartsuit \xrightarrow[\simeq]{\omega}  s \kappa_\heartsuit \cong U_\heartsuit.
    \end{equation*}
    In particular, by construction, $\omega$ and $\eta$ are compatible in the defining diagram of $\Frob{\ModDisc{R}}{F_*}$,
    and thus, by the universal property of the pullback, we get an equivalence $\Phi^\heartsuit \cong \id{\Frob{\ModDisc{R}}{F_*}}$.
\end{proof}

We now show that both relevant $\infty$-categories have a canonical stable compact generator (in the sense of \cref{defn:cpgen:stable-generator}).
Using the Schwede--Shipley theorem, this will also imply that we can identify the $\infty$-categories with certain module categories.
\begin{lem} \label{lem:affine:DFrob-cp-gen}
    The stable $\infty$-category $\Cat{D}(\Frob{\ModDisc{R}}{F_*})$
    has a stable compact generator $A \coloneqq HL_\heartsuit(R)$.
\end{lem}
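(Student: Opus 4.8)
The plan is to verify the three defining conditions of a stable compact generator (in the sense of \cref{defn:cpgen:stable-generator}) for the object $A = HL_\heartsuit(R)$: namely that $A$ is (i) compact, (ii) generates $\Cat D(\Frob{\ModDisc{R}}{F_*})$, and (iii) has no negative self-extensions, i.e.\ $\map{}{A}{A[-n]} \simeq 0$ for $n > 0$ (equivalently $A$ is connective and its mapping spectrum into $A$ is connective). I would organize the argument around the adjunction $L_\heartsuit \dashv U_\heartsuit$ on hearts and its derived counterpart, together with \cref{lem:affine:maps-from-unit}-style computations of mapping spectra out of a free object.

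First I would identify mapping spectra out of $A$. Since $\Frob{\ModDisc{R}}{F_*}$ is Grothendieck abelian with enough projectives only after we understand $L_\heartsuit$, the key point is that the derived functor $\Cat D(L_\heartsuit)$ is left adjoint to $\Cat D(U_\heartsuit)$ (both $U_\heartsuit$ and $L_\heartsuit$ being exact, or at least $U_\heartsuit$ exact and colimit-preserving so that $\Cat D(U_\heartsuit)$ has a left adjoint), and that $A \simeq \Cat D(L_\heartsuit)(R)$ since $HL_\heartsuit(R)$ sits in degree $0$ and $R$ is projective in $\ModDisc{R}$, so no higher derived terms appear. Then for any $N \in \Cat D(\Frob{\ModDisc{R}}{F_*})$ we get
\begin{equation*}
    \map{\Cat D(\Frob{\ModDisc{R}}{F_*})}{A}{N} \simeq \map{\Cat D(\ModDisc{R})}{R}{\Cat D(U_\heartsuit)(N)} \simeq \fgt{R}\bigl(\Cat D(U_\heartsuit)(N)\bigr),
\end{equation*}
using \cref{lem:affine:derived-cat-of-modules} and \cref{lem:affine:maps-from-unit} (with $S = R$) for the last step, where I regard $\Cat D(\ModDisc{R}) \simeq \Mod{R}$. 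Compactness of $A$ then follows because $\Cat D(U_\heartsuit)$ preserves colimits (it is the derived functor of a colimit-preserving functor, hence colimit-preserving, as $U_\heartsuit$ is colimit-preserving by \cite[Corollary 2.8 (i)]{cartmod}) and $\fgt{R}$ preserves colimits by \cref{lem:affine:free-forget}, so $\map{}{A}{-}$ commutes with filtered colimits (indeed all colimits). The connectivity condition (iii) follows from the same formula: $\map{}{A}{A} \simeq \fgt{R}\Cat D(U_\heartsuit)(A)$, and $\Cat D(U_\heartsuit)(A) = \Cat D(U_\heartsuit)\Cat D(L_\heartsuit)(R) \simeq \bigsqcup_{n \ge 0}(\mathbf L F^*)^n(R)$ by the derived analogue of \cref{lem:affine:frob-left-adjoint} applied to the discrete setting — wait, more directly: $\Cat D(U_\heartsuit)\Cat D(L_\heartsuit) \simeq \Cat D(U_\heartsuit L_\heartsuit)$, and $U_\heartsuit L_\heartsuit(R) = \bigoplus_{n\ge 0} F_*^n F^{*n} R$ lives in the heart (it is a discrete module, $F$ being possibly non-flat but this is a pushforward of a module, still discrete), hence is connective, and $\fgt{R}$ is t-exact by \cref{lem:affine:forget-t-exact}, so $\map{}{A}{A}$ is connective.

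For generation (ii), I would argue that an object $N$ with $\map{}{A}{N[k]} \simeq 0$ for all $k \in \Z$ must be zero. By the mapping-spectrum formula this says $\fgt{R}\Cat D(U_\heartsuit)(N) \simeq 0$, hence $\Cat D(U_\heartsuit)(N) \simeq 0$ since $\fgt{R}$ is conservative (\cref{lem:affine:free-forget}); but $U_\heartsuit$ is conservative by \cite[Corollary 2.8 (b)]{cartmod}, and I would invoke the fact that a conservative, colimit-preserving, t-exact functor between derived $\infty$-categories of Grothendieck abelian categories is itself conservative on the derived level (one checks on homotopy objects: $H_n(\Cat D(U_\heartsuit)N) \simeq U_\heartsuit(H_n N)$ since $\Cat D(U_\heartsuit)$ is t-exact, so all $H_n N$ vanish, so $N \simeq 0$ — here I use that $\Cat D(\Frob{\ModDisc{R}}{F_*})$ is left-complete, or at least that vanishing of all homotopy objects forces vanishing, which holds for the standard t-structure on the derived $\infty$-category of a Grothendieck abelian category). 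I expect the main obstacle to be the bookkeeping around the derived adjunction $\Cat D(L_\heartsuit) \dashv \Cat D(U_\heartsuit)$ and the identification $A \simeq \Cat D(L_\heartsuit)(R)$: one must be careful that $L_\heartsuit$ need not be exact (the Frobenius $F^*$ is not exact for general $R$), so $\Cat D(L_\heartsuit)$ is the derived functor of a non-exact functor, and $A = HL_\heartsuit(R)$ is the naive image of the abelian-categorical $L_\heartsuit(R)$, not a priori the derived image. The cleanest fix is to observe that $R$ is a projective object of $\ModDisc{R}$, and $L_\heartsuit$ preserves projectives (being a left adjoint of an exact functor $U_\heartsuit$), so $L_\heartsuit(R)$ is projective in $\Frob{\ModDisc{R}}{F_*}$ and is thus computed correctly by the derived functor; alternatively, one computes $\map{}{HL_\heartsuit(R)}{-}$ directly via the adjunction $L_\heartsuit \dashv U_\heartsuit$ on abelian categories together with the universal property of $\Cat D(-)$ as the derived $\infty$-category, bypassing $\Cat D(L_\heartsuit)$ entirely.
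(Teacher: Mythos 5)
Your overall strategy — verify the two conditions of \cref{defn:cpgen:stable-generator} directly via a formula $\map{}{A}{N} \simeq \fgt{R}\,\Cat D(U_\heartsuit)(N)$ — is genuinely different from the paper's proof, and it has a gap at exactly its central step. The paper does not (and cannot, with its machinery) form $\Cat D(L_\heartsuit)$: the derived-functor formalism used throughout (\cite[Proposition A.2, Notation A.3]{cartmod}) applies only to \emph{exact} colimit-preserving functors, and $L_\heartsuit$ is not exact (its underlying functor involves $F^*$, which is not flat in general). So the asserted adjunction $\Cat D(L_\heartsuit) \dashv \Cat D(U_\heartsuit)$ and the identification $A \simeq \Cat D(L_\heartsuit)(R)$ are not available as stated: you would first have to prove that $\Cat D(U_\heartsuit)$ admits a left adjoint at all (e.g.\ via accessibility plus preservation of limits, which for infinite products needs an $\mathrm{AB}4^*$-type argument about derived products), and then identify its value on $HR$ with $HL_\heartsuit(R)$. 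Your proposed fix via projectivity of $L_\heartsuit(R)$ is the right idea, but as written ("is thus computed correctly by the derived functor") it is circular-adjacent: identifying the left adjoint's value at $HR$ is tantamount to the mapping-spectrum formula you are trying to establish. To make your route rigorous one should argue directly that for the projective compact generator $P = L_\heartsuit(R)$ one has $\pi_n\map{}{HP}{N} \cong \Hom(P, H_n N)$ for \emph{all} $N$ (e.g.\ because a single projective placed in degree $0$ is K-projective), and then run your compactness and conservativity arguments; that is essentially what the paper's appendix packages once and for all. Two smaller points: your condition (iii) is not part of \cref{defn:cpgen:stable-generator} and is not needed here (it is proved separately later, in the analogue of \cref{lem:affine:mapping-spectrum-A-discrete}); and within that superfluous part the formula $U_\heartsuit L_\heartsuit(R) \cong \bigoplus_{n\ge 0} F_*^n F^{*n}R$ is wrong — by \cite[Corollary 4.4]{cartmod} the underlying module is $\bigoplus_{n \ge 0} F^{*n}R$, with no $F_*^n$ twist (the two differ as $R$-modules, e.g.\ for $R = \Fp[x]$).

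For comparison, the paper's proof is a two-step reduction using the appendix results you did not invoke: $R$ is a compact projective generator of $\ModDisc{R}$; since $U_\heartsuit$ is conservative and colimit-preserving, \cref{lem:cpgen:preservation} shows $L_\heartsuit(R)$ is a compact projective generator of the abelian category $\Frob{\ModDisc{R}}{F_*}$; and \cref{lem:cpgen:abelian-implies-stable} (proved via Gabriel's theorem, which identifies the abelian category with $\ModDisc{\End{L_\heartsuit(R)}}$ and hence its derived $\infty$-category with a module category) then yields that $HL_\heartsuit(R)$ is a stable compact generator. This sidesteps precisely the unbounded-derived-category subtleties your direct computation runs into, and it never needs a derived version of $L_\heartsuit$.
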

\begin{proof}
    Using \cref{lem:cpgen:abelian-implies-stable}, it suffices to show that $L_\heartsuit(R)$ is a compact projective generator (in the sense of \cref{defn:cpgen:abelian-generator})
    of the Grothendieck abelian category $\Frob{\ModDisc{R}}{F_*}$.
    By \cite[Corollary 2.8]{cartmod}, the forgetful functor $U_{\heartsuit}$
    is a conservative and colimit-preserving functor between Grothendieck abelian categories.
    Then \cref{lem:cpgen:preservation} shows that $L_\heartsuit$ preserves compact projective generators.
    Since $R$ is a compact projective generator of $\ModDisc{R}$
    we see that $L_{\heartsuit}(R)$ is a compact projective generator of $\Frob{\ModDisc{R}}{F_*}$.
\end{proof}

\begin{cor} \label{lem:affine:DFrob-is-module-cat}
    There is a canonical equivalence of $\infty$-categories
    \begin{equation*}
        \Mod{\End{A}} \xrightarrow{\simeq} \Cat D(\Frob{\ModDisc{R}}{F_*}).
    \end{equation*}
\end{cor}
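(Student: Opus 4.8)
The plan is to deduce the statement directly from the Schwede--Shipley theorem (\cref{lem:affine:schwede-shipley}), applied to the pair $(\Cat D(\Frob{\ModDisc{R}}{F_*}), A)$, using that $A = HL_\heartsuit(R)$ is a stable compact generator by \cref{lem:affine:DFrob-cp-gen}.

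First I would observe that $(\Cat D(\Frob{\ModDisc{R}}{F_*}), A)$ is a well-defined object of ${(\PrSt)}_{\Mod{\S}/}$. Indeed, $\Frob{\ModDisc{R}}{F_*}$ is Grothendieck abelian by \cref{lem:functoriality:frob-functor-on-groth-prst}, so $\Cat D(\Frob{\ModDisc{R}}{F_*})$ is presentable stable, and — as recorded in the remark following the definition of $\Theta$ — specifying the object $A$ is the same as specifying a colimit-preserving exact functor $\Mod{\S} \simeq \Sp \to \Cat D(\Frob{\ModDisc{R}}{F_*})$.

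Next, since $A$ is a stable compact generator, the characterization of the essential image in \cref{lem:affine:schwede-shipley} shows that $(\Cat D(\Frob{\ModDisc{R}}{F_*}), A)$ lies in the essential image of the fully faithful functor $\Theta$. As $\Theta$ is fully faithful and admits a right adjoint, its counit is an equivalence on objects of the essential image; evaluating it on our pair and using that the right adjoint sends $(\Cat C, M)$ to $\End{M}$ yields a canonical equivalence
\[
    \Theta(\End{A}) \xrightarrow{\simeq} \bigl(\Cat D(\Frob{\ModDisc{R}}{F_*}), A\bigr)
\]
in ${(\PrSt)}_{\Mod{\S}/}$. Since $\Theta(\End{A}) \cong (\Mod{\End{A}}, \End{A})$, forgetting the distinguished objects produces the desired canonical equivalence $\Mod{\End{A}} \xrightarrow{\simeq} \Cat D(\Frob{\ModDisc{R}}{F_*})$.

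There is no real obstacle here: the corollary is a formal consequence of \cref{lem:affine:DFrob-cp-gen} and the previously recalled Schwede--Shipley theorem. The only subtlety worth spelling out is the word \emph{canonical} in the statement — this is why I would extract the equivalence from the counit of the adjunction $\Theta \dashv (-\mapsto\End{-})$ rather than merely invoking essential surjectivity onto the image.
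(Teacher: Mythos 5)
Your proposal is correct and follows essentially the same route as the paper: both invoke \cref{lem:affine:DFrob-cp-gen} to place $(\Cat D(\Frob{\ModDisc{R}}{F_*}), A)$ in the essential image of $\Theta$ and then conclude that the counit of the adjunction from \cref{lem:affine:schwede-shipley} is an equivalence, identifying $\Theta(\End{A}) \cong (\Mod{\End{A}}, \End{A})$ with that pair. Your extra remark about why the counit (rather than bare essential surjectivity) yields the \emph{canonical} equivalence is exactly the point implicit in the paper's argument.
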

\begin{proof}
    There is a morphism
    \begin{equation*}
        (\Mod{\End{A}}, \End{A}) \to (\Cat D(\Frob{\ModDisc{R}}{F_*}), A)
    \end{equation*}
    in ${(\PrSt)}_{{\Mod{\S}}/}$ which is given by the counit of the adjunction from \cref{lem:affine:schwede-shipley}.
    The same result implies that the target of this morphism is in the essential image of $\Theta$,
    as $A$ is a stable compact generator of $\Cat D(\Frob{\ModDisc{R}}{F_*})$ by \cref{lem:affine:DFrob-cp-gen}.
    Using this, it follows from the fully faithfulness of $\Theta$ that the counit above
    is an equivalence in this case.
\end{proof}

\begin{lem} \label{lem:affine:frob-ring-is-discrete}
    The object $L_{\Cat D} HR \in \Frob{\Cat D(\ModDisc{R})}{\Cat D(F_*)}$ lies in the heart.
\end{lem}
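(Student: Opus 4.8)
The plan is to verify the claim after applying the forgetful functor $U_{\Cat D} \colon \Frob{\Cat D(\ModDisc{R})}{\Cat D(F_*)} \to \Cat D(\ModDisc{R})$. By the definition of the t-structure on $\Frob{\Cat D(\ModDisc{R})}{\Cat D(F_*)}$ recalled just above, an object lies in the heart $\Frob{\Cat D(\ModDisc{R})}{\Cat D(F_*)}^\heartsuit$ if and only if its image under $U_{\Cat D}$ lies in $\Cat D(\ModDisc{R})^\heartsuit$. Hence it suffices to show that $U_{\Cat D}L_{\Cat D}(HR)$ is discrete.

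For this I would first invoke \cref{lem:affine:frob-left-adjoint} to identify the underlying module explicitly as $U_{\Cat D}L_{\Cat D}(HR) \cong \bigsqcup_{n \ge 0} (\mathbf{L}F^*)^n(HR)$. Next I would observe that $HR$ is the image under the inclusion of the heart of the object $R \in \ModDisc{R}$, which under the symmetric monoidal equivalence $\Cat D(\ModDisc{R}) \cong \Mod{R}$ of \cref{lem:affine:derived-cat-of-modules} corresponds to the tensor unit. Since $\mathbf{L}F^*$ is symmetric monoidal by \cref{lem:affine:adjunction-on-module-cats}, it preserves the tensor unit, so $\mathbf{L}F^*(HR) \cong HR$ and, iterating, $(\mathbf{L}F^*)^n(HR) \cong HR$ for all $n \ge 0$. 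Consequently $U_{\Cat D}L_{\Cat D}(HR) \cong \bigsqcup_{n \ge 0} HR$.

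It then remains to note that a countable coproduct of copies of the discrete object $R$ is again discrete: under $\Cat D(\ModDisc{R}) \cong \Mod{R}$ this coproduct is the free module $R^{(\N)}$, or equivalently the heart $\Cat D(\ModDisc{R})^\heartsuit \cong \ModDisc{R}$ is closed under coproducts in $\Cat D(\ModDisc{R})$ because $\ModDisc{R}$ satisfies $\mathrm{AB}4$. Thus $U_{\Cat D}L_{\Cat D}(HR)$ lies in the heart of $\Cat D(\ModDisc{R})$, and the claim follows. I do not expect a serious obstacle here; the only points requiring a little care are the identification of $HR$ with the tensor unit (so that the symmetric monoidality of $\mathbf{L}F^*$ can be applied) and the observation that the coproduct computing $U_{\Cat D}L_{\Cat D}(HR)$, taken in $\Cat D(\ModDisc{R})$, stays inside the heart.
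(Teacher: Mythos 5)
Your proposal is correct and follows essentially the same route as the paper: check discreteness after $U_{\Cat D}$, use \cref{lem:affine:frob-left-adjoint} to write $U_{\Cat D}L_{\Cat D}HR \cong \bigsqcup_{n\ge 0}(\mathbf{L}F^*)^n HR$, identify each term with $HR$ via symmetric monoidality of $\mathbf{L}F^*$, and conclude that the coproduct stays in the heart. The only (immaterial) difference is the justification of that last step: you appeal to $\mathrm{AB}4$ for $\ModDisc{R}$ (or the identification with $R^{(\N)}$ in $\Mod{R}$), while the paper notes that coproducts preserve both connective and coconnective objects, the latter because the t-structure is compatible with filtered colimits.
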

\begin{proof}
    By definition of the induced t-structure it suffices to show that $U_{\Cat D}L_{\Cat D} HR$ lies in the heart.
    By \cref{lem:affine:frob-left-adjoint} we have $U_{\Cat D}L_{\Cat D}HR \cong \bigsqcup_{n \ge 0} (\mathbf{L}F^*)^n HR$.
    Since $\bigsqcup_{n \ge 0}$ preserves 
    connective objects (because connective objects are closed under colimits) and also coconnective objects (since the t-structure 
    is stable under filtered colimits, cf.\ \cite[Proposition 1.3.5.21]{higheralgebra}),
    it suffices to show that $(\mathbf{L}F^*)^n HR$ is discrete for all $n$.
    Since $\mathbf{L}F^*$ is symmetric monoidal, cf.\ \cref{lem:affine:adjunction-on-module-cats}, 
    and since $HR \in \Cat D(\ModDisc{R})$ is the tensor unit, we know that $\mathbf{L}F^* HR \cong HR$.
    Inductively, we thus see that $(\mathbf{L}F^*)^n HR \cong HR$, which is clearly in the heart.
\end{proof}

\begin{lem} \label{lem:affine:L-heart-id}
    Under the identification of the heart of $\Frob{\Cat D(\ModDisc{R})}{\Cat D(F_*)}$ from \cref{lem:affine:heart-identification},
    there is a canonical equivalence $L_\heartsuit \cong \pi_0 L_{\Cat D} H$ of functors $\ModDisc{R} \to \Frob{\ModDisc{R}}{F_*}$.
\end{lem}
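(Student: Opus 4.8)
The plan is to exhibit $\pi_0 L_{\Cat D} H$ as a left adjoint of $U_\heartsuit$ and then conclude by uniqueness of adjoints, since $L_\heartsuit$ is by construction a left adjoint of $U_\heartsuit$.

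The first point is that $L_{\Cat D}$ is right t-exact: it is left adjoint to $U_{\Cat D}$, which is t-exact (as recorded just before \cref{lem:affine:Phi-t-exact}) and hence in particular left t-exact, so its left adjoint preserves connective objects. Since $HM \in \Cat D(\ModDisc R)$ is discrete, hence connective, this shows $L_{\Cat D} H M \in \Frob{\Cat D(\ModDisc R)}{\Cat D(F_*)}_{\ge 0}$ for every $M \in \ModDisc R$.

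Next I would run the standard ``hearts of an adjunction'' computation. Write $U_{\Cat D}^\heartsuit$ for the functor induced by $U_{\Cat D}$ on hearts (it exists since $U_{\Cat D}$ is t-exact) and, for the moment, write $P$ for the projection $\Frob{\Cat D(\ModDisc R)}{\Cat D(F_*)} \to \Frob{\Cat D(\ModDisc R)}{\Cat D(F_*)}^\heartsuit$ onto the heart, so that the functor $\pi_0$ of the statement is $(\Lambda H) \circ P$ in the notation of \cref{lem:affine:heart-identification,lem:affine:kappa-heart}. For $M \in \ModDisc R$ and $N$ in the heart, the connectivity of $L_{\Cat D} H M$ gives, via the truncation adjunction on connective objects and full faithfulness of the heart inclusion, a natural equivalence of mapping spaces between $\map{}{P L_{\Cat D} H M}{N}$ (computed in $\Frob{\Cat D(\ModDisc R)}{\Cat D(F_*)}^\heartsuit$) and $\map{}{L_{\Cat D} H M}{H N}$ (computed in $\Frob{\Cat D(\ModDisc R)}{\Cat D(F_*)}$); then $L_{\Cat D} \dashv U_{\Cat D}$ turns this into $\map{}{H M}{U_{\Cat D} H N}$ in $\Cat D(\ModDisc R)$; t-exactness of $U_{\Cat D}$ rewrites $U_{\Cat D} H N \cong H\, U_{\Cat D}^\heartsuit N$; and full faithfulness of the heart inclusion $\ModDisc R \hookrightarrow \Cat D(\ModDisc R)$ identifies the result with $\map{\ModDisc R}{M}{U_{\Cat D}^\heartsuit N}$. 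All of these are natural in $M$ and $N$, so by the Yoneda lemma $P L_{\Cat D} H$ is a left adjoint of $U_{\Cat D}^\heartsuit$.

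It remains to transport this across the equivalence $\Lambda H \colon \Frob{\Cat D(\ModDisc R)}{\Cat D(F_*)}^\heartsuit \xrightarrow{\simeq} \Frob{\ModDisc R}{F_*}$ of \cref{lem:affine:heart-identification}. For this I would restrict the naturality square $U_\heartsuit \circ \Lambda \cong \pi_0 \circ U_{\Cat D}$ — the instance of the commuting square of \cref{lem:functoriality:frob-functor} for the morphism $\pi_0 \colon (\Cat D(\ModDisc R), \Cat D(F_*)) \to (\ModDisc R, F_*)$ — along the heart inclusion $H$, using again that $U_{\Cat D}$ is t-exact and that $\pi_0 \circ H \cong \id{}$ on $\Cat D(\ModDisc R)^\heartsuit$; this yields $U_\heartsuit \circ (\Lambda H) \cong U_{\Cat D}^\heartsuit$, i.e.\ $\Lambda H$ intertwines $U_{\Cat D}^\heartsuit$ with $U_\heartsuit$ and carries $P L_{\Cat D} H$ to $(\Lambda H) \circ P \circ L_{\Cat D} H = \pi_0 L_{\Cat D} H$. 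Hence $\pi_0 L_{\Cat D} H$ and $L_\heartsuit$ are both left adjoints of $U_\heartsuit$, and are therefore canonically equivalent.

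The main obstacle is purely bookkeeping, concentrated in the last paragraph: one must keep the two heart inclusions (of $\Cat D(\ModDisc R)$ and of $\Frob{\Cat D(\ModDisc R)}{\Cat D(F_*)}$, both written $H$) and the two roles of $\pi_0$ (the $0$-th homology functor on $\Cat D(\ModDisc R)$ versus the heart-projection landing in $\Frob{\ModDisc R}{F_*}$ of \cref{lem:affine:kappa-heart}) apart, and check that the equivalence of \cref{lem:affine:heart-identification} really does intertwine $U_{\Cat D}$ with $U_\heartsuit$. Everything else is formal manipulation of adjunctions and t-structures.
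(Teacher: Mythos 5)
Your proposal is correct and follows essentially the same route as the paper: both exhibit $\pi_0 L_{\Cat D} H$ as a left adjoint of $U_\heartsuit$ (via the adjunction induced on hearts by $L_{\Cat D} \dashv U_{\Cat D}$, using that $U_{\Cat D}$ is t-exact and hence $L_{\Cat D}$ right t-exact) and conclude by uniqueness of left adjoints, after checking that the identification $\Lambda H$ of the hearts intertwines $U_{\Cat D}^\heartsuit$ with $U_\heartsuit$. The only difference is that the paper cites \cite[Proposition 1.3.17 (iii)]{bbd} for the induced adjunction on hearts, which you instead verify directly by the mapping-space computation.
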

\begin{proof}
    Note that the adjunction $L_{\Cat D} \dashv U_{\Cat D}$ 
    induces an adjunction $\pi_0 L_{\Cat D} H \dashv \pi_0 U_{\Cat D} H$ on the heart, cf.\ \cite[Proposition 1.3.17 (iii)]{bbd}.
    By uniqueness of left adjoints, it suffices to provide an equivalence $U_{\heartsuit} \cong \pi_0 U_{\Cat D} H$.
    Under the identification of the heart (via the functor $\Lambda H$),
    this is immediate from the definition of $\Lambda$.
\end{proof}

\begin{lem} \label{lem:affine:FrobD-cp-gen}
    The object $B \coloneqq \Phi(A)$ is a stable compact generator of $\Frob{\Cat D(\ModDisc{R})}{\Cat D(F_*)}$.
    Moreover, there is a canonical equivalence $B \cong L_{\Cat D}HR$.
\end{lem}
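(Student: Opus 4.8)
The plan is to produce the equivalence $B \cong L_{\Cat D}HR$ first, and then deduce that $B$ is a stable compact generator from the corresponding property of $A$ together with the equivalence of endomorphism spectra that we are about to establish. For the identification $B \cong L_{\Cat D}HR$, recall $B = \Phi(A) = \Phi HL_\heartsuit(R)$. Since $\Phi$ is t-exact (\cref{lem:affine:Phi-t-exact}) and $HL_\heartsuit(R)$ lies in the heart, $\Phi HL_\heartsuit(R)$ also lies in the heart, and under the identification of the heart from \cref{lem:affine:heart-identification} it is computed by $\Phi^\heartsuit(L_\heartsuit R) \cong L_\heartsuit(R)$ by \cref{lem:affine:Phi-heart-equivalence}. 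On the other hand, by \cref{lem:affine:frob-ring-is-discrete} the object $L_{\Cat D}HR$ lies in the heart, and by \cref{lem:affine:L-heart-id} its image under the heart-identification is $\pi_0 L_{\Cat D}H(R) \cong L_\heartsuit(R)$. Hence $\Phi(A)$ and $L_{\Cat D}HR$ both lie in the heart and become identified with $L_\heartsuit(R) \in \Frob{\ModDisc{R}}{F_*}$ under the equivalence of \cref{lem:affine:heart-identification}; since that equivalence is fully faithful, this yields the desired equivalence $B = \Phi(A) \cong L_{\Cat D}HR$ in $\Frob{\Cat D(\ModDisc{R})}{\Cat D(F_*)}$.

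For the statement that $B$ is a stable compact generator, I would invoke the criterion in \cref{defn:cpgen:stable-generator}: $B$ must be compact and the mapping spectra $\map{}{B}{-}$ must be jointly conservative (equivalently $B$ generates under colimits and shifts). Compactness: since $A$ is compact in $\Cat D(\Frob{\ModDisc{R}}{F_*})$ and $\Phi$ preserves colimits (both categories are presentable stable and $\Phi$ is built from derived functors of colimit-preserving functors; more directly, $U_{\Cat D}\Phi \cong \Cat D(U_\heartsuit)$ and $\kappa_{\Cat D}\Phi$ are colimit-preserving, and $U_{\Cat D}, \kappa_{\Cat D}$ are jointly conservative and colimit-preserving, so $\Phi$ is colimit-preserving), the image $B = \Phi(A)$ is compact provided $\Phi$ also preserves compactness — which follows because $\Phi$ has a colimit-preserving functor on both sides and one checks via the conservative colimit-preserving $U_{\Cat D}$ that $\map{}{B}{-}$ commutes with filtered colimits using $\map{}{L_{\Cat D}HR}{-} \cong \map{}{HR}{U_{\Cat D}-}$ together with compactness of $HR$ in $\Cat D(\ModDisc{R})$ and the fact that $U_{\Cat D}$ preserves filtered colimits. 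Generation: the objects $\Sigma^n B$ jointly detect equivalences because, using the equivalence $B \cong L_{\Cat D}HR$ and the adjunction $L_{\Cat D} \dashv U_{\Cat D}$, we have $\map{}{B}{M} \cong \map{\Cat D(\ModDisc{R})}{HR}{U_{\Cat D}M}$, and $HR$ generates $\Cat D(\ModDisc{R})$ while $U_{\Cat D}$ is conservative (\cite[Proposition~2.6]{cartmod}), so if all $\map{}{\Sigma^n B}{M} \simeq 0$ then $U_{\Cat D}M \simeq 0$ hence $M \simeq 0$.

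The main obstacle is the compactness of $B$: unlike conservativity and the heart computation, which are essentially formal once the earlier lemmas are in hand, compactness requires knowing that the left adjoint $L_{\Cat D}$ sends the compact generator $HR$ to a compact object, i.e.\ that $U_{\Cat D}$ preserves filtered colimits (so that $\map{}{L_{\Cat D}HR}{-} \cong \map{}{HR}{U_{\Cat D}-}$ commutes with them). This is where one must use that $U_{\Cat D}$ is not merely colimit-preserving but that $\Cat D(\ModDisc{R})$ is compactly generated with $HR$ compact, and invoke \cref{lem:affine:frob-left-adjoint} to control $U_{\Cat D}L_{\Cat D}$. I would carry out the steps in the order: (1) $B$ lies in the heart and equals $L_\heartsuit(R)$ there; (2) $L_{\Cat D}HR$ lies in the heart and equals $L_\heartsuit(R)$ there; (3) conclude $B \cong L_{\Cat D}HR$; (4) deduce compactness and generation of $B$ from those of $HR$ via the adjunction $L_{\Cat D} \dashv U_{\Cat D}$.
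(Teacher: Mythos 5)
Your proposal is correct and follows essentially the same route as the paper: first identify $B \cong L_{\Cat D}HR$ using t-exactness of $\Phi$, $\Phi^\heartsuit \cong \id{}$, \cref{lem:affine:L-heart-id} and \cref{lem:affine:frob-ring-is-discrete}, then transfer the stable compact generator property from $HR$ along the adjunction $L_{\Cat D} \dashv U_{\Cat D}$ using that $U_{\Cat D}$ is conservative and colimit-preserving. The only difference is cosmetic: the paper packages your inline adjunction argument for compactness and conservativity as \cref{lem:cpgen:preservation} (together with \cref{lem:cpgen:abelian-implies-stable} for $HR$), whereas you reprove it directly.
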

\begin{proof}
    We first establish the equivalence $B \cong L_{\Cat D}HR$.
    It is given by the following chain of equivalences:
    \begin{align*}
        \Phi(A) \cong \Phi H L_\heartsuit R \cong H \Phi^\heartsuit L_\heartsuit R \cong H L_\heartsuit R \cong H \pi_0 L_{\Cat D} H R \cong L_{\Cat D} H R.
    \end{align*}
    Here, the first equivalence is the definition of $A$,
    the second holds because $\Phi$ is t-exact by \cref{lem:affine:Phi-t-exact},
    and the third equivalence follows since $\Phi^\heartsuit \cong \id{}$ by \cref{lem:affine:Phi-heart-equivalence}.
    The fourth equivalence is just \cref{lem:affine:L-heart-id},
    whereas the last equivalence holds because $L_{\Cat D} HR$ is discrete by \cref{lem:affine:frob-ring-is-discrete}.

    Hence, it suffices to show that $L_{\Cat D}HR$ is a stable compact generator of $\Frob{\Cat D(\ModDisc{R})}{\Cat D(F_*)}$.
    Note that by \cite[Corollary 2.8]{cartmod}, the forgetful functor $U_{\Cat D}$
    is a conservative and colimit-preserving functor between presentable stable $\infty$-categories.
    Then \cref{lem:cpgen:preservation} shows that $L_{\Cat D}$ preserves stable compact generators.
    Since $HR$ is a stable compact generator of $\Cat D(\ModDisc{R})$ by \cref{lem:cpgen:abelian-implies-stable},
    we see that $L_{\Cat D}HR$ is a stable compact generator of $\Frob{\Cat D(\ModDisc{R})}{\Cat D(F_*)}$.
\end{proof}

\begin{cor} \label{lem:affine:FrobD-is-module-cat}
    There is a canonical equivalence of $\infty$-categories
    \begin{equation*}
        \Mod{\End{B}} \xrightarrow{\simeq} \Frob{\Cat D(\ModDisc{R})}{\Cat D(F_*)}.
    \end{equation*}
\end{cor}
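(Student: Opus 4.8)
The plan is to repeat the proof of \cref{lem:affine:DFrob-is-module-cat} essentially verbatim, with $A$ replaced by $B$. First recall that $\Frob{\Cat D(\ModDisc{R})}{\Cat D(F_*)}$ is a presentable stable $\infty$-category by \cref{lem:functoriality:frob-functor-on-groth-prst} (applied with $\Cat C = \PrSt$), so, regarding $B$ as the colimit-preserving exact functor $\Mod{\S} \to \Frob{\Cat D(\ModDisc{R})}{\Cat D(F_*)}$ that classifies it, the pair $(\Frob{\Cat D(\ModDisc{R})}{\Cat D(F_*)}, B)$ defines an object of ${(\PrSt)}_{\Mod{\S}/}$.

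Next I would invoke the counit of the adjunction from the Schwede--Shipley theorem (\cref{lem:affine:schwede-shipley}) to obtain a morphism
\[
    (\Mod{\End{B}}, \End{B}) \longrightarrow (\Frob{\Cat D(\ModDisc{R})}{\Cat D(F_*)}, B)
\]
in ${(\PrSt)}_{\Mod{\S}/}$. By \cref{lem:affine:FrobD-cp-gen}, $B$ is a stable compact generator of $\Frob{\Cat D(\ModDisc{R})}{\Cat D(F_*)}$, so \cref{lem:affine:schwede-shipley} guarantees that the target of this morphism lies in the essential image of the fully faithful functor $\Theta$. Since the counit of an adjunction with fully faithful left adjoint is an equivalence precisely on the objects of the essential image (this follows from the triangle identities together with the fact that the unit is an equivalence), the displayed morphism is an equivalence; passing to underlying $\infty$-categories gives the claimed equivalence $\Mod{\End{B}} \xrightarrow{\simeq} \Frob{\Cat D(\ModDisc{R})}{\Cat D(F_*)}$.

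There is essentially no obstacle here: the only inputs are the presentable stability of the target (needed so that it defines an object of ${(\PrSt)}_{\Mod{\S}/}$) and the fact, from \cref{lem:affine:FrobD-cp-gen}, that $B$ is a stable compact generator, both of which are already available. I would also remark that, combined with \cref{lem:affine:DFrob-is-module-cat} and the equivalence $B \cong L_{\Cat D}HR$ from \cref{lem:affine:FrobD-cp-gen}, this reduces the remaining work of the section to comparing the $\mathbb{E}_1$-ring spectra $\End{A}$ and $\End{B}$, which is carried out in \cref{lem:affine:eta-equivalence}.
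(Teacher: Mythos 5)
Your proposal is correct and follows essentially the same route as the paper: both apply the counit of the Schwede--Shipley adjunction from \cref{lem:affine:schwede-shipley} to the pair $(\Frob{\Cat D(\ModDisc{R})}{\Cat D(F_*)}, B)$ and use that $B$ is a stable compact generator (\cref{lem:affine:FrobD-cp-gen}) together with the fully faithfulness of $\Theta$ to conclude the counit is an equivalence. The paper simply phrases this as "the proof is now the same as that of \cref{lem:affine:DFrob-is-module-cat}," which is exactly the argument you spelled out.
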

\begin{proof}
    There is a morphism
    \begin{equation*}
        (\Mod{\End{B}}, \End{B}) \to (\Frob{\Cat D(\ModDisc{R})}{\Cat D(F_*)}, B)
    \end{equation*}
    in ${(\PrSt)}_{{\Mod{\S}}/}$ which is given by the counit of the adjunction from \cref{lem:affine:schwede-shipley}.
    The proof is now the same as the one of \cref{lem:affine:DFrob-is-module-cat},
    where we use that $B$ is a stable compact generator of $\Frob{\Cat D(\ModDisc{R})}{\Cat D(F_*)}$
    by \cref{lem:affine:FrobD-cp-gen}.
\end{proof}

Combining this corollary with \cref{lem:affine:DFrob-is-module-cat}, 
we see that the $\infty$-categories $\Cat D(\Frob{\ModDisc{R}}{F_*})$ and $\Frob{\Cat D(\ModDisc{R})}{\Cat D(F_*)}$ 
are equivalent if the $\mathbb{E}_1$-ring spectra $\End{A}$ and $\End{B}$ are equivalent.
Next, we show that the latter is indeed true.

\begin{lem} \label{lem:affine:A-B-heart}
    Both $A \in \Cat{D}(\Frob{\ModDisc{R}}{F_*})$
    and $B \in \Frob{\Cat D(\ModDisc{R})}{\Cat D(F_*)}$ 
    lie in the heart of the respective $\infty$-categories.
\end{lem}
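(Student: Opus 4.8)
The plan is to observe that both assertions are immediate consequences of constructions and lemmas already established, so no genuine computation is required; the only care needed is to cite the right t-exactness statement and to use the heart identification fixed earlier.

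For $A$, I would simply unwind the definition from \cref{lem:affine:DFrob-cp-gen}: by construction $A \coloneqq HL_\heartsuit(R)$ is the image, under the inclusion of the heart $H \colon \Frob{\ModDisc{R}}{F_*} \cong \Cat D(\Frob{\ModDisc{R}}{F_*})^\heartsuit \hookrightarrow \Cat D(\Frob{\ModDisc{R}}{F_*})$, of the object $L_\heartsuit(R)$ of the abelian category $\Frob{\ModDisc{R}}{F_*}$. Since $H$ is by definition the inclusion of the heart, $A$ lies in the heart of $\Cat D(\Frob{\ModDisc{R}}{F_*})$.

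For $B$, I would give either of two routes. The first uses that $\Phi$ is t-exact by \cref{lem:affine:Phi-t-exact}: a t-exact functor carries connective objects to connective objects and coconnective objects to coconnective objects, hence carries the heart into the heart; since $B = \Phi(A)$ and $A$ lies in the heart by the previous paragraph, $B$ lies in the heart of $\Frob{\Cat D(\ModDisc{R})}{\Cat D(F_*)}$. The second route invokes the canonical equivalence $B \cong L_{\Cat D}HR$ established in \cref{lem:affine:FrobD-cp-gen} together with \cref{lem:affine:frob-ring-is-discrete}, which says precisely that $L_{\Cat D}HR$ lies in the heart.

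Since every ingredient is already in place, I do not expect any real obstacle; the statement is recorded here only because the discreteness of $A$ and $B$ will be needed in the next step, where one compares the endomorphism $\mathbb{E}_1$-ring spectra $\End{A}$ and $\End{B}$ with the discrete ring $\op{R[F]}$.
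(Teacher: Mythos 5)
Your proposal is correct and follows essentially the same route as the paper: $A = HL_\heartsuit(R)$ lies in the heart by construction, and $B = \Phi(A)$ lies in the heart because $\Phi$ is t-exact (\cref{lem:affine:Phi-t-exact}). Your alternative argument for $B$ via $B \cong L_{\Cat D}HR$ and \cref{lem:affine:frob-ring-is-discrete} is also valid and non-circular, but it is just a second road to the same two-line observation.
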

\begin{proof}
    By definition, $A = HL_{\heartsuit} R$ is in the heart.
    Moreover, $B = \Phi(A)$ is in the heart since $\Phi$ is t-exact by \cref{lem:affine:Phi-t-exact}.
\end{proof}

\begin{lem} \label{lem:affine:mapping-spectrum-A-discrete}
    The mapping spectrum $\map{}{A}{A}$ is connective (in fact discrete, i.e.\ in $\Sp^\heartsuit$).
\end{lem}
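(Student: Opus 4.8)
The plan is to use that $A=HL_\heartsuit R$ is the image under the inclusion $H$ of the heart of a \emph{projective} object: in the proof of \cref{lem:affine:DFrob-cp-gen} it was shown that $P\coloneqq L_\heartsuit R$ is a compact projective generator — in particular a projective object — of the Grothendieck abelian category $\Cat A\coloneqq\Frob{\ModDisc{R}}{F_*}$. I will prove the more general statement that for any projective $P\in\Cat A$ and any $N\in\Cat A$ the mapping spectrum $\map{\Cat D(\Cat A)}{HP}{HN}$ is discrete, with $\pi_0\cong\Hom{\Cat A}{P}{N}$; the lemma is then the case $N=P$.

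For the discreteness, write $\pi_n\map{\Cat D(\Cat A)}{HP}{HN}\cong\Hom{\Cat D(\Cat A)}{HP}{(HN)[-n]}$. When $n\ge 1$, the object $(HN)[-n]$ lies in $\Cat D(\Cat A)_{\le -1}$ while $HP\in\Cat D(\Cat A)_{\ge 0}$, so this $\operatorname{Hom}$-group vanishes by the t-structure axioms; equivalently, $\map{\Cat D(\Cat A)}{HP}{HN}$ is coconnective because $HP$ and $HN$ both lie in the heart (cf.\ \cref{lem:affine:A-B-heart} for $A$ itself). When $n=-i$ with $i\ge 1$, the group $\Hom{\Cat D(\Cat A)}{HP}{(HN)[i]}$ is the Yoneda group $\operatorname{Ext}^i_{\Cat A}(P,N)$; since $\Cat A$ is Grothendieck abelian it has enough injectives, so picking an injective resolution $N\to I^\bullet$ we may compute $\map{\Cat D(\Cat A)}{HP}{HN}$ from the complex $\Hom{\Cat A}{P}{I^\bullet}$, and projectivity of $P$ makes $\Hom{\Cat A}{P}{-}$ exact, so this complex has cohomology concentrated in degree $0$ with value $\Hom{\Cat A}{P}{N}$. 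Hence $\pi_n\map{\Cat D(\Cat A)}{HP}{HN}=0$ for all $n\ne 0$ and $\pi_0\cong\Hom{\Cat A}{P}{N}$, which is the claim.

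The only point requiring care is the identification of $\map{\Cat D(\Cat A)}{HP}{HN}$ with the complex obtained from an injective resolution, i.e.\ that Lurie's derived $\infty$-category of a Grothendieck abelian category computes $\operatorname{RHom}$ on the heart in the classical way; granting that, projectivity of $P$ finishes the proof. Alternatively, one can bypass the $\operatorname{Ext}$-computation: since $\Cat A$ has a projective generator it has enough projectives, so $L_\heartsuit$ admits a left derived functor $\mathbf{L}L_\heartsuit$ which is left adjoint to the t-exact functor $\Cat D(U_\heartsuit)$, and as $R$ is projective in $\ModDisc{R}$ we get $\mathbf{L}L_\heartsuit(HR)\cong HL_\heartsuit R=A$; then
\begin{equation*}
    \map{}{A}{A}\cong\map{\Cat D(\ModDisc{R})}{HR}{\Cat D(U_\heartsuit)A}\cong\map{\Cat D(\ModDisc{R})}{HR}{H(U_\heartsuit P)},
\end{equation*}
which under the equivalence $\Cat D(\ModDisc{R})\cong\Mod{R}$ of \cref{lem:affine:derived-cat-of-modules} is $\fgt{R}(H(U_\heartsuit P))$ by the argument of \cref{lem:affine:maps-from-unit}, hence discrete.
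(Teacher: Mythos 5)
Your main argument is correct and is essentially the paper's own proof: coconnectivity because $A$ lies in the heart, and connectivity because the negative homotopy groups are the groups $\operatorname{Ext}^n(L_\heartsuit R, L_\heartsuit R)$, which vanish for $n\neq 0$ since $L_\heartsuit R$ is projective by (the proof of) \cref{lem:affine:DFrob-cp-gen}. The identification of these homotopy groups with classical $\operatorname{Ext}$, which you flag as the delicate point, is exactly what the paper cites \cite[Notation 1.1.2.17]{higheralgebra} for, so no gap remains.
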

\begin{proof}
    Note that $A$ lives in the heart by \Cref{lem:affine:A-B-heart}, and thus it is clear that $\map{}{A}{A}$ is coconnective.
    To see that it is also connective, note that we have 
    \begin{equation*}
        \pi_{-n}\map{}{A}{A} \cong \operatorname{Ext}^n(L_{\heartsuit} R, L_{\heartsuit} R),
    \end{equation*}
    cf.\ the discussion in \cite[Notation 1.1.2.17]{higheralgebra}.
    But now note that by (the proof of) \cref{lem:affine:DFrob-cp-gen} 
    we see that $L_{\heartsuit} R$ is a projective object.
    In particular, the $\operatorname{Ext}$-groups vanish for $n \neq 0$,
    which immediately implies connectivity of the mapping spectrum.
\end{proof}

\begin{lem} \label{lem:affine:mapping-spectrum-B-discrete}
    The mapping spectrum $\map{}{B}{B}$ is connective (in fact discrete, i.e.\ in $\Sp^\heartsuit$).
\end{lem}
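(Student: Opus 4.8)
The plan is to prove the stronger assertion that $\map{}{B}{B}$ is discrete. The naive approach would be to mimic \cref{lem:affine:mapping-spectrum-A-discrete}, i.e.\ to write $\pi_{-n}\map{}{B}{B}$ as an $\operatorname{Ext}$-group in $\Frob{\ModDisc{R}}{F_*}$ (via \cref{lem:affine:heart-identification}) and invoke projectivity of the generator; but for $n \ge 2$ this identification of homotopy groups with $\operatorname{Ext}$-groups is not yet available — it is essentially equivalent to the statement that $\Frob{\Cat D(\ModDisc{R})}{\Cat D(F_*)}$ is the derived category of its heart, which is what this section is building towards. Instead I would argue via the adjunction $L_{\Cat D} \dashv U_{\Cat D}$ of \cref{lem:affine:frob-left-adjoint}.

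Concretely: by \cref{lem:affine:FrobD-cp-gen} we have $B \cong L_{\Cat D} HR$, so using that the adjunction $L_{\Cat D} \colon \Cat D(\ModDisc{R}) \rightleftarrows \Frob{\Cat D(\ModDisc{R})}{\Cat D(F_*)} \noloc U_{\Cat D}$ is spectrally enriched, we get
\begin{equation*}
    \map{}{B}{B} \cong \map{}{L_{\Cat D} HR}{L_{\Cat D} HR} \cong \map{\Cat D(\ModDisc{R})}{HR}{U_{\Cat D} L_{\Cat D} HR}.
\end{equation*}
Under the symmetric monoidal equivalence $\Cat D(\ModDisc{R}) \cong \Mod{R}$ of \cref{lem:affine:derived-cat-of-modules} the tensor unit $HR$ corresponds to $R$, so \cref{lem:affine:maps-from-unit} identifies the functor $\map{\Cat D(\ModDisc{R})}{HR}{-}$ with $\fgt{R}$, giving $\map{}{B}{B} \cong \fgt{R}(U_{\Cat D} L_{\Cat D} HR)$.

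It then remains to recall that $U_{\Cat D} L_{\Cat D} HR \cong \bigsqcup_{n \ge 0}(\mathbf{L}F^*)^n HR \cong \bigsqcup_{n \ge 0} HR$ lies in the heart of $\Cat D(\ModDisc{R})$, which is exactly (the content of the proof of) \cref{lem:affine:frob-ring-is-discrete}, and that $\fgt{R}$ is t-exact by \cref{lem:affine:forget-t-exact} (as $R$ is a connective $\mathbb{E}_1$-ring), hence preserves the heart. Therefore $\map{}{B}{B}$ lies in $\Sp^{\heartsuit}$, and in particular it is connective. The whole argument is a short assembly of results already established; the only point needing a little care is to keep track that the equivalence $B \cong L_{\Cat D}HR$ is compatible with the enriched adjunction when rewriting the mapping spectrum. (Alternatively, one may keep the proof structurally parallel to \cref{lem:affine:mapping-spectrum-A-discrete}: coconnectivity of $\map{}{B}{B}$ is immediate from \cref{lem:affine:A-B-heart}, and only connectivity needs the adjunction step above together with t-exactness of $\fgt{R}$.)
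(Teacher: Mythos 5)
Your proof is correct and is essentially identical to the paper's argument: it rewrites $\map{}{B}{B} \cong \fgt{R} U_{\Cat D} L_{\Cat D} HR$ via $B \cong L_{\Cat D}HR$, the adjunction, and \cref{lem:affine:maps-from-unit}, then concludes using \cref{lem:affine:frob-ring-is-discrete} and the t-exactness of $\fgt{R}$ from \cref{lem:affine:forget-t-exact}.
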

\begin{proof}
    By \cref{lem:affine:FrobD-cp-gen} there is an equivalence $B \cong L_{\Cat D} HR$.
    Now we have equivalences 
    \begin{equation*}
        \map{}{B}{B} \cong \map{}{L_{\Cat D} HR}{L_{\Cat D} HR} \cong \map{}{HR}{U_{\Cat D} L_{\Cat D} HR} \cong \fgt{R} U_{\Cat D} L_{\Cat D} HR.
    \end{equation*}
    Here, the second equivalence holds by adjunction, whereas the last equivalence is (essentially) \cref{lem:affine:maps-from-unit}.
    Since $\fgt{R}$ is t-exact by \cref{lem:affine:forget-t-exact}, it suffices to show that $U_{\Cat D} L_{\Cat D} HR$ 
    is in the heart. This was shown in \cref{lem:affine:frob-ring-is-discrete}.
\end{proof}

\begin{prop} \label{lem:affine:eta-equivalence}
    The morphism of $\mathbb{E}_1$-rings $\End{\Phi} \colon \End{A} \to \End{B}$ (given by the action of $\Phi$ on the endomorphism spectra) is an equivalence.
\end{prop}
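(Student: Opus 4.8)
The plan is to reduce the statement to a computation on $\pi_0$. Since a morphism of $\mathbb{E}_1$-ring spectra is an equivalence if and only if it is an equivalence on underlying spectra, and since both $\End{A}$ and $\End{B}$ are discrete by \cref{lem:affine:mapping-spectrum-A-discrete,lem:affine:mapping-spectrum-B-discrete}, it suffices to prove that $\End{\Phi}$ induces an isomorphism on $\pi_0$.

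Next I would identify the two abelian groups $\pi_0\End{A}$ and $\pi_0\End{B}$. Since $A$ and $B$ lie in the hearts of their respective $\infty$-categories by \cref{lem:affine:A-B-heart}, and since the heart sits as a full subcategory inside the homotopy category, there are canonical identifications $\pi_0\End{A}=\pi_0\map{}{A}{A}\cong\Hom{\Cat D(\Frob{\ModDisc{R}}{F_*})^\heartsuit}{A}{A}$ and $\pi_0\End{B}\cong\Hom{\Frob{\Cat D(\ModDisc{R})}{\Cat D(F_*)}^\heartsuit}{B}{B}$. As $\End{\Phi}$ is by construction induced by the action of the exact functor $\Phi$ on mapping spectra, the induced map on $\pi_0$ corresponds, under these identifications, to the map on $\operatorname{Hom}$-sets induced by the restriction of $\Phi$ to the hearts.

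Finally I would invoke t-exactness together with \cref{lem:affine:Phi-heart-equivalence}: by \cref{lem:affine:Phi-t-exact} the functor $\Phi$ restricts to a functor between the hearts, and under the identification of \cref{lem:affine:heart-identification}, together with the canonical identification of the heart of a derived $\infty$-category with its abelian category, this restriction is precisely the functor $\Phi^\heartsuit$ of \cref{lem:affine:Phi-heart-equivalence}, which is equivalent to $\id{}$. In particular $\Phi^\heartsuit$ is an equivalence of categories, hence bijective on all $\operatorname{Hom}$-sets, so $\pi_0\End{\Phi}$ is an isomorphism and the proposition follows. (Recall here that $B$ is by definition $\Phi(A)$, so there is no discrepancy between $\Phi|_\heartsuit(A)$ and $B$.)

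The only delicate point I expect is bookkeeping: one must check that the three identifications entering the argument — the Schwede--Shipley description of $\End{-}$ as the $\mathbb{E}_1$-endomorphism ring, the heart identification of \cref{lem:affine:heart-identification}, and the equivalence $\Phi^\heartsuit\simeq\id{}$ of \cref{lem:affine:Phi-heart-equivalence} — are mutually compatible, so that $\pi_0\End{\Phi}$ really is the $\operatorname{Hom}$-set map coming from $\Phi|_\heartsuit$. No new conceptual input beyond the lemmas already established in this section should be required.
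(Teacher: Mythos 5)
Your proposal is correct and follows essentially the same route as the paper: reduce to the underlying spectra, use the connectivity/discreteness lemmas to pass to a $1$-categorical check, and conclude from the fact that $A$, $B$ lie in the hearts together with $\Phi^\heartsuit\simeq\id{}$ from \cref{lem:affine:Phi-heart-equivalence}. The only cosmetic difference is that the paper uses connectivity to reduce to an equivalence of mapping spaces, whereas you use discreteness to reduce to an isomorphism on $\pi_0$; both rest on the same lemmas.
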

\begin{proof}
    It suffices to show that the underlying map is an equivalence, i.e.\ we have to see that the map 
    \begin{equation*}
        \map{\Cat{D}(\Frob{\ModDisc{R}}{F_*})}{A}{A} \xrightarrow{\Phi} \map{\Frob{\Cat D(\ModDisc{R})}{\Cat D(F_*)}}{B}{B}
    \end{equation*}
    is an equivalence.
    Note that by \cref{lem:affine:mapping-spectrum-A-discrete,lem:affine:mapping-spectrum-B-discrete} both mapping spectra are connective, hence it suffices to show that $\Phi$ 
    induces an equivalence of mapping spaces 
    \begin{equation*}
        \Map{\Cat{D}(\Frob{\ModDisc{R}}{F_*})}{A}{A} \xrightarrow{\Phi} \Map{\Frob{\Cat D(\ModDisc{R})}{\Cat D(F_*)}}{B}{B}.
    \end{equation*}
    Since both $A$ and $B$ are in the respective hearts, cf.\ \cref{lem:affine:A-B-heart},
    it follows that this map is actually induced by $\Phi^\heartsuit$.
    But $\Phi^\heartsuit$ is an equivalence by \cref{lem:affine:Phi-heart-equivalence}.
    This proves the proposition.
\end{proof}

\begin{thm} \label{lem:affine:main-thm}
    The functor $\Phi$ is an equivalence.
\end{thm}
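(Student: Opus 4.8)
The plan is to read off the equivalence from the machinery already in place: the two module-category identifications \cref{lem:affine:DFrob-is-module-cat,lem:affine:FrobD-is-module-cat} and the ring equivalence \cref{lem:affine:eta-equivalence}, glued together via the fully faithfulness of $\Theta$ from the Schwede--Shipley theorem \cref{lem:affine:schwede-shipley}.

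First I would check that $\Phi$ is a morphism in $\PrSt$. By construction $U_{\Cat D} \circ \Phi \cong \Cat D(U_\heartsuit)$, which is colimit-preserving as the derived functor of the colimit-preserving functor $U_\heartsuit$; and $U_{\Cat D}$ is conservative and colimit-preserving by \cite[Corollary 2.8]{cartmod}. Since a conservative colimit-preserving functor out of a cocomplete $\infty$-category reflects colimits, it follows that $\Phi$ preserves colimits, and being a colimit-preserving functor between stable $\infty$-categories it is automatically exact. Moreover $\Phi$ sends the chosen generator $A$ to $B$, essentially by the definition $B \coloneqq \Phi(A)$ in \cref{lem:affine:FrobD-cp-gen}; hence $\Phi$ upgrades to a morphism
\[
  \Phi \colon (\Cat D(\Frob{\ModDisc{R}}{F_*}), A) \longrightarrow (\Frob{\Cat D(\ModDisc{R})}{\Cat D(F_*)}, B)
\]
in ${(\PrSt)}_{\Mod{\S}/}$.

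Now, by (the proofs of) \cref{lem:affine:DFrob-is-module-cat,lem:affine:FrobD-is-module-cat}, the source and target of this morphism are identified, via the counit of the $\Theta$-adjunction, with $\Theta(\End{A})$ and $\Theta(\End{B})$ respectively; in particular both lie in the essential image of $\Theta$. Since $\Theta$ is fully faithful, the morphism $\Phi$ is, under these identifications, equal to $\Theta$ applied to a map of $\mathbb{E}_1$-rings $\End{A} \to \End{B}$; and since the right adjoint of $\Theta$ is the endomorphism-ring functor, unwinding the (co)unit shows that this map is precisely $\End{\Phi}$. By \cref{lem:affine:eta-equivalence} the map $\End{\Phi}$ is an equivalence, hence so is $\Theta(\End{\Phi})$, and therefore so is $\Phi$.

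The only delicate point is the bookkeeping in the last step: one must verify that the $\mathbb{E}_1$-ring map extracted from $\Phi$ via the full faithfulness of $\Theta$ really coincides with the map $\End{\Phi}$ of \cref{lem:affine:eta-equivalence}, which amounts to tracing through the description of the right adjoint of $\Theta$ as ``take endomorphism spectra'' and its compatibility with the counit. If one prefers to avoid this, there is a direct alternative: since $\Phi$ is colimit-preserving, exact, and takes the compact generator $A$ to the compact generator $B$, it is an equivalence as soon as it is fully faithful on compact objects; the compact objects of $\Cat D(\Frob{\ModDisc{R}}{F_*})$ form the thick subcategory generated by $A$, and $\Phi$ preserves finite colimits, shifts and retracts, so this reduces to $\Phi$ inducing an equivalence $\map{}{A}{A} \xrightarrow{\simeq} \map{}{B}{B}$ — which is exactly the content of \cref{lem:affine:eta-equivalence} — while essential surjectivity is then automatic, as the essential image is a localizing subcategory of $\Frob{\Cat D(\ModDisc{R})}{\Cat D(F_*)}$ containing the generator $B$.
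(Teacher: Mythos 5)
Your proposal is correct and follows essentially the same route as the paper: identify both sides with $\Mod{\End{A}}$ and $\Mod{\End{B}}$ via the Schwede--Shipley counits (\cref{lem:affine:DFrob-is-module-cat,lem:affine:FrobD-is-module-cat}), and conclude from the fact that $\End{\Phi}$ is an equivalence (\cref{lem:affine:eta-equivalence}) using naturality of the counit, which is exactly the commutative square the paper writes down. Your extra verifications (that $\Phi$ is colimit-preserving, hence a morphism in ${(\PrSt)}_{\Mod{\S}/}$) and the sketched alternative via thick/localizing subcategories are fine but do not change the substance.
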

\begin{proof}
    Consider the commutative diagram
    \begin{center}
        \begin{tikzcd}
            (\Cat{D}(\Frob{\ModDisc{R}}{F_*}), A) \ar[r, "\Phi"] &(\Frob{\Cat D(\ModDisc{R})}{\Cat D(F_*)}, B) \\
            (\Mod{\End{A}}, \End{A}) \ar[u] \ar[r, "\Theta(\End{\Phi})"'] & (\Mod{\End{B}}, \End{B}) \ar[u]
        \end{tikzcd}
    \end{center}
    in ${(\PrSt)}_{{\Mod{\S}}/}$,
    where the vertical maps are the counits of the adjunction from \cref{lem:affine:schwede-shipley}.
    These counits are equivalences by (the proofs of) \cref{lem:affine:DFrob-is-module-cat,lem:affine:FrobD-is-module-cat}.
    As $\End{\Phi}$ is an equivalence by \cref{lem:affine:eta-equivalence},
    the theorem follows.
\end{proof}

\begin{rmk}
    One can identify the $\mathbb{E}_1$-ring $\End{A}$.
    In fact, it is discrete by \cref{lem:affine:mapping-spectrum-A-discrete}.
    Hence (since $A = HL_{\heartsuit} R$), it is enough to compute the associative ring $\Map{\Frob{\ModDisc{R}}{F_*}}{L_\heartsuit R}{L_\heartsuit R}$.
    This ring can be identified (via Gabriels theorem \cite[Corollaire V.1.1]{gabriel}) with the ring $R[\tau]^{\mathrm{op}}$, see \cite[Definition 3.2.1]{bockle2014cohomological}.
    For this we use that $\Frob{\ModDisc{R}}{F_*}$ agrees with their category of $\tau$-sheaves, essentially by construction,
    and that they speak of left modules, whereas we work with right modules.
\end{rmk}

\section{Zariski descent of Frobenius modules}

In this section, we show that the functor $\Cat D^+(\Frob{\QCoh{-}}{F_*})$
defines a Zariski sheaf on the small (quasi-compact) Zariski site of a geometric scheme, cf.\ \cref{lem:zariski:left-sep-derived-category-sheaf}.
Combining this with our results from the previous section, we prove \cref{lem:intro:main-thm-plus}, cf.\ \cref{lem:zariski:main-thm}.

We start by recalling that the derived $\infty$-category is functorial in Grothendieck abelian categories:
\begin{prop} \label{lem:zariski:derived-cat}
    There is a functor $\Cat D \colon \Groth \to \PrSt$ 
    which sends a Grothendieck abelian category to its derived $\infty$-category,
    and a colimit-preserving exact functor
    to its derived functor.

    Similarly, there is a functor $\Cat D^+ \colon \Groth \to \CatInf$ 
    that is the restriction of $\Cat D$ to bounded above objects.
\end{prop}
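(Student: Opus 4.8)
The plan is to construct the functor $\Cat D \colon \Groth \to \PrSt$ using the universal property of the derived $\infty$-category. Recall from \cite[Section 1.3.5]{higheralgebra} (see also \cite[Proposition C.5.4.5]{sag}) that for a Grothendieck abelian category $\Cat A$, the derived $\infty$-category $\Cat D(\Cat A)$ is characterized by a universal property, or can be constructed explicitly via the localization of the $\infty$-category of chain complexes (equivalently, via the filtered-colimit completion of an appropriate small model). First I would invoke the known functoriality results: a colimit-preserving left exact (equivalently, exact, since we are between abelian categories and such a functor is automatically right exact) functor $f \colon \Cat A \to \Cat B$ induces a functor on chain complexes, which is compatible with quasi-isomorphisms, hence descends to $\Cat D(f) \colon \Cat D(\Cat A) \to \Cat D(\Cat B)$; this is \cite[Proposition A.2]{cartmod} combined with \cite[Lemma A.4]{cartmod} for the compatibility with composition. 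To assemble this into an actual functor out of the $1$-category $\Groth$ (rather than just a compatible family on objects and morphisms), I would cite the coherent version of this construction, e.g.\ \cite[Proposition C.5.4.6, Variant C.5.4.9]{sag}, or observe that $\Cat D(-)$ can be exhibited as a composite of functors each of which is manifestly functorial (pass to the $\infty$-category of connective objects / Grothendieck prestable $\infty$-categories, then stabilize). The target is $\PrSt$ because $\Cat D(\Cat A)$ is presentable stable \cite[Proposition 1.3.5.9, 1.3.5.21]{higheralgebra}, and $\Cat D(f)$ is colimit-preserving since it is a left adjoint (its right adjoint being the derived functor of a right adjoint of $f$, or by the adjoint functor theorem since it preserves colimits on generators).

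For the second statement, I would define $\Cat D^+(\Cat A) \subseteq \Cat D(\Cat A)$ as the full subcategory of bounded-above (equivalently, right-bounded) objects with respect to the canonical t-structure, i.e.\ those $M$ with $M \cong \tau_{\le n} M$ for some $n$ (using the homological convention of the paper, these are the $M \in \bigcup_n \Cat D(\Cat A)_{\le n}$). The key point is that for a colimit-preserving exact functor $f \colon \Cat A \to \Cat B$, the derived functor $\Cat D(f)$ is right t-exact — indeed it is t-exact, since $f$ being exact means it preserves the heart and hence $\Cat D(f)$ is t-exact by the construction of the derived functor, cf.\ \cite[Notation A.3]{cartmod} — so $\Cat D(f)$ restricts to a functor $\Cat D^+(\Cat A) \to \Cat D^+(\Cat B)$. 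This restriction is again functorial in $f$ (it is just the restriction of the functor $\Cat D$ from the first part to a subcategory that is preserved by all morphisms), giving the functor $\Cat D^+ \colon \Groth \to \CatInf$. Note we land in $\CatInf$ rather than $\PrSt$ because $\Cat D^+(\Cat A)$ is generally not presentable (it is not closed under infinite coproducts), though it is still stable and the restricted functors need not preserve all colimits.

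The main obstacle, and the only genuinely nontrivial input, is the $\infty$-categorical coherence: producing a bona fide functor out of the nerve of the $1$-category $\Groth$ rather than merely a pseudo-functor defined on objects and $1$-morphisms. This is handled by the cited literature — either Lurie's treatment of Grothendieck prestable $\infty$-categories in \cite[Appendix C.5]{sag}, where $\Cat D(-)$ is assembled as a functor, or by the remark in \cite{cartmod} that the relevant construction is already functorial — so in the write-up I would simply reference \cite[Proposition A.2 and Lemma A.4]{cartmod} together with the appropriate statement from \cite{sag}, and then observe that the two claims about target categories ($\PrSt$ for $\Cat D$, $\CatInf$ for $\Cat D^+$) and the t-exactness needed to pass to bounded-above objects are formal consequences of the construction of the derived functor recalled in \cite[Notation A.3]{cartmod}.
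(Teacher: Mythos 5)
Your proposal is correct, and its second route --- exhibiting $\Cat D$ as the composite of the embedding of $\Groth$ into separated Grothendieck prestable $\infty$-categories from \cite[Proposition C.5.4.5]{sag} with stabilization, and then identifying the resulting functors on morphisms with the derived functors via t-exactness and the universal property \cite[Proposition A.2]{cartmod} --- is essentially the paper's own proof, which also obtains $\Cat D^+$ by restricting along the t-exact derived functors. The coherence issue you flag is indeed the crux, and it is resolved exactly as you suggest.
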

\begin{proof}
    Write $\PreSt \subset \PrL$ for the full subcategory consisting of those 
    presentable categories that are prestable, cf.\ \cite[Definition C.1.2.1]{sag}.
    We first construct a functor 
    \begin{equation*}
        \Cat D_{\ge 0} \colon \Groth \to \PreSt
    \end{equation*}
    which sends a Grothendieck abelian category $\Cat A$
    to the presentable prestable category $\Cat D(\Cat A)_{\ge 0}$,
    i.e.\ the connective part of the derived $\infty$-category.
    Recall from \cite[Proposition C.5.4.5]{sag}
    that there is a fully faithful functor $M \colon \Groth \to \Groth^{\mathrm{lex}, \mathrm{sep}}_\infty$,
    where the right-hand category denotes the category of separated Grothendieck prestable 
    categories, whose morphisms are colimit-preserving exact functors.
    We let $\Cat D_{\ge 0}$ be the composition of $M$ with the canonical forgetful functor $\Groth^{\mathrm{lex}, \mathrm{sep}}_\infty \to \PreSt$.
    It now follows from \cite[Propositions C.5.4.5 and C.5.3.2]{sag}
    that $\Cat D_{\ge 0}$ sends a Grothendieck abelian category $\Cat A$ to the connective part of the derived $\infty$-category of $\Cat A$.

    If $\Cat A$ is a Grothendieck abelian category,
    then its derived $\infty$-category $\Cat D(\Cat A)$ is the stabilization of $\Cat D(\Cat A)_{\ge 0}$
    by \cite[Remark C.1.2.10]{sag} since the t-structure on the derived $\infty$-category of $\Cat A$ is right-complete.
    Hence, we can define $\Cat D$ as the composition of $\Cat D_{\ge 0}$ with the stabilization functor 
    $\Sp(-) \colon \PrL \to \PrSt$.

    To get the description on morphisms, note that for each morphism $F$ in $\Groth$ the functor
    $\Cat D_{\ge 0}(F)$ is exact and colimit-preserving,
    and hence preserves categorical $n$-truncated and $n$-connective objects.
    Thus, it is t-exact on the stabilization.
    Therefore, by the universal property of the derived $\infty$-category \cite[Proposition A.2]{cartmod},
    it suffices to show that the restriction of $\Cat D(F)$ to the heart is given by $F$,
    which is obvious from the above.

    To get $\Cat D^+$, one uses that all involved derived functors are t-exact,
    and thus everything restricts to bounded above objects.
\end{proof}

\begin{defn} \label{defn:zariski:site}
    Let $X$ be a qcqs $\Fp$-scheme.
    We write $X_{\zar}$ for the (small) full subcategory of $\operatorname{Sch}_{/X}$ 
    spanned by those schemes $Y \to X$ that can be written as 
    $Y = \sqcup_{i \in I} Y_i \to X$, where $I$ is finite,
    and $Y_i \to X$ is the inclusion of a quasi-compact open subset of $X$.
    In particular, since every $U \in X_{\zar}$ is a finite disjoint union of quasi-compact open subset of the qcqs $X$, it is clear that 
    $U$ is moreover quasi-separated (as open immersions are quasi-separated \cite[\href{https://stacks.math.columbia.edu/tag/01L7}{Tag 01L7}]{stacks-project},
    and quasi-separated morphisms are stable under composition).
    We equip $X_{\zar}$ with the Grothendieck topology where covers are 
    given by jointly surjective families $\{ U_i \to U\}_{i \in I}$.
    To see that this in fact defines a Grothendieck topology, we have to check that $X_{\zar}$
    is closed under pullbacks. This is true since the intersection of quasi-compact 
    opens is quasi-compact since $X$ was assumed to be quasi-separated.
\end{defn}

\begin{rmk} \label{rmk:zariski:qcqs-of-transition-maps}
    This version of the Zariski site is nonstandard.
    Since affines are quasi-compact,
    it is still true that equivalences of sheaves can be checked on affine opens if the scheme is geometric, see \cref{def:zariski:geometric,lem:zariski:check-on-affines} below.
    We use this version of the site to avoid problems with quasi-coherence and flat base change.
    Indeed, e.g. \cite[\href{https://stacks.math.columbia.edu/tag/02KH}{Tag 02KH}]{stacks-project} 
    needs the morphism $f$ to be qcqs.
    Since in our case $f$ will always be one of the open subset inclusions $U \hookrightarrow V$ 
    in $X_{\zar}$, this follows from 
    \cite[\href{https://stacks.math.columbia.edu/tag/01KV}{Tag 01KV}]{stacks-project} and 
    \cite[\href{https://stacks.math.columbia.edu/tag/03GI}{Tag 03GI}]{stacks-project} 
    since $U$ and $V$ were assumed to be qcqs.
\end{rmk}

\begin{rmk}
    If $X$ is a qcqs scheme such that any open subset of $X$ is quasi-compact 
    (e.g.\ $X$ Noetherian), then $X_{\zar}$ is 
    (the finite disjoint union completion of) the usual small Zariski site of $X$.
\end{rmk}

\begin{lem} \label{lem:zariski:check-on-affines}
    Let $X$ be a qcqs scheme and let $\Cat C$ be a complete $\infty$-category.
    Let $\Cat F, \Cat G \colon \op X_{\zar} \to \Cat C$ be Zariski sheaves,
    and let $f \colon \Cat F \to \Cat G$ be a morphism.
    Suppose that for every affine open $U \subset X$ 
    the morphism $f_U \colon \Cat F(U) \to \Cat G(U)$ is an equivalence.
    Then $f$ is an equivalence.
\end{lem}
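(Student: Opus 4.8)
The plan is to verify that $f_U \colon \Cat F(U) \to \Cat G(U)$ is an equivalence for every $U \in X_{\zar}$; since a natural transformation of functors valued in an $\infty$-category is an equivalence exactly when it is a pointwise equivalence, this is what we must show. The first step is a reduction to the case that $U$ is a quasi-compact open subscheme of $X$: every object of $X_{\zar}$ is a finite disjoint union of such opens, and a sheaf with values in a complete $\infty$-category sends finite disjoint unions to finite products, so $f_U$ is then simply the product of the corresponding maps on the individual pieces.

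So fix a quasi-compact open $U \subseteq X$. Since the affine opens of $X$ form a basis and $U$ is quasi-compact, I would choose a finite cover $\{U_i \hookrightarrow U\}_{i \in I}$ by affine opens $U_i \subseteq U$. This is a covering in $X_{\zar}$, and both $\Cat F$ and $\Cat G$, being Zariski sheaves, satisfy \v{C}ech descent along it: writing $\check C_\bullet$ for the \v{C}ech nerve of this cover, we get $\Cat F(U) \simeq \lim_{\Delta} \Cat F(\check C_\bullet)$ and $\Cat G(U) \simeq \lim_{\Delta} \Cat G(\check C_\bullet)$, compatibly with $f$. In cosimplicial degree $k$ the term $\check C_k$ is the finite disjoint union $\bigsqcup_{(i_0,\dots,i_k) \in I^{k+1}} (U_{i_0} \cap \dots \cap U_{i_k})$, and each finite intersection of affine opens occurring here is again affine: in a scheme with affine diagonal the intersection of two affine opens $V, W$ is a base change of the affine morphism $\Delta_X \colon X \to X \times X$ along $V \times W \to X \times X$, hence affine over the affine scheme $V \times W$, and one concludes by induction on the number of factors. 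Consequently $f$ restricts to an equivalence on every affine piece of each $\check C_k$ by hypothesis, and since a sheaf carries these finite disjoint unions to finite products, $f$ induces an equivalence $\Cat F(\check C_k) \to \Cat G(\check C_k)$ for all $k$. Passing to the limit over $\Delta$ shows that $f_U$ is an equivalence, which completes the argument.

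The only genuinely geometric input is the stability of affineness under finite intersections of affine opens in a scheme with affine diagonal; everything else is a formal consequence of the sheaf condition. The main point to handle carefully is the bookkeeping of the disjoint unions that appear both in the objects of $X_{\zar}$ and in the \v{C}ech nerve, where one systematically uses the (automatic) fact that a sheaf valued in a complete $\infty$-category takes finite disjoint unions to finite products, reducing every relevant check to the affine opens $U_i$, on which the hypothesis applies directly.
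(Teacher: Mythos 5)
Your proof is correct and takes essentially the same route as the paper: choose a finite affine cover, use the affine diagonal to see that the Čech nerve consists of (disjoint unions of) affines, and conclude by passing to the limit over $\Delta$ of levelwise equivalences. The only difference is bookkeeping: you verify the equivalence on every object of $X_{\zar}$ (reducing finite disjoint unions to products, which indeed follows from the sheaf condition since the empty family covers $\emptyset$), whereas the paper's written proof records only the evaluation at $X$ itself.
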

\begin{proof}
    We have to show that for every $V \in X_{\zar}$ the morphism $f_V \colon \Cat F(V) \to \Cat G(V)$ 
    is an equivalence. Let $\Cat U = \{U_i\}_i$ be a finite Zariski cover of $V$, where the $U_i$ are affine.
    
    Assume first that $V$ has affine diagonal. Thus, any intersection of the $U_i$ 
    is also affine.
    In particular, the \v{C}ech nerve $\Cat U^\bullet$ of the cover consists only of affine schemes.
    Since $\Cat F$ is a sheaf, $\Cat F(V) \cong \lim_{\Delta} \Cat F(\Cat U^\bullet)$,
    and similarly for $\Cat G$.
    By assumption, $f$ induces an equivalence
    on the limit diagram and therefore also on the limit.

    Now, let $V$ be an arbitrary object of $X_{\zar}$. Note that any intersection of the $U_i$ 
    is quasi-affine (i.e.\ an open subset of an affine scheme). In particular, it is separated 
    and thus has affine diagonal. Arguing as above, we see that $f$ induces an equivalence 
    on the limit diagrams (using the first part of the proof), and hence on the limit.
\end{proof}

We now show that certain constructions like quasi-coherent sheaves satisfy Zariski descent.

\begin{lem} \label{lem:zariski:qcoh}
    Let $X$ be a qcqs $\Fp$-scheme.
    There is a functor $\QCoh{-} \colon \op{X}_{\zar} \to \Groth$
    which on objects sends an open $U \subset X$ to the category 
    of quasi-coherent $\mathcal{O}_U$-modules.
    Similarly, the associated morphism to an inclusion $U \subset V$ of open subsets of $X$
    is the restriction of quasi-coherent $\mathcal{O}_V$-modules to $U$.

    Moreover, $\QCoh{-}$ is a Zariski sheaf.
\end{lem}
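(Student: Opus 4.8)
The plan is to construct the functor essentially by hand, exploiting that restriction of quasi-coherent sheaves is \emph{strictly} functorial, and then to deduce the sheaf property from the classical gluing lemma for quasi-coherent sheaves.

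\emph{Construction of the functor.} On the (opposite of the) poset of quasi-compact open subsets of $X$, the assignment $U \mapsto \QCoh{U}$ together with the restriction functors $(-)|_U \colon \QCoh{V} \to \QCoh{U}$ for $U \subseteq V$ is a strict presheaf of $1$-categories — restriction of sheaves is associative on the nose — and hence a functor to $\CatInf$ requiring no coherence data. Since every object of $X_{\zar}$ is a finite disjoint union $\sqcup_{i \in I} U_i$ of quasi-compact opens of $X$, and a morphism in $X_{\zar}$ amounts to a reindexing map together with inclusions of the pieces, I extend this to a (again strict) functor $\op{X}_{\zar} \to \CatInf$ by $\sqcup_i U_i \mapsto \prod_i \QCoh{U_i}$. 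It then remains to check that this lands in $\Groth$: each $\QCoh{U}$ for $U$ a quasi-compact open of the qcqs scheme $X$ is itself qcqs, hence Grothendieck abelian by \cite[\href{https://stacks.math.columbia.edu/tag/077P}{Tag 077P}]{stacks-project}, and a finite product of Grothendieck abelian categories is again one; and for an open immersion $j \colon U \hookrightarrow V$ of qcqs schemes the restriction functor $j^{*}$ is exact, since open immersions are flat, and preserves small colimits — colimits of quasi-coherent sheaves on a qcqs scheme are computed in $\mathcal{O}$-modules by \cite[\href{https://stacks.math.columbia.edu/tag/01LA}{Tag 01LA}]{stacks-project}, on which $j^{-1}$ preserves all colimits; equivalently, $j^{*}$ is left adjoint to $j_{*}$, which preserves quasi-coherence as $j$ is qcqs. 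Products and composites of such functors are again exact and colimit-preserving.

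\emph{Sheaf property.} Fix $U \in X_{\zar}$ and a cover $\mathcal{U} = \{U_i \to U\}_i$, with \v{C}ech nerve $\mathcal{U}^{\bullet}$, whose terms lie in $X_{\zar}$ since it is closed under pullbacks. One must show the canonical comparison $\QCoh{U} \to \lim_{\Delta} \QCoh{\mathcal{U}^{\bullet}}$ is an equivalence; since $\Groth \hookrightarrow \CatInf$ preserves limits by \cite[Proposition C.5.4.21]{sag}, it suffices to verify this in $\CatInf$. By construction $\QCoh{-}$ carries disjoint unions to products, so both sides split over the components of $U$ and we may take $U$ to be a single quasi-compact open; as descent need only be checked on a cofinal class of covers, we may further assume $\mathcal{U}$ is a finite cover by quasi-compact opens, so that every term of $\mathcal{U}^{\bullet}$ is qcqs. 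Since the totalization of a \v{C}ech nerve of $1$-categories is computed on $\Delta_{\le 2}$, the right-hand side is the ordinary $1$-category of Zariski descent data: families $(\mathcal{F}_i \in \QCoh{U_i})_i$ with isomorphisms $\mathcal{F}_i|_{U_i \cap U_j} \cong \mathcal{F}_j|_{U_i \cap U_j}$ obeying the cocycle condition on triple intersections. That $\mathcal{F} \mapsto (\mathcal{F}|_{U_i})_i$ is an equivalence onto this category is precisely the statement that sheaves glue along open covers — the sheaf condition for the underlying $\mathcal{O}$-module — together with the fact that a sheaf glued from quasi-coherent pieces is quasi-coherent, quasi-coherence being local (\cite[\href{https://stacks.math.columbia.edu/tag/01LC}{Tag 01LC}]{stacks-project}).

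\emph{The main obstacle} is bookkeeping rather than genuine difficulty. The two points needing care are: verifying that \emph{all} the restriction and product-of-restriction functors involved are simultaneously exact and small-colimit-preserving, so that the functor truly lands in $\Groth$ and not merely in $\CatInf$; and identifying $\lim_{\Delta} \QCoh{\mathcal{U}^{\bullet}}$ with the $1$-categorical descent-data category before invoking gluing. A packaging that trades one chore for another — defining $\QCoh{-}$ on $X_{\zar}$ as the right Kan extension of $\mathrm{Spec}(R) \mapsto \ModDisc{R}$ from the affine Zariski site and deducing descent from faithfully flat descent of modules — runs into the same \v{C}ech-nerve cofinality argument when $X$ is not semi-separated, and so gains little.
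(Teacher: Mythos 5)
Your proposal is correct in substance and follows the same overall strategy as the paper: construct the functor with values in $\CatInf$, check that each restriction functor is exact (open immersions are flat) and colimit-preserving (it has a right adjoint $j_*$ preserving quasi-coherence because the transition maps are qcqs), so that the functor factors through $\Groth$, and then deduce the sheaf condition from the classical $1$-categorical descent statement, using that all values are ordinary categories. The difference is in packaging: the paper avoids all coherence bookkeeping by taking the straightening of the cartesian fibration of quasi-coherent modules from the Stacks Project (Tag 03YL), restricted to $\op{X}_{\zar}$, and gets descent by citing the stack property (Tag 03YM) together with the observation that the presheaf lands in the $(2,1)$-category of ordinary categories; you instead build the functor by hand and re-derive descent via the $\Delta_{\le 2}$-truncation of the totalization plus gluing of sheaves. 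Both routes work; yours is more self-contained, the paper's is shorter.

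Two points in your write-up need patching. First, a morphism $\sqcup_i U_i \to \sqcup_j V_j$ in $X_{\zar}$ is \emph{not} just a reindexing map with inclusions of pieces: the $U_i$ need not be connected, so each $U_i$ may split into clopen parts landing in different $V_j$'s. Once pieces split, the component-wise prescription on $\prod_i \QCoh{U_i}$ is no longer strictly functorial (products of sets are not strictly associative), so your ``no coherence data needed'' claim is only literally true on the poset of quasi-compact opens of $X$. This is fixable --- the extended assignment is a pseudofunctor of ordinary categories into the $(2,1)$-category of categories, which yields a functor to $\CatInf$, or one can simply use the Stacks Project fibration as the paper does --- but as written the extension step is glossed over. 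Second, the reduction to finite covers by quasi-compact opens should be justified by a sentence: covering sieves generated by finite subfamilies are cofinal among all covering sieves because the objects of $X_{\zar}$ are quasi-compact, and your functor sends finite disjoint unions to products; this is standard but it is the step that lets you pass from arbitrary jointly surjective families to the finite \v{C}ech nerves you analyze.
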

\begin{proof}
    We let $\QCoh{-} \colon \op{X}_{\zar} \to \CatInf$ be the straightening of the 
    cartesian fibration from \cite[\href{https://stacks.math.columbia.edu/tag/03YL}{Tag 03YL}]{stacks-project} (restricted to $\op{X}_{\zar}$).
    Note that by construction, we have that $\QCoh{U}$ is just the Grothendieck abelian category of 
    quasi-coherent sheaves on $U$.
    Moreover, on inclusions of open subsets $U \subset V$, $\QCoh{U \hookrightarrow V}$ is the restriction of quasi-coherent sheaves.
    Note that these restriction functors are exact as $U \hookrightarrow V$
    is an open immersion and so in particular flat.
    Furthermore, the restriction functors are also colimit-preserving as they have right adjoints
    given by pushforward (note that here we need that the morphisms $U \hookrightarrow V$ are qcqs by \cref{rmk:zariski:qcqs-of-transition-maps}).
    In particular, we see that everything factors through $\Groth$.
    That the functor $\QCoh{-}$ is a Zariski sheaf follows from \cite[\href{https://stacks.math.columbia.edu/tag/03YM}{Tag 03YM}]{stacks-project}.
    Note that the functor actually lands in the $(2,1)$-category of ordinary categories,
    hence the result from the stacks project applies.
\end{proof}

\begin{lem} \label{lem:zariski:qcoh-frob}
    Let $X$ be a qcqs $\Fp$-scheme.
    The functor $\QCoh{-} \colon \op{X}_{\zar} \to \Groth$ from \cref{lem:zariski:qcoh}
    admits a lift $\QCoh{-}^{\frob}$ to $\End{\Groth}$, i.e.\ there exists a diagonal morphism in the diagram
    \begin{center}
        \begin{tikzcd}
            & \End{\Groth} \ar[d] \\
            \op{X}_{\zar} \ar[ur, dashed, bend left, "{\QCoh{-}^{\frob}}"] \ar[r, "{\QCoh{-}}"'] &\Groth
        \end{tikzcd}
    \end{center}
    such that for any open $U \subset X$, the underlying functor of $\QCoh{U}^{\frob}$ 
    is the Frobenius pushforward $F_* \colon \QCoh{U} \to \QCoh{U}$.
\end{lem}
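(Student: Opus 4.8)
The plan is to produce the dashed lift as a natural endomorphism of the functor $\QCoh{-}$. Since $\End{-}$ is defined as a pullback in $\CatInfHuge$ and $\Fun{\op{X}_{\zar}}{-}$ preserves limits, there is an equivalence $\Fun{\op{X}_{\zar}}{\End{\Groth}} \cong \End{\Fun{\op{X}_{\zar}}{\Groth}}$. An object of the right-hand side lying over $\QCoh{-}$ (under the projection to $\Fun{\op{X}_{\zar}}{\Groth}$) is precisely an endomorphism of $\QCoh{-}$ in the functor category, i.e.\ a natural transformation $\alpha \colon \QCoh{-} \Rightarrow \QCoh{-}$ of functors $\op{X}_{\zar} \to \Groth$ — so in particular one whose components are colimit-preserving exact functors. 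Thus it suffices to construct such an $\alpha$ with components $\alpha_U = F_{U*} \colon \QCoh{U} \to \QCoh{U}$; the property demanded in the statement then holds by construction.

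First I would record that $F_{U*}$ is indeed a morphism in $\Groth$ for every $U \in X_{\zar}$. The absolute Frobenius $F_U \colon U \to U$ is an affine morphism (it is integral, locally given by $x \mapsto x^p$), so $F_{U*}$ is exact; and it preserves all colimits since this may be checked affine-locally, where it is restriction of scalars along $x \mapsto x^p$ and thus has both a left and a right adjoint, exactly as in the discussion preceding \cref{lem:affine:adjunction-on-module-cats}.

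Next comes the transition data. For an inclusion $j \colon U \hookrightarrow V$ in $X_{\zar}$, the functoriality of the absolute Frobenius yields a strictly commuting square $j \circ F_U = F_V \circ j$ in $\SchFp$, and this square is Cartesian: $F_V$ is a homeomorphism on underlying topological spaces, so $F_V^{-1}(U) = U$ as open subschemes of $V$, with $F_V|_U = F_U$ (one can also see this directly on affines). Since $j$ is flat and $F_V$ is affine, hence qcqs, flat base change for quasi-coherent sheaves (cf.\ \cite[Tag 02KH]{stacks-project}) provides a natural equivalence $\beta_j \colon j^* F_{V*} \xrightarrow{\sim} F_{U*} j^*$, i.e.\ a homotopy filling the square
\begin{center}
\begin{tikzcd}
\QCoh{V} \ar[r, "F_{V*}"] \ar[d, "j^*"'] & \QCoh{V} \ar[d, "j^*"] \\
\QCoh{U} \ar[r, "F_{U*}"'] & \QCoh{U}\rlap{.}
\end{tikzcd}
\end{center}
Since $\QCoh{-}$ is obtained, as in the proof of \cref{lem:zariski:qcoh}, by straightening the cartesian fibration of \cite[Tag 03YL]{stacks-project} — which takes values in the $(2,1)$-category of ordinary categories — the families $(F_{U*})_U$ and $(\beta_j)_j$ assemble into the sought natural transformation $\alpha$, provided the $\beta_j$ are compatible with composition.

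The only nonformal point, and the one I expect to be the main obstacle, is exactly this last coherence: one must check that $\beta_j$ is the identity for $j$ an identity, and that for composable open immersions $U \hookrightarrow V \hookrightarrow W$ the base-change equivalences obey the cocycle identity $\beta_{j' \circ j} \simeq (\beta_j\, {j'}^{*}) \circ (j^{*}\, \beta_{j'})$, so that $(F_{U*})_U$ genuinely defines an endofunctor of the classifying fibration over $\op{X}_{\zar}$ and not merely a pointwise collection of functors. This is the standard compatibility of flat base change with pasting of Cartesian squares; as we work in a $(2,1)$-category it amounts to a finite (if tedious) diagram chase. Alternatively one can bypass the chase by invoking the functoriality of $\QCoh{-}$ along a suitable category of correspondences (forward maps qcqs, backward maps flat qcqs, equipped with base-change equivalences), within which $\alpha$ is simply the pushforward leg of the natural endomorphism of the inclusion $X_{\zar} \hookrightarrow \SchFp$ given by the absolute Frobenius.
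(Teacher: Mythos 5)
Your opening reduction is exactly the paper's first step: since $\End{-}$ is a limit construction, a lift of $\QCoh{-}$ to $\End{\Groth}$ is the same datum as an endomorphism of $\QCoh{-}$ in $\Fun{\op{X}_{\zar}}{\Groth}$, i.e.\ a natural transformation $\QCoh{-} \Rightarrow \QCoh{-}$ with components $F_{U*}$. Your checks that $F_{U*}$ is exact and colimit-preserving, and that the Frobenius square along an open immersion $j\colon U \hookrightarrow V$ is cartesian so that flat base change supplies invertible constraints $\beta_j \colon j^*F_{V*} \xrightarrow{\sim} F_{U*}j^*$, are correct and consistent with what the paper uses.

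The problem is where you stop. The cocycle compatibility of the $\beta_j$, together with the passage from the coherent family $(F_{U*},\beta_j)_{U,j}$ to an honest morphism in the $\infty$-categorical functor category $\Fun{\op{X}_{\zar}}{\Groth}$, is the entire content of the lemma (the pointwise statement is trivial), and you only assert it — either as a ``finite (if tedious) diagram chase'' that is never carried out, or by appeal to a functoriality of quasi-coherent sheaves on a category of correspondences that the paper has not established and which is itself a substantial construction. The paper discharges precisely this point by unstraightening: it defines, by hand, an endofunctor of the cartesian fibration $\int\QCoh{-}$ over $X_{\zar}$, sending $(U,\Cat F)\mapsto (U,F_*\Cat F)$ and acting on morphisms via the exchange $F_*\iota_*\cong \iota_*F_*$ coming from the strict equality $F\iota=\iota F$; it then verifies functoriality by an explicit diagram and checks that cartesian edges are preserved, which is where the base-change isomorphism $\iota^*F_*\cong F_*\iota^*$ (cf.\ \cite[Lemma 2.2.1]{blickle2013cartier}) enters. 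In that picture all coherence is a strict $1$-categorical functoriality check. In your straightened picture you would additionally have to prove the pasting compatibility of the base-change isomorphisms and then invoke a $(2,1)$-categorical (un)straightening statement to assemble components and $2$-cells into a map $\op{X}_{\zar}\times\Delta^1\to\Groth$; made precise, this essentially reproduces the paper's fibrational construction. So the strategy is workable and close in spirit to the paper, but as written the main verification is missing.
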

\begin{proof}
    By the universal property of $\End{\Groth}$ and adjunction, this boils down to giving a natural transformation
    $\QCoh{-} \to \QCoh{-}$,
    or, by unstraightening \cite[§3.2]{highertopoi}, to giving a morphism of cartesian fibrations
    \begin{equation*}
        \int \QCoh{-} \to \int \QCoh{-}.
    \end{equation*}
    These are both ordinary categories, so we can write down by hand what this morphism is:
    On objects, it sends a pair $(U, \Cat F)$, where $U \subset X$ is open and $\Cat F \in \QCoh{U}$,
    to the pair $(U, F_* \Cat F)$.
    It sends a morphism $(\iota, \phi) \colon (U, \Cat F) \to (V, \Cat G)$,
    where $\iota \colon U \hookrightarrow V$ is an inclusion of open subsets, 
    and $\phi \colon \Cat G \to \iota_* \Cat F$ is a morphism of quasi-coherent sheaves,
    to the morphism $(\iota, \phi')$,
    where $\phi' \colon F_* \Cat G \xrightarrow{F_* \phi} F_* \iota_* \Cat F \cong \iota_* F_* \Cat F$.
    Here, the equivalence exists because already $F\iota = \iota F$.
    We check that this actually defines a functor:
    So let $(\iota, \phi) \colon (U, \Cat F) \to (V, \Cat G)$
    and $(j, \psi) \colon (V, \Cat G) \to (W, \Cat H)$ be two morphisms.
    We have to see that the outer diagram commutes:
    \begin{center}
        \begin{tikzcd}
            F_* \Cat H \ar[r, "F_* \psi"] \ar[dd, "{F_*(j_* \phi \circ \psi)}"'] & F_* j_* \Cat G \ar[r, "Ex_{*,*}"] \ar[d, "F_* j_* \phi"'] &j_* F_* \Cat G \ar[d, "j_* F_* \phi"] \\
            &F_* j_* \iota_* \Cat F \ar[r, "Ex_{*,*}"'] & j_* F_* \iota_* \Cat F \ar[d, "Ex_{*,*}"] \\
            F_* j_* \iota_* \Cat F \ar[ur, equal] \ar[rr, "Ex_{*,*}"'] && j_* \iota_* F_* \Cat F \rlap{.}
        \end{tikzcd}
    \end{center}
    Here, the small square commutes by naturality of the exchange transformation,
    whereas the left trapezoid commutes by functoriality, and the right trapezoid 
    commutes by the definition of the exchange transformations.

    It suffices to show that the functor is a morphism of cartesian fibrations.
    So let $(\iota, \phi) \colon (U, \Cat F) \to (V, \Cat G)$ be a cartesian edge.
    Using the description of cartesian edges from \cite[\href{https://stacks.math.columbia.edu/tag/04U3}{Lemma 04U3}]{stacks-project}
    we know that the adjoint morphism $\tilde{\phi} \colon \iota^* \Cat G \to \Cat F$ is an equivalence,
    and we have to show that the same is true for the image of $(\iota, \phi)$.
    Hence, we have to see that the adjoint of $\phi' \colon F_* \Cat G \xrightarrow{F_* \phi} F_* \iota_* \Cat F \cong \iota_* F_* \Cat F$
    is an isomorphism. Consider the following diagram:
    \begin{center}
        \begin{tikzcd}
            \iota^* F_* \Cat G \ar[d, "Ex^*_*"'] \ar[r, "\iota^* F_* \phi"] &\iota^* F_* \iota_* \Cat F \ar[d, "Ex_*^*"] \ar[r, "Ex_{*,*}"] &\iota^* \iota_* F_* \Cat F \ar[d, "\mathrm{counit}"] \\
            F_* \iota^* \Cat G \ar[r, "F_* \iota^* \phi"']&F_* \iota^* \iota_* \Cat F \ar[r, "\mathrm{counit}"'] &F_* \Cat F\rlap{.}
        \end{tikzcd}
    \end{center}
    The left square commutes by naturality of the exchange transformation, 
    whereas the right square commutes by definition of the vertical exchange transformation as adjoint of the map
    $F_* \xrightarrow{\mathrm{unit}} F_*\iota_*\iota^* \cong \iota_* F_* \iota^*$.
    The composition on the top and right is the adjoint of $\phi'$.
    The left vertical arrow is an isomorphism by \cite[Lemma 2.2.1]{blickle2013cartier},
    whereas the bottom composition is $F_*$ applied to the adjoint of $\phi$,
    which is an isomorphism by assumption.
    This proves the claim.
\end{proof}

\begin{lem} \label{lem:zariski:qcoh-frob-sheaf}
    Let $X$ be a qcqs $\Fp$-scheme.
    The functor
    \begin{equation*}
        \QCoh{-}^{\frob} \colon \op{X}_{\zar} \to \End{\Groth}
    \end{equation*}
    from \cref{lem:zariski:qcoh-frob} is a Zariski sheaf.
\end{lem}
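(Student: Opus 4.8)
The plan is to reduce the sheaf condition for $\QCoh{-}^{\frob}$ to the already established sheaf condition for $\QCoh{-}$ (\cref{lem:zariski:qcoh}), exploiting that $\End{\Groth}$ is a pullback. The key point I would isolate first is that the forgetful functor $U_{\Groth}\colon \End{\Groth} \to \Groth$ detects small limits. Indeed, $\End{\Groth}$ is by construction the pullback of $\Fun{\Delta^1}{\Groth} \xrightarrow{(s,t)} \Groth \times \Groth \xleftarrow{(\id{},\id{})} \Groth$, so the two projections $U_{\Groth}$ and $\kappa_{\Groth}\colon \End{\Groth} \to \Fun{\Delta^1}{\Groth}$ jointly detect limits; moreover $(s,t)\colon \Fun{\Delta^1}{\Groth} \to \Groth \times \Groth$ is conservative and preserves limits (limits of functors are computed pointwise), hence reflects them, and there are canonical identifications $s \circ \kappa_{\Groth} \cong U_{\Groth} \cong t \circ \kappa_{\Groth}$. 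Thus a cone in $\End{\Groth}$ whose $U_{\Groth}$-image is a limit cone automatically has a limit-cone image under $\kappa_{\Groth}$ as well, and is therefore itself a limit cone; conversely $U_{\Groth}$ preserves limits by \cite[Proposition 2.6]{cartmod}. Consequently a functor $G\colon \op{X}_{\zar} \to \End{\Groth}$ sends a cone to a limit cone exactly when $U_{\Groth}\circ G$ does, so $G$ is a Zariski sheaf if and only if $U_{\Groth}\circ G$ is.

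I would then apply this with $G = \QCoh{-}^{\frob}$. By its construction in \cref{lem:zariski:qcoh-frob}, this functor is a lift of $\QCoh{-}$ along $U_{\Groth}$, i.e.\ $U_{\Groth}\circ \QCoh{-}^{\frob} \cong \QCoh{-}$ (on underlying categories, $\QCoh{U}^{\frob}$ is just $\QCoh{U}$ together with the endofunctor $F_*$). Since $\QCoh{-}$ is a Zariski sheaf on $X_{\zar}$ by \cref{lem:zariski:qcoh}, the previous paragraph immediately gives that $\QCoh{-}^{\frob}$ sends the \v{C}ech nerve of every cover in $X_{\zar}$ to a limit diagram, which is exactly the assertion that $\QCoh{-}^{\frob}$ is a Zariski sheaf.

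There is no substantial obstacle here; the only step requiring care is the bookkeeping that upgrades "the projections out of the pullback $\End{\Groth}$ jointly detect limits" to "$U_{\Groth}$ alone detects limits", which rests on the conservativity of $(s,t)$ together with the identifications $s\kappa_{\Groth} \cong U_{\Groth} \cong t\kappa_{\Groth}$ — precisely the ingredients already used in the proof of \cref{lem:functoriality:end-limit-preserving}. One should also note in passing that the ($\Delta$-indexed, small) limits involved exist in the huge category $\End{\Groth}$, since $\Groth$ admits small limits and $\End{\Groth}$ is obtained from it by a pullback along limit-preserving functors, so all the limit cones above make sense.
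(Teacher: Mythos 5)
Your proposal is correct and takes essentially the same route as the paper: there, the sheaf property is likewise deduced from \cref{lem:zariski:qcoh} together with the fact that the forgetful functor $\End{\Groth} \to \Groth$ is conservative and preserves limits (citing \cite[Proposition 2.6 (b) and (c)]{cartmod}). Your additional unwinding of why $U_{\Groth}$ alone detects limits is just an in-line proof of the cited facts, so no substantive difference.
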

\begin{proof}
    We have seen in \cref{lem:zariski:qcoh} that $\QCoh{-}$ is a Zariski sheaf.
    Since $\End{\Groth} \to \Groth$ is conservative and preserves limits,
    e.g.\ by \cite[Proposition 2.6 (b) and (c)]{cartmod},
    it follows immediately that also $\QCoh{-}^{\frob}$ is a Zariski sheaf.
\end{proof}

\begin{prop}[Zariski descent of derived categories] \label{lem:zariski:descent-on-derived}
    Let $X$ be a qcqs $\Fp$-scheme.
    Let $\Cat F \colon \op{X}_{\zar} \to \Groth$ 
    be a Zariski sheaf of Grothendieck abelian categories,
    such that $\Cat F$ satisfies base change in the following sense:
    If
    \[\begin{tikzcd}
        U_1 \ar[r, "g'"] \ar[d, "f'"'] \cartsymb & U_2 \ar[d, "f"] \\
        U_3 \ar[r, "g"'] & U_4
    \end{tikzcd}\]
    is a cartesian square in $X_{\zar}$ (in particular, all the arrows are (disjoint unions of) qcqs open immersions),
    then applying the functor $\Cat D^+ \circ \Cat F$ (using \cref{lem:zariski:derived-cat}) yields a horizontally right adjointable square
    \[\begin{tikzcd}
        \Cat D^+(\Cat F(U_4)) \ar[r, "g^*"] \ar[d, "f^*"'] & \Cat D^+(\Cat F(U_3)) \ar[d, "f'^*"] \\
        \Cat D^+(\Cat F(U_2)) \ar[r, "g'^*"'] & \Cat D^+(\Cat F(U_1)) \rlap{,}
    \end{tikzcd}\]
    i.e.\ the functors $g^*$ and $g'^*$ admit right adjoints $Rg_*$ and $Rg'_*$, respectively,
    and the canonical base change map $f^* Rg_* \to Rg'_* f'^*$ is an equivalence.

    Then $\Cat D^+\circ\Cat F \colon \op{X}_{\zar} \to \CatInf$ 
    is a Zariski sheaf of stable $\infty$-categories.
\end{prop}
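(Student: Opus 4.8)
The plan is to verify the sheaf condition for $\Cat D^+\circ\Cat F$ directly on a generating class of Zariski covers; note first that each $\Cat D^+(\Cat F(U))$ is stable, so once this is done we automatically get a Zariski sheaf of stable $\infty$-categories. Since $X$ is qcqs, every cover in $X_{\zar}$ admits a finite refinement, so by the standard reduction for the Zariski topology — via its cd-structure, or directly by induction on the number of members of a finite cover after absorbing disjoint unions — it suffices to check: (a) $\Cat D^+(\Cat F(\emptyset))$ is a terminal $\infty$-category and $\Cat D^+\circ\Cat F$ carries finite disjoint unions to finite products; and (b) $\Cat D^+\circ\Cat F$ carries every elementary Mayer–Vietoris square
\begin{center}
    \begin{tikzcd}
        U \cap V \ar[r] \ar[d] & V \ar[d] \\ U \ar[r] & W
    \end{tikzcd}
\end{center}
(with $U,V\subset X$ quasi-compact open and $W=U\cup V$) to a pullback square in $\CatInf$. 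Part (a) is immediate: $\Cat F$ is a Zariski sheaf, so it sends $\emptyset$ to the terminal category and finite disjoint unions to finite products, and $\Cat D^+$ (equivalently $\Cat D$, by \cref{lem:zariski:derived-cat}) preserves finite products because the derived $\infty$-category of a finite product of Grothendieck abelian categories is the product of the derived $\infty$-categories.

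For part (b), write $j_U^*,j_V^*$ and $j_{UV,U}^*,j_{UV,V}^*$ for the restriction functors along the four open immersions of the square (all qcqs by \cref{rmk:zariski:qcqs-of-transition-maps}). Applying the base-change hypothesis to the cartesian square $U\times_W U=U$ (open immersions are monomorphisms) gives $j_U^*\circ R(j_U)_*\simeq\mathrm{id}$, so the pushforward along any open immersion in $X_{\zar}$ is fully faithful; likewise for the other three. Consider the comparison functor $\Phi\colon \Cat D^+(\Cat F(W))\to \Cat D^+(\Cat F(U))\times_{\Cat D^+(\Cat F(U\cap V))}\Cat D^+(\Cat F(V))$, $M\mapsto(j_U^*M,j_V^*M)$ with the canonical identification of their restrictions over $U\cap V$. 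This $\Phi$ is conservative: the two projections out of the fibre product are jointly conservative, so it suffices that $j_U^*$ and $j_V^*$ be jointly conservative on $\Cat D^+(\Cat F(W))$, which holds because $j_U^*,j_V^*$ are exact functors of abelian categories (open immersions are flat) and hence t-exact on the derived level, so $j_U^*M\simeq 0\simeq j_V^*M$ forces $j_U^*H_n(M)\simeq 0\simeq j_V^*H_n(M)$ for all $n$, hence $H_n(M)=0$ for all $n$ since $\Cat F(W)\to\Cat F(U)\times\Cat F(V)$ is faithful ($\Cat F$ being a sheaf), hence $M\simeq 0$.

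It remains to produce an inverse. The functor $\Phi$ has a right adjoint $\Psi$ that sends an object $(M_U,M_V)$ of the fibre product, with common restriction $N\in\Cat D^+(\Cat F(U\cap V))$, to
\begin{equation*}
    \Psi(M_U,M_V)\;\coloneqq\;R(j_U)_*M_U\;\times_{R(j_{UV})_*N}\;R(j_V)_*M_V\;\in\;\Cat D^+(\Cat F(W)),
\end{equation*}
where $j_{UV}\colon U\cap V\hookrightarrow W$ and the two maps are the evident units. Using the Beck–Chevalley equivalence $j_U^*R(j_V)_*\simeq R(j_{UV,U})_*\,j_{UV,V}^*$ furnished by the hypothesis, together with the full faithfulness of the pushforwards, one computes $j_U^*\Psi(M_U,M_V)\simeq M_U$ and, symmetrically, $j_V^*\Psi(M_U,M_V)\simeq M_V$, compatibly with the identifications over $U\cap V$; hence the counit $\Phi\Psi\to\mathrm{id}$ is an equivalence, so $\Psi$ is fully faithful. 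A conservative functor with a fully faithful right adjoint is an equivalence — once $\Phi$ is conservative and the counit is invertible, the triangle identities force the unit $\mathrm{id}\to\Psi\Phi$ to be invertible as well — so $\Phi$ is an equivalence, completing part (b) and the proof. I expect the main obstacle to lie in this last step: tracking the various open immersions through the Beck–Chevalley equivalences and checking that the identifications $j_U^*\Psi\simeq M_U$ and $j_V^*\Psi\simeq M_V$ respect the gluing datum $N$, so that $\Phi\Psi\to\mathrm{id}$ is genuinely an equivalence of objects of the fibre product and not merely after projecting to the two factors.
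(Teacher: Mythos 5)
Your argument is correct in outline, but it takes a genuinely different route from the paper. The paper does not use a Mayer--Vietoris/cd-structure reduction at all: it takes an arbitrary (finite) cover, passes to its \v{C}ech nerve $\Cat A^\bullet = \Cat F(\check{C}(\Cat U)^\bullet)$, and invokes a general criterion of Heyer--Mann (\cite[Proposition A.4.23]{heyer20246}) for $\Cat D^+$ to preserve the limit of a cosimplicial diagram of Grothendieck abelian categories; the base-change hypothesis is used only to verify two of the four conditions of that criterion (commutation of $Rd^0_*$ with the cosimplicial structure maps, and that $d^{1*}$ sends injectives to $d^0_*$-acyclics). You instead reduce, via the Zariski cd-structure on the site $X_{\zar}$, to finite disjoint unions plus a single Mayer--Vietoris square, and then run the classical recollement-style gluing by hand: conservativity of the comparison functor $\Phi$ (your appeal to ``faithfulness'' of $\Cat F(W)\to\Cat F(U)\times\Cat F(V)$ should be phrased as joint conservativity, which does follow from the sheaf condition since evaluation at level $0$ of a limit over $\Delta$ is conservative, combined with t-exactness of the restrictions and the fact that homology functors are jointly conservative on $\Cat D^+$), an explicit right adjoint $\Psi$ given by a fibre product of pushforwards, full faithfulness of $Rj_*$ extracted from the base-change hypothesis applied to the self-intersection square $U\times_W U=U$ (a nice use of the hypothesis the paper never needs), and Beck--Chevalley for the square $U\cap V=U\times_W V$. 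The coherence worry you flag at the end is not a real obstacle: the counit $\Phi\Psi\to\mathrm{id}$ is a map in the fibre product category, where equivalences are detected by the two projections, so it suffices that its two components $j_U^*\Psi\to M_U$ and $j_V^*\Psi\to M_V$ be equivalences, which is exactly your computation. The trade-off between the two proofs: the paper handles all covers uniformly and outsources the homological algebra to a quotable descent criterion, while your proof is more self-contained in its homological content but outsources the (standard, yet not entirely trivial on this nonstandard site of finite disjoint unions of quasi-compact opens) reduction of Zariski descent for $\CatInf$-valued presheaves to the reducedness, finite-product and Mayer--Vietoris conditions, and requires the existence of the adjoint $\Psi$ to be justified pointwise (representability of $\Map(\Phi(-),(M_U,M_V))$ via the fibre-product formula), since the bounded-above categories are not presentable.
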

\begin{proof}
    It suffices to show that for every Zariski cover $\Cat U = \{U_i \hookrightarrow U\}_i$ in $X$ the functor 
    $\Cat D^+$ preserves the limit diagram $\Cat F(U) \to \Cat F(\check{C}(\Cat U)^\bullet)$,
    where $\check{C}(\Cat U)^\bullet$ is the \v{C}ech nerve of the cover.
    For notational convenience, we write $\Cat A^n \coloneqq \Cat F(\check{C}(\Cat U)^n)$,
    so that $\Cat F(U) \eqqcolon \Cat A^{-1} \cong \lim_{n \in \Delta} \Cat A^n$.
    The proposition then follows from \cite[Proposition A.4.23]{heyer20246} if we can show the following statements:
    \begin{enumerate}[label=\textbf{(\alph*)}]
        \item $\Cat A^n$ is a Grothendieck abelian category for each $n \in \Delta$.
        \item For every $\alpha \colon [n] \to [m] \in \Delta$,
            the functor $\alpha^* \colon \Cat A^n \to \Cat A^m$ is a exact functor admitting a right adjoint $\alpha_*$.
        \item For $d^i \colon [n] \to [n+1]$ the $i$-th coface map, and $\alpha \colon [n] \to [m] \in \Delta$,
            the following diagram commutes:
            \begin{center}
                \begin{tikzcd}
                    \Cat D^+ (\Cat A^{n+1}) \ar[r, "Rd^0_*"] \ar[d, "\alpha'^*"'] &\Cat D^+(\Cat A^n) \ar[d, "\alpha^*"] \\
                    \Cat D^+ (\Cat A^{m+1}) \ar[r, "Rd^0_*"'] &\Cat D^+(\Cat A^m) \rlap{,}
                \end{tikzcd}
            \end{center}
            where $\alpha'$ is defined as $\id{[0]} \star\alpha$ under the identification $[n+1]=[0]\star[n]$ and similarly for $m$.
        \item The functor $d^{1*} \colon \Cat A^0 \to \Cat A^1$ sends injective objects to $d^0_*$-acyclic objects.
    \end{enumerate}
    The statements \textbf{(a)} and \textbf{(b)} are clear since $\Cat A^\bullet$ is a diagram in $\Groth$.
    We proceed by showing \textbf{(c)}. By definition, all functors in the square are induced by open immersions of the form
    \begin{equation*}
        \bigsqcup_{i_0, \dots, i_m} \bigcap_{k = 0}^{m} U_{i_k} \to \bigsqcup_{i_0, \dots, i_n} \bigcap_{k = 0}^{n} U_{i_k}.
    \end{equation*}
    Hence, the result follows from our base change assumption.

    We end the proof by showing \textbf{(d)}.
    Let $M \in \Cat F(\sqcup_i U_i)$ be an injective object.
    We have to see that $d^{1*} M$ is $d^0_*$-acyclic.
    For this, it is enough to see that the following diagram commutes:
    \begin{center}
        \begin{tikzcd}
            \Cat D^+ (\Cat A^0) \ar[r, "R\iota_*"]\ar[d, "d^{1*}"'] &\Cat D^+(\Cat A^{-1})\ar[d, "\iota^*"] \\
            \Cat D^+ (\Cat A^1) \ar[r, "Rd^0_*"']  &\Cat D^+(\Cat A^0)\rlap{,}
        \end{tikzcd}
    \end{center}
    where $\iota \colon \sqcup_i U_i \to U$ is the canonical map.
    Indeed, if $M \in \Cat D^+ (\Cat A^0)$ is injective,
    we want to know that $Rd_*^0 d^{1*} M$ is concentrated in degree $0$.
    This follows from commutativity, as $M$ is $R\iota_*$-acyclic (as it is injective),
    and $\iota^*$ is t-exact (since it was exact on abelian categories).
    But now the commutativity of the diagram is proven in exactly the same way as in \textbf{(c)}.
\end{proof}

In order to use the above proposition in the setting of Frobenius modules, we have to make sure 
that they satisfy (a weak version of) flat base change. 
For this to make sense note that each qcqs morphism of schemes $f\colon X \to Y$ induces a left exact pushforward functor
\begin{equation*}
    f_* \colon \Frob{\QCoh{X}}{F_*} \to \Frob{\QCoh{Y}}{F_*}
\end{equation*}
on Frobenius modules via the functoriality of $\FrobSingle{-}$, cf.\ \cref{lem:functoriality:frob-functor}.
This in turn induces a right derived functor
\begin{equation*}
    Rf_* \colon \Cat D^+(\Frob{\QCoh{X}}{F_*}) \to \Cat D^+(\Frob{\QCoh{Y}}{F_*}).
\end{equation*} 
Moreover, the usual pullback functor $f^* \colon \QCoh{Y} \to \QCoh{X}$ induces a functor
\begin{equation*}
    f^* \colon \Frob{\QCoh{Y}}{F_*} \to \Frob{\QCoh{X}}{F_*}
\end{equation*}
via \cite[Definition 4.1.1]{bockle2014cohomological} (note that in \loccit Frobenius modules are called $\tau$-sheaves, and their categories are equivalent by \cite[Corollary 2.10]{cartmod}).
The proof of \cite[Proposition 4.4.5]{bockle2014cohomological} shows that the pullback functor $f^*$ is left adjoint to $f_*$.

We also need the following definition:
\begin{defn} \label{def:zariski:geometric}
    Let $X$ be a scheme. We say that $X$ is \emph{geometric} 
    if $X$ is quasi-compact and has affine diagonal (the latter is also called 
    \emph{semi-separated} in the literature).
\end{defn}

\begin{lem}[Flat base change] \label{lem:zariski:flat-base-change-qcoh}
        Let
    \[\begin{tikzcd}
        X' \ar[r, "g'"] \ar[d, "f'"'] \cartsymb & X \ar[d, "f"] \\
        S' \ar[r, "g"'] & S
    \end{tikzcd}\]
    be a cartesian diagram of geometric $\Fp$-schemes and qcqs morphisms such that $g$ is moreover flat.
    Let $M \in \QCoh{X}$.
    Then the base change map $g^* Rf_* M \to Rf'_* g'^* M$ is an equivalence in $\Cat D^+ (\QCoh{S'})$.
\end{lem}
\begin{proof}
    Since all schemes involved are geometric, it follows from \cite[\href{https://stacks.math.columbia.edu/tag/08DB}{Tag 08DB}]{stacks-project}
    that we can also work in $\Cat D^+_{\mathrm{qc}}(X)$, the full subcategory 
    of $\Cat D^+(\mathrm{Mod}_{\Cat O_X})$ (the derived category of $\Cat O_X$-modules) 
    spanned by the complexes with quasicoherent cohomology (and similarly for $X'$, $S'$ and $S$).
    Moreover, the pullback and pushforward functors are compatible with this equivalence, see 
    \cite[\href{https://stacks.math.columbia.edu/tag/0CRX}{Tag 0CRX}]{stacks-project} for the pushforward functors, and note that any square with horizontal equivalences is autmoatically vertically left-adjointable 
    to get the compatibility with the pullback functors.
    Now, we may apply the flat base change theorem \cite[\href{https://stacks.math.columbia.edu/tag/02KH}{Tag 02KH}]{stacks-project}.
\end{proof}

\begin{lem} \label{lem:zariski:flat-base-change-frob-modules}
    Let
    \[\begin{tikzcd}
        X' \ar[r, "g'"] \ar[d, "f'"'] \cartsymb & X \ar[d, "f"] \\
        S' \ar[r, "g"'] & S
    \end{tikzcd}\]
    be a cartesian diagram of geometric $\Fp$-schemes and qcqs morphisms such that $g$ is moreover flat.
    Let $M \in \Cat D^+ (\Frob{\QCoh{X}}{F_*})$.
    Then the base change map $g^* Rf_* M \to Rf'_* g'^* M$ is an equivalence in $\Cat D^+ (\Frob{\QCoh{S'}}{F_*})$.
\end{lem}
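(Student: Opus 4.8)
The plan is to reduce the statement to ordinary flat base change for quasi-coherent complexes by testing along the forgetful functor. Write $U_X \colon \Frob{\QCoh{X}}{F_*} \to \QCoh{X}$ for the forgetful functor, and likewise $U_S, U_{S'}, U_{X'}$; by \cite[Corollary 2.8]{cartmod} these are exact, colimit-preserving and conservative. Passing to bounded below derived $\infty$-categories, the induced functors are t-exact, so together with conservativity on the heart, $\Cat D^+(U_{S'})$ is conservative on $\Cat D^+(\Frob{\QCoh{S'}}{F_*})$: a morphism whose image has invertible homology objects in every degree is itself an equivalence, since its cofiber is bounded below with vanishing homology. It therefore suffices to show that $\Cat D^+(U_{S'})$ carries the base change map $g^* Rf_* M \to Rf'_* g'^* M$ to an equivalence.

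The next step is to record the compatibility of the forgetful functors with the operations in play. At the level of abelian categories, $U_{(-)}$ intertwines $f_*$ and $f^*$ with the pushforward and the pullback of quasi-coherent sheaves, compatibly with the adjunction $f^* \dashv f_*$; this is immediate from the construction of $f_*$ via the functoriality of $\Frob{-}{-}$ (\cref{lem:functoriality:frob-functor}) and of $f^*$ from \cite[Definition 4.1.1]{bockle2014cohomological}, since in both cases the underlying quasi-coherent sheaf of the output is the corresponding operation applied to the underlying quasi-coherent sheaf. As $g$ is flat — hence so is its base change $g'$ — the pullbacks $g^*$ and $g'^*$ are exact on Frobenius modules and on quasi-coherent sheaves, and so $\Cat D^+(U_{(-)})$ intertwines them with their (derived $=$ underived) quasi-coherent analogues by functoriality of $\Cat D^+$ applied to the commuting squares of exact functors.

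The crucial point is the compatibility with the \emph{derived} pushforward, $\Cat D^+(U_S) \circ Rf_* \cong Rf_* \circ \Cat D^+(U_X)$ together with its analogue over $S'$. To prove it I would choose a finite affine open cover $\{V_a\}$ of $S$ and, by quasi-compactness of $X$, a finite affine open cover $\{W_\sigma\}$ of $X$ with each $W_\sigma$ contained in some $f^{-1}(V_a)$. Since $X$ is geometric, every finite intersection $W_{\sigma_0} \cap \dots \cap W_{\sigma_n}$ is again an affine open of $X$; since $S$ is geometric, each inclusion $V_a \hookrightarrow S$ is an affine morphism. Hence every composite $W_{\sigma_0} \cap \dots \cap W_{\sigma_n} \hookrightarrow f^{-1}(V_a) \to V_a \hookrightarrow S$ is a morphism of affine schemes followed by an affine open immersion, so its pushforward is exact on quasi-coherent sheaves, and therefore — as $U$ is exact and conservative — also on Frobenius modules. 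Consequently the alternating \v{C}ech complex $\check{C}^\bullet(\{W_\sigma\},M)$ is a finite-length $f_*$-acyclic resolution of $M$ in $\Frob{\QCoh{X}}{F_*}$, so $Rf_* M \cong f_*\, \check{C}^\bullet(\{W_\sigma\},M)$; the same cover computes $Rf_*$ on quasi-coherent sheaves, and $U_X$, being exact and commuting with the pushforwards and pullbacks entering the \v{C}ech complex, identifies the two computations. The analogous argument over $S'$ handles $Rf'_*$. Since $U$ is moreover compatible with the adjunctions $\mathbf{L}f^* \dashv Rf_*$ (pass to left adjoints in the above compatibility on the unbounded derived categories), $\Cat D^+(U_{S'})$ carries the Frobenius base change map to the base change map $g^* Rf_*(\Cat D^+(U_X)M) \to Rf'_* g'^*(\Cat D^+(U_X)M)$ for quasi-coherent complexes, which is an equivalence by classical flat base change along the flat morphism $g$ of qcqs schemes, cf.\ \cite[\href{https://stacks.math.columbia.edu/tag/02KH}{Tag 02KH}]{stacks-project} together with the standard extension to bounded below complexes. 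By conservativity of $\Cat D^+(U_{S'})$ we conclude.

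The step I expect to be the main obstacle is precisely the compatibility of the forgetful functor with $Rf_*$: this is exactly where geometricity of $X$ and $S$ — rather than merely the qcqs hypothesis — is essential, since it is what makes inclusions of affine opens into affine morphisms and intersections of affine opens affine, so that a single \v{C}ech resolution is simultaneously $f_*$-acyclic upstairs in $\Frob{\QCoh{X}}{F_*}$ and, after forgetting, $f_*$-acyclic in $\QCoh{X}$. One also has to check that the comparison of \v{C}ech complexes is natural enough to match the mates defining the two base change transformations, but this is formal once the functorwise compatibilities are in place.
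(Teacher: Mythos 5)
Your proposal is correct and follows essentially the same route as the paper: test the base change map along the conservative, t-exact forgetful functor, use the geometric hypothesis to identify $\Cat D^+(U)\circ Rf_*$ with $Rf_*\circ \Cat D^+(U)$ via a finite affine \v{C}ech resolution (the paper cites a Frobenius-module version of B\"ockle--Pink's Proposition 6.4.2 for exactly this), and conclude by classical flat base change for quasi-coherent sheaves. The only cosmetic difference is that the paper carries out the passage from heart objects to all of $\Cat D^+$ explicitly (d\'evissage plus a truncation argument using left t-exactness), while you absorb this step into the ``standard extension'' of flat base change to bounded below quasi-coherent complexes, which is indeed standard.
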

\begin{proof}
    First note that the functors $g^*$ and $g'^*$ are exact because $g$, and therefore also $g'$, are flat, respectively, 
    and because the forgetful functor $U$ is conservative and exact by \cite[Corollary 2.8 (b) and (h)]{cartmod}.
    Thus, \cite[\href{https://stacks.math.columbia.edu/tag/09T5}{Tag 09T5}]{stacks-project} implies that $g^*$ is left adjoint to $Rg_*$,
    and similarly for $g'$.
    Using this we can define the base change map as being adjoint to the map 
    $Rf_* M \xrightarrow{u} Rf_* Rg'_* g'^* M \cong Rg_* Rf'_* g'^* M$,
    where $u$ denotes the unit of the adjunction and the equivalence follows from the commutativity of the diagram.
    We have to show that this map is an equivalence, which we do after applying the conservative t-exact forgetful functor $\Cat D^+(U) \colon \Cat D^+ (\Frob{\QCoh{S'}}{F_*}) \to \Cat D^+(\QCoh{S'})$ 
    (for the conservativity see \cite[Corollary A.5]{cartmod}).

    Assume first that $M \in \Cat D^+(\Frob{\QCoh{X}}{F_*})^\heartsuit$.
    The forgetful functor commutes with the pullback functors by definition,
    and with the derived pushforward functors by the version of \cite[Proposition 6.4.2]{bockle2014cohomological} for Frobenius modules (here we use that the schemes are geometric which implies
    that the derived pushforward functors can be computed using a \v{C}ech resolution, cf.\ \cite[\href{https://stacks.math.columbia.edu/tag/01XL}{Tag 01XL}]{stacks-project}).
    This means, we have to show that the map $g^* Rf_* UM \to Rf'_* g'^* UM$ is an equivalence in $\Cat D^+(\QCoh{S'})$. 
    But this is exactly \cref{lem:zariski:flat-base-change-qcoh}.

    Since all the functors are exact, by a standard devissage argument,
    we immediately get the result for $M \in \Cat D^\flat(\Frob{\QCoh{X}}{F_*})$.
    Suppose now that $M \in \Cat D^+(\Frob{\QCoh{X}}{F_*})$.
    By separatedness of the t-structures, it suffices to show that 
    $\pi_n(g^* Rf_* UM) \to \pi_n(Rf'_* g'^* UM)$ is an equivalence for all $n \in \Z$.
    For this, consider the following diagram:
    \begin{center}
        \begin{tikzcd}
            \tau_{\ge n} g^* Rf_* UM \ar[r] &\tau_{\ge n} Rf'_* g'^* UM \\
            \tau_{\ge n} g^* Rf_* U \tau_{\ge n} M \ar[u, "\cong"] \ar[r, "\cong"] &\tau_{\ge n} Rf'_* g'^* U \tau_{\ge n} M \ar[u, "\cong"] \rlap{,}
        \end{tikzcd}
    \end{center}
    where the vertical arrows are equivalences 
    since $g^* Rf_* U$ and $Rf'_* g'^* U$ are left t-exact (in fact, $g^*$, $g'^*$ and $U$ are t-exact,
    and $Rf_*$ and $Rf'_*$ are right adjoints of t-exact functors).
    Indeed, this can be checked on homotopy groups;
    by definition they vanish below $n$, and in degrees $\ge n$ they agree 
    by the long exact sequence associated to the fiber sequence 
    \begin{equation*}
        \tau_{\ge n} g^* Rf_* U \tau_{\ge n} M \to \tau_{\ge n} g^* Rf_* U M \to \tau_{\ge n} g^* Rf_* U \tau_{\le n-1} M,
    \end{equation*}
    using that $g^* Rf_* U$ is left t-exact, and similarly for the right vertical morphism.
    But the lower horizontal arrow is an equivalence by the above, as $\tau_{\ge n} M$
    is bounded.
    This immediately implies the result.
\end{proof}

\begin{prop} \label{lem:zariski:left-sep-derived-category-sheaf}
    Let $X$ be a geometric $\Fp$-scheme.
    The presheaf of stable $\infty$-categories 
    \begin{equation*}
        \Cat D^+(\FrobSingle{\QCoh{-}^{\frob}}) \colon \op{X}_{\zar} \to \CatInf
    \end{equation*}
    is a Zariski sheaf.
    Here, $\QCoh{-}^{\frob}$ is the functor from \cref{lem:zariski:qcoh-frob}.
\end{prop}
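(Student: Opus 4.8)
The plan is to deduce the statement from the two main inputs prepared above, \cref{lem:zariski:qcoh-frob-sheaf} and \cref{lem:zariski:descent-on-derived}. First I would form the composite
\[
    \Cat G \coloneqq \FrobSingle{\QCoh{-}^{\frob}} \colon \op{X}_{\zar} \xrightarrow{\QCoh{-}^{\frob}} \End{\Groth} \xrightarrow{\Frob{-}{-}} \Groth
\]
of the functor from \cref{lem:zariski:qcoh-frob} with $\Frob{-}{-} \colon \End{\Groth} \to \Groth$ from \cref{lem:functoriality:frob-functor-on-groth-prst}. This $\Cat G$ is a Zariski sheaf of Grothendieck abelian categories: $\QCoh{-}^{\frob}$ is a Zariski sheaf by \cref{lem:zariski:qcoh-frob-sheaf}, the functor $\Frob{-}{-}$ preserves limits by \cref{lem:functoriality:frob-functor-on-groth-prst-limits}, and $\Groth \hookrightarrow \CatInf$ preserves limits, so $\Cat G$ sends \v{C}ech limit diagrams to limit diagrams. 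The assertion of the proposition is then exactly the conclusion of \cref{lem:zariski:descent-on-derived} applied to $\Cat F = \Cat G$, so it remains to check the base-change hypothesis of that proposition.

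To that end I would first record that $X_{\zar}$ consists entirely of geometric $\Fp$-schemes and flat qcqs morphisms. A quasi-compact open $Y \subseteq X$ is geometric since $\Delta_Y$ is the base change of the affine morphism $\Delta_X$ along $Y \times Y \hookrightarrow X \times X$, and a finite disjoint union $\bigsqcup_i Y_i$ of such is quasi-compact with affine diagonal, because $\bigsqcup_i \Delta_{Y_i}$ factors as an affine morphism $\bigsqcup_i Y_i \to \bigsqcup_i Y_i \times Y_i$ followed by the inclusion of a union of connected components $\bigsqcup_i Y_i \times Y_i \hookrightarrow \bigsqcup_{i,j} Y_i \times Y_j$, which is a closed immersion and hence affine. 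Every transition map of $X_{\zar}$ is a finite disjoint union of quasi-compact open immersions, so it is flat, and it is qcqs by (the argument of) \cref{rmk:zariski:qcqs-of-transition-maps}. Now let a cartesian square in $X_{\zar}$ be given as in \cref{lem:zariski:descent-on-derived}; since $X_{\zar}$ is closed under pullbacks (\cref{defn:zariski:site}) this is a cartesian square of schemes, of geometric $\Fp$-schemes and qcqs morphisms, in which the two vertical edges are moreover flat. Applying \cref{lem:zariski:flat-base-change-frob-modules} with $(U_4, U_3, U_2, U_1)$ in the roles of $(S, X, S', X')$ — so that the distinguished flat morphism $g$ of \loccit becomes the right vertical edge $f \colon U_2 \to U_4$ — yields that $g^*$ and $g'^*$ admit right adjoints $R g_*$, $R g'_*$ on $\Cat D^+$ and that the base change map $f^* R g_* \to R g'_* f'^*$ is an equivalence. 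This is precisely the horizontal right adjointability demanded by \cref{lem:zariski:descent-on-derived}, and invoking \loccit with $\Cat F = \Cat G$ then gives that $\Cat D^+ \circ \Cat G = \Cat D^+(\FrobSingle{\QCoh{-}^{\frob}})$ is a Zariski sheaf of stable $\infty$-categories.

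The genuine mathematical content here already lives in the earlier lemmas: \cref{lem:zariski:flat-base-change-frob-modules} (a devissage reducing the base-change equivalence for Frobenius modules to ordinary flat base change for quasi-coherent sheaves) and \cref{lem:zariski:descent-on-derived} (the criterion for $\Cat D^+$ of a sheaf of Grothendieck abelian categories with base change to remain a sheaf). Accordingly, the obstacles in the present argument are bookkeeping ones: getting the orientation of the base-change square right so that the flat edge is the one along which one pulls back, checking that the derived pushforwards $R g_*$ demanded by \cref{lem:zariski:descent-on-derived} coincide with those produced by \cref{lem:zariski:flat-base-change-frob-modules} (both being the right adjoint on $\Cat D^+$ of the exact pullback, which exists since $\Cat D^+$ of a Grothendieck abelian category has enough injectives), and confirming that the nonstandard site $X_{\zar}$ really has the good properties — geometric objects, flat qcqs transition maps — needed to feed into \cref{lem:zariski:flat-base-change-frob-modules}.
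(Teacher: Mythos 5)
Your proposal is correct and follows essentially the same route as the paper: deduce that $\FrobSingle{\QCoh{-}^{\frob}}$ is a Zariski sheaf of Grothendieck abelian categories from \cref{lem:zariski:qcoh-frob-sheaf} and \cref{lem:functoriality:frob-functor-on-groth-prst-limits}, then apply \cref{lem:zariski:descent-on-derived} with the base-change hypothesis supplied by \cref{lem:zariski:flat-base-change-frob-modules}. Your extra verifications (that objects of $X_{\zar}$ are geometric and the role assignment matching the flat edge of the square to the $g$ of \cref{lem:zariski:flat-base-change-frob-modules}) are details the paper leaves implicit, and they check out.
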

\begin{proof}
    By \cref{lem:zariski:qcoh-frob-sheaf}, the presheaf $\QCoh{-}^{\frob}$ is a Zariski sheaf.
    Moreover, the functor $\FrobSingle{-}$ preserves limits by \cref{lem:functoriality:frob-functor-on-groth-prst-limits},
    hence also the presheaf $\FrobSingle{\QCoh{-}^{\frob}}$ is a Zariski sheaf (as Zariski sheaves are defined using a limit condition).
    Therefore, the proposition follows from \cref{lem:zariski:descent-on-derived},
    using \cref{lem:zariski:flat-base-change-frob-modules}.
\end{proof}

\begin{lem} \label{lem:zariski:left-sep-derived-category-qcoh-sheaf}
    Let $X$ be a geometric $\Fp$-scheme.
    The presheaf of stable $\infty$-categories
    \begin{equation*}
        \Cat D^+(\QCoh{-}) \colon \op{X}_{\zar} \to \CatInf
    \end{equation*}
    is a Zariski sheaf.
\end{lem}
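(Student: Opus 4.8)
The plan is to realize this as an instance of \cref{lem:zariski:descent-on-derived}. We already know from \cref{lem:zariski:qcoh} that $\QCoh{-} \colon \op{X}_{\zar} \to \Groth$ is a Zariski sheaf of Grothendieck abelian categories, so $\Cat D^+(\QCoh{-}) = \Cat D^+ \circ \QCoh{-}$ is a well-defined presheaf with values in $\CatInf$ (using \cref{lem:zariski:derived-cat}), and everything comes down to verifying the base change hypothesis of \cref{lem:zariski:descent-on-derived} for the sheaf $\Cat F = \QCoh{-}$.

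So the next step is to fix a cartesian square in $X_{\zar}$ as in that proposition. All four of its arrows are (finite disjoint unions of) qcqs open immersions, in particular flat, by \cref{rmk:zariski:qcqs-of-transition-maps}. Hence, with the notation of \cref{lem:zariski:descent-on-derived}, the transition functor $g^* \colon \QCoh{U_4} \to \QCoh{U_3}$ is exact (flatness of $g$) and admits a right adjoint $g_* \colon \QCoh{U_3} \to \QCoh{U_4}$, namely pushforward, which preserves quasi-coherence since $g$ is qcqs. Therefore \cite[\href{https://stacks.math.columbia.edu/tag/09T5}{Tag 09T5}]{stacks-project} shows that the derived functor $Rg_*$ is right adjoint to $\Cat D^+(g^*)$, and likewise for $g'$; this gives horizontal right adjointability of the square. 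It then remains to see that the base change map $f^* Rg_* \to Rg'_* f'^*$ is an equivalence, and since $f$ is flat and $g$ is qcqs this is exactly the flat base change theorem for quasi-coherent cohomology \cite[\href{https://stacks.math.columbia.edu/tag/02KH}{Tag 02KH}]{stacks-project}. With this verified, \cref{lem:zariski:descent-on-derived} yields that $\Cat D^+(\QCoh{-})$ is a Zariski sheaf of stable $\infty$-categories, as desired.

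I do not expect any genuine obstacle here: this is a strictly simpler version of \cref{lem:zariski:left-sep-derived-category-sheaf}, obtained by discarding the Frobenius structure and replacing the Frobenius-module version of base change used in \cref{lem:zariski:flat-base-change-frob-modules} by the plain flat base change theorem. The only mild point is the bookkeeping that the transition maps in the nonstandard site $X_{\zar}$ are qcqs, which is the content of \cref{rmk:zariski:qcqs-of-transition-maps} and ultimately rests on the standing assumption that $X$ is qcqs.
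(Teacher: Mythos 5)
Your proposal follows essentially the same route as the paper: apply \cref{lem:zariski:descent-on-derived} to $\Cat F = \QCoh{-}$ (a Zariski sheaf by \cref{lem:zariski:qcoh}), verify horizontal right adjointability via flat base change, using \cref{rmk:zariski:qcqs-of-transition-maps} to see the transition maps are qcqs. The one point you gloss over is that \cite[\href{https://stacks.math.columbia.edu/tag/02KH}{Tag 02KH}]{stacks-project} gives the base change equivalence only for objects of the heart (a single quasi-coherent sheaf), whereas the hypothesis of \cref{lem:zariski:descent-on-derived} requires $f^* Rg_* \to Rg'_* f'^*$ to be an equivalence on all of $\Cat D^+$; the paper explicitly closes this by the truncation/d\'evissage argument as in the proof of \cref{lem:zariski:flat-base-change-frob-modules} (pass from the heart to bounded complexes by exactness of the pullbacks, then to $\Cat D^+$ via $\tau_{\ge n}$ and left t-exactness of the composites), so you should add that step rather than cite the Stacks result as covering complexes directly.
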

\begin{proof}
    Since $\QCoh{-}$ is a Zariski sheaf by \cref{lem:zariski:qcoh},
    this follows from \cref{lem:zariski:descent-on-derived},
    using \cref{lem:zariski:flat-base-change-qcoh}
    (which applies 
    since all involved morphisms are qcqs, cf.\ \cref{rmk:zariski:qcqs-of-transition-maps}).
    Note that this gives that the base change map is an equivalence on objects in the heart,
    but one can deduce the result for every bounded above object by an analogous argument 
    as in the proof of \cref{lem:zariski:flat-base-change-frob-modules}.
\end{proof}

\begin{lem} \label{lem:zariski:end-left-sep-derived-category-sheaf}
    Let $X$ be a geometric $\Fp$-scheme.
    The presheaf of stable $\infty$-categories
    \begin{equation*}
        \End{\Cat D^+}(\QCoh{-}^{\frob}) \colon \op{X}_{\zar} \to \End{\CatInf}
    \end{equation*}
    is a Zariski sheaf.
\end{lem}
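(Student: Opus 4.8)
The plan is to obtain the statement formally, by combining two descent results already in hand: that $\QCoh{-}^{\frob}$ is a Zariski sheaf with values in $\End{\Groth}$ (\cref{lem:zariski:qcoh-frob-sheaf}), and that $\Cat D^+(\QCoh{-})$ is a Zariski sheaf with values in $\CatInf$ (\cref{lem:zariski:left-sep-derived-category-qcoh-sheaf}). The device that glues these together is \cref{lem:functoriality:end-limit-preserving} applied to the functor $\Cat D^+ \colon \Groth \to \CatInf$: it says that for a diagram $p$ in $\End{\Groth}$ admitting a limit, if $\Cat D^+$ preserves the limit of the underlying diagram $U_{\Groth}\circ p$ (here $U_{\Groth}\colon \End{\Groth}\to\Groth$ is the forgetful functor), then $\End{\Cat D^+}$ preserves the limit of $p$ itself. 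Since by construction $U_{\Groth}\circ\QCoh{-}^{\frob} = \QCoh{-}$ (\cref{lem:zariski:qcoh-frob}), this hypothesis is precisely the $\QCoh{-}$-version of descent.

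Concretely, I would fix $U\in X_{\zar}$ together with a Zariski cover $\Cat U = \{U_i\hookrightarrow U\}_i$ with Čech nerve $\check{C}(\Cat U)^\bullet$, and consider the cosimplicial diagram $p\colon \Delta \to \End{\Groth}$, $[n]\mapsto \QCoh{\check{C}(\Cat U)^n}^{\frob}$. Because $\QCoh{-}^{\frob}$ is a Zariski sheaf, $p$ admits the limit $\QCoh{U}^{\frob}\simeq\lim_{\Delta}p$ in $\End{\Groth}$; and $\Cat D^+$ preserves the limit $\QCoh{U}\simeq\lim_{\Delta}(U_{\Groth}\circ p)$ of the underlying diagram --- this is exactly \cref{lem:zariski:left-sep-derived-category-qcoh-sheaf}. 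Then \cref{lem:functoriality:end-limit-preserving} yields that the canonical comparison map
\[
    \End{\Cat D^+}(\QCoh{U}^{\frob}) \longrightarrow \lim_{\Delta}\End{\Cat D^+}\bigl(\QCoh{\check{C}(\Cat U)^\bullet}^{\frob}\bigr)
\]
is an equivalence. As $U$ and the cover were arbitrary, this is the assertion that $\End{\Cat D^+}(\QCoh{-}^{\frob})$ is a Zariski sheaf.

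There is no serious obstacle, but one point deserves care: $\Cat D^+$ does not preserve all limits --- for instance it fails on infinite products, since boundedness above of a tuple of complexes is not a levelwise condition --- so one cannot apply the ``in particular'' clause of \cref{lem:functoriality:end-limit-preserving} and conclude that $\End{\Cat D^+}$ is limit-preserving outright. What is used is the diagram-by-diagram form of that lemma, restricted to the Čech nerves appearing in Zariski descent, which is legitimate precisely because the corresponding statement for $\QCoh{-}$ is already known. As an alternative one could argue directly: the forgetful functor $U_{\CatInf}\colon \End{\CatInf}\to\CatInf$ is conservative and limit-preserving (as used in \cref{lem:zariski:qcoh-frob-sheaf}, via \cite[Proposition 2.6 (b) and (c)]{cartmod}), and postcomposing the presheaf $\End{\Cat D^+}(\QCoh{-}^{\frob})$ with it gives back $\Cat D^+(\QCoh{-})$; then sheafness of the latter together with conservativity of $U_{\CatInf}$ forces the comparison maps above to be equivalences.
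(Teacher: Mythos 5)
Your proposal is correct and is essentially the paper's own argument: sheafness of $\QCoh{-}^{\frob}$ from \cref{lem:zariski:qcoh-frob-sheaf}, the diagram-by-diagram case of \cref{lem:functoriality:end-limit-preserving} applied to $\Cat D^+$, with the hypothesis supplied by \cref{lem:zariski:left-sep-derived-category-qcoh-sheaf}. Your cautionary remark that $\Cat D^+$ does not preserve all limits, so only the per-diagram form of the lemma may be invoked, is exactly the point the paper's formulation of that lemma is designed to accommodate.
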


\begin{proof}
    By \cref{lem:zariski:qcoh-frob-sheaf}, the presheaf $\QCoh{-}^{\frob}$ is a Zariski sheaf.
    Thus, it suffices to show that for every (quasi-compact) Zariski cover $\Cat U = \{U_i \hookrightarrow U\}_i$ in $X_{\zar}$, the functor 
    $\End{\Cat D^+}$ preserves the limit diagram $\QCoh{U}^{\frob} \to \QCoh{\check{C}(\Cat U)^\bullet}^{\frob}$,
    where $\check{C}(\Cat U)^\bullet$ is the \v{C}ech nerve of the cover.
    By \cref{lem:functoriality:end-limit-preserving}, it is enough to show that the functor $\Cat D^+$ preserves the limit diagram $\QCoh{U} \to \QCoh{\check{C}(\Cat U)^\bullet}$.
    This was shown in \cref{lem:zariski:left-sep-derived-category-qcoh-sheaf}.
\end{proof}

\begin{lem} \label{lem:zariski:canonical-map}
    Let $X$ be a geometric $\Fp$-scheme.
    There is a morphism of presheaves on $X_{\zar}$
    \begin{equation*}
        \Phi \colon \Cat{D}^+(\FrobSingle{\QCoh{-}^{\frob}}) \to \FrobSingle{\End{\Cat{D}^+}(\QCoh{-}^{\frob})}
    \end{equation*}
    such that for each (quasi-compact) open $U \subset X$,
    the map $\Phi_U$ can be identified with a t-exact functor
    \begin{equation*}
        \Phi_U \colon \Cat{D}^+(\Frob{\QCoh{U}}{F_*}) \to \Frob{\Cat D^+(\QCoh{U})}{\Cat D^+(F_*)},
    \end{equation*}
    where the target is equipped with the induced t-structure from \cite[Proposition 3.3]{cartmod}.
    If $U = \operatorname{Spec}(R)$ is affine, then this map can be identified with a map
    \begin{equation*}
        \Phi_R \colon \Cat{D}^+(\Frob{\ModDisc{R}}{F_*}) \to \Frob{\Cat D^+(\ModDisc{R})}{\Cat D^+(F_*)},
    \end{equation*}
    which is the restriction of the equivalence from \cref{lem:affine:main-thm}.
\end{lem}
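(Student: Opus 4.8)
The plan is to produce $\Phi$ by carrying out, uniformly over $X_{\zar}$, the construction defining the affine comparison functor $\Phi$ of \cref{lem:affine:main-thm}; concretely, I would run that construction inside the functor $\infty$-category $\Fun{\op{X}_{\zar}}{\CatInf}$, in which all limits are computed pointwise. First I would assemble the input data. By \cref{lem:zariski:qcoh-frob} the presheaf $\QCoh{-}^{\frob}$ takes values in $\End{\Groth}$, so \cref{lem:functoriality:frob-functor-on-groth-prst} shows that $\FrobSingle{\QCoh{-}^{\frob}}$ takes values in $\Groth$, and the functoriality of $\FrobSingle{-}$ (\cref{lem:functoriality:frob-functor}) equips it with natural transformations $U_\heartsuit \colon \FrobSingle{\QCoh{-}^{\frob}} \to \QCoh{-}$ and $\kappa_\heartsuit \colon \FrobSingle{\QCoh{-}^{\frob}} \to \Fun{\Delta^1}{\QCoh{-}}$ of $\Groth$-valued presheaves, satisfying the pointwise relations $s \kappa_\heartsuit \cong U_\heartsuit$ and $t \kappa_\heartsuit \cong F_* U_\heartsuit$ coming from the pullback square defining $\FrobSingle{-}$. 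Since $U_\heartsuit$, $\kappa_\heartsuit$ and $F_*$ are colimit-preserving and exact by \cite[Corollary 2.8]{cartmod}, I may apply the functor $\Cat D^+ \colon \Groth \to \CatInf$ of \cref{lem:zariski:derived-cat} to each of them.

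Next I would observe that, by \cref{lem:functoriality:frob-functor} and the fact that limits in functor categories are pointwise, the presheaf $\FrobSingle{\End{\Cat D^+}(\QCoh{-}^{\frob})}$ is the pullback in $\Fun{\op{X}_{\zar}}{\CatInf}$ of the cospan $\Cat D^+(\QCoh{-}) \xrightarrow{(\id{}, \Cat D^+(F_*))} \Cat D^+(\QCoh{-})^{2} \xleftarrow{(s,t)} \Fun{\Delta^1}{\Cat D^+(\QCoh{-})}$, and I would define $\Phi$ as the map into this pullback with components $\Cat D^+(U_\heartsuit)$ and $\xi \circ \Cat D^+(\kappa_\heartsuit)$, where $\xi \colon \Cat D^+(\Fun{\Delta^1}{\QCoh{-}}) \to \Fun{\Delta^1}{\Cat D^+(\QCoh{-})}$ is the t-exact comparison functor from the discussion before \cite[Theorem 5.1]{cartmod}, which is natural in the Grothendieck abelian category and hence a morphism of presheaves here. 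The compatibility $(s,t) \circ \xi \circ \Cat D^+(\kappa_\heartsuit) \cong (\id{}, \Cat D^+(F_*)) \circ \Cat D^+(U_\heartsuit)$ needed to land in the pullback would be checked via $s \circ \xi \cong \Cat D^+(s)$, $t \circ \xi \cong \Cat D^+(t)$ together with the relations $s \kappa_\heartsuit \cong U_\heartsuit$ and $t \kappa_\heartsuit \cong F_* U_\heartsuit$; the witnessing homotopy is exactly the one used for the outer diagram in \cite[Theorem 5.1]{cartmod}, now read in the functor category. Because limits in $\Fun{\op{X}_{\zar}}{\CatInf}$ are pointwise, the value $\Phi_U$ is then the functor $\Cat D^+(\Frob{\QCoh{U}}{F_*}) \to \Frob{\Cat D^+(\QCoh{U})}{\Cat D^+(F_*)}$ induced by $\Cat D^+(U_\heartsuit)$ and $\xi \circ \Cat D^+(\kappa_\heartsuit)$; I would deduce its t-exactness exactly as in \cref{lem:affine:Phi-t-exact}, namely from the definition of the induced t-structure (\cite[Proposition 3.3]{cartmod}) together with the fact that $U_{\Cat D} \circ \Phi_U \cong \Cat D^+(U_\heartsuit)$ is t-exact, being the derived functor of an exact functor.

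For the affine case $U = \operatorname{Spec}(R)$, I would note that under the equivalence $\QCoh{U} \cong \ModDisc{R}$ intertwining the two Frobenius pushforwards, the recipe for $\Phi_U$ above is verbatim the recipe defining the functor $\Phi$ of \cref{lem:affine:main-thm}, except with $\Cat D^+$ in place of $\Cat D$. Since $\Cat D^+(-) \hookrightarrow \Cat D(-)$ is fully faithful and t-exact and is compatible with $\xi$, $U_\heartsuit$ and $\kappa_\heartsuit$, and since $\Frob{\Cat D^+(\ModDisc{R})}{\Cat D^+(F_*)} \to \Frob{\Cat D(\ModDisc{R})}{\Cat D(F_*)}$ is fully faithful by \cref{lem:functoriality:frob-functor-fully-faithful} with essential image the bounded-above objects (those sent by $U_{\Cat D}$ into $\Cat D^+(\ModDisc{R})$), the arrow $\Phi_R$ produced by the universal property is precisely the restriction to bounded-above objects of the functor $\Phi$ of \cref{lem:affine:main-thm}; as the latter is a t-exact equivalence, $\Phi_R$ is an equivalence.

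The main obstacle I anticipate is purely organisational: arranging the construction to be genuinely functorial in $X_{\zar}$ rather than defining $\Phi_U$ object-by-object. This is handled by working throughout in $\Fun{\op{X}_{\zar}}{\CatInf}$ and by invoking the naturality and t-exactness of the comparison functor $\xi$ from \cite{cartmod}; all homotopies witnessing the various commutativities are the ones already produced for \cite[Theorem 5.1]{cartmod} and transfer without change, so beyond this bookkeeping nothing essentially new is required.
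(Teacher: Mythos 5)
Your proposal is correct and takes essentially the same route as the paper: both define $\Phi$ via the universal property of the pullback describing $\Frob{-}{-}$, with components $\Cat D^+(U_\heartsuit)$ and $\xi \circ \Cat D^+(\kappa_\heartsuit)$, the witnessing homotopy borrowed from \cite[Theorem 5.1]{cartmod}, t-exactness read off from the induced t-structure and the t-exactness of $\Cat D^+(U_\heartsuit)$, and the affine identification immediate from the construction. The only (cosmetic) difference is packaging: the paper builds the transformation naturally in all of $\End{\Groth}$ and then precomposes with $\QCoh{-}^{\frob}$, whereas you precompose first and work in $\Fun{\op{X}_{\zar}}{\CatInf}$; your extra detail on the affine case (identifying $\Frob{\Cat D^+(\ModDisc{R})}{\Cat D^+(F_*)}$ with the bounded-above objects) is a harmless elaboration of what the paper leaves as clear from construction.
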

\begin{proof}
    We will construct a map from the bottom-left to the top-right composition of the below square. Then precomposition with $\QCoh{-}^{\frob}$ gives the desired map.
    \begin{center}
        \begin{tikzcd}
            \End{\Groth} \ar[d, "{\Frob{-}{-}}"'] \ar[rr, "\End{\Cat D^+}"] &&\End{\CatInf} \ar[d, "{\Frob{-}{-}}"]\\
            \Groth \ar[rr, "\Cat D^+"'] &&\CatInf
        \end{tikzcd}
    \end{center}
    Let $\Cat A \in \Groth$, and let $F \colon \Cat A \to \Cat A$ be a colimit-preserving and exact endofunctor.
    Hence, unwinding the definitions, we have to find a map
    \begin{equation*}
        \Phi_{(\Cat A, F)} \colon \Cat D^+(\Frob{\Cat A}{F}) \to \Frob{\Cat D^+(\Cat A)}{\Cat D^+(F)}
    \end{equation*}
    natural in the pair $(\Cat A, F)$.
    Using the universal property of the pullback, we let $\Phi_{(\Cat A, F)}$
    be the dashed morphism in the following diagram:
    \begin{center}
        \begin{tikzcd}
            \Cat D^+(\Frob{\Cat A}{F}) \ar[dr, dashed, "{\Phi_{(\Cat A, F)}}"] \ar[dd, "{\Cat D^+(\kappa)}"'] \ar[rrd, bend left, "\Cat D^+(U)"] && \\
            & \Frob{\Cat D^+(\Cat A)}{\Cat D^+(F)} \cartsymb \ar[r, "U"] \ar[d, "\kappa"'] &\Cat D^+(\Cat A) \ar[d, "{(\id{}, \Cat D^+(F))}"] \\
            \Cat D^+(\Fun{\Delta^1}{\Cat A}) \ar[r]& \Fun{\Delta^1}{\Cat D^+(\Cat A)} \ar[r, "{(s, t)}"'] &\Cat D^+(\Cat A) \times \Cat D^+(\Cat A) \rlap{.}
        \end{tikzcd}
    \end{center}
    The bottom left functor is the restriction of the functor from the discussion before \cite[Theorem 5.1]{cartmod} to the subcategories of bounded above objects.
    The commutativity of the outer solid diagram can be deduced in the exact same way 
    as the commutativity of the diagram in \cite[Theorem 5.1]{cartmod}, see the paragraph directly behind the statement of the theorem.
    As everything in this diagram is natural in $(\Cat A, F)$, the same is true for $\Phi_{(\Cat A, F)}$.
    Moreover, the t-exactness of $\Phi_{(\Cat A, F)}$ follows immediately from the definition of the induced t-structure \cite[Proposition 3.3]{cartmod},
    and the fact that $\Cat D^+(U)$ is t-exact.

    It is clear from the construction that on affine $U = \mathrm{Spec}(R)$ the functor $\Phi_U$ 
    is exactly the restriction of the equivalence from \cref{lem:affine:main-thm}.
\end{proof}

\begin{thm} \label{lem:zariski:main-thm}
    Let $X$ be a geometric $\Fp$-scheme.
    Then the canonical morphism 
    \begin{equation*}
        \Phi_X \colon \Cat{D}^+ (\Frob{\QCoh{X}}{F_*}) \xrightarrow{\simeq} \Frob{\Cat{D}^+(\QCoh{X})}{\Cat{D}^+(F_*)}
    \end{equation*}
    from \cref{lem:zariski:canonical-map} is a t-exact equivalence.
\end{thm}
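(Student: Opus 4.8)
The plan is to prove \cref{lem:zariski:main-thm} by Zariski descent, reducing to the affine case handled in \cref{lem:affine:main-thm}. By \cref{lem:zariski:canonical-map} we already have the morphism $\Phi$ of presheaves on $X_{\zar}$, its identification on each open $U$ with a t-exact functor $\Phi_U$, and in particular its identification on affine opens $U = \operatorname{Spec}(R)$ with the restriction to bounded above objects of the equivalence of \cref{lem:affine:main-thm}. In particular the t-exactness of $\Phi_X$ is already recorded there, so only the equivalence part remains.

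First I would check that both the source and the target of $\Phi$ are Zariski sheaves. For the source $\Cat{D}^+(\FrobSingle{\QCoh{-}^{\frob}})$ this is precisely \cref{lem:zariski:left-sep-derived-category-sheaf}. For the target $\FrobSingle{\End{\Cat{D}^+}(\QCoh{-}^{\frob})}$, I would combine \cref{lem:zariski:end-left-sep-derived-category-sheaf}, which says that $\End{\Cat{D}^+}(\QCoh{-}^{\frob}) \colon \op{X}_{\zar} \to \End{\CatInf}$ is a Zariski sheaf, with the limit-preservation of $\FrobSingle{-} \colon \End{\CatInf} \to \CatInf$ from \cref{lem:functoriality:frob-functor-limits}: since the sheaf condition for a $\CatInf$-valued presheaf is the condition of sending \v{C}ech nerves of covers to limit diagrams, postcomposing a sheaf with a limit-preserving functor again yields a sheaf.

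Next, since $\CatInf$ is a complete $\infty$-category, \cref{lem:zariski:check-on-affines} applies to the morphism $\Phi$ of Zariski sheaves: to conclude that $\Phi_X$ is an equivalence it suffices to show that $\Phi_U$ is an equivalence for every affine open $U = \operatorname{Spec}(R) \subset X$. By \cref{lem:zariski:canonical-map}, $\Phi_U$ is the restriction to bounded above objects of the functor $\Phi$ of \cref{lem:affine:main-thm}, which is a t-exact equivalence (an equivalence by \cref{lem:affine:main-thm}, t-exact by \cref{lem:affine:Phi-t-exact}). A t-exact equivalence has a t-exact inverse, so both the functor and its inverse preserve the full subcategories of bounded above objects; hence the restriction is again an equivalence. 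This gives the equivalence on every affine open of $X$, and \cref{lem:zariski:check-on-affines} then upgrades it to an equivalence on $X$ itself.

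I do not expect a serious obstacle at this stage, as all the substantive content has been established in the previous two sections. The only mild points of care are verifying that postcomposition with $\FrobSingle{-}$ preserves the sheaf property (handled by \cref{lem:functoriality:frob-functor-limits}) and that the affine equivalence genuinely restricts to $\Cat{D}^+$ (handled by the t-exactness of the functor together with that of its inverse); both are routine.
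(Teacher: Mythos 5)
Your proposal is correct and follows essentially the same route as the paper: sheafiness of source and target (via \cref{lem:zariski:left-sep-derived-category-sheaf}, \cref{lem:zariski:end-left-sep-derived-category-sheaf} and the limit-preservation of $\FrobSingle{-}$), reduction to affine opens via \cref{lem:zariski:check-on-affines}, and the affine case from \cref{lem:affine:main-thm} through the identification in \cref{lem:zariski:canonical-map}. Your extra remark that the t-exact equivalence has a t-exact inverse and hence restricts to an equivalence on bounded above objects is a slightly more explicit justification of a step the paper leaves implicit, but the argument is the same.
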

\begin{proof}
    By \cref{lem:zariski:canonical-map},
    there is even a morphism of presheaves on $X_{\zar}$
    \begin{equation*}
        \Phi \colon \Cat{D}^+ (\FrobSingle{\QCoh{-}^{\frob}}) \to \FrobSingle{\End{\Cat D^+}(\QCoh{-}^{\frob})},
    \end{equation*}
    that is sectionwise t-exact. We have to show that this morphism is an equivalence on global sections.
    The source of $\Phi$ is a Zariski sheaf by \cref{lem:zariski:left-sep-derived-category-sheaf}.
    Similarly, the target of $\Phi$ is also a Zariski sheaf: 
    Indeed, the composition $\End{\Cat D^+} \, \circ \, \QCoh{-}^{\frob}$ is a Zariski sheaf by \cref{lem:zariski:end-left-sep-derived-category-sheaf}, 
    and the functor $\Frob{-}{-} \colon \End{\CatInf} \to \CatInf$ preserves limits by \cref{lem:functoriality:frob-functor-limits}.
    Thus, it follows that also the composition $\Frob{-}{-} \, \circ \, \End{\Cat D^+} \, \circ \, \QCoh{-}^{\frob}$ is a Zariski sheaf. 
    
    Hence, in order to see that $\Phi$ is an equivalence on global sections,
    it suffices to see that it is an equivalence on affine opens $U \subset X$, cf.\ \cref{lem:zariski:check-on-affines}.
    Now note that this case was shown in \cref{lem:affine:main-thm} (as we have identified the morphism $\Phi_U$ with the restriction of the morphism 
    in \loccit by \cref{lem:zariski:canonical-map}).
\end{proof}

\section{Left-completeness on geometric schemes}
In this section, we show that the derived categories of quasi-coherent modules and Frobenius modules, respectively,  
over a geometric scheme are already left-complete. This will imply our main theorem, cf.\ \cref{lem:ab4:main-thm}.

Recall from \cite[Definition 8.3 (d)]{antieau2018uniqueness} the axiom $\mathrm{AB}4^*n(\omega)$ 
for Grothendieck abelian categories, this is a weaker variant of Grothendieck's axiom $\mathrm{AB}4^*$.
If a Grothendieck abelian category $\Cat A$ satisfies $\mathrm{AB}4^*n(\omega)$ 
for some $n \ge 0$, then $\Cat D(\Cat A)$ is left-complete by 
\cite[Proposition 8.14]{antieau2018uniqueness}. Hence, we get the following:

\begin{lem} [{\cite[Theorem 1.4]{positselski2025roosaxiomholdsquasicoherent}}] \label{lem:ab4:qcoh-is-ab4n} 
    Let $X$ be a geometric scheme.
    There exists an $n \ge 0$ such that $\QCoh{X}$ is $\mathrm{AB}4^*n(\omega)$.
    In particular, the derived $\infty$-category $\Cat D(\QCoh{X})$ is left-complete.
\end{lem}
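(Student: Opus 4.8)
The first assertion is literally \cite[Theorem 1.4]{positselski2025roosaxiomholdsquasicoherent}, so the plan is to invoke it; for orientation I sketch the mechanism behind that theorem. Since $X$ is geometric it admits a finite affine open cover $X = U_1 \cup \dots \cup U_N$, and affineness of the diagonal forces every finite intersection $U_{i_0} \cap \dots \cap U_{i_k}$ to be affine as well. On an affine scheme $\operatorname{Spec}(R)$ one has $\QCoh{\operatorname{Spec}(R)} \simeq \ModDisc{R}$, which satisfies Grothendieck's honest axiom $\mathrm{AB}4^*$ because arbitrary products of $R$-modules are exact. Given a countable family $(\Cat{F}_j)_j$ in $\QCoh{X}$, the higher derived functors of the countable-product functor on $\QCoh{X}$ are controlled by a \v{C}ech computation over the cover $\{U_i\}$: the restrictions to the affine intersections $U_{i_0} \cap \dots \cap U_{i_k}$ land in module categories where products are exact, and the inclusions of these intersections into $X$ are affine morphisms, so a \v{C}ech double-complex estimate bounds the derived product functors by (essentially) the length $N$ of the cover. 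This yields $\mathrm{AB}4^*n(\omega)$ for some $n$ bounded in terms of $N$, uniformly in the family, and that uniform bound is precisely the non-formal content of Positselski's theorem.

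For the final clause I would argue exactly as in the discussion preceding the statement: if a Grothendieck abelian category satisfies $\mathrm{AB}4^*n(\omega)$ for some $n \ge 0$, then its derived $\infty$-category is left-complete by \cite[Proposition 8.14]{antieau2018uniqueness}. Applying this with $\Cat{A} = \QCoh{X}$ and the $n$ produced above gives left-completeness of $\Cat{D}(\QCoh{X})$, as claimed.

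The step I expect to be the genuine obstacle is this uniform degree bound for the derived functors of countable products in $\QCoh{X}$: one must control the interaction between the exact product functors on the affine pieces and the merely left exact gluing and pushforward functors, and that is exactly the \v{C}ech-complex estimate supplied by \cite{positselski2025roosaxiomholdsquasicoherent}. Everything else is either a direct citation or the purely formal implication from \cite{antieau2018uniqueness} already recalled above, so in the actual write-up I would keep the proof to two sentences: cite \cite[Theorem 1.4]{positselski2025roosaxiomholdsquasicoherent} for the first assertion and \cite[Proposition 8.14]{antieau2018uniqueness} for the ``in particular''.
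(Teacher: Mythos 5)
Your proposal matches the paper exactly: the first assertion is taken verbatim from Positselski's Theorem 1.4 (the paper gives no independent proof, only the citation), and the left-completeness is the formal consequence via Antieau's Proposition 8.14 recalled just before the lemma. Your Čech sketch of the mechanism is reasonable orientation but plays no role beyond the citation, so the two-sentence write-up you propose is precisely what the paper does.
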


\begin{prop} \label{lem:ab4:frob-qcoh-is-ab4n}
    Let $X$ be a geometric $\Fp$-scheme.
    There exists an $n \ge 0$ such that $\Frob{\QCoh{X}}{F_*}$ is $\mathrm{AB}4^*n(\omega)$.
    In particular, the derived $\infty$-category $\Cat D(\Frob{\QCoh{X}}{F_*})$ is left-complete.
\end{prop}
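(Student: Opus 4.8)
The plan is to transport the axiom from $\QCoh{X}$, where it holds by \cref{lem:ab4:qcoh-is-ab4n}, to $\Frob{\QCoh{X}}{F_*}$ along the t-exact equivalence $\Phi_X$ of \cref{lem:zariski:main-thm}; this is exactly the route signalled in the remark following \cref{lem:intro:main-thm-plus}. The first step is to recast $\mathrm{AB}4^*n(\omega)$ in derived terms. By \cite[Definition 8.3 (d)]{antieau2018uniqueness}, a Grothendieck abelian category $\Cat A$ is $\mathrm{AB}4^*n(\omega)$ precisely when the right derived functors of the countable product functor $\prod \colon \Cat A^\omega \to \Cat A$ vanish in degrees $> n$. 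Since a termwise product of injective resolutions is again a complex of injectives (products of injectives being injective in a Grothendieck abelian category) and therefore computes the product in $\Cat D(\Cat A)$, this is equivalent to the requirement that for every countable family $\{N_k\}_{k \in \omega}$ of objects of $\Cat A$ the product $\prod_k N_k$ formed in $\Cat D(\Cat A)$ lies in $\Cat D(\Cat A)_{\geq -n}$. Note that it automatically lies in $\Cat D(\Cat A)_{\leq 0}$ because $\prod$ is left exact, hence in $\Cat D^+(\Cat A)$, where it is still the product of the $N_k$ (mapping spectra out of bounded above objects do not distinguish $\Cat D^+(\Cat A)$ from $\Cat D(\Cat A)$). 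In particular the axiom depends only on $\Cat D^+(\Cat A)$, which is what makes the reduction possible.

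Now fix, via \cref{lem:ab4:qcoh-is-ab4n}, an integer $n \geq 0$ with $\QCoh{X}$ being $\mathrm{AB}4^*n(\omega)$; I claim the same $n$ works for $\Frob{\QCoh{X}}{F_*}$. Let $\{M_k\}_{k \in \omega}$ be objects of $\Frob{\QCoh{X}}{F_*}$ and form $P = \prod_k M_k$ in $\Cat D^+(\Frob{\QCoh{X}}{F_*})$. I would apply the equivalence $\Phi_X \colon \Cat D^+(\Frob{\QCoh{X}}{F_*}) \xrightarrow{\simeq} \Frob{\Cat D^+(\QCoh{X})}{\Cat D^+(F_*)}$ of \cref{lem:zariski:main-thm}, followed by the forgetful functor $U_{\Cat D}$ on the target, which is conservative and limit-preserving by \cite[Proposition 2.6 (b) and (c)]{cartmod} (cf.\ \cref{lem:functoriality:frob-functor-limits}) and t-exact for the induced t-structure of \cite[Proposition 3.3]{cartmod}, and which satisfies $U_{\Cat D} \circ \Phi_X \simeq \Cat D^+(U)$ by the construction of $\Phi$ in \cref{lem:zariski:canonical-map}, where $U \colon \Frob{\QCoh{X}}{F_*} \to \QCoh{X}$ is the forgetful functor. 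Since $\Phi_X$ and $U_{\Cat D}$ preserve products and $\Cat D^+(U)$ agrees with $U$ on the heart, this yields
\begin{equation*}
    U_{\Cat D}(\Phi_X P) \cong \prod_k U(M_k)
\end{equation*}
in $\Cat D^+(\QCoh{X})$, the product of the underlying quasi-coherent sheaves $U(M_k)$, which are objects of the heart. By $\mathrm{AB}4^*n(\omega)$ for $\QCoh{X}$ the right-hand side lies in $\Cat D(\QCoh{X})_{\geq -n}$. As an object of $\Frob{\Cat D^+(\QCoh{X})}{\Cat D^+(F_*)}$ lies in degrees $\geq -n$ if and only if its image under $U_{\Cat D}$ does (this being the definition of the induced t-structure), it follows that $\Phi_X P$, and hence $P$ since $\Phi_X$ is a t-exact equivalence, lies in degrees $\geq -n$. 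Therefore $\Frob{\QCoh{X}}{F_*}$ is $\mathrm{AB}4^*n(\omega)$, and the left-completeness of $\Cat D(\Frob{\QCoh{X}}{F_*})$ then follows from \cite[Proposition 8.14]{antieau2018uniqueness}.

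The main obstacle — and the reason this proposition has to come after \cref{lem:zariski:main-thm} rather than before it — is that one cannot run this comparison directly at the level of abelian categories: the forgetful functor $U \colon \Frob{\QCoh{X}}{F_*} \to \QCoh{X}$ need not preserve injective objects, because its left adjoint is built from $F^*$, which is exact only when the absolute Frobenius of $X$ is flat (the very hypothesis removed here). Consequently there is no naive relation between the derived product functors of the two abelian categories, and passing through the derived $\infty$-categories and \cref{lem:zariski:main-thm} is exactly what repairs this. Once that input is available, the remaining points — that $U_{\Cat D}$ commutes with countable products and detects the connectivity bound defining the axiom — are purely formal, so I expect no further difficulty.
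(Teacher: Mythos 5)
Your proposal is correct and follows essentially the same route as the paper: fix $n$ from \cref{lem:ab4:qcoh-is-ab4n}, observe that the $\mathrm{AB}4^*n(\omega)$ condition is a statement about countable products of heart objects in the bounded-above derived category, transport it through the equivalence of \cref{lem:zariski:main-thm}, and then check the connectivity bound after the limit-preserving, t-exact forgetful functor $U$, where it reduces to $\mathrm{AB}4^*n(\omega)$ for $\QCoh{X}$. The only difference is that you spell out the translation of Antieau's axiom into the derived-category condition and the identification $U_{\Cat D}\circ\Phi_X \simeq \Cat D^+(U)$, which the paper leaves implicit.
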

\begin{proof}
    By \cref{lem:ab4:qcoh-is-ab4n} there is an $n \ge 0$ such that $\QCoh{X}$ is $\mathrm{AB}4^*n(\omega)$.
    It is enough to show that $\Frob{\QCoh{X}}{F_*}$ is also $\mathrm{AB}4^*n(\omega)$.
    Let $(M_k)_k$ be a countable family in $\Frob{\QCoh{X}}{F_*}$.
    We have to show that the product $\prod_k HM_k \in \Cat D(\Frob{\QCoh{X}}{F_*})_{\ge -n}$.
    This statement only depends on the coconnective part $\Cat D_{\le 0}(\Frob{\QCoh{X}}{F_*})$.
    Hence, using \cref{lem:zariski:main-thm}, we may work in the $\infty$-category $\Frob{\Cat D_{\le 0}(\QCoh{X})}{\Cat D_{\le 0}(F_*)}$.
    Now, by definition of the t-structure, connectivity 
    may be checked after applying the limit-preserving t-exact functor $U \colon \Frob{\Cat D_{\le 0}(\QCoh{X})}{\Cat D_{\le 0}(F_*)} \to \Cat D_{\le 0}(\QCoh{X})$.
    But $U \prod_k HM_k \cong \prod_k HUM_k \in \Cat D_{\le 0}(\QCoh{X})_{\ge -n}$
    as $\QCoh{X}$ is $\mathrm{AB}4^*n(\omega)$.
\end{proof}

To show that $\Frob{\Cat D(\QCoh{X})}{\Cat D(F_*)}$ is left-complete as well,
we need the following result:

\begin{lem}
    Let $\Cat D$ be a presentable stable $\infty$-category with a t-structure.
    Then $\Cat D$ is left-complete if and only if 
    \begin{enumerate}[label=(\alph*)]
        \item $X \to \lim_n \tau_{\le n} X$ is an equivalence for every $X \in \Cat D$, and
        \item $\tau_{\le k} \lim_n X_n \to X_k$ is an equivalence for every $k \in \Z$ and every 
            $(X_n)_n \in \lim_n \Cat D_{\le n}$ (i.e.\ $X_n \in \Cat D_{\le n}$ for every $n$,
            together with compatible equivalences $\tau_{\le m} X_n \cong X_m$ for all $m \le n$).
    \end{enumerate}
\end{lem}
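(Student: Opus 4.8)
The plan is to identify the canonical functor
\[
    \Phi \colon \Cat D \to \widehat{\Cat D} \coloneqq \lim_n \Cat D_{\le n}, \qquad X \mapsto (\tau_{\le n} X)_n,
\]
as a left adjoint, and then to recognize (a) and (b) as the assertions that its unit and counit, respectively, are natural equivalences. Recall that $\Cat D$ is left-complete precisely when $\Phi$ is an equivalence, cf.\ \cite[Definition 1.2.1.17]{higheralgebra}; since a functor admitting a right adjoint is an equivalence exactly when both its unit and its counit are natural equivalences, both directions of the lemma will follow at once from this identification.

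First I would produce the right adjoint $\Psi$. Mapping spaces in the limit $\widehat{\Cat D}$ are computed componentwise, so for $X \in \Cat D$ and $(Y_n)_n \in \widehat{\Cat D}$ the truncation adjunctions $\tau_{\le n} \dashv (\Cat D_{\le n} \hookrightarrow \Cat D)$ yield equivalences, natural in both variables,
\[
    \Map{\widehat{\Cat D}}{\Phi X}{(Y_n)_n} \simeq \lim_n \Map{\Cat D_{\le n}}{\tau_{\le n} X}{Y_n} \simeq \lim_n \Map{\Cat D}{X}{Y_n} \simeq \Map{\Cat D}{X}{\lim_n Y_n}.
\]
Here the last step uses that any object $(Y_n)_n$ of $\widehat{\Cat D}$ underlies a tower $\cdots \to Y_1 \to Y_0$ in $\Cat D$ (via the truncation maps $Y_{n+1} \to \tau_{\le n} Y_{n+1} \simeq Y_n$), whose limit exists since $\Cat D$ is presentable, together with the fact that $\Map{\Cat D}{X}{-}$ preserves limits. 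Hence $\Psi((Y_n)_n) \coloneqq \lim_n Y_n$ is right adjoint to $\Phi$.

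Next I would read off the unit and counit by tracing identity morphisms through the chain of equivalences above. The unit $\eta_X \colon X \to \Psi\Phi X = \lim_n \tau_{\le n} X$ comes out as the map induced by the truncation maps $X \to \tau_{\le n} X$, that is, exactly the map appearing in (a). For the counit, $\Phi\Psi((Y_n)_n) = (\tau_{\le k}\lim_n Y_n)_k$, and the $k$-th component of $\epsilon_{(Y_n)_n} \colon \Phi\Psi((Y_n)_n) \to (Y_n)_n$ is the canonical map $\tau_{\le k}\lim_n Y_n \to Y_k$ — namely $\tau_{\le k}$ of the projection $\lim_n Y_n \to Y_k$ composed with $\tau_{\le k} Y_k \simeq Y_k$ — that is, exactly the map appearing in (b). Since a morphism in the limit $\widehat{\Cat D}$ is an equivalence if and only if each of its components is, $\eta$ is a natural equivalence iff (a) holds and $\epsilon$ is a natural equivalence iff (b) holds. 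Combining: $\Cat D$ is left-complete $\iff$ $\Phi$ is an equivalence $\iff$ $\eta$ and $\epsilon$ are both equivalences $\iff$ (a) and (b) both hold.

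The only part that is not purely formal is the bookkeeping with the limit category $\widehat{\Cat D}$: justifying that its mapping spaces and its equivalences are detected componentwise, identifying the tower in $\Cat D$ underlying an object of $\widehat{\Cat D}$, and carefully tracing the identity morphisms to pin down $\eta$ and $\epsilon$ as the maps in (a) and (b). Once these points are settled, the remainder is the formal yoga of the adjunction $\Phi \dashv \Psi$.
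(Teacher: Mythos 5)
Your proposal is correct and follows essentially the same route as the paper's proof: exhibit the limit functor $(X_n)_n \mapsto \lim_n X_n$ as the right adjoint of the canonical comparison functor $\Cat D \to \lim_n \Cat D_{\le n}$, identify the maps in (a) and (b) with its unit and counit, and conclude that left-completeness is equivalent to both being equivalences. The extra detail you supply (the componentwise mapping-space computation producing the adjunction) is a fleshed-out version of what the paper asserts in one line.
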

\begin{proof}
    By definition, $\Cat D$ is left-complete if and only if the canonical functor 
    $\Cat D \to \lim_n \Cat D_{\le n}$ is an equivalence.
    This functor has a right adjoint, given by sending $(X_n)_n$ to its limit $\lim_n X_n \in \Cat D$.
    The two maps given in the statement of the lemma are exactly the unit and counit of this adjunction.
    Hence, $\Cat D$ is left-complete if and only if those maps are equivalences.
\end{proof}

As an immediate corollary, we obtain:
\begin{cor} \label{lem:ab4:conservative-limit-preserving-reflects-left-complete}
    Let $U \colon \Cat D \to \Cat E$ be a t-exact conservative limit-preserving functor 
    of presentable stable $\infty$-categories with t-structures.
    If the t-structure on $\Cat E$ is left-complete, then the t-structure on $\Cat D$ is also left-complete.
\end{cor}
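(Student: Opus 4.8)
The plan is to read this off the preceding lemma: I would check that $\Cat D$ satisfies its conditions (a) and (b), using that $\Cat E$ does. The two features of $U$ to exploit are conservativity --- so that a morphism of $\Cat D$ is an equivalence as soon as it becomes one after applying $U$ --- and the combination of t-exactness with limit-preservation, which guarantees that $U$ commutes (up to canonical equivalence) with the truncation functors $\tau_{\le n}$ and with the countable limits $\lim_n$ appearing in the statement of the lemma.

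First I would treat condition (a). Given $X \in \Cat D$, apply $U$ to the canonical map $X \to \lim_n \tau_{\le n} X$; under the identifications $U \lim_n \tau_{\le n} X \cong \lim_n U\tau_{\le n} X \cong \lim_n \tau_{\le n} UX$ this becomes the canonical map $UX \to \lim_n \tau_{\le n} UX$, which is an equivalence because the t-structure on $\Cat E$ is left-complete and hence satisfies (a). Conservativity of $U$ then gives that the original map is an equivalence. For condition (b), start with a family $(X_n)_n \in \lim_n \Cat D_{\le n}$. Because $U$ is t-exact we have $UX_n \in \Cat E_{\le n}$, and feeding the structure equivalences $\tau_{\le m} X_n \cong X_m$ through $U$ (together with $U\tau_{\le m} \cong \tau_{\le m} U$) exhibits $(UX_n)_n$ as an object of $\lim_n \Cat E_{\le n}$. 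Applying $U$ to the map $\tau_{\le k}\lim_n X_n \to X_k$ of (b), and using that $U$ commutes with $\lim_n$ and with $\tau_{\le k}$, identifies it with the corresponding map $\tau_{\le k}\lim_n UX_n \to UX_k$ for the family $(UX_n)_n$ in $\Cat E$; that map is an equivalence by left-completeness of $\Cat E$, so conservativity of $U$ finishes the argument.

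Having verified (a) and (b) for $\Cat D$, the preceding lemma yields that $\Cat D$ is left-complete. There is no genuine obstacle here --- the statement is, as advertised, an immediate corollary. The only points that deserve a sentence of care are that $U$ really does commute with the truncation functors (this is exactly t-exactness) and that applying $U$ to the coherence data of a point of $\lim_n \Cat D_{\le n}$ yields coherent data for a point of $\lim_n \Cat E_{\le n}$, which amounts to routine naturality bookkeeping.
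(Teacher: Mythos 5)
Your proposal is correct and is exactly the argument the paper intends: the corollary is stated as an immediate consequence of the preceding lemma, and your verification of conditions (a) and (b) for $\Cat D$ via conservativity, limit-preservation, and the (standard) fact that t-exactness makes $U$ commute with the truncations $\tau_{\le n}$ is precisely the intended filling-in.
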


Using \cref{lem:ab4:qcoh-is-ab4n}, we can apply this result to $U \colon \Frob{\Cat D(\QCoh{X})}{\Cat D(F_*)} \to \Cat D(\QCoh{X})$, 
which is conservative and limit-preserving by \cite[Corollary 2.8 (b) and (c)]{cartmod},
and t-exact by definition of the induced t-structure \cite[Proposition 3.3]{cartmod}.
Hence, we get:

\begin{cor} \label{lem:ab4:frobD-is-left-complete}
    Let $X$ be a geometric $\Fp$-scheme. 
    The induced t-structure on $\Frob{\Cat D(\QCoh{X})}{\Cat D(F_*)}$ 
    is left-complete.
\end{cor}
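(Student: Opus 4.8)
The plan is to deduce the statement from \cref{lem:ab4:conservative-limit-preserving-reflects-left-complete}, applied to the forgetful functor
\[
    U \colon \Frob{\Cat D(\QCoh{X})}{\Cat D(F_*)} \longrightarrow \Cat D(\QCoh{X}).
\]
First I would check that $U$ is t-exact. This is immediate from the definition of the induced t-structure on the lax equalizer in \cite[Proposition 3.3]{cartmod}: there the connective (resp.\ coconnective) objects of $\Frob{\Cat D(\QCoh{X})}{\Cat D(F_*)}$ are declared to be precisely those that $U$ sends into $\Cat D(\QCoh{X})_{\ge 0}$ (resp.\ $\Cat D(\QCoh{X})_{\le 0}$). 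Next I would recall that $U$ is conservative and preserves all limits, which is \cite[Corollary 2.8 (b) and (c)]{cartmod}; this applies since $U$ is the projection out of the defining pullback of $\infty$-categories and $\Cat D(F_*)$ is an exact endofunctor of a presentable stable $\infty$-category.

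It then remains to verify that the target $\Cat D(\QCoh{X})$ is left-complete, and this is the only place where geometricity of $X$ is used: by \cref{lem:ab4:qcoh-is-ab4n} there is an $n \ge 0$ with $\QCoh{X}$ satisfying $\mathrm{AB}4^*n(\omega)$, hence $\Cat D(\QCoh{X})$ is left-complete. Feeding this into \cref{lem:ab4:conservative-limit-preserving-reflects-left-complete} yields left-completeness of $\Frob{\Cat D(\QCoh{X})}{\Cat D(F_*)}$. I do not expect a genuine obstacle here: all the substantive work lives in the cited results, and the only point requiring care is that we invoke the criterion for the full derived $\infty$-category rather than for $\Cat D^+$ --- which is legitimate because \cref{lem:ab4:qcoh-is-ab4n} furnishes left-completeness of $\Cat D(\QCoh{X})$ itself and $U$ is defined on all of $\Frob{\Cat D(\QCoh{X})}{\Cat D(F_*)}$. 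It is worth contrasting this with the other half of the left-completeness story: for $\Cat D(\Frob{\QCoh{X}}{F_*})$ one cannot argue so directly and instead routes through the equivalence of \cref{lem:zariski:main-thm} to reduce the relevant $\mathrm{AB}4^*$ bound to $\QCoh{X}$, as in \cref{lem:ab4:frob-qcoh-is-ab4n}.
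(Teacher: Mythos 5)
Your proposal is correct and follows exactly the paper's own argument: apply \cref{lem:ab4:conservative-limit-preserving-reflects-left-complete} to the forgetful functor $U$, which is conservative and limit-preserving by \cite[Corollary 2.8 (b) and (c)]{cartmod}, t-exact by the definition of the induced t-structure \cite[Proposition 3.3]{cartmod}, and lands in the left-complete $\Cat D(\QCoh{X})$ thanks to \cref{lem:ab4:qcoh-is-ab4n}. No gaps.
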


Next, we state and prove our main theorem. For a geometric $\Fp$-scheme $X$, using the universal property of the pullback,
we let $\Phi_X$ be the dashed morphism in the following diagram, where we write for notational convenience $\Cat A \coloneqq \QCoh{X}$: 
\begin{center}
    \begin{tikzcd}
        \Cat D(\Frob{\Cat A}{F_*}) \ar[dr, dashed, "{\Phi_X}"] \ar[dd, "{\Cat D(\kappa)}"'] \ar[rrd, bend left, "\Cat D(U)"] && \\
        & \Frob{\Cat D(\Cat A)}{\Cat D(F_*)} \cartsymb \ar[r, "U"] \ar[d, "\kappa"'] &\Cat D(\Cat A) \ar[d, "{(\id{}, \Cat D(F_*))}"] \\
        \Cat D(\Fun{\Delta^1}{\Cat A}) \ar[r]& \Fun{\Delta^1}{\Cat D(\Cat A)} \ar[r, "{(s, t)}"'] &\Cat D(\Cat A) \times \Cat D(\Cat A) \rlap{.}
    \end{tikzcd}
\end{center}
Here, the bottom left functor is the functor from the discussion before \cite[Theorem 5.1]{cartmod}.
As in the last sections, the commutativity of the outer solid diagram can be deduced in the exact same way 
as the commutativity of the diagram in \cite[Theorem 5.1]{cartmod}, see the paragraph directly behind the statement of the theorem.

To prove the main theorem, we also need the following lemma about left-complete stable $\infty$-categories:

\begin{lem} \label{lem:ab4:plus-equivalence}
    Let $F \colon \Cat D \to \Cat E$ be a t-exact functor 
    of presentable stable $\infty$-categories with t-structures such that both $\infty$-categories are left-complete.
    If the induced functor $F^+ \colon \Cat D^+ \to \Cat E^+$ is an equivalence,
    then also $F \colon \Cat D \to \Cat E$ is an equivalence.
\end{lem}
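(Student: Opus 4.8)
The plan is to exploit the description of left-completeness already used above: for a presentable stable $\infty$-category $\Cat C$ with a t-structure, the t-structure is left-complete exactly when the canonical functor $\Cat C \to \lim_n \Cat C_{\le n}$, sending $X$ to the tower $(\tau_{\le n}X)_n$ of its coconnective truncations, is an equivalence. First I would record that, being t-exact, $F$ commutes with all truncation functors: applying the exact functor $F$ to the fibre sequence $\tau_{\ge n+1}X \to X \to \tau_{\le n}X$ and using $F(\Cat D_{\ge n+1}) \subseteq \Cat E_{\ge n+1}$ together with $F(\Cat D_{\le n}) \subseteq \Cat E_{\le n}$, uniqueness of the truncation fibre sequence in $\Cat E$ gives natural equivalences $F\tau_{\le n} \simeq \tau_{\le n}F$ and $F\tau_{\ge n+1} \simeq \tau_{\ge n+1}F$. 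In particular $F$ restricts to functors $F_{\le n}\colon \Cat D_{\le n} \to \Cat E_{\le n}$ compatible with the transition maps, and the two left-completeness comparisons fit into a commuting square that identifies $F$ with $\lim_n F_{\le n}$. Since the limit of a pointwise equivalence of $\op{\N}$-diagrams in $\CatInf$ is again an equivalence, it therefore suffices to show that $F_{\le n}$ is an equivalence for every $n$.

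To see this, note that $\Cat D_{\le n} \subseteq \Cat D^+$ and $\Cat E_{\le n} \subseteq \Cat E^+$ are full subcategories, so $F_{\le n}$ is the restriction of the equivalence $F^+$ to full subcategories and is in particular fully faithful; it remains to check essential surjectivity. Given $Y \in \Cat E_{\le n}$, essential surjectivity of $F^+$ yields $X \in \Cat D^+$ with $F(X) \simeq Y$. Then $\tau_{\ge n+1}X$ is still bounded above, hence lies in $\Cat D^+$, and
\[
    F(\tau_{\ge n+1}X) \simeq \tau_{\ge n+1}F(X) \simeq \tau_{\ge n+1}Y \simeq 0,
\]
using the commutation just established and that $Y \in \Cat E_{\le n}$. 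As $F^+$ is an equivalence it is conservative, so $\tau_{\ge n+1}X \simeq 0$, i.e.\ $X \in \Cat D_{\le n}$; thus $F_{\le n}(X) \simeq Y$ and $F_{\le n}$ is essentially surjective.

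I do not anticipate a genuine obstacle: the whole argument is a formal manipulation of t-structures and left-completeness. The one point deserving care is the identification of $F$ with $\lim_n F_{\le n}$, i.e.\ the naturality of the comparison $\Cat C \to \lim_n \Cat C_{\le n}$ along t-exact functors; this is immediate from $F\tau_{\le n} \simeq \tau_{\le n}F$, so that the obvious square of comparison functors commutes and transports $F$ to $\lim_n F_{\le n}$.
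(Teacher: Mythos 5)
Your proof is correct and follows the same route as the paper's: restrict $F$ to equivalences $\Cat D_{\le n} \xrightarrow{\simeq} \Cat E_{\le n}$ and then use left-completeness to identify $F$ with $\lim_n F_{\le n}$. The paper records this in one line as "immediate"; you have simply filled in the details it leaves implicit (commutation with truncations, and essential surjectivity of $F_{\le n}$ via conservativity of $F^+$).
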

\begin{proof}
    This is immediate since $\Cat D_{\le n} \cong \Cat D^+_{\le n} \xrightarrow[F]{\simeq} \Cat E^+_{\le n} \cong \Cat E_{\le n}$ for all $n \in \Z$,
    and $\Cat D \cong \lim_n \Cat D_{\le n}$, and similarly for $\Cat E$.
\end{proof}

\begin{thm} \label{lem:ab4:main-thm}
    Let $X$ be a geometric $\Fp$-scheme. Then the functor
    \begin{equation*}
        \Phi_X \colon \Cat{D} (\Frob{\QCoh{X}}{F_*}) \xrightarrow{\simeq} \Frob{\Cat{D}(\QCoh{X})}{\Cat{D}(F_*)}
    \end{equation*}
    described above is a t-exact equivalence,
    where we equip the target with the induced t-structure from \cite[Proposition 3.3]{cartmod}.
\end{thm}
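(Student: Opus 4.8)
The plan is to deduce this statement from the bounded-above version \cref{lem:zariski:main-thm} together with left-completeness of both sides, by a direct application of \cref{lem:ab4:plus-equivalence}.

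First I would record that both $\infty$-categories carry left-complete t-structures: the source $\Cat D(\Frob{\QCoh{X}}{F_*})$ by \cref{lem:ab4:frob-qcoh-is-ab4n}, and the target $\Frob{\Cat D(\QCoh{X})}{\Cat D(F_*)}$ by \cref{lem:ab4:frobD-is-left-complete}. Next I would check that the functor $\Phi_X$ constructed above is t-exact: by the description of the induced t-structure in \cite[Proposition 3.3]{cartmod}, t-exactness can be tested after postcomposing with the t-exact forgetful functor $U$, and $U \circ \Phi_X \cong \Cat D(U)$ is t-exact by the construction of derived functors — exactly as in \cref{lem:affine:Phi-t-exact} and in the proof of \cref{lem:zariski:canonical-map}.

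Then I would identify the bounded-above parts of the two sides. On the left, $\Cat D(\Frob{\QCoh{X}}{F_*})^+ = \Cat D^+(\Frob{\QCoh{X}}{F_*})$ by definition of $\Cat D^+$. On the right, an object $M$ of $\Frob{\Cat D(\QCoh{X})}{\Cat D(F_*)}$ is bounded above if and only if $UM$ is (again by \cite[Proposition 3.3]{cartmod}); since $\Cat D(F_*)$ is t-exact it restricts to $\Cat D^+(F_*)$ on bounded-above objects, so the full subcategory of bounded-above objects is precisely $\Frob{\Cat D^+(\QCoh{X})}{\Cat D^+(F_*)}$. Under these identifications the restriction $\Phi_X^+$ coincides with the morphism from \cref{lem:zariski:canonical-map}: both arise as the dashed arrow from the universal property of the same pullback square, the $\Cat D^+$-square being the restriction of the $\Cat D$-square, using that the comparison functor $\xi$ from \cite[Theorem 5.1]{cartmod} restricts to bounded-above objects. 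By \cref{lem:zariski:main-thm} the functor $\Phi_X^+$ is a t-exact equivalence, so \cref{lem:ab4:plus-equivalence} applies and yields that $\Phi_X$ is a t-exact equivalence, as claimed.

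Given the earlier sections the argument is essentially formal; the only point requiring care is the logical order, which is in fact forced. As explained in the introduction, one cannot establish left-completeness of $\Cat D(\Frob{\QCoh{X}}{F_*})$ directly, because the weak $\mathrm{AB}4^*$ axiom for $\Frob{\QCoh{X}}{F_*}$ (\cref{lem:ab4:frob-qcoh-is-ab4n}) is itself proved by transporting the statement through the bounded-above equivalence \cref{lem:zariski:main-thm} back to $\QCoh{X}$, where it is known by \cref{lem:ab4:qcoh-is-ab4n}. So the genuine content sits in \cref{lem:zariski:main-thm} — Zariski descent together with the affine Schwede--Shipley identification — and in \cref{lem:ab4:frob-qcoh-is-ab4n}, and this final theorem is then a short bookkeeping consequence of those inputs.
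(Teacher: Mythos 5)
Your argument is correct and follows the paper's own proof essentially verbatim: t-exactness via $U\circ\Phi_X\cong\Cat D(U)$, identification of the bounded-above restriction $\Phi_X^+$ with the equivalence of \cref{lem:zariski:main-thm}, and then \cref{lem:ab4:plus-equivalence} together with the left-completeness results \cref{lem:ab4:frob-qcoh-is-ab4n,lem:ab4:frobD-is-left-complete}. Your remarks on the forced logical order (left-completeness of the source depending on the bounded-above equivalence) also match the paper's own discussion in the introduction.
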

\begin{proof}
    The t-exactness of the functor follows immediately from the definition of the induced t-structure and the t-exactness of $\Cat D(U)$.
    Thus, $\Phi_X$ restricts to a functor
    \begin{equation*}
        \Phi_X^+ \colon \Cat{D}^+ (\Frob{\QCoh{X}}{F_*}) \to \Frob{\Cat{D}^+(\QCoh{X})}{\Cat{D}^+(F_*)}.
    \end{equation*}
    By construction, $\Phi_X^+$ is the same functor as in \cref{lem:zariski:main-thm},
    where we also showed that it is an equivalence. 
    Note that by definition of the induced t-structure on $\Frob{\Cat{D}(\QCoh{X})}{\Cat{D}(F_*)}$
    the category of bounded above objects of this category is exactly given by $\Frob{\Cat{D}^+(\QCoh{X})}{\Cat{D}^+(F_*)}$.
    Hence, \cref{lem:ab4:plus-equivalence} implies that the functor
    \begin{equation*}
        \Phi_X \colon \Cat{D} (\Frob{\QCoh{X}}{F_*}) \xrightarrow{\simeq} \Frob{\Cat{D}(\QCoh{X})}{\Cat{D}(F_*)}
    \end{equation*}
    is also an equivalence, as both source and target are left-complete by \cref{lem:ab4:frob-qcoh-is-ab4n,lem:ab4:frobD-is-left-complete}.
\end{proof}

\appendix
\section{Compact projective generators}
In the literature, there are a variety of notions of compact (projective) generators 
in various settings, e.g.\ in abelian categories or in stable $\infty$-categories.
The goal of this appendix is to translate between some of those notions 
so that we may use different results from the literature.

\begin{defn} \label{defn:cpgen:abelian-generator}
    Let $\Cat A$ be a Grothendieck abelian category and $P \in \Cat A$ an object.
    We say that $P$ is a \emph{compact projective generator}
    if $\Map{\Cat A}{P}{-}$ commutes with sifted colimits and is conservative.
\end{defn}

This definition immediately implies the following:
\begin{lem} \label{lem:cpgen:tiny}
    Let $\Cat A$ be a Grothendieck abelian category and $P \in \Cat A$ a compact projective generator.
    Then $\Map{\Cat A}{P}{-}$ commutes with all colimits.
\end{lem}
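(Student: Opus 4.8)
The plan is to reduce the statement to preservation of finite coproducts, via the standard fact that the collection of all small colimits is generated by sifted colimits together with finite coproducts; equivalently, a functor between cocomplete (ordinary) categories preserves all small colimits as soon as it preserves sifted colimits and finite coproducts. The underlying reductions are elementary: arbitrary coproducts are filtered colimits of their finite subcoproducts; an arbitrary coequalizer of a pair $f,g \colon A \to B$ is the reflexive coequalizer of $(f,\id{B}),(g,\id{B}) \colon A \amalg B \to B$ (with common section the inclusion $B \hookrightarrow A \amalg B$); and small coproducts together with coequalizers generate all small colimits. See \cite[Section~5.5.8]{highertopoi} for the $\infty$-categorical formulation. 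Since $\Cat A$ is Grothendieck abelian it is in particular additive and cocomplete, so this reduction applies.

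By \cref{defn:cpgen:abelian-generator}, the functor $\Map{\Cat A}{P}{-}$ preserves sifted colimits, so it remains only to check that it preserves finite coproducts. This is immediate from additivity: $\Map{\Cat A}{P}{-}$ is corepresentable, hence preserves all small limits --- in particular finite products --- and it sends the zero object of $\Cat A$ to the zero object of the target; equivalently, it is an additive functor of additive categories and hence preserves finite biproducts. Since in an additive category finite coproducts agree with finite products (biproducts), we conclude that $\Map{\Cat A}{P}{-}$ preserves finite coproducts as well. Combining with the preservation of sifted colimits and the reduction of the previous paragraph, $\Map{\Cat A}{P}{-}$ preserves all small colimits.

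There is essentially no genuine obstacle here: the statement is formal given the standard generation-of-colimits result. The only bookkeeping point is that $\Map{\Cat A}{P}{-}$ is understood as taking values in the additive category in which $\Cat A$ is enriched, so that ``finite coproduct'' in the target is again read as ``biproduct'' --- this is exactly what makes the finite-coproduct step come for free, so that the single hypothesis of \cref{defn:cpgen:abelian-generator} on sifted colimits already delivers the full conclusion; conservativity, the other half of that definition, plays no role.
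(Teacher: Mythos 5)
Your argument is correct and follows essentially the same route as the paper: reduce to finite coproducts using that sifted colimits together with finite coproducts generate all small colimits, then observe that finite coproducts are biproducts in the additive setting, so a corepresentable (hence product-preserving, additive-group-valued) functor preserves them. The paper states this more tersely, but the underlying reasoning, including the implicit point that the target is read additively, is the same.
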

\begin{proof}
    Since $\Map{\Cat A}{P}{-}$ preserves sifted colimits by assumption,
    it suffices to show that $\Map{\Cat A}{P}{-}$ preserves finite coproducts.
    This is clear since finite coproducts are finite products in abelian categories.
\end{proof}

We have a similar definition in the stable situation.
\begin{defn} \label{defn:cpgen:stable-generator}
    Let $\Cat D$ be a presentable stable $\infty$-category and $P \in \Cat D$ an object.
    We say that $P$ is a \emph{stable compact generator}
    if $\map{\Cat D}{P}{-}$ (the mapping spectrum) commutes with filtered colimits and is conservative.
\end{defn}

\begin{exmpl}
    Let $R$ be an associative ring.
    Then $R \in \ModDisc{R}$ is a compact projective generator,
    and $R \in \Mod{R}$ is a stable compact generator.
\end{exmpl}

Our goal in this section is to prove the following proposition:
\begin{prop} \label{lem:cpgen:abelian-implies-stable}
    Let $\Cat A$ be a Grothendieck abelian category, and $P \in \Cat A$ 
    a compact projective generator.
    Then $HP \in \Cat D(\Cat A)$ is a stable compact generator.
\end{prop}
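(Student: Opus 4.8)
The plan is to reduce to the case of modules over a discrete ring, where the statement follows formally from the free--forgetful adjunction recalled in \cref{lem:affine:free-forget,lem:affine:maps-from-unit}. Since $P$ is a compact projective generator, \cref{lem:cpgen:tiny} shows that $\Map{\Cat A}{P}{-}$ preserves all colimits, and by hypothesis it is conservative. Writing $S \coloneqq \operatorname{End}_{\Cat A}(P)$ for the (discrete) endomorphism ring, the functor $\Map{\Cat A}{P}{-}\colon \Cat A \to \ModDisc{S}$ is therefore an equivalence of abelian categories by Gabriel's theorem \cite[Corollaire V.1.1]{gabriel}, and it carries $P$ to the free rank-one module $S \in \ModDisc{S}$. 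As both this equivalence and its inverse are exact and colimit-preserving, applying the functor $\Cat D$ of \cref{lem:zariski:derived-cat} gives a t-exact equivalence $\Cat D(\Cat A) \xrightarrow{\simeq} \Cat D(\ModDisc{S})$ whose restriction to hearts is the original equivalence; in particular it sends $HP$ to $HS$. So it suffices to prove that $HS$ is a stable compact generator of $\Cat D(\ModDisc{S})$.

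For this I would invoke the (not necessarily commutative) version of \cref{lem:affine:derived-cat-of-modules}, namely \cite[Theorem 7.1.2.13]{higheralgebra}: there is a t-exact equivalence $\Cat D(\ModDisc{S}) \simeq \Mod{S}$ carrying $HS$ to the free module $S \in \Mod{S}$. Under this identification \cref{lem:affine:maps-from-unit} gives $\map{\Mod{S}}{S}{-} \cong \fgt{S}$, and \cref{lem:affine:free-forget} tells us that $\fgt{S}$ is conservative and preserves all colimits --- in particular it preserves filtered colimits. Hence $\map{\Cat D(\ModDisc{S})}{HS}{-}$ is conservative and preserves filtered colimits, which is exactly the assertion that $HS$ is a stable compact generator in the sense of \cref{defn:cpgen:stable-generator}. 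Transporting this back along the equivalence of the previous step shows that $HP$ is a stable compact generator of $\Cat D(\Cat A)$.

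The only step that is not purely formal is the passage to the ring: one must check that the Gabriel equivalence is compatible with the formation of derived $\infty$-categories and really does track $HP$ to $HS$, and one must cite the non-commutative form of the comparison $\Cat D(\ModDisc{S}) \simeq \Mod{S}$ (the commutative case being \cref{lem:affine:derived-cat-of-modules}). An alternative that avoids the explicit ring $S$ is to argue directly that the exact functor $\map{\Cat D(\Cat A)}{HP}{-}\colon \Cat D(\Cat A) \to \Sp$ is t-exact --- left t-exactness holds because $HP$ lies in the heart, right t-exactness because $P$ is projective --- and that the hyper-$\operatorname{Ext}$ spectral sequence of the Postnikov filtration of an object $X$ degenerates (again since $P$ is projective) to an isomorphism $\pi_n\map{\Cat D(\Cat A)}{HP}{X} \cong \Map{\Cat A}{P}{H_n(X)}$; conservativity then follows since $P$ generates $\Cat A$ and the t-structure on $\Cat D(\Cat A)$ is non-degenerate, and preservation of filtered colimits follows since homology commutes with filtered colimits in $\Cat D(\Cat A)$ while $\Map{\Cat A}{P}{-}$ does so by \cref{lem:cpgen:tiny}. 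The delicate point in this second route is the convergence of the spectral sequence for unbounded objects.
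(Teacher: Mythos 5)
Your proposal is correct and follows essentially the same route as the paper: reduce via Gabriel's theorem to $\ModDisc{S}$ with $S=\End{P}$ (the paper packages this as \cref{lem:cpgen:gabriel}), identify $\Cat D(\ModDisc{S})\simeq\Mod{S}$ by the (associative-ring) form of \cite[Theorem 7.1.2.13]{higheralgebra}, and conclude via \cref{lem:affine:maps-from-unit} and \cref{lem:affine:free-forget} that the mapping spectrum out of the generator is the conservative, colimit-preserving forgetful functor. Your extra care about transporting $HP$ to $HS$ along the derived equivalence and your alternative spectral-sequence argument are fine but not needed beyond what the paper does.
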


For the proof, we need the following two lemmas.

\begin{lem} \label{lem:cpgen:Tohoku-generator}
    Let $\Cat A$ be a Grothendieck abelian category, and $P \in \Cat A$ 
    a compact projective generator.
    Then every object $A \in \Cat A$ admits a surjection $\oplus_I P \twoheadrightarrow A$
    for some (small) set $I$.
\end{lem}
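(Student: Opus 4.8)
The plan is to take, for $A \in \Cat A$, the tautological map out of a coproduct of copies of $P$ indexed by \emph{all} morphisms $P \to A$, and to check it is an epimorphism using that $\Map{\Cat A}{P}{-}$ is a conservative, colimit-preserving functor. Concretely, I would set $I \coloneqq \Map{\Cat A}{P}{A}$, which is a small set since $\Cat A$, being Grothendieck abelian, is locally small, and let $\phi \colon \bigoplus_{f \in I} P \to A$ be the morphism whose restriction along the $f$-th coproduct inclusion $\iota_f$ equals $f$ itself. The claim to prove is that $\phi$ is an epimorphism.

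To prove the claim, let $q \colon A \to Q \coloneqq \operatorname{coker}(\phi)$, so that $\bigoplus_{f \in I} P \xrightarrow{\phi} A \xrightarrow{q} Q \to 0$ is exact. By \cref{lem:cpgen:tiny} the functor $\Map{\Cat A}{P}{-}$ commutes with all colimits; in particular it preserves coproducts and cokernels, so applying it to the displayed sequence identifies $\Map{\Cat A}{P}{Q}$ with the cokernel of the induced map $\bigoplus_{f \in I} \Map{\Cat A}{P}{P} \to \Map{\Cat A}{P}{A}$. On the summand indexed by $f$ this map sends $\id{P}$ to $\phi \circ \iota_f = f$, so its image is all of $\Map{\Cat A}{P}{A} = I$; hence $\Map{\Cat A}{P}{Q} = 0$.

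Finally, since $\Map{\Cat A}{P}{-}$ is additive it sends the zero object to $0$, so the canonical morphism $0 \to Q$ is carried to the isomorphism $0 \to \Map{\Cat A}{P}{Q} = 0$. As $\Map{\Cat A}{P}{-}$ is conservative by \cref{defn:cpgen:abelian-generator}, the morphism $0 \to Q$ is an isomorphism, i.e.\ $Q \cong 0$, and therefore $\phi \colon \bigoplus_I P \to A$ is the desired surjection.

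This argument is the categorical analogue of Grothendieck's observation that a generator detects surjections, and I do not expect any genuine obstacle. The only points requiring a little care are that $I$ is an honest set — which is exactly local smallness of Grothendieck abelian categories — and that $\Map{\Cat A}{P}{-}$ really does preserve the cokernel of $\phi$, for which \cref{lem:cpgen:tiny} is precisely the needed input; projectivity of $P$ enters only through the fact, already packaged into \cref{lem:cpgen:tiny}, that $\Map{\Cat A}{P}{-}$ is colimit-preserving.
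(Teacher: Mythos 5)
Your proposal is correct and follows essentially the same route as the paper: both take the tautological map $\bigoplus_{f \colon P \to A} P \to A$, use \cref{lem:cpgen:tiny} to see that $\Map{\Cat A}{P}{-}$ preserves the coproduct and the cokernel, observe the induced map of abelian groups is surjective since each $f$ is hit by the $f$-th summand inclusion, and conclude the cokernel vanishes by conservativity. Your extra remarks on local smallness and on applying conservativity to $0 \to \operatorname{coker}$ are just spelled-out versions of what the paper leaves implicit.
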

\begin{proof}
    Consider the canonical map 
    \begin{equation*}
        \Psi \colon \bigoplus_{f \colon P \to A} P \to A.
    \end{equation*}
    We claim that this map is surjective, i.e.\ $\mathrm{coker}(\Psi) \cong 0$.
    Since $\Map{\Cat A}{P}{-}$ is conservative and preserves cokernels (by \cref{lem:cpgen:tiny}),
    it suffices to show that the induced map of abelian groups
    \begin{equation*}
        \Map{\Cat A}{P}{\bigoplus_{f \colon P \to A} P} \to \Map{\Cat A}{P}{A}
    \end{equation*}
    is surjective.
    This is clear as any $f \colon P \to A$ 
    is the image of the inclusion of the $f$-th summand.
\end{proof}

\begin{rmk}
    The last lemma shows that a compact projective generator 
    is in particular a generator in the sense of Grothendieck, see 
    \cite[Proposition 1.9.1]{grothendieck1957tohoku}.
\end{rmk}

\begin{lem} \label{lem:cpgen:gabriel}
    Let $\Cat A$ be a Grothendieck abelian category, and $P \in \Cat A$ 
    a compact projective generator.
    Then there is an equivalence $\Psi \colon \Cat A \xrightarrow{\simeq} \ModDisc{R}$,
    where $R \coloneqq \End{P}$ is the endomorphism ring of $P$,
    and $\Psi(P) = R$.
\end{lem}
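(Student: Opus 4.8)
The plan is to realize $\Psi$ as $\Map{\Cat A}{P}{-}$ and to produce an explicit quasi-inverse. First I would observe that $\Map{\Cat A}{P}{-}$ carries a natural right $R$-module structure (an element of $R = \End P$ acts by precomposition), so it refines to a functor $\Psi \colon \Cat A \to \ModDisc R$ with $\Psi(P) = R$. By hypothesis $\Psi$ is conservative, by \cref{lem:cpgen:tiny} it preserves all colimits, and since $\Map{\Cat A}{P}{-}$ is always left exact it is in fact exact.

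Next I would build a candidate inverse $F \colon \ModDisc R \to \Cat A$. Since $\ModDisc R$ is generated under colimits by the free module $R$, whose endomorphism ring is $R$ itself, and $\Cat A$ is cocomplete, there is an essentially unique colimit-preserving functor $F$ with $F(R) = P$ for which the induced $R$-action on $F(R)$ is the tautological action of $R = \End P$ on $P$; concretely $F(N) = N \otimes_R P$, defined from a free presentation $R^{(J)} \to R^{(I)} \to N \to 0$ as $\operatorname{coker}(P^{(J)} \to P^{(I)})$. The tensor--hom formula shows $F$ is left adjoint to $\Psi$, with counit the evaluation maps $\Psi(M) \otimes_R P \to M$; being a left adjoint, $F$ preserves all colimits and is right exact. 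The only delicate point here is the well-definedness and functoriality of $N \otimes_R P$ for a general module $N$ and a general object $P$ of an abelian category (independence of the presentation, using projectivity of free modules to lift the comparison maps); this is routine, but it is where essentially all of the bookkeeping sits.

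It then remains to check that the unit $\eta \colon \id{\ModDisc R} \to \Psi F$ and counit $\epsilon \colon F\Psi \to \id{\Cat A}$ are equivalences. For $\eta$: at $R$ it is the canonical identification $R \cong \Map{\Cat A}{P}{P}$, the full subcategory of $\ModDisc R$ on which $\eta$ is an isomorphism is closed under colimits (because $\id{\ModDisc R}$ and $\Psi F$ both preserve them), and every $R$-module is a cokernel of a map between free modules, hence obtained from copies of $R$ by colimits; so $\eta$ is a natural isomorphism. For $\epsilon$: at $P$ it is the canonical isomorphism $F\Psi(P) = R \otimes_R P \cong P$, so $\epsilon$ is an isomorphism on every coproduct $\bigoplus_I P$ since $F\Psi$ and $\id{\Cat A}$ preserve coproducts; and by \cref{lem:cpgen:Tohoku-generator}, applied to $A$ and then to the kernel of a surjection $\bigoplus_I P \twoheadrightarrow A$, every $A \in \Cat A$ sits in an exact sequence $\bigoplus_J P \to \bigoplus_I P \to A \to 0$; applying the right-exact functors $F\Psi$ and $\id{\Cat A}$ and comparing cokernels forces $\epsilon_A$ to be an isomorphism.

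An adjunction whose unit and counit are both equivalences is an adjoint equivalence, so $\Psi$ is an equivalence $\Cat A \xrightarrow{\simeq} \ModDisc R$ with inverse $F$, and $\Psi(P) = R$ as recorded above. The statement is of course Gabriel's theorem, so no single step is hard; the real work — and the main obstacle — is setting up $F = - \otimes_R P$ and the adjunction $F \dashv \Psi$ cleanly in the abstract abelian setting, after which the two triangle-identity checks use nothing beyond \cref{lem:cpgen:tiny} and \cref{lem:cpgen:Tohoku-generator}.
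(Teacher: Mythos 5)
Your proof is correct, but it takes a different route from the paper: the paper's proof is a two-line reduction, observing that $P$ is projective (since $\Map{\Cat A}{P}{-}$ preserves cokernels by \cref{lem:cpgen:tiny}) and a generator in Grothendieck's sense (\cref{lem:cpgen:Tohoku-generator}), and then simply citing Gabriel's theorem \cite[Corollaire V.1.1]{gabriel}. You instead reprove the relevant case of Gabriel's theorem from scratch: you realize $\Psi = \Map{\Cat A}{P}{-}$ with its right $R$-action, build the left adjoint $F = - \otimes_R P$ (using cocompleteness of $\Cat A$), and check that the unit is an isomorphism by colimit-closure starting from $R$ and that the counit is an isomorphism by right-exactness together with the presentations $\bigoplus_J P \to \bigoplus_I P \to A \to 0$ supplied by \cref{lem:cpgen:Tohoku-generator}. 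All the steps you outline go through (the tensor--hom adjunction and the independence-of-presentation bookkeeping for $-\otimes_R P$ are standard in a cocomplete abelian category), so your argument is a valid, self-contained proof; what the paper's approach buys is brevity and the offloading of exactly that bookkeeping onto the classical reference, at the cost of invoking an external theorem, while yours makes the mechanism of the equivalence (and the role of the two auxiliary lemmas) explicit. One small point worth stating if you write this up: colimits in $\ModDisc{R}$ are computed on underlying abelian groups, so \cref{lem:cpgen:tiny} indeed gives that $\Psi$, viewed as valued in $\ModDisc{R}$, preserves all colimits.
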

\begin{proof}
    Note that $P$ is projective (in the abelian sense, i.e.\ $\Map{\Cat A}{P}{-}$ commutes with cokernels)
    by \cref{lem:cpgen:tiny}.
    Hence, in view of \cref{lem:cpgen:Tohoku-generator}, this follows from Gabriel's theorem \cite[Corollaire V.1.1]{gabriel}.
\end{proof}

\begin{proof}[Proof of \cref{lem:cpgen:abelian-implies-stable}]
    By \cref{lem:cpgen:gabriel}, we may assume that $\Cat A = \ModDisc{R}$
    for some associative ring $R$, and $P = R$.
    In particular, we know $\Cat D(\Cat A) \cong \Mod{R}$.
    Our goal is to show that $\map{\Mod{R}}{R}{-}$ is conservative and preserves filtered colimits.
    This functor can be identified with the canonical forgetful functor $\Mod{R} \to \Sp$,
    cf.\ \cref{lem:affine:maps-from-unit}.
    Hence, the result follows from \cref{lem:affine:free-forget}.
\end{proof}

We end the section by proving a stability property of generators.
\begin{lem} \label{lem:cpgen:preservation}
    Let $L \colon \Cat C \to \Cat D$ be a left adjoint functor
    of $\infty$-categories
    such that the right adjoint $R \colon \Cat D \to \Cat C$ 
    preserves colimits and is conservative.
    \begin{enumerate}
        \item[(a)] If both $\Cat C$ and $\Cat D$ are Grothendieck abelian,
            then $L$ preserves compact projective generators.
        \item[(b)] If both $\Cat C$ and $\Cat D$ are presentable stable,
            then $L$ preserves stable compact generators.
    \end{enumerate}
\end{lem}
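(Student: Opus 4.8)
The plan is to deduce both parts from a single formal observation: if $P \in \Cat C$ and $Q \coloneqq L(P) \in \Cat D$, then the adjunction $L \dashv R$ supplies a natural equivalence $\Map{\Cat D}{Q}{-} \simeq \Map{\Cat C}{P}{-} \circ R$ of functors on $\Cat D$ (and, in the stable case, the analogous equivalence on mapping spectra), so that the relevant generator property of $Q$ is obtained simply by composing the corresponding property of $\Map{\Cat C}{P}{-}$ (resp.\ $\map{\Cat C}{P}{-}$) with the one of $R$.

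For part (a) I would fix a compact projective generator $P \in \Cat C$ and set $Q \coloneqq L(P)$. The adjunction gives $\Map{\Cat D}{Q}{Y} \simeq \Map{\Cat C}{P}{RY}$, naturally in $Y$. Since $R$ preserves all colimits by hypothesis and $\Map{\Cat C}{P}{-}$ preserves sifted colimits by \cref{defn:cpgen:abelian-generator} (indeed all colimits by \cref{lem:cpgen:tiny}), the composite $\Map{\Cat D}{Q}{-}$ preserves sifted colimits. Likewise $R$ is conservative by hypothesis and $\Map{\Cat C}{P}{-}$ is conservative by \cref{defn:cpgen:abelian-generator}, so the composite is conservative. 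Hence $Q = L(P)$ is a compact projective generator of $\Cat D$.

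For part (b) the argument is the same with mapping spaces replaced by mapping spectra. Using that $L \dashv R$ upgrades to a spectrally enriched adjunction (cf.\ the conventions in the introduction), one gets $\map{\Cat D}{Q}{-} \simeq \map{\Cat C}{P}{-} \circ R$; since $R$ preserves filtered colimits and is conservative, and $\map{\Cat C}{P}{-}$ preserves filtered colimits and is conservative by \cref{defn:cpgen:stable-generator}, so does the composite, whence $Q = L(P)$ is a stable compact generator of $\Cat D$.

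I do not expect a real obstacle here; the proof is a short diagram chase in each case. The two points that warrant a word of care are that in the stable case one must use the spectrally enriched form of the adjunction rather than its underlying version, and that in the abelian case ``sifted colimits'' already suffices for the conclusion precisely because $R$ is assumed to preserve \emph{all} colimits, so there is no need to rerun separately the finite-coproduct step of \cref{lem:cpgen:tiny}.
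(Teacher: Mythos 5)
Your proposal is correct and follows essentially the same route as the paper: both arguments reduce to the adjunction equivalence $\Map{\Cat D}{LP}{-} \cong \Map{\Cat C}{P}{R-}$ (spectrally enriched in the stable case) combined with the assumed colimit-preservation and conservativity of $R$.
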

\begin{proof}
    For the abelian case, let $P \in \Cat C$ be a compact projective generator.
    The statement then follows from the equivalence $\Map{\Cat D}{LP}{-} \cong \Map{\Cat C}{P}{R-}$
    and the properties of $R$.
    In the stable case, note that the adjunction is automatically spectrally enriched.
\end{proof}

\bibliography{main}

\end{document}